\title[Anosov representations and dominated splittings]{Anosov representations and dominated splittings}
\author[J.~Bochi]{Jairo Bochi}
\author[R.~Potrie]{Rafael Potrie}
\author[A.~Sambarino]{Andr\'es Sambarino}
\thanks{J.B.\ was partially supported by projects Fondecyt 1180371 and Conicyt PIA ACT172001. R.P.\ was partially supported by CSIC-618 and FCE-135352. A.S.\ was partially supported by IFUM and CSIC group 618.}
\thanks{This preprint was published in \textit{J.\ Eur.\ Math.\ Soc.\ }21 (2019), no.\ 11, pp.\ 3343--3414. \doi{10.4171/JEMS/905}}
\subjclass[2010]{22E40 (primary); 20F67, 37B15, 37D30, 53C35, 53D25 (secondary)}
\keywords{Discrete subgroups of Lie groups, linear cocycles, dominated splitting, coarse geometry, hyperbolic groups}
\renewcommand*{\backref}[1]{}
\renewcommand*{\backrefalt}[4]{\quad \tiny
  \ifcase #1 (\textbf{NOT CITED.})%
  \or    (Cited on page~#2.)%
  \else   (Cited on pages~#2.)%
  \fi}
\def\MRbibitem{\@ifnextchar[\my@lbibitem\my@bibitem}
\def\mybiblabel#1#2{\@biblabel{{\hyperref{http://www.ams.org/mathscinet-getitem?mr=#1}{}{}{#2}}}}
\def\myhyperanchor#1{\Hy@raisedlink{\hyper@anchorstart{cite.#1}\hyper@anchorend}}
\def\my@lbibitem[#1]#2#3#4\par{%
  \item[\mybiblabel{#2}{#1}\myhyperanchor{#3}\hfill]#4%
  \@ifundefined{ifbackrefparscan}{}{\BR@backref{#3}}%
  \if@filesw{\let\protect\noexpand\immediate
    \write\@auxout{\string\bibcite{#3}{#1}}}\fi\ignorespaces%
}
\def\my@bibitem#1#2#3\par{%
  \refstepcounter\@listctr
  \item[\mybiblabel{#1}{\the\value\@listctr}\myhyperanchor{#2}\hfill]#3%
  \@ifundefined{ifbackrefparscan}{}{\BR@backref{#2}}%
  \if@filesw\immediate\write\@auxout
    {\string\bibcite{#2}{\the\value\@listctr}}\fi\ignorespaces%
}
\newcommand{\xqedhere}[2]{%
  \rlap{\hbox to#1{\hfil\llap{\ensuremath{#2}}}}}
\newcommand{\Z}{\mathbb{Z}} \newcommand{\ZZ}{\Z}
\newcommand{\R}{\mathbb{R}} \newcommand{\RR}{\R}
\newcommand{\C}{\mathbb{C}}
\newcommand{\N}{\mathbb{N}}
\renewcommand{\P}{\mathbb{P}}
\newcommand{\cC}{\mathcal{C}}
\newcommand{\cD}{\mathcal{D}}\newcommand{\cE}{\mathcal{E}}
\newcommand{\cG}{\mathcal{G}}
\newcommand{\cO}{\mathcal{O}}
\newcommand{\cT}{\mathcal{T}}
\newcommand{\en}{\subset}
\newcommand{\eps}{\varepsilon}
\newcommand{\G}{\Gamma} 
\newcommand{\Gr}{\mathscr G}    
\newcommand{\<}{\langle}
\renewcommand{\>}{\rangle}
\newcommand{\E}{\Sigma}
\newcommand{\g}{\gamma}
\newcommand{\om}{\omega}
\newcommand{\posgen}{\scr F^{(2)}}
\renewcommand{\t}{\theta}
\renewcommand{\L}{\Lambda}
\newcommand{\p}{{\mathsf{f}}}
\newcommand{\zz}{{\mathsf{Z}}}
\newcommand{\wk}{\check}
\renewcommand{\aa}{\underline{a}}
\newcommand{\dd}{\diamondsuit}
\newcommand{\scr}{\mathscr}
\renewcommand{\sf}[1]{{\mathsf{#1}}}
\newcommand{\cal}{\mathcal}
\renewcommand{\frak}{\mathfrak}
\DeclareMathOperator{\ii}{i}
\DeclareMathOperator{\plucker}{\L}
\DeclareMathOperator{\SL}{SL}
\DeclareMathOperator{\PSL}{PSL}
\DeclareMathOperator{\GL}{GL}
\DeclareMathOperator{\PGL}{PGL}
\DeclareMathOperator{\id}{id}
\DeclareMathOperator{\inte}{int}
\DeclareMathOperator{\traza}{Trace}
\DeclareMathOperator{\ad}{ad}
\DeclareMathOperator{\Ad}{Ad}
\DeclareMathOperator{\jac}{jac}
\newcommand{\mm}{\mathbf{m}}
\newcommand{\Id}{\mathrm{Id}}
\newcommand{\st}{\,\mathord{\colon}\,} 
\newcommand{\cu}{\mathrm{cu}}
\newcommand{\cs}{\mathrm{cs}}
\newcommand{\grass}{\Gr}      
\renewcommand{\angle}{\measuredangle}
\newcommand{\Wedge}{\mathsf{\Lambda}}  
\renewcommand{\epsilon}{\varepsilon}
\newcommand{\arxiv}[1]{Preprint \href{http://arxiv.org/abs/#1}{arXiv:{#1}}}
\numberwithin{equation}{section}     
\setlist[enumerate,1]{label = {\upshape(\roman*)},ref = \roman*}
\setlist[enumerate,2]{label = {\upshape(\alph*)},ref = \alph*}
\newtheorem{teo}{Theorem}[section]
\newtheorem{cor}[teo]{Corollary}
\newtheorem{lem}[teo]{Lemma}
\newtheorem{prop}[teo]{Proposition}
\theoremstyle{definition}
\newtheorem*{claim}{Claim}
\newtheorem{question}[teo]{Question}
\newtheorem{remark}[teo]{Remark}
\theoremstyle{remark}
\newtheorem{obs}[teo]{Remark}
\newtheorem*{ack}{Acknowledgements}
\begin{document}

\begin{abstract}
We provide a link between Anosov representations introduced by Labourie and dominated splitting of linear cocycles. This allows us to obtain equivalent characterizations for Anosov representations and to recover recent results due to Gu\'eritaud--Guichard--Kassel--Wienhard \cite{GGKW} and Kapovich--Leeb--Porti \cite{KLP2} by different methods. We also give characterizations in terms of multicones and cone-types inspired in the work of Avila--Bochi--Yoccoz \cite{ABY} and Bochi--Gourmelon \cite{BG}. Finally we provide a new proof of the higher rank Morse Lemma of Kapovich--Leeb--Porti \cite{KLP2}.
\end{abstract}

\maketitle

\tableofcontents

\section{Introduction}\label{s.intro}

The aim of this paper is to expose and exploit connections between the following two areas:
\begin{enumerate}[label = {\arabic*}.]
	\item linear representations of discrete groups;
	\item differentiable dynamical systems.
\end{enumerate}
More specifically, we show that \emph{Anosov representations} are closely related to \emph{dominated splittings}. This relation allows us to reobtain some results about Anosov representations, and to give new characterizations of them.

\medskip

Anosov representations where introduced by Labourie \cite{Labourie-AnosovFlows} in his study of the Hitchin component (\cite{Hitchin}). They provide a stable class of discrete faithful representations of word-hyperbolic groups into semi-simple Lie groups, that unifies examples of varying nature. Since then, Anosov representations have become a main object of study, being subject of various deep results (see for example \cite{GuichardWienhard}, the surveys \cite{BCS,Wienhard-Survey} and references therein). Recently, new characterizations of Anosov representations have been found by \cite{GGKW,KLP1,KLP2,KLP3}, these characterizations considerably simplify the definition.
It is now fairly accepted in the community that Anosov representations are a good generalization of convex co-compact groups to higher rank.

\medskip

In differentiable dynamical systems, 
the notion of \emph{hyperbolicity}, as introduced by Anosov and Smale \cite{Smale} plays a central role.
Early on, it was noted that weaker forms of hyperbolicity (partial, nonuniform, etc.) should also be studied:
see \cite{BDV} for a detailed account.
One of these is the the notion of \emph{dominated splittings}, which can be thought as a projective version of hyperbolicity: see Section~\ref{s.DS}.

\medskip

As mentioned above, in this paper we benefit from the viewpoint of differentiable dynamics in the study of linear representations.
For example, it turns out that a linear representation is Anosov if and only if its associated linear flow has a dominated splitting: see Subsection~\ref{ss.equivalences} for the precise statements.

\medskip

Let us summarize the contents of this paper.

In Section~\ref{s.DS} we describe the basic facts about dominated splittings that will be used in the rest of the paper. We present the characterization of dominated splittings given by \cite[Theorem~A]{BG}. We rely on this theorem in different contexts throughout the paper.

In Section~\ref{s.dominationimplieshyp} we introduce \emph{dominated representations} $\rho$ of a given finitely generated group $\Gamma$ into $\GL(d,\R)$. The definition is simple: we require the gap between some consecutive singular values to be exponentially large with respect to the word length of the group element, that is, 
$$
\frac{\sigma_{p+1}(\rho(\gamma))}{\sigma_{p}(\rho(\gamma))} < C e^{-\lambda |\gamma|} 
\quad \text{for all $\gamma \in \Gamma$},
$$
for some constants $p \in \{1,\dots,d-1\}$, $C>0$, and $\lambda>0$ independent of $\gamma\in\Gamma$.

We show that the existence of dominated representations implies that the group $\Gamma$ is word-hyperbolic: see Theorem~\ref{teo:dominationimplieshyp}. Word-hyperbolicity allows us to consider Anosov representations, and in Section~\ref{s.equiv} we show that they are exactly the same as dominated representations. Many of the results in Sections \ref{s.dominationimplieshyp} and \ref{s.equiv} are not new, appearing in the recent works of \cite{GGKW} and \cite{KLP1,KLP2,KLP3} with different terminology. 
Our proofs are different and make use of the formalism of linear cocycles and most importantly of non-trivial properties of dominated splittings.
At the end of Section~\ref{s.equiv} we pose a few questions where the connection with differentiable dynamics is manifest.

In Section~\ref{s.multicones} we give yet another equivalent condition for a representation to be Anosov, which uses the sofic subshift generated by the cone-types instead of the geodesic flow of the group: see Theorem~\ref{t.multicone_representation}.
This condition is very much inspired in \cite{ABY,BG} and provides nice ways to understand the variation of the limit maps, as well as a quite direct method to check if a representation is dominated. This criterion is used in Section~\ref{s.analytic} to reobtain a basic result from \cite{BCLS} on the analyticity of the limit maps for Anosov representations: see Theorem~\ref{teo.analytic}. 

Section \ref{s.morse} shows how \cite[Theorem~A]{BG}  implies a Morse~Lemma-type statement for the symmetric space of $\PSL(d,\R)$. That result is contained in the recent work of \cite{KLP2}, but we provide here a different approach. This section only relies on Section \ref{s.DS}.

In Section~\ref{s.general} we replace $\GL(d,\R)$ with a real-algebraic non-compact semi-simple Lie group. Representation theory of such groups allows one to reduce most of the general statements to the corresponding ones in $\GL(d,\R)$ by a fairly standard procedure.  Nevertheless, more work is needed to obtain a Morse Lemma for symmetric spaces of non-compact type; this occupies most of Section~\ref{s.general}.

The sections are largely independent; dependence is indicated in Fig.~\ref{f.sections}.

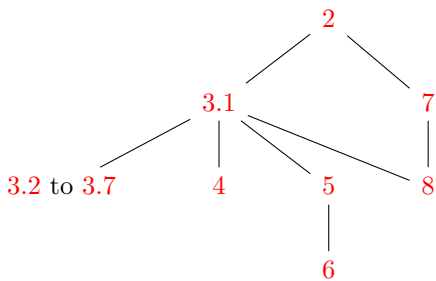
\begin{figure}[hbt]
\begin{center}
\begin{tikzcd}
& & \ref{s.DS} \arrow[dash,dl]\arrow[dash,dr] & \\ 
& \ref{ss.dominatedrep} \arrow[dash,dl]\arrow[dash,d]\arrow[dash,dr]\arrow[dash,drr] & & \ref{s.morse} \arrow[dash,d]\\
\ref{ss.crithyp}\text{ to }\ref{ss.conclusion} & \ref{s.equiv} & \ref{s.multicones} \arrow[dash,d] & \ref{s.general} \\
& & \ref{s.analytic} & 
\end{tikzcd}
\end{center}
\caption{Dependence between sections.}
\label{f.sections}
\end{figure}

\medskip 

The results of this paper were announced at \cite{EHY}.
 
\begin{ack}
We thank Mat\'ias Carrasco, Olivier Guichard, Fanny Kassel, and Jean-Fran\c{c}ois Quint for many interesting discussions. We are greatly indebted to the referee and to O. Guichard for several important corrections, including pointing to a faulty proof. Jairo Bochi acknowledges the kind hospitality of Universidad de la Rep\'ublica. 
\end{ack}


\section{Dominated splittings}\label{s.DS}

In the 1970's, Ma\~n\'e introduced the notion of \emph{dominated splittings}, which played an important role in his solution of Smale's Stability Conjecture: see \cite{Samba_dom} and references therein. Independently, dominated splittings had been studied in the theory of Ordinary Differential Equations by the Russian school at least since the 1960s, where it is called \emph{exponential separation}: see \cite{Palmer} and references therein. Dominated splittings continue to be an important subject in Dynamical Systems \cite{CroPo,Samba_dom} and Control Theory \cite{ColKli}.

\subsection{Definition and basic properties of dominated splittings}\label{ss.def_ds}

Let $X$ be a compact metric space.
Let $\mathbb{T}$ be either $\Z$ or $\R$.
Consider a continuous action of $\mathbb{T}$ on $X$, that is, a continuous family of homeomorphisms 
$\{\phi^t \colon X \to X \}_{t \in \mathbb{T}}$ such that $\phi^{t+s}  =  \phi^t \circ \phi^s$.
We call $\{\phi^t\}$ a \emph{continuous flow}.

Let $E$ be a real vector bundle with projection map $\pi \colon E \to X$ and fibers $E_x \coloneqq \pi^{-1}(x)$  of constant dimension $d$.
We endow $E$ with a Riemannian metric (that is, a continuous choice of an inner product on each fiber).
Suppose $\{\psi^t \colon E \to E\}_{t \in \mathbb{T}}$ is a continuous action of $\mathbb{T}$ on $E$ by automorphisms of the vector bundle that covers $\{\phi^t\}$, that is, $\pi \circ \psi^t  =  \phi^t \circ \pi$.
So the restriction of $\psi^t$ to each fiber $E_x$ is a linear automorphism $\psi^t_x$ onto $E_{\phi^t(x)}$.
We say that $\{\psi^t\}$ is a \emph{linear flow} which \emph{fibers over} the continuous flow $\{\phi^t\}$.

The simplest situation is when $\mathbb{T}  =  \Z$ and the vector bundle is trivial, i.e., $E  =  X \times \R^d$ and $\pi(x,v)  =  x$;
in that case the linear flow $\{\psi^t\}$ is called a \emph{linear cocycle},
and in order to describe it is sufficient to specify the maps $\phi  =  \phi^1 \colon X \to X$
and $A \colon X \to \GL(d,\R)$ such that $\psi^1(x,v)  =  (\phi(x), A(x)v)$.
With some abuse of terminology, we sometimes call the pair $(\phi,A)$ a linear cocycle.

\medskip

Suppose that the vector bundle $E$ splits as a direct sum $E^\cu \oplus E^\cs$ of continuous\footnote{In fact, continuity of the bundles follows from condition \eqref{eq:dom1}: \see e.g.\ \cite{CroPo}.} subbundles of constant dimensions.\footnote{$\cu$ and $\cs$ stand for \emph{center-unstable} and \emph{center-stable}, respectively. This terminology is usual in differentiable dynamics.} 
This splitting is called \emph{invariant} under the linear flow $\{\psi^t\}$ if for all $x \in X$ and $t \in \mathbb{T}$,
$$
\psi^t(E^\cu_x)  =  E^\cu_{\phi^t(x)} \, , \quad 
\psi^t(E^\cs_x)  =  E^\cs_{\phi^t(x)} \, .
$$
Such a splitting is called \emph{dominated} (with $E^\cu$ \emph{dominating} $E^\cs$) 
if there are constants $C>0$, $\lambda>0$ such that for all $x \in X$, $t>0$,
and unit vectors $v \in E^\cs_x$, $w\in E^\cu_x$ we have:
\begin{equation}\label{eq:dom1}
\frac{\| \psi^t (v) \| } {\| \psi^t (w) \|} < C e^{-\lambda t} \, .
\end{equation}
Note that this condition is independent of the choice of the Riemannian metric for the bundle.
It is actually equivalent to the following condition (see \cite[p.~156]{ColKli}): 
for all $x \in X$ and all unit vectors $v \in E^\cs_x$, $w\in E^\cu_x$ we have: 
\begin{equation}\label{eq:dom2}
\lim_{t \to +\infty} \frac{\| \psi^t (v) \| } {\| \psi^t (w) \|}  =  0  .
\end{equation}

The bundles of a dominated splitting are unique given their dimensions; more generally:

\begin{prop}\label{p.uniqueness}
Suppose a linear flow $\{\psi^t\}$ has dominated splittings $E^\cu \oplus E^\cs$ and $F^\cu \oplus F^\cs$,
with $E^\cu$ (resp.\ $F^\cu$) dominating $E^\cs$ (resp.\ $F^\cs$).
If $\dim E^\cu \le \dim F^\cs$ then $E^\cu \subset F^\cs$ and $E^\cs \supset E^\cu$.
\end{prop}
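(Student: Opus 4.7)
The approach I would take is by contradiction, leaning entirely on the equivalent characterization \eqref{eq:dom2}, which says $\|\psi^t v\|/\|\psi^t w\| \to 0$ whenever $v$ lies in a dominated subbundle and $w$ in its dominating partner. (I will read the statement as the standard uniqueness claim: namely, if $\dim E^\cu \leq \dim F^\cu$ then $E^\cu \subset F^\cu$ and $E^\cs \supset F^\cs$, which is what appears meaningful given that $E^\cs$ and $E^\cu$ are complementary.) The strategy is to first establish the weaker intersection statement $E^\cu_x \cap F^\cs_x = \{0\}$ for every $x \in X$, and then upgrade to the full inclusion $E^\cu \subset F^\cu$.

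To establish triviality of $E^\cu_x \cap F^\cs_x$, I would suppose there is a nonzero $v$ in this intersection and look for a ``witness'' vector $z \ne 0$ in $F^\cu_x \cap E^\cs_x$. Such a $z$ exists by the dimension estimate
\[
\dim(F^\cu_x \cap E^\cs_x) \geq \dim F^\cu + \dim E^\cs - d = \dim F^\cu - \dim E^\cu \geq 0,
\]
at least when this inequality is strict. With such a $z$ at hand, $E$-domination applied to the pair $(z,v) \in E^\cs_x \times E^\cu_x$ gives $\|\psi^t z\|/\|\psi^t v\| \to 0$, while $F$-domination applied to $(v,z) \in F^\cs_x \times F^\cu_x$ gives the opposite $\|\psi^t v\|/\|\psi^t z\| \to 0$, which is impossible. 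Once intersection-triviality is known, I would upgrade to the genuine inclusion by considering the $\psi^t$-equivariant projection $\pi \colon E^\cu \to F^\cs$ along $F^\cu$; its image is an invariant subbundle of $F^\cs$, so its vectors must grow slowly (as the codomain lies in $F^\cs$) while also being bounded below in terms of growth of $E^\cu$-vectors (since $\pi$ is equivariant and $E^\cu$ dominates), and the two estimates together, combined with compactness of $X$ and uniform boundedness of the projection, force $\pi \equiv 0$. The companion conclusion $E^\cs \supset F^\cs$ would then follow either by a symmetric argument or by applying the established inclusion to the reversed flow $\{\psi^{-t}\}$, for which the splittings $E^\cs \oplus E^\cu$ and $F^\cs \oplus F^\cu$ are dominated with the stable and unstable bundles exchanging roles while the dimension hypothesis reverses accordingly.

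The main obstacle I anticipate is the borderline case $\dim E^\cu = \dim F^\cu$, where the dimension count produces only a non-strict inequality and the witness vector $z$ need not exist. In this situation the intersection argument alone does not suffice, and one must appeal to a genuine uniqueness property for dominated splittings of a given index. This can be obtained from the characterization of dominated splittings via singular value gaps introduced shortly afterwards in the section (namely \cite[Theorem~A]{BG}): the dominating subbundle is the limit as $t \to +\infty$ of the span of the top singular directions of $\psi^t_x$, which is intrinsically associated to the flow, and its nested behavior under increasing indices gives the inclusion $E^\cu \subset F^\cu$ directly. The other subtle point is the step going from ``intersection trivial'' to ``containment holds'' even in the case $p < q$, which in general is a nontrivial linear-algebra gap that requires the quantitative domination inequalities rather than just their asymptotic form.
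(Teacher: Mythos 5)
First, the statement as printed in the paper is garbled: ``$E^\cs \supset E^\cu$'' cannot hold for the complementary subbundles of a splitting, and the dimension hypothesis ``$\dim E^\cu \le \dim F^\cs$'' does not match. Your reading — $\dim E^\cu \le \dim F^\cu$ implies $E^\cu \subset F^\cu$ and $E^\cs \supset F^\cs$ — is the standard uniqueness assertion and surely what is intended. Note also that the paper offers no proof of its own here (it defers to \cite{CroPo}), so there is no in-paper argument to compare against; I am evaluating your sketch on its own terms.

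Your witness-vector contradiction is the right first idea, and you correctly flag the borderline case $\dim E^\cu = \dim F^\cu$ where $F^\cu_x \cap E^\cs_x$ may be trivial, so the witness $z$ may not exist. But even with a witness, your first step only yields $E^\cu_x \cap F^\cs_x = \{0\}$, which does \emph{not} give $E^\cu_x \subset F^\cu_x$ when $\dim E^\cu < \dim F^\cu$, as you note. The gap I see as an actual error rather than an acknowledged omission is in the projection step: you assert the image vectors of $\pi \colon E^\cu \to F^\cs$ are ``bounded below in terms of growth of $E^\cu$-vectors (since $\pi$ is equivariant and $E^\cu$ dominates).'' Equivariance gives only $\psi^t\pi(v) = \pi(\psi^t v)$, hence an \emph{upper} bound $\|\psi^t\pi(v)\| \le \|\pi\|_\infty\,\|\psi^t v\|$; there is no uniform lower bound, since $\pi$ has nontrivial kernel $E^\cu \cap F^\cu$ and can crush vectors arbitrarily. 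So the ``two estimates together'' do not force $\pi \equiv 0$. What does work, still in the elementary spirit, is: write $v = u + w$ with $u \in F^\cu_x$, $w \in F^\cs_x$, let $t \to -\infty$, chain the domination estimates through the witness $z$ to get $\|\psi^t v\| / \|\psi^t w\| \to 0$, and contradict the uniform lower bound $\|\psi^t v\| \ge c\,\|\psi^t w\|$ coming from the angle between $F^\cu$ and $F^\cs$ being bounded away from zero — a basic property of dominated splittings you did not invoke. This still leaves the borderline case open. Your fallback via the singular-value characterization (Theorem~\ref{t.BG}) — $E^\cu_x = \lim_t U_p(\psi^t_{\phi^{-t}x})$, $F^\cu_x = \lim_t U_q(\psi^t_{\phi^{-t}x})$, and $U_p(A) \subset U_q(A)$ whenever $p \le q$ and both are defined — is indeed clean, handles the borderline case automatically, and is not circular even though it appears later in the section; but as given it is a pointer, not a written argument.
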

See e.g.\ \cite{CroPo} for a proof of this and other properties of dominated splittings.

One can also define domination for invariant splittings into more than two bundles.
This leads to the concept of \emph{finest dominated splitting}, whose uniqueness is basically a consequence of Proposition~\ref{p.uniqueness}; see \cite{BDP,CroPo}.

The existence of a dominated splitting can be characterized in terms of 
cone fields: see e.g.\ \cite{CroPo};
we will use these ideas later in Section~\ref{s.multicones}.

\subsection{Domination in terms of singular values}\label{ss.dom_sing}

Another indirect way to detect the existence of a dominated splitting can be formulated in terms of the ``non-conformality'' of the linear maps. Results of this kind were obtained in \cite{Yoccoz,Lenz} for dimension $2$, and later in \cite{BG} in more generality.\footnote{For a recent generalization to Banach spaces, see \cite{BluMo}.}
Let us explain this characterization.

If $A$ is a linear map between two inner product vector spaces of dimension $d$,
then its \emph{singular values} 
$$
\sigma_1(A) \ge \sigma_2(A) \ge \cdots \ge \sigma_d(A) 
$$
are the eigenvalues of the positive semidefinite operator $\sqrt{A^* A}$,
repeated according to multiplicity.
They equal the semiaxes of the ellipsoid obtained as the $A$-image of the unit ball;
this is easily seen using the singular value decomposition \cite[\S~7.3]{HornJ}.

If $p \in \{1,\dots,d-1\}$ and $\sigma_p(A) > \sigma_{p+1}(A)$, then we 
say that \emph{$A$ has a gap of index $p$}.
In that case, we denote by $U_p(A)$ the $p$-dimensional subspace containing the $p$ biggest axes of the ellipsoid $\{Av \st \|v\| = 1\}$.
Equivalently, $U_p(A)$ is the eigenspace of $\sqrt{A A^*}$ corresponding to the $p$ larger  
eigenvalues.
We also define $S_{d-p}(A) \coloneqq U_{d-p}(A^{-1})$.
Note that $S_{d-p}(A)^\perp  =  A^{-1}(U_p(A))$ and $U_p(A)^\perp  =  A(S_{d-p}(A))$.
See Fig.~\ref{f.boluda}.
\begin{figure}[!hbt]
	\centering
	\begin{tikzpicture}[scale = .85]
		\def\facta{1.35}
		\def\factb{1.2}
		\begin{scope}[rotate = -20]
			\draw[thick] (0,0) circle [radius = 1];
			\draw[thin]  (0,0) ellipse [x radius = {1/\facta}, y radius = \facta];
			\draw[thick] (-\factb,0) -- (\factb,0);
			\draw[thin] (0,-\factb*\facta) -- (0,\factb*\facta) node[right]{\small $S_{d-p}(A)$};
		\end{scope}
		\draw (2,0) edge[out = 20,in = 160,->] node[midway,above]{\small $A$} (3,0);
		\begin{scope}[xshift = 5.25cm,rotate = -70]
			\draw[thin] (0,0) circle [radius = 1];
			\draw[thick] (0,0) ellipse [x radius = {1/\facta}, y radius = \facta];
			\draw[thin] (-\factb,0) -- (\factb,0);
			\draw[thick] (0,-\factb*\facta) -- (0,\factb*\facta) node[right]{\small $U_p(A)$};
		\end{scope}
	\end{tikzpicture}
	\caption{Spaces associated to a linear map $A$.}\label{f.boluda}
\end{figure}
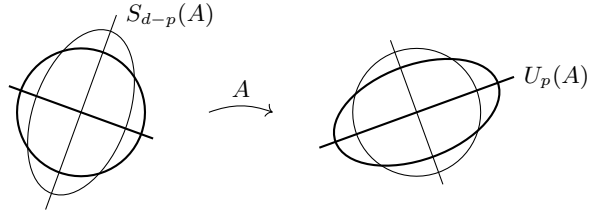

\medskip

The following theorem asserts that the existence of a dominated splitting can be detected in terms of singular values, and also describes the invariant subbundles in these terms:

\begin{teo}[Bochi--Gourmelon {\cite{BG}}]\label{t.BG}
Let $\mathbb{T}  =  \Z$ or $\R$.
Let $\{\psi^t\}_{t \in \mathbb{T}}$ be a linear flow on a vector bundle $E$, fibering over 
a continuous flow $\{\phi^t\}_{t \in \mathbb{T}}$ on a compact metric space $X$.

Then the linear flow  $\{\psi^t\}$ has a dominated splitting $E^\cu \oplus E^\cs$ 
where the dominating bundle $E^\cu$ has dimension $p$ 
if and only if there exist $c>0$, $\lambda>0$ such that for every $x \in X$ and $t\ge 0$ we have
$$
\frac{\sigma_{p+1}(\psi^t_x)}{\sigma_p(\psi^t_x)} < c e^{-\lambda t} \, .
$$
Moreover, the bundles are given by:
\begin{align}
\label{e.BG_cu}
E^\cu_x & =  \lim_{t \to +\infty} U_p \big( \psi^t_{\phi^{-t}(x)} \big) \, , \\
\label{e.BG_cs}
E^\cs_x & =  \lim_{t \to +\infty} S_{d-p} \big( \psi^t_x \big) \, ,
\end{align}
and these limits are uniform.
\end{teo}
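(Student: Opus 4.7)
I treat the two implications separately. The forward direction — dominated splitting implies singular value gap — is short. Given $E = E^\cu \oplus E^\cs$ with $\dim E^\cu = p$ satisfying \eqref{eq:dom1}, continuity of the bundles on the compact base $X$ makes the angle between $E^\cu_x$ and $E^\cs_x$ uniformly bounded below. Plugging $V = E^\cu_x$ into the minimax formula $\sigma_p(\psi^t_x) = \max_{\dim V = p} \min_{v \in V,\,\|v\|=1} \|\psi^t_x v\|$ bounds $\sigma_p(\psi^t_x)$ below by the minimal expansion on $E^\cu_x$, and plugging $W = E^\cs_x$ into the dual formula bounds $\sigma_{p+1}(\psi^t_x)$ above by the maximal expansion on $E^\cs_x$, with constants depending only on the uniform angle. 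The singular value ratio then inherits the exponential decay from \eqref{eq:dom1}.

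The converse is the substantive part. My plan is to build $E^\cu$ and $E^\cs$ directly from the formulas \eqref{e.BG_cu}--\eqref{e.BG_cs} and verify all required properties. The key analytic tool is a Davis--Kahan-type perturbation estimate: if $A$ has a gap of index $p$ with ratio $\epsilon = \sigma_{p+1}(A)/\sigma_p(A)$, then
\[
\mathrm{dist}\bigl(U_p(A \circ C),\, U_p(A)\bigr) \;\lesssim\; \epsilon,
\qquad
\mathrm{dist}\bigl(S_{d-p}(B \circ A),\, S_{d-p}(A)\bigr) \;\lesssim\; \epsilon,
\]
with implicit constants controlled by the operator norm and conditioning of the auxiliary factor. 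Intuitively, the image of the unit ball under $A$ is an ellipsoid so elongated along its top $p$ axes that the subspace they span is geometrically stable under composition.

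Applying this to the cocycle identity $\psi^{n+1}_{\phi^{-(n+1)}(x)} = \psi^n_{\phi^{-n}(x)} \circ \psi^1_{\phi^{-(n+1)}(x)}$, whose right factor is uniformly bounded on $X$ and whose left factor has gap decaying exponentially in $n$ by hypothesis, the successive differences in the sequence $\{U_p(\psi^n_{\phi^{-n}(x)})\}_{n \in \N}$ are summable in the Grassmannian, giving uniform convergence to a continuous $E^\cu_x$; continuity of the flow handles non-integer $t$. The dual post-composition argument applied to $\psi^{n+1}_x = \psi^1_{\phi^n(x)} \circ \psi^n_x$ yields $E^\cs_x$.

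\textbf{Wrap-up.} Invariance of both bundles follows by commuting $\psi^s_x$ with the defining limits, again justified by the perturbation estimate; continuity comes for free from the uniform convergence; transversality $E^\cu_x \oplus E^\cs_x = E_x$ is obtained by arguing that a nonzero vector in $E^\cu_x \cap E^\cs_x$ would have to both grow at least like $\sigma_p(\psi^t_x)$ and shrink at most like $\sigma_{p+1}(\psi^t_x)$ under $\psi^t$, contradicting the gap for large $t$; and the domination bound \eqref{eq:dom1} is recovered from the singular value gap together with invariance by the same minimax reasoning as in the forward direction. The technical heart — the step I expect to require the most care — is the perturbation estimate, together with its precise quantitative dependence on the auxiliary factor; this is what lets the Cauchy property close up between an exponentially decaying gap and a uniformly bounded time-$1$ cocycle.
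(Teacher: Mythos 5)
Your construction of $E^\cu$ and $E^\cs$ via the limit formulas, using a Davis--Kahan-type perturbation estimate for the Cauchy property and for invariance, matches the paper's approach (Lemma~\ref{l.nochangeright} and Lemma~\ref{l.domination_implies_slow_change} are the paper's versions of your estimate), and the forward implication is fine. The gap is in your wrap-up: the argument for transversality and domination is circular.

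You assert that a nonzero $v \in E^\cu_x \cap E^\cs_x$ ``would have to grow at least like $\sigma_p(\psi^t_x)$'' under $\psi^t$. But $E^\cu_x$ is determined by the \emph{backward} orbit; the only available lower bound on forward growth is $\|\psi^t_x v\| \ge \sigma_p(\psi^t_x)\,\sin\angle\big(v, S_{d-p}(\psi^t_x)\big)\,\|v\|$ (Lemma~\ref{l.pitagoras}), and as $t\to\infty$ the angle $\angle\big(E^\cu_x, S_{d-p}(\psi^t_x)\big)$ converges to $\angle(E^\cu_x, E^\cs_x)$ --- precisely the quantity whose positivity you are trying to establish. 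So the growth estimate presupposes transversality. The dual upper bound for $v\in E^\cs_x$ has the same problem: writing $v$ as a component in $S_{d-p}(\psi^t_x)$ plus a correction of size $O(e^{-\mu t})\|v\|$ does not bound $\|\psi^t_x v\|$ by $O\big(\sigma_{p+1}(\psi^t_x)\|v\|\big)$, since the correction can be amplified by as much as $\sigma_1(\psi^t_x)$. Likewise, ``recovering domination by the minimax reasoning of the forward direction'' does not work: that direction uses $E^\cu_x$ as a test subspace to bound $\sigma_p$ from \emph{below}, whereas the converse needs $E^\cu_x$ to essentially \emph{achieve} the maximum in the minimax, which is again transversality.

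This is in fact the substantive step of the theorem, and the paper (following \cite{BG}) resolves it with tools absent from your proposal: the uniform singular value gap forces a uniform Lyapunov exponent gap $\lambda_p-\lambda_{p+1}\ge 2\eps$ at every Lyapunov-regular point; Oseledets' theorem then identifies $E^\cu_x$ and $E^\cs_x$ with sums of Oseledets subspaces at such points, giving almost-everywhere transversality; and a Krylov--Bogoliubov compactness argument (exploiting the continuity of the subbundles you did establish) promotes this to uniform domination, by extracting a regular point with Lyapunov gap $\le\eps$ if domination were to fail. You have misidentified the technical heart of the proof: the perturbation estimate is routine linear algebra, while the step you dispatch in a single sentence --- ruling out the tangency $E^\cu_x\cap E^\cs_x\ne\{0\}$ --- is the one that needs Oseledets or a genuine substitute.
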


(To make sense of the limits above it is necessary to metrize the Grassmann bundle associated to $E$;
the particular way of doing so is irrelevant for the statement.)

In the paper \cite{BG}, the power of the Multiplicative Ergodic Theorem of Oseledets is used to  basically reduce the proof of Theorem~\ref{t.BG} to some angle estimates. Since we will explicitly need such estimates in other parts of this paper, we will also present a sketch of the proof of Theorem~\ref{t.BG} in \S~\ref{ss.sketch_BG}.

\begin{obs}\label{r.dom_complex}
Consider the case of \emph{complex} vector spaces, bundles, etc.
Dominated splittings can be defined analogously;
so can singular values and the subspaces $U_p$, $S_{d-p}$.
Theorem~\ref{t.BG} also extends to the complex case;
indeed it can be deduced from the real case. 
\end{obs}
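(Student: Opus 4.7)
The plan is to deduce the complex version from the real Theorem~\ref{t.BG} by realification. Given a complex vector bundle $E \to X$ of complex dimension $d$ equipped with a Hermitian metric and a complex linear flow $\{\psi^t\}$, I forget the complex structure to obtain the underlying real vector bundle $E_\R$ of real dimension $2d$, endowed with the induced real linear flow $\{\psi^t_\R\}$. Writing $J$ for the bundle automorphism ``multiplication by $i$'', the real inner product $\langle u,v\rangle_\R \coloneqq \mathrm{Re}\,\langle u,v\rangle$ induces on $E_\R$ the same norm as the Hermitian one on $E$, so the notions of singular value gap and of domination transfer unambiguously between the two pictures.

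The first key step is to relate singular values across realification. For any complex linear map $A$, the identity $(A_\R)^* A_\R = (A^* A)_\R$ holds, and since the complex self-adjoint operator $A^* A$ commutes with $J$, its realification has each eigenvalue appearing with doubled real multiplicity. Hence
\[
\sigma_{2k-1}(A_\R) = \sigma_{2k}(A_\R) = \sigma_k(A).
\]
In particular, $A$ has a complex gap of index $p$ with ratio $\sigma_{p+1}(A)/\sigma_p(A)$ if and only if $A_\R$ has a real gap of index $2p$ with the same ratio. Moreover, because $J$ commutes with $(A^* A)_\R$, the real eigenspace $U_{2p}(A_\R)$ is $J$-invariant and coincides with the realification of the complex subspace $U_p(A)$; an identical statement holds for $S_{2(d-p)}(A_\R)$ and $S_{d-p}(A)$.

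With these identifications the complex singular value gap hypothesis for $\{\psi^t\}$ at index $p$ becomes the real one for $\{\psi^t_\R\}$ at index $2p$, so the real Theorem~\ref{t.BG} yields a dominated splitting $E_\R = E^\cu_\R \oplus E^\cs_\R$ with $\dim E^\cu_\R = 2p$. The formulas \eqref{e.BG_cu} and \eqref{e.BG_cs} exhibit each fiber as a uniform limit of $J$-invariant subspaces, and since the set of $J$-invariant subspaces is closed in the real Grassmannian, the limit fibers are themselves $J$-invariant. Hence $E^\cu_\R$ and $E^\cs_\R$ arise as realifications of complex subbundles $E^\cu, E^\cs \subset E$, and the domination inequality \eqref{eq:dom1} transfers verbatim since norms coincide. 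The converse direction is immediate: a complex dominated splitting is \emph{a fortiori} real, and then the real theorem supplies the gap on singular values of $\psi^t_\R$, which is equivalent via the identities above to the complex gap. The only point that is not purely bookkeeping is verifying that the invariant subbundles produced by the real theorem are $J$-invariant; this is exactly what the closedness of the complex Grassmannian inside the real Grassmannian, combined with the description \eqref{e.BG_cu}--\eqref{e.BG_cs}, provides, and it constitutes the mild technical heart of the deduction.
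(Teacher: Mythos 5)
Your realification argument is correct, and since the paper states the remark without supplying any proof, your write-up is exactly the kind of deduction the authors are alluding to. The key identities are all verified: $(A_\R)^* = (A^*)_\R$, hence $(A_\R A_\R^*) = (AA^*)_\R$, the doubling of singular values $\sigma_{2k-1}(A_\R)=\sigma_{2k}(A_\R)=\sigma_k(A)$, and the observation that a complex gap of index $p$ with ratio $r$ is equivalent to a real gap of index $2p$ with the same ratio $r$. The genuinely non-bookkeeping point — that the real dominated splitting produced by Theorem~\ref{t.BG} is $J$-invariant — is handled correctly via the limit formulas \eqref{e.BG_cu}--\eqref{e.BG_cs}: each $U_{2p}(\psi^t_\R)$ is $J$-invariant, and the image of $\Gr_p(\C^d)$ inside $\Gr_{2p}(\R^{2d})$ is compact (hence closed), so the uniform limits land there. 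One cosmetic note: you could alternatively deduce $J$-invariance directly from uniqueness of the dominated splitting (Proposition~\ref{p.uniqueness}), since $J(E^\cu_\R)\oplus J(E^\cs_\R)$ is also a dominated splitting of the same index for $\psi^t_\R$ (as $J$ commutes with the flow and is an isometry), hence must coincide with $E^\cu_\R\oplus E^\cs_\R$; this avoids appealing to the limit formulas. Both routes work.
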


\subsection{Domination for sequences of matrices}\label{ss.dom_sequences}
Next, we describe some consequences of Theorem~\ref{t.BG} for sequences of $d \times d$ matrices.\footnote{We note that similar sequences have been considered in \cite{GGKW}, namely sequences that have what they call \emph{coarsely linear increments} (CLI) in certain Cartan projections.}

\medskip

Given $K>1$, define the following compact set: 
$$
\cD(K) \coloneqq \big\{ A \in \GL(d,\R), \ \|A\| \le K , \ \|A^{-1}\| \le K \big\}.
$$
If $I$ is a (possibly infinite) interval in $\Z$, we endow $\cD(K)^I$ with the product topology,
which is compact and is induced e.g.\ by the following metric:
$$
d((A_n), (B_n)) \coloneqq 
\sum_{n\in I} 2^{-|n|} \left(\|A_n - B_n\| + \|A_n^{-1} - B_n^{-1}\|\right)  \, .
$$

Let $p \in \{1,\dots,d-1\}$, $\mu>0$, $c>0$.
For each interval $I\subset \Z$,
let $\cD(K,p,\mu,c,I)$ denote the set of sequences of matrices $(A_n) \in \cD(K)^I$
such that for all $m$, $n \in I$ with $m \ge n$ we have
$$ 
\frac{\sigma_{p+1}}{\sigma_p} (A_{m} \cdots A_{n+1} A_n)  \leq c e^{-\mu (m-n+1)}  \, .
$$

Let us consider the case $I  =  \Z$.
Let $\vartheta$ denote the shift map on the space $\cD(K,p,\mu,c,\Z)$,
and let $A \colon \cD(K,p,\mu,c,\Z) \to \GL(d,\R)$ denote the projection on the zeroth coordinate.
The pair $(\vartheta, A)$ determines a linear cocycle (in the sense explained in \S~\ref{ss.def_ds}). Note that the hypothesis of Theorem~\ref{t.BG} is automatically satisfied.
So we obtain:\footnote{For a similar statement with different notation and not relying on Theorem~\ref{t.BG}, see \cite[Theorem 5.3]{GGKW}.}

\begin{prop}\label{p.BG_sequences}
Fix constants $K>1$, $p \in \{1,\dots,d-1\}$, $\mu>0$, $c>0$.
Then, for each sequence $x  =  (A_n) \in \cD(K,p,\mu,c,\Z)$, the limits:
\begin{align*}
E^\cu(x) &\coloneqq \lim_{n \to +\infty} U_p \big( A_{-1} A_{-2} \cdots A_{-n} \big) \, , \\
E^\cs(x) &\coloneqq \lim_{n \to +\infty} S_{d-p} \big(A_{n-1} A_{n-2} \cdots A_0 \big) \, ,
\end{align*}
exist and are uniform over $\cD(K,p,\mu,c,\Z)$. 
Moreover, $E^\cu \oplus E^\cs$ is a dominated splitting for the linear cocycle $(\vartheta, A)$ defined above.
\end{prop}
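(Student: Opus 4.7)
The plan is to apply Theorem~\ref{t.BG} to the linear cocycle $(\vartheta, A)$ on the base $X \coloneqq \cD(K,p,\mu,c,\Z)$, regarded as a cocycle on the trivial bundle $X \times \R^d$. Before invoking the theorem I need to verify three basic facts about the base: that $X$ is compact, that $\vartheta$ maps $X$ into itself, and that $\vartheta$ and $A$ are continuous. Each of the defining conditions
\[
\frac{\sigma_{p+1}}{\sigma_p}(A_m \cdots A_{n+1} A_n) \le c e^{-\mu(m-n+1)}
\]
involves only finitely many coordinates, and singular values depend continuously on matrix entries, so each such condition is closed in $\cD(K)^\Z$; their intersection $X$ is therefore a closed subset of a compact space, hence compact. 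Shift-invariance holds because the condition for $\vartheta(A_n)$ with indices $m \ge n$ coincides with the condition for $(A_n)$ with indices $m+1 \ge n+1$. Continuity of $\vartheta$ and of the zeroth-coordinate projection $A$ is immediate from the product topology.

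Next I would observe that the hypothesis of Theorem~\ref{t.BG} is automatically satisfied. For $x = (A_n) \in X$ and $t \ge 0$, the $t$-th iterate of the cocycle is $\psi^t_x = A_{t-1} \cdots A_1 A_0$, and setting $m = t-1$, $n = 0$ in the defining inequality yields $\sigma_{p+1}(\psi^t_x)/\sigma_p(\psi^t_x) \le c e^{-\mu t}$, uniformly in $x \in X$. Theorem~\ref{t.BG} then provides a dominated splitting $E^\cu \oplus E^\cs$ for $(\vartheta, A)$ with $\dim E^\cu = p$, together with the uniform limit formulas \eqref{e.BG_cu} and \eqref{e.BG_cs}.

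To conclude, it remains to translate these formulas into the matrix-product notation of the statement. Since $\vartheta$ is a shift, the sequence $\phi^{-t}(x) = \vartheta^{-t}(x)$ has its first $t$ matrices equal to $A_{-t}, A_{-t+1}, \dots, A_{-1}$, so $\psi^t_{\phi^{-t}(x)} = A_{-1} A_{-2} \cdots A_{-t}$, giving the claimed formula for $E^\cu(x)$; and $\psi^t_x = A_{t-1} \cdots A_0$ gives the formula for $E^\cs(x)$. The proof is essentially a packaging argument and there is no genuinely hard step; the only mild care required is verifying that $X$ is closed and shift-invariant, so that Theorem~\ref{t.BG} applies verbatim to the cocycle $(\vartheta, A)$.
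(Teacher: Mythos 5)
Your proposal is correct and takes essentially the same approach as the paper: the paper itself treats Proposition~\ref{p.BG_sequences} as an immediate application of Theorem~\ref{t.BG} after noting (without elaboration) that the hypothesis is ``automatically satisfied.'' Your write-up simply makes explicit the routine verifications that the paper leaves implicit — compactness and shift-invariance of $\cD(K,p,\mu,c,\Z)$, continuity of $\vartheta$ and $A$, and the translation of formulas \eqref{e.BG_cu}–\eqref{e.BG_cs} into matrix-product notation.
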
 

By a compactness argument, the theorem above yields information for finite sequences of matrices (throughout, $\N = \{0,1,2,\dots\}$):

\begin{lem}\label{l.seq_splitting}
Given $K>1$, $\mu>0$, and $c>0$, there exist $\ell_1 \in \N$ and $\delta>0$
with the following properties.
Suppose that $I \subset \Z$ is an interval and $(A_i)_{i\in I}$ is an element of $\cD(K,p,\mu,c,I)$.
If $n < k < m$ all belong to $I$ and $\min \{ k-n, m-k \} > \ell_1$ then:
$$
\angle \big( U_p(A_{k-1} \cdots A_{n+1} A_n) , \, S_{d-p}(A_{m-1} \cdots A_{k+1} A_k) \big) > \delta \, .
$$
\end{lem}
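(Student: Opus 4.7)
To prove the lemma I argue by contradiction, combining compactness in $\cD(K)^\Z$ with the bi-infinite result of Proposition~\ref{p.BG_sequences}. Suppose the conclusion fails: there exist sequences $(A_i^{(j)})_{i \in I_j} \in \cD(K,p,\mu,c,I_j)$ and indices $n_j < k_j < m_j$ in $I_j$ with $\min(k_j-n_j,\, m_j-k_j) \to \infty$ for which the angle in the statement tends to $0$. Translating indices so that $k_j = 0$ and extending each finite sequence to $\Z$ by placing the identity matrix outside $I_j$, each extended sequence lies in the compact space $\cD(K)^\Z$; after passing to a subsequence, $A_i^{(j)} \to A_i$ in $\cD(K)$ for every $i \in \Z$.

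For any fixed finite window $[n,m] \subset \Z$, eventually $[n,m]$ is contained in the shifted $I_j$, so the product $A_m^{(j)} \cdots A_n^{(j)}$ satisfies the domination estimate with constants $c,\mu$. Continuity of singular values propagates this to the limit product, showing $(A_i)_{i\in\Z} \in \cD(K,p,\mu,c,\Z)$. Proposition~\ref{p.BG_sequences} then provides complementary subspaces $E^\cu$ and $E^\cs$ for the limit sequence, so that $\angle(E^\cu, E^\cs) \ge 2\delta_0$ for some $\delta_0 > 0$.

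The main remaining task---and what I expect to be the chief obstacle---is to show that the finite-time subspaces
\[
U^{(j)} \coloneqq U_p\bigl(A_{-1}^{(j)} \cdots A_{n_j}^{(j)}\bigr), \qquad S^{(j)} \coloneqq S_{d-p}\bigl(A_{m_j-1}^{(j)} \cdots A_0^{(j)}\bigr)
\]
converge to $E^\cu$ and $E^\cs$ respectively; this will contradict $\angle(U^{(j)}, S^{(j)}) \to 0$. Given $\epsilon>0$, the uniformity in Proposition~\ref{p.BG_sequences} yields an $L$ with $\angle(U_p(A_{-1}\cdots A_{-L}), E^\cu) < \epsilon/3$, and continuity of $U_p$ on matrices with a gap of index $p$ then gives $U_p(A_{-1}^{(j)} \cdots A_{-L}^{(j)}) \to U_p(A_{-1}\cdots A_{-L})$ as $j \to \infty$. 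What remains is an estimate $\angle\bigl(U_p(P_j),\, U_p(P_j N_j)\bigr) < \epsilon/3$, uniform in $j$ for $L$ large, where $P_j = A_{-1}^{(j)}\cdots A_{-L}^{(j)}$ and $N_j = A_{-L-1}^{(j)} \cdots A_{n_j}^{(j)}$. Both $P_j$ and $P_j N_j$ have exponentially strong singular gaps at index $p$ (of orders $ce^{-\mu L}$ and $ce^{-\mu|n_j|}$ respectively); using the angle estimates recalled in the sketch of the proof of Theorem~\ref{t.BG} in \S\ref{ss.sketch_BG}, one verifies that a small $\sigma_{p+1}/\sigma_p$ ratio for $P_j$ forces $U_p(P_j N_j)$ close to $U_p(P_j)$, provided $P_j N_j$ itself retains a gap---which it does. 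A symmetric argument in the future direction handles $S^{(j)} \to E^\cs$, completing the contradiction.
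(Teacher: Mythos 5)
Your overall strategy matches the paper's: argue by contradiction, normalize $k_j = 0$, pass to a diagonal subsequence to get a limit in $\cD(K,p,\mu,c,\Z)$, and invoke Proposition~\ref{p.BG_sequences}. You correctly identify that the substance of the argument is showing $U^{(j)} \to E^\cu(x)$ and $S^{(j)} \to E^\cs(x)$; the paper is terse on precisely this point (it just says ``their angle is zero, which contradicts domination''), so filling it in is the right instinct.

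However, the final step of your sketch is false as stated. You claim that a small $\sigma_{p+1}/\sigma_p$ ratio for $P_j$, together with $P_j N_j$ retaining a gap of index $p$, forces $U_p(P_j N_j)$ to be close to $U_p(P_j)$. This is not true: take $d=2$, $p=1$, $P = \operatorname{diag}(\lambda,1)$ and $N = \operatorname{diag}(1/\lambda,\lambda)$, so $PN = \operatorname{diag}(1,\lambda)$. Both $P$ and $PN$ have singular gap ratio $1/\lambda$, arbitrarily small, yet $U_1(P) = \R e_1$ and $U_1(PN) = \R e_2$ are orthogonal. Lemma~\ref{l.nochangeright} only bounds $d(U_p(A),U_p(AB))$ by $\|B\|\,\|B^{-1}\|\,\sigma_{p+1}(A)/\sigma_p(A)$, and the factor $\|N_j\|\,\|N_j^{-1}\|$ grows like $K^{2(|n_j|-L)}$, so a one-shot application is useless.

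What actually saves the argument --- and what the counterexample above violates --- is that $(A_i^{(j)})$ is dominated \emph{at every intermediate scale}, not merely at the two endpoints $L$ and $|n_j|$. One must telescope: apply Lemma~\ref{l.nochangeright} one factor at a time, obtaining $d\bigl(U_p(A_{-1}^{(j)}\cdots A_{-n}^{(j)}),\,U_p(A_{-1}^{(j)}\cdots A_{-n-1}^{(j)})\bigr) \le K^2 c e^{-\mu n}$ for every $n$, and sum the geometric tail over $n \ge L$. This is exactly the content of Lemma~\ref{l.seq_convergence}, which gives the needed bound $d(U_p(P_j), U^{(j)}) < \tilde c\, e^{-\mu L}$, uniform in $j$. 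Replacing your penultimate paragraph's claim with an explicit appeal to Lemma~\ref{l.seq_convergence} (and its $S_{d-p}$ counterpart for the future direction) makes the proof correct and is, in effect, what the paper leaves to the reader.
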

\begin{proof}
The proof is by contradiction.
Assume that there exist numbers $K>1$, $\mu>0$, $c>0$, and sequences $\ell_j \to \infty$, $\delta_j \to 0$ such that for each $j$ there exist an interval $I_j \subset \Z$, an element 
$\big(A_i^{(j)}\big)_{i \in I_j}$ of $\cD(K,p,\mu,c,I_j)$, and integers $n_j < k_j < m_j$ in $I_j$ such that $\min \{ k_j - n_j, m_j - k_j \} > \ell_j$ and 
$$
\angle \big( U_p(A^{(j)}_{k_j-1} \cdots A^{(j)}_{n_j}) , \, S_{d-p}(A^{(j)}_{m_j-1} \cdots A^{(j)}_{k_j}) \big) \le \delta_j \, .
$$ 
Shifting indices, we can assume that $k_j  =  0$ for every $j$.
By a diagonal argument, passing to subsequences we can assume that for each $i \in \Z$, the  matrices $A_i^{(j)}$ (which are defined for sufficiently large $j$) converge to some matrix $A_i$ as $j \to \infty$.
The resulting sequence $x  =  (A_i)_{i\in\Z}$ belongs to $\cD(K,p,\mu,c,\Z)$.
If $E^\cu(x)$ and $E^\cs(x)$ are the limit spaces as in Proposition~\ref{p.BG_sequences},
then their angle is zero, which contradicts domination.
\end{proof}

\section{Domination implies word-hyperbolicity}\label{s.dominationimplieshyp} 

In this section, we define dominated linear representations of a finitely generated group, and prove that groups that admit such representations are word-hyperbolic.

\subsection{Dominated representations and the word-hyperbolicity theorem}\label{ss.dominatedrep}

Let $\Gamma$ be a finitely generated group.
Let us fix a symmetric generating set $S$ of $\Gamma$.
We denote by  $|\gamma|$ the \emph{word-length} of $\gamma \in \Gamma$, i.e.\ the minimum number 
of elements of $S$ required to obtain $\gamma$ as a product of elements of $S$. 
The \emph{word-metric} is defined as: 
\begin{equation}\label{e.word_metric}
d(\gamma,\eta) \coloneqq |\eta^{-1} \gamma| \, .
\end{equation}
Then the action of $\Gamma$ into itself by left multiplication is isometric.

Recall that the group $\Gamma$ is called \emph{word-hyperbolic} if it is a Gromov-hyperbolic metric space when endowed with the word-metric \eqref{e.word_metric}; this does not depend on the choice of the generating set $S$; see \cite{gromov,CDP,GhysdelaHarpe,BH}.

\medskip

A representation $\rho\colon \Gamma \to \GL(d,\R)$ is $p$-\emph{dominated} if there exist constants $C$, $\lambda >0$ such that:
\begin{equation}\label{eq:domrep}
\frac{\sigma_{p+1}(\rho(\gamma))} {\sigma_{p}(\rho(\gamma))} \leq C e^{-\lambda |\gamma|} \  \quad \text{for all } \gamma \in \Gamma.
\end{equation}
It is easy to see that the definition does not depend on the choice of $S$ though the constants $C$ and $\lambda$ may change.

\begin{obs}\label{rem-pdominatedimpliesd-p}
Since $\sigma_i(A) =  \sigma_{d+1-i}(A^{-1})^{-1}$ for each $i$, the property $|\gamma^{-1}| = |\gamma|$ implies that if a representation $\rho$ is $p$-dominated, then it is also $(d-p)$-dominated. 
\end{obs}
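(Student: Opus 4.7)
The statement is essentially an algebraic duality between the indices $p$ and $d-p$, transported to the group through the symmetry $|\gamma^{-1}| = |\gamma|$. My plan is to rewrite the defining ratio for $(d-p)$-domination in terms of singular values of the inverse matrix and then apply the $p$-domination hypothesis to $\gamma^{-1}$.

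First, I would record the singular value identity $\sigma_i(A) = \sigma_{d+1-i}(A^{-1})^{-1}$, which follows at once from the singular value decomposition: if $A = U \Sigma V^*$ with $\Sigma = \diag(\sigma_1, \ldots, \sigma_d)$ in decreasing order, then $A^{-1} = V \Sigma^{-1} U^*$ and reordering the diagonal of $\Sigma^{-1}$ gives singular values $\sigma_d^{-1} \ge \sigma_{d-1}^{-1} \ge \cdots \ge \sigma_1^{-1}$. Substituting $i = d-p$ and $i = d-p+1$ yields
$$
\frac{\sigma_{d-p+1}(A)}{\sigma_{d-p}(A)} = \frac{\sigma_p(A^{-1})^{-1}}{\sigma_{p+1}(A^{-1})^{-1}} = \frac{\sigma_{p+1}(A^{-1})}{\sigma_p(A^{-1})}.
$$

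Now, applied to $A = \rho(\gamma)$, the inverse is $A^{-1} = \rho(\gamma^{-1})$, so the identity above becomes
$$
\frac{\sigma_{d-p+1}(\rho(\gamma))}{\sigma_{d-p}(\rho(\gamma))} = \frac{\sigma_{p+1}(\rho(\gamma^{-1}))}{\sigma_p(\rho(\gamma^{-1}))}.
$$
By $p$-domination applied to $\gamma^{-1}$, this last quantity is bounded by $C e^{-\lambda |\gamma^{-1}|}$, and the symmetry $|\gamma^{-1}| = |\gamma|$ (which holds because $S$ is symmetric) converts this into $C e^{-\lambda |\gamma|}$, giving $(d-p)$-domination with the same constants. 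There is no real obstacle here: the entire argument rests on two easy ingredients, the SVD identity and the symmetry of the word length, both of which are essentially built into the definitions.
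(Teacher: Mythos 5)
Your proof is correct and is exactly the argument the paper is gesturing at: the remark in the paper simply states the singular-value identity and the word-length symmetry and leaves the one-line algebraic conversion implicit, which you have carried out faithfully. Nothing further to add.
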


The purpose of this section is to show: 

\begin{teo}\label{teo:dominationimplieshyp} If a group $\Gamma$ admits a $p$-dominated representation into $\GL(d,\R)$ then $\Gamma$ is word-hyperbolic.
\end{teo}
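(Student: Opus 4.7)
The plan is to connect a $p$-dominated representation with sequences of matrices indexed by geodesic paths in $\Gamma$, and then to extract from Lemma~\ref{l.seq_splitting} a geometric property of the Cayley graph of $\Gamma$ that is strong enough to imply word-hyperbolicity.

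First I would reformulate the domination condition along geodesics. Fix a finite symmetric generating set $S$ and, for each $\gamma \in \Gamma$, fix a geodesic word $s_1\cdots s_n$ representing $\gamma$. Since $S$ is finite, all matrices $\rho(s_i)$ lie in a bounded set $\cD(K)$ for some $K>1$. Because any contiguous subword of a geodesic word is itself a geodesic word (it realises the word length of the element it spells), applying \eqref{eq:domrep} to every such subword shows that the finite sequence $(\rho(s_i))_{i=1}^n$ belongs to $\cD(K,p,\lambda,C',\{1,\dots,n\})$ with constants independent of the chosen geodesic. Lemma~\ref{l.seq_splitting} then supplies $\ell_1\in\N$ and $\delta>0$, depending only on the representation, such that at every interior vertex of a sufficiently long geodesic the past-unstable subspace $U_p$ and the future-stable subspace $S_{d-p}$ are uniformly transverse with angle greater than $\delta$. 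Using Remark~\ref{rem-pdominatedimpliesd-p} one can moreover reverse orientations, so the same estimate holds for both the $p$- and the $(d-p)$-splitting.

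Next I would promote this transversality into a fellow-traveller property for geodesics in $\Gamma$. The informal picture is: if two geodesics issued from a common point end at vertices at bounded Cayley distance, then at every matched interior depth the two $U_p$-subspaces must be uniformly close; otherwise one of them would make a small angle with the $S_{d-p}$ of the other concatenated geodesic, contradicting the uniform transversality of Stage~1. In other words, I would isolate a criterion for word-hyperbolicity verifiable from such Grassmannian data---either a quantitative Morse property (uniformly close endpoints force uniformly close geodesics) or a divergence-of-geodesics property à la Cannon---and then check that this criterion is forced by the $\delta$-angle bound. Conjugating by the left $\Gamma$-action (which acts isometrically on the Cayley graph but changes the reference point for $U_p$ and $S_{d-p}$) allows the estimate to be applied at an arbitrary base vertex, so the criterion can be verified globally.

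The main obstacle is precisely this last step: translating a Grassmannian angle estimate, valid at individual vertices, into a genuinely metric statement about the Cayley graph. Concretely one has to control how the spaces $U_p$ and $S_{d-p}$ associated with two different long geodesics interact when the endpoints come close, and to convert Grassmannian distance bounds into Cayley-graph distance bounds. I expect this to be carried out in the announced subsections from \ref{ss.crithyp} onward, where a convenient combinatorial characterization of word-hyperbolicity---likely phrased as a thin-bigon or fellow-traveller condition---is first established and then verified via the uniform angle separation coming from Lemma~\ref{l.seq_splitting}. Once the criterion is in place, the argument closes and yields word-hyperbolicity of $\Gamma$.
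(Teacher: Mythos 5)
Your Stage~1---recoding each geodesic word as a sequence in $\cD(K,p,\mu,c,I)$ (after inverting the generators, so that $A_k=\rho(s_k^{-1})$ makes the partial products match $\rho(\gamma_j^{-1}\gamma_i)$) and invoking Lemma~\ref{l.seq_splitting} for a uniform angle $\delta>0$ between the $U_p$ of the past and the $S_{d-p}$ of the future at any interior vertex---is exactly the technical input the paper extracts (Lemma~\ref{l.transv}). The trouble is Stage~2, which you do not carry out and which you guess incorrectly: the paper does \emph{not} pass through a thin-bigon or fellow-traveller criterion on the Cayley graph. Instead it applies Bowditch's topological criterion (Theorem~\ref{teo-bow}): it builds a candidate boundary $M\subset\Gr_p(\R^d)$ as the closure of $\{U_p(\rho(\gamma)):|\gamma|\to\infty\}$ (equation~\eqref{eq-conjuntoM}), shows $M$ is compact, non-empty, $\rho(\Gamma)$-invariant (Proposition~\ref{p.PropsSetM}), perfect in the non-elementary case (Lemma~\ref{l.3pointsperfect} and Proposition~\ref{p.perfect_again}, with Lemmas~\ref{l.somebody_expands} and~\ref{l.Z2notdominated} ruling out $\Z^m$-behaviour), and that the diagonal action on $M^{(3)}$ is properly discontinuous (Proposition~\ref{p.propdisc}, via the north-south contraction of Lemma~\ref{l.diciembre}) and cocompact (Proposition~\ref{p.cocompact}, via the expansivity Lemma~\ref{l.expansivity}). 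Elementary groups are handled separately since they are trivially hyperbolic.

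The conceptual point your proposal misses is precisely why the paper takes the Bowditch route rather than the metric one you envision. The uniform transversality you get from Lemma~\ref{l.seq_splitting} is naturally a statement about \emph{boundary data}---limit spaces in the Grassmannian---and converting it into a metric statement about pairs of geodesics in the Cayley graph (divergence, thinness of bigons, the Morse property) requires non-trivial additional work; this is the content of the higher-rank Morse Lemma of Kapovich--Leeb--Porti, which is what the paper's Section~\ref{s.morse} proves separately. Bowditch's criterion lets one bypass this: it asks only for a perfect compact $\Gamma$-space with a convergence-type action on triples, and the Grassmannian limit set provides exactly that, with the $\delta$-transversality feeding directly into proper discontinuity and cocompactness. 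So your Stage~2, as stated, is a genuine gap: you have identified the right geometric input but defer the crucial step of converting it into a hyperbolicity criterion, and the criterion you anticipate is not the one the paper uses. Filling the gap along your proposed lines would amount to reproving the KLP Morse Lemma, which is harder than (and logically downstream of, in this paper's organization) the Bowditch argument.
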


\medskip

Theorem~\ref{teo:dominationimplieshyp} follows from a more general result recently obtained by Kapovich, Leeb, and Porti \cite[Theorem~1.4]{KLP2}. 
Their result concerns not only dominated representations, but quasi-isometric embeddings of metric spaces satisfying a condition related to domination.
Here we use the results from Section~\ref{s.DS} to give a direct and more elementary proof.

Let us mention that a related but different notion of domination was recently studied by other authors \cite{DT,GKW}.

\begin{obs}[Representations to $\SL(d,\R)$]\label{rem-det1} Given a representation with target group $\GL(d,\R)$, we can always assume that it has its image contained in matrices with determinant $\pm 1$ by composing with the homomorphism $A \mapsto |\det A|^{-1/d} A$, which does not affect $p$-domination.  
\end{obs}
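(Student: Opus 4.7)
The plan is to verify the three assertions packed into the remark: (a) the map $\tau\colon A \mapsto |\det A|^{-1/d} A$ is a well-defined group homomorphism $\GL(d,\R) \to \GL(d,\R)$; (b) its image lies in the subgroup of matrices of determinant $\pm 1$; and (c) $\tau\circ\rho$ satisfies the $p$-domination inequality \eqref{eq:domrep} whenever $\rho$ does, with the same constants.

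For (a), the key computation uses multiplicativity of the determinant: since $|\det(AB)|^{-1/d} = |\det A|^{-1/d}\,|\det B|^{-1/d}$ and scalar factors commute with matrix multiplication, we have $\tau(AB) = |\det A|^{-1/d}|\det B|^{-1/d} AB = \tau(A)\tau(B)$. For (b), a direct calculation gives $\det \tau(A) = |\det A|^{-1} \det A \in \{+1,-1\}$, so the image of $\tau\circ\rho$ lies in $\{A\in\GL(d,\R) \st |\det A| = 1\}$, which equals $\SL(d,\R)$ up to the sign of the determinant.

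For (c), I would use the fact that singular values are absolutely homogeneous: for any scalar $\lambda \in \R$ and any $A \in \GL(d,\R)$, one has $\sigma_i(\lambda A) = |\lambda|\,\sigma_i(A)$ for every $i$ (this is immediate from the singular value decomposition, or from the characterization of $\sigma_i$ as the $i$-th semiaxis of the image ellipsoid of the unit ball). Taking $\lambda = |\det \rho(\gamma)|^{-1/d}$ yields $\sigma_i(\tau(\rho(\gamma))) = |\det \rho(\gamma)|^{-1/d}\,\sigma_i(\rho(\gamma))$. The common positive factor cancels in the ratio, so
\[
\frac{\sigma_{p+1}(\tau(\rho(\gamma)))}{\sigma_p(\tau(\rho(\gamma)))} = \frac{\sigma_{p+1}(\rho(\gamma))}{\sigma_p(\rho(\gamma))},
\]
and \eqref{eq:domrep} transfers from $\rho$ to $\tau\circ\rho$ with the same constants $C$ and $\lambda$.

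There is no genuine obstacle here; the only small care needed is to observe that $\tau$ is not the identity on scalar matrices, so one cannot interpret this as an inner operation on representations — rather, $\tau\circ\rho$ is a new representation that carries the same dominated-splitting information as $\rho$ because the scalar normalization factors are invisible to ratios of singular values. This justifies the standing assumption, used freely in the sequel, that dominated representations take values in matrices of determinant $\pm 1$.
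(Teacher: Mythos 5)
Your verification is correct and is exactly the (routine) argument the paper leaves implicit in stating this as a remark: multiplicativity of $|\det|^{-1/d}$ gives the homomorphism property, and absolute homogeneity of singular values, $\sigma_i(\lambda A)=|\lambda|\,\sigma_i(A)$, makes the normalizing scalar cancel in the ratio $\sigma_{p+1}/\sigma_p$, so \eqref{eq:domrep} is preserved with the same constants. Nothing further is needed.
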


\begin{obs}[Representations to $\PGL(d,\R)$]\label{rem-pgl}
We can define $p$-dominated representations on $\PGL(d,\R)$ in exactly the same way, since the quotients $\sigma_{p+1}/\sigma_p$ are well-defined in the latter group. Obviously, composing any $p$-dominated representation on $\GL(d,\R)$ with the quotient map $\pi \colon \GL(d,\R) \to \PGL(d,\R)$ we obtain a $p$-dominated representation.
Conversely, given any $p$-dominated representation $\rho \colon \Gamma \to \PGL(d,\R)$, we can find a group $\hat{\Gamma}$, a $2$-to-$1$ homomorphism $f \colon \hat{\Gamma} \to \Gamma$, and a $p$-dominated representation $\hat{\rho} \colon \hat{\Gamma} \to \GL(d,\R)$ (with determinants $\pm 1$) such that $\pi \circ \hat{\rho}  =  f \circ \rho$. Theorem~\ref{teo:dominationimplieshyp} yields that $\hat\Gamma$ is word-hyperbolic, and it follows (see \cite[p.~63]{GhysdelaHarpe}) that $\Gamma$ is word-hyperbolic as well.
\end{obs}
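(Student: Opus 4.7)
The plan is to realize $\hat\Gamma$ as a fibre product with a natural two-sheeted covering of $\PGL(d,\R)$, and then transfer the $p$-dominated estimate across this covering. Let $G := \SL^{\pm}(d,\R) = \{A \in \GL(d,\R) \st |\det A| = 1\}$; the restriction of $\pi$ to $G$ is a surjective homomorphism onto $\PGL(d,\R)$ whose kernel is $\{\pm I\}$, since a scalar matrix $\lambda I$ lies in $G$ exactly when $|\lambda|^{d}=1$, i.e.\ $\lambda = \pm 1$. Define
$$\hat\Gamma := \{(\gamma,A) \in \Gamma \times G \st \rho(\gamma) = \pi(A)\},$$
with the obvious product structure, and let $f(\gamma,A) := \gamma$, $\hat\rho(\gamma,A) := A$. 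Both are group homomorphisms, $\pi\circ\hat\rho = \rho\circ f$ by construction, $f$ is surjective because $\pi|_G$ is surjective, and $\ker f = \{(e,\pm I)\}$, so $f$ is $2$-to-$1$ as required.

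Next I would choose compatible generating sets so that the word-lengths match up to an additive constant. For each $s\in S$ pick a lift $\tilde s \in G$ with $\pi(\tilde s) = \rho(s)$, and let
$$\hat S := \{(s,\tilde s) \st s \in S\} \cup \{(e,-I)\}.$$
For $(\gamma,A) \in \hat\Gamma$ with $\gamma = s_1\cdots s_n$ a geodesic word, the product $(s_1,\tilde s_1)\cdots(s_n,\tilde s_n)$ equals $(\gamma,B)$ for some $B\in G$ with $\pi(B)=\rho(\gamma)=\pi(A)$, so $B = \pm A$; hence $(\gamma,A)$ is a product of at most $n+1$ elements of $\hat S$. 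Conversely, $f(\hat S) \subset S \cup \{e\}$, so $f$ is $1$-Lipschitz on word-metrics. Thus
$$|f(\hat\gamma)|_\Gamma \leq |\hat\gamma|_{\hat\Gamma} \leq |f(\hat\gamma)|_\Gamma + 1 \quad \text{for every } \hat\gamma \in \hat\Gamma,$$
which in particular shows $\hat\Gamma$ is finitely generated.

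Because the ratio $\sigma_{p+1}/\sigma_p$ is invariant under nonzero scalar multiplication (and therefore well-defined on $\PGL(d,\R)$, as noted in the Remark), we have
$$\frac{\sigma_{p+1}(\hat\rho(\hat\gamma))}{\sigma_p(\hat\rho(\hat\gamma))} = \frac{\sigma_{p+1}(\rho(f(\hat\gamma)))}{\sigma_p(\rho(f(\hat\gamma)))} \leq C e^{-\lambda |f(\hat\gamma)|_\Gamma} \leq (Ce^\lambda)\, e^{-\lambda |\hat\gamma|_{\hat\Gamma}},$$
so $\hat\rho$ is a $p$-dominated representation of $\hat\Gamma$ into $\GL(d,\R)$ with image in $G$ (hence with determinants $\pm 1$). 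Applying Theorem~\ref{teo:dominationimplieshyp} to $\hat\rho$ gives word-hyperbolicity of $\hat\Gamma$, and since $\Gamma \cong \hat\Gamma/\ker f$ with $\ker f$ a finite normal subgroup, the classical fact that word-hyperbolicity descends to quotients by finite normal subgroups (\cite[p.~63]{GhysdelaHarpe}) concludes. The only step that requires genuine care is verifying that $\hat S$ actually generates $\hat\Gamma$ and not merely an index-$2$ subgroup — that is why I explicitly adjoin $(e,-I)$ to $\hat S$, since a priori every product of chosen lifts of a relator in $\Gamma$ could land in $\{+I\}$, leaving $(e,-I)$ unreachable.
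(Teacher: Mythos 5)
Your proposal is correct and realizes exactly the construction the remark has in mind: $\hat\Gamma$ is the pullback of the central extension $1 \to \{\pm I\} \to \SL^{\pm}(d,\R) \to \PGL(d,\R) \to 1$ along $\rho$, the word-lengths on $\hat\Gamma$ and $\Gamma$ differ by at most an additive constant, and the ratio $\sigma_{p+1}/\sigma_p$ transfers unchanged since it is scale-invariant. The details you supply (adjoining $(e,-I)$ to guarantee generation, and writing the intertwining relation as $\pi\circ\hat\rho=\rho\circ f$, which corrects a typo in the remark's statement) are exactly the ones the paper leaves implicit.
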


\begin{obs}[General semi-simple Lie groups]\label{rem-lie}
Using exterior powers, any $p$-domi\-na\-ted representation on $\GL(d,\R)$ induces a $1$-dominated representation in $\GL(k,\R)$ for $k  =  \binom{d}{p}$. In Subsection~\ref{generalcase?} we shall see that every representation $\rho\colon \Gamma \to G$, where $G$ is an arbitrary (real-algebraic non-compact) semi-simple Lie group, can be reduced to the case of $\PGL(d,\R)$ for some $d$ via a similar construction. 
\end{obs}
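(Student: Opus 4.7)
The plan is to verify the two assertions by examining how singular values transform under exterior powers. The first assertion is essentially a direct computation, while the second is announced as deferred to a later subsection.

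For the first assertion, the key point is the standard identification of the singular values of $\Lambda^p A$ with $p$-fold products of singular values of $A$. Concretely, for $A \in \GL(d,\R)$ with singular value decomposition $A = U D V^*$ where $U,V$ are orthogonal and $D = \diag(\sigma_1(A),\ldots,\sigma_d(A))$, functoriality of exterior powers gives $\Lambda^p A = (\Lambda^p U)(\Lambda^p D)(\Lambda^p V)^*$. With respect to the natural orthonormal basis $\{e_{i_1} \wedge \cdots \wedge e_{i_p}\}_{i_1 < \cdots < i_p}$ of $\Lambda^p \R^d$, $\Lambda^p U$ and $\Lambda^p V$ remain orthogonal, and $\Lambda^p D$ is diagonal with entries $\sigma_{i_1}(A)\cdots\sigma_{i_p}(A)$. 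This is then a singular value decomposition of $\Lambda^p A$. Ordering these diagonal entries decreasingly, the two largest are clearly $\sigma_1(A)\cdots\sigma_p(A)$ and $\sigma_1(A)\cdots\sigma_{p-1}(A)\sigma_{p+1}(A)$ (any other product requires replacing at least one $\sigma_j$, $j \le p$, by some $\sigma_i$, $i \ge p+1$, and the ratio $\sigma_i/\sigma_j$ is maximized, subject to $i>j$, at $j=p$, $i=p+1$). Hence
$$
\frac{\sigma_2(\Lambda^p A)}{\sigma_1(\Lambda^p A)} = \frac{\sigma_{p+1}(A)}{\sigma_p(A)} \, .
$$

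Since $\Lambda^p \colon \GL(d,\R) \to \GL(\binom{d}{p},\R)$ is a group homomorphism, $\Lambda^p \circ \rho$ is a linear representation of $\Gamma$ into $\GL(k,\R)$ with $k = \binom{d}{p}$, and applying the above identity to $A = \rho(\gamma)$ turns the $p$-domination estimate \eqref{eq:domrep} for $\rho$ into the $1$-domination estimate for $\Lambda^p \circ \rho$ with the \emph{same} constants $C$ and $\lambda$. The only non-trivial ingredient is the singular value identity, which is immediate from the description above; there is no real obstacle.

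For the second assertion, the remark itself defers the argument to Subsection \ref{generalcase?}, so I only indicate the expected strategy. Given a real-algebraic non-compact semi-simple Lie group $G$ and a representation $\rho \colon \Gamma \to G$, one chooses suitable irreducible linear representations $\tau_i \colon G \to \GL(d_i,\R)$ (for instance indexed by the fundamental weights associated with a chosen parabolic subgroup) and considers the compositions $\tau_i \circ \rho$; by the computation above, applying exterior powers if necessary, the relevant gaps in the Cartan projection of $\rho(\gamma)$ translate into first-over-second singular value ratios in some $\PGL(d,\R)$. The routine step is the singular value identity just carried out; the substantive work, deferred to the later subsection, is to set up the dictionary between the Cartan projection of $G$ and singular value gaps under the chosen representations, which requires a modest amount of semi-simple representation theory.
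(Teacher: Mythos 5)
Your proposal is correct and follows essentially the same route as the paper: the first assertion rests on the identity $\sigma_2(\Wedge^p A)/\sigma_1(\Wedge^p A)=\sigma_{p+1}(A)/\sigma_p(A)$, which the paper records as equations \eqref{e.ext_sigma1}--\eqref{e.ext_sigma2} in Appendix~\ref{ss.ext}, and the second assertion is handled exactly as you anticipate, via the Tits representations of Proposition~\ref{prop:titss} and the relation \eqref{eq:relaciontits} between simple roots of the Cartan projection and first-over-second singular value gaps in Subsection~\ref{generalcase?}.
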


\begin{obs}
Given Theorem~\ref{teo:dominationimplieshyp}, one may wonder whether every hyperbolic group admits a dominated representation. This is far from true, since there exist hyperbolic groups that are nonlinear, i.e.\ every linear representation of these groups factors through a finite group: see \cite[Section~8]{nonlinear}. On the other hand, one can ask if every linear hyperbolic group admits a dominated representation.
\end{obs}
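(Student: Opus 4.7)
The plan is to justify the first assertion of the remark (the second is merely an open question). The strategy has two ingredients: (i) import from the literature the existence of an infinite hyperbolic group $\Gamma$ all of whose linear representations factor through a finite group, as provided by \cite[Section~8]{nonlinear}; and (ii) argue directly that such a $\Gamma$ cannot carry a $p$-dominated representation for any $p \in \{1,\dots,d-1\}$.

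For step (ii), I would argue by contradiction. Suppose $\rho \colon \Gamma \to \GL(d,\R)$ is $p$-dominated. By hypothesis, $\rho$ factors through a finite quotient, so its image $\rho(\Gamma)$ is a finite subset of $\GL(d,\R)$. Then the singular value ratio $\sigma_{p+1}(\rho(\gamma))/\sigma_p(\rho(\gamma))$ takes only finitely many values, each strictly positive (since the singular values of any invertible matrix are positive), and is therefore bounded below by some $m > 0$. On the other hand, the $p$-domination inequality \eqref{eq:domrep} forces this ratio to be at most $C e^{-\lambda|\gamma|}$. Since $\Gamma$ is infinite and finitely generated, one can choose $\gamma \in \Gamma$ with $|\gamma|$ arbitrarily large, producing the contradiction $m \leq C e^{-\lambda |\gamma|} \to 0$. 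This contradiction finishes step (ii).

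The real obstacle is entirely external to the present paper: the existence of infinite nonlinear hyperbolic groups is a deep matter and is simply cited from \cite[Section~8]{nonlinear}, whose proof relies on small-cancellation techniques over hyperbolic groups. By contrast, the linear-algebraic content of step (ii) is elementary, requiring only the definition of $p$-domination and the positivity of singular values of invertible matrices; in particular, none of the material from Section~\ref{s.DS} (dominated splittings, Theorem~\ref{t.BG}, etc.) is needed here.
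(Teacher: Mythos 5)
Your argument is correct and is exactly the (implicit) justification behind the remark: the paper offers no written proof, relying on the reader to observe that a representation with finite image of an infinite finitely generated group cannot satisfy the exponential decay \eqref{eq:domrep}, which is precisely what your step (ii) supplies. The one point worth making explicit — which you do — is that the nonlinear hyperbolic groups of \cite[Section~8]{nonlinear} are infinite, since for finite groups the domination condition is vacuous.
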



\subsection{Criterion for hyperbolicity}\label{ss.crithyp}

In order to show word-hyperbolicity the following sufficient condition will be used: 

\begin{teo}[Bowditch \cite{Bowditch}]\label{teo-bow} Let $\Gamma$ be a group which acts by homeomorphisms in a perfect compact metrizable topological space $M$ such that the diagonal action of $\Gamma$ on the (nonempty) space  
$$
M^{(3)} \coloneqq \big\{(x_1,x_2,x_3) \in M^3 \st x_i \neq x_j \text{ if } \ i\neq j \big \}
$$ 
is properly discontinuous and cocompact. 
Then $\Gamma$ is word-hyperbolic.
\end{teo}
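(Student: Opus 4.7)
The plan is to bootstrap the topological dynamics on $M$ into a geometric action of $\Gamma$ on a Gromov-hyperbolic graph, then invoke the Milnor--\v{S}varc lemma to conclude word-hyperbolicity. This is essentially the strategy of Bowditch.

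First I would extract the \emph{convergence property} on $M$ from proper discontinuity on $M^{(3)}$. Given any sequence of distinct elements $\gamma_n \in \Gamma$ and any triple $(x,y,z) \in M^{(3)}$, properness forbids $(\gamma_n x,\gamma_n y,\gamma_n z)$ from remaining in a compact subset of $M^{(3)}$; after passing to a subsequence at least two coordinates must coalesce in the limit. A Zorn/diagonal-selection argument then produces points $a,b \in M$ (an attractor and a repeller) such that $\gamma_n|_{M\setminus\{b\}}$ converges to the constant $a$, uniformly on compacta, and symmetrically for $\gamma_n^{-1}$. The cocompactness hypothesis turns $\Gamma$ into a \emph{uniform} convergence group on $M$.

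Next I would build a $\Gamma$-graph $\cG$ on which the action is geometric. Choose a symmetric compact set $K \subset M^{(3)}$ with $\Gamma \cdot K = M^{(3)}$, let the vertex set be $\Gamma$, and join $\gamma$ to $\gamma'$ by an edge whenever $\gamma K \cap \gamma' K \ne \emptyset$. Proper discontinuity forces $\cG$ to be connected and locally finite, and the left $\Gamma$-action is free, cocompact, and by automorphisms. By Milnor--\v{S}varc, $\cG$ is $\Gamma$-equivariantly quasi-isometric to any Cayley graph of $\Gamma$, so the problem reduces to establishing that $\cG$ is $\delta$-hyperbolic in the sense of Gromov.

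Proving hyperbolicity of $\cG$ is the main obstacle, and is the heart of Bowditch's argument. The approach is to encode the dynamics on $M$ in a system of \emph{annuli} in $\cG$: pairs of finite vertex sets $(A^-,A^+)$ which ``separate'' $\cG$ at controlled distance and which are naturally indexed by pairs of distinct points of $M$. Using the uniform convergence action, I would show that this annulus system is \emph{linear}, meaning that along any geodesic ray to infinity the number of pairwise disjoint annuli separating the basepoint from a given endpoint in $M$ grows linearly with graph distance. A purely combinatorial lemma--the technical core of \cite{Bowditch}--then asserts that a connected graph admitting a linear annulus system is Gromov-hyperbolic, with boundary identified with the index set $M$. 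This combinatorial lemma is where the real work lies: one must bound Gromov products in terms of annulus-separation data and verify the thin-triangle condition directly. Once $\cG$ is $\delta$-hyperbolic, Milnor--\v{S}varc applied to the geometric $\Gamma$-action completes the proof that $\Gamma$ is word-hyperbolic (and, as a byproduct, identifies $M$ with $\partial_\infty \Gamma$).
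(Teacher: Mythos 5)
The first thing to say is that the paper does not prove this statement at all: it is Bowditch's topological characterization of word-hyperbolic groups, quoted from \cite{Bowditch} and used as a black box. The entire point of Section 3 of the paper is to \emph{verify the hypotheses} of this theorem for the limit set $M$ of a dominated representation, not to prove the theorem itself. So there is no proof in the paper to compare yours against; what you have written is a summary of Bowditch's own argument, and it has to be judged on its own.

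As a roadmap it captures the right architecture (proper discontinuity on distinct triples gives the convergence property, cocompactness upgrades it to a uniform convergence action, and an annulus system then yields hyperbolicity), but it is not a proof: the entire technical content --- the combinatorial statement that a suitable annulus system forces Gromov hyperbolicity --- is explicitly deferred to the reference. More importantly, the one step you do claim to justify is wrong as stated. Proper discontinuity gives local finiteness of the graph $\cG$, but it does \emph{not} give connectivity; connectivity of $\cG$ is equivalent to the finite set $\{\gamma : K\cap\gamma K\neq\emptyset\}$ generating $\Gamma$, and the usual way to get this (Svarc--Milnor) requires $M^{(3)}$ to be a length space, or at least connected. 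It is neither: $M^{(3)}$ is never geodesic, and it can be totally disconnected (take $M$ a Cantor set, as for a free group). In Bowditch's actual argument this difficulty is avoided by reversing the order of construction: the annuli are pairs of disjoint closed subsets of the compactum $M$ itself, not vertex sets of a pre-existing graph; the associated crossratio is used to build a hyperbolic \emph{path} quasimetric directly on the space of distinct triples, so that coarse connectivity is an output of the annulus system rather than an input. If you keep your order of steps, you owe a separate argument that $\cG$ is connected before Svarc--Milnor can be invoked.
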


We recall that a continuous action of $\Gamma$ in a topological space $X$ is:
\begin{itemize}
\item \emph{properly discontinuous} if given any compact subset $K \en X$ there exists $n$ such that if $|\gamma|>n$ then $\gamma K \cap K  =  \emptyset$;
\item \emph{cocompact} if there exists a compact subset $K \en X$ such that $\Gamma x \cap K \neq \emptyset$ for every $x \in X$.
\end{itemize}

\begin{obs}\label{rem-boundary}
Theorem \ref{teo-bow} also gives that the set $M$ is equivariantly homeomorphic to $\partial \Gamma$. Here $\partial \Gamma$ denotes the \emph{visual boundary} of the group $\Gamma$, defined as the set of equivalence classes of quasi-geodesic rays (i.e.\ quasi-geodesic maps from $\N$ to $\Gamma$) by the equivalence of being at finite Hausdorff distance from each other; see for example \cite[Chapitre 7]{GhysdelaHarpe} or \cite[Chapitre 2]{CDP}). The topology in $\partial \Gamma$ is given by pointwise convergence of the quasi-geodesic rays with same constants and starting at the same point. In Section \ref{s.analytic} we will also comment on the metric structure on the boundary. \end{obs}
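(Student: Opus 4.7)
The plan is to produce a $\Gamma$-equivariant homeomorphism $\Phi \colon \partial \Gamma \to M$; since both spaces are compact and metrizable, it will be enough to exhibit an equivariant continuous bijection. The first step is to reinterpret the hypothesis of Theorem~\ref{teo-bow} dynamically: proper discontinuity of the $\Gamma$-action on $M^{(3)}$ is equivalent, by a classical argument of Gehring--Martin and Tukia, to the statement that $\Gamma$ acts on $M$ as a \emph{convergence group}, i.e.\ any sequence $(\gamma_n) \en \Gamma$ with $|\gamma_n|\to\infty$ admits a subsequence $(\gamma_{n_k})$ and points $a,b\in M$ such that $\gamma_{n_k}\to a$ uniformly on compact subsets of $M\setminus\{b\}$.

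Next I would construct $\Phi$ as follows. Given $\xi\in\partial\Gamma$, pick a quasi-geodesic ray $(\gamma_n)_{n\in\N}$ in $\Gamma$ representing $\xi$; the convergence property applied to this sequence yields a subsequential attracting point $a(\xi)\in M$, and I set $\Phi(\xi)\coloneqq a(\xi)$. The crucial verification is that $a(\xi)$ does not depend on the choice of representative or of subsequence: two equivalent quasi-geodesic rays fellow-travel in the hyperbolic group $\Gamma$ (using that $\Gamma$ is already word-hyperbolic by Bowditch's theorem), and uniqueness of the attracting limit along a single ray follows by ruling out, via proper discontinuity on $M^{(3)}$, any branching of subsequential limits. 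Equivariance of $\Phi$ is immediate from the construction, and continuity follows from uniformity on compacts in the convergence property together with the fact that nearby ideal points in $\partial\Gamma$ admit representative rays sharing a long initial segment.

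Surjectivity of $\Phi$ uses cocompactness of the action on $M^{(3)}$: given $m\in M$ and auxiliary points $y,z\in M$ with $(m,y,z)\in M^{(3)}$, cocompactness produces a compact $K\en M^{(3)}$ and a sequence $\delta_n\in\Gamma$ with $|\delta_n|\to\infty$ and $\delta_n(m,y,z)\in K$; the inverses $\delta_n^{-1}$ then have $m$ as attracting point, and, with care, can be threaded into a quasi-geodesic ray whose $\Phi$-image is $m$. Injectivity follows since distinct ideal points of $\partial\Gamma$ admit diverging quasi-geodesic representatives, and divergent sequences in the convergence action must yield distinct attracting points — otherwise one again contradicts proper discontinuity on triples.

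The main obstacle I expect is the well-definedness of $\Phi$: one must translate intrinsic geometric properties of quasi-geodesic rays in the Cayley graph of $\Gamma$ (fellow-traveling, geodesic shadowing) into dynamical statements about the convergence behavior on $M$, and both the independence of representative and the uniqueness of subsequential limits rely on a delicate interplay between Gromov-hyperbolicity of $\Gamma$ and the convergence-group dictionary on $M$. Once this bridge is in place the remaining verifications are standard, and indeed the full statement can be viewed as an instance of Bowditch's refined characterization identifying any uniform convergence action on a compact perfect metrizable space with the boundary action of a word-hyperbolic group.
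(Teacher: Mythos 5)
The paper does not actually prove this remark: the equivariant homeomorphism between $M$ and $\partial\Gamma$ is part of the \emph{statement} of Bowditch's theorem as cited, and the remark is nothing more than that citation. Your closing sentence, which defers to Bowditch's characterization of uniform convergence actions, is therefore exactly the paper's own justification, and at that level your proposal coincides with it.

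Read as a self-contained argument, however, your sketch has a gap precisely at the step you flag as the ``main obstacle''. The convergence-group dictionary gives, for each subsequence of a quasi-geodesic ray, an attracting/repelling pair, but proving that the attracting point is independent of the subsequence --- and more generally that the endpoint map $\Phi$ is well defined, continuous and injective --- does not follow from ``proper discontinuity on $M^{(3)}$'' by the bookkeeping you describe; it is the substantive content of Bowditch's proof, which constructs a hyperbolic path-metric space out of the action on triples (via annulus systems), shows the orbit map is a quasi-isometry, and then invokes quasi-isometry invariance of the Gromov boundary. Your fellow-traveling reduction is fine (if $\gamma_n'=\gamma_n\eta$ with $\eta$ fixed and $\gamma_{n_k}\to a$ locally uniformly off $b$, then $\gamma_{n_k}\eta\to a$ locally uniformly off $\eta^{-1}b$), but uniqueness of the subsequential limit along a single ray, and the continuity of $\Phi$, are left as plausible claims rather than proofs. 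Note finally that in the concrete situation of the paper the identification is realized explicitly by $x\mapsto\lim_n U_p(\rho(\gamma_n))$ along geodesic rays (Lemma~\ref{l.diagonal} and Lemma~\ref{lem-equivariant}), which is how the remark is actually exploited in Section~\ref{s.equiv}; if you want a hands-on proof in this setting, that is the route to take rather than a general re-derivation of Bowditch's theorem.
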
 

A group is called \emph{elementary} if it is finite or virtually cyclic; elementary groups are trivially word-hyperbolic. 

The converse of Theorem \ref{teo-bow} applies to non-elementary word-hyperbolic groups. In the proof of Theorem~\ref{teo:dominationimplieshyp} we must separate the case where the group is elementary, since elementary groups may admit dominated representations while Theorem~\ref{teo-bow} does not apply to them.


\subsection{Some preliminary lemmas for $p$-dominated representations} 

The \emph{Grassmannian} $\Gr_p(\R^d)$ is the set of all $p$-dimensional subspaces of $\R^d$.
As we explain in detail in the Appendix~\ref{s.appendix}, the following formula defines a metric on the Grassmannian:
$$
d(P,Q) \coloneqq \cos \angle (P^\perp,Q) \, ;
$$
here $\perp$ denotes orthogonal complement, and $\angle$ denotes the smallest angle 
between pairs of vectors in the respective spaces.
Appendix~\ref{s.appendix} also contains a number of quantitative linear-algebraic estimates that we use in this section.  

In particular, in Appendix \ref{s.appendix} there are precise statements and proofs of the following results we will use: 

\begin{itemize}
\item Lemmas~\ref{l.nochangeright} and \ref{l.domination_implies_slow_change} estimate the distance of $U_p(A)$ with $U_p(AB)$ and $B U_p(A)$ with $U_p(BA)$ with respect to the norms of $B^{\pm 1}$ and the gap on the singular values of $A$. Lemma~\ref{l.seq_convergence} is a reinterpretation of these facts in terms of dominated sequences of matrices. 

\item Lemma~\ref{l.dominationattractor} shows that if there is a large gap between $p$-th and $p+1$-the singular values of $A$ then $AP$ will be close to $U_p(A)$ for every $P$ which makes a given angle with $S_{d-p}(A)$. 

\item Lemma~\ref{l.expand} estimates the dilatation of the action of $A$ in $\Gr_p(\RR^d)$ for subspaces far from the subspace which is mapped to $U_p(A)$.  

\item Corollary~\ref{c.seq_complement} is a consequence of classical properties of dominated splittings in the context of sequences of matrices in $\cD(K,p,\mu,c,\N)$.

\end{itemize}

Throughout the rest of this section, let $\rho \colon \Gamma \to \GL(d,\R)$ a $p$-dominated representation with constants $C \ge 1$, $\lambda>0$ (c.f.\ relation~\eqref{eq:domrep}).    
Let:
\begin{equation}\label{e.K}
K \coloneqq \max_{g \in S} \|\rho(g)\| \geq 1 \, ,
\end{equation}
where $S$ is the finite symmetric generating set of $\Gamma$ fixed before.
Also fix $\ell_0 \in \N$ such that: 
\begin{equation}\label{e.ell0}
C e^{-\lambda \ell_0} < 1 \, , 
\end{equation}
and, in particular (recalling the notation introduced in \S~\ref{ss.dom_sing}),
the spaces $U_p(\rho(\gamma)) \in \Gr_p(\R^d)$ and $S_{d-p}(\rho(\gamma)) \in \Gr_{d-p}(\R^d)$ 
are well-defined whenever $|\gamma|\ge \ell_0$. 

\medskip

Suppose that $\gamma$, $\eta$ are large elements in $\Gamma$ such that the spaces $U_p(\rho(\gamma))$ and $U_p(\rho(\eta))$ are not too close; then the next two  lemmas respectively assert that $d(\gamma,\eta)$ is comparable to $|\gamma|+|\eta|$, and that  $U_p(\rho(\gamma))$ and $S_{d-p}(\rho(\eta^{-1}))$ are transverse.

\begin{lem}\label{l.desechable}
There exist constants $\nu \in (0,1)$, $c_0>0$, $c_1>0$ with the following properties.
If $\gamma$, $\eta \in \Gamma$ are such that
$|\gamma|$, $|\eta| \ge \ell_0$ (where $\ell_0$ is as in \eqref{e.ell0})
then:
$$
d(\gamma,\eta) \ge \nu (|\gamma|+|\eta|) - c_0 - c_1 \left| \log d\big(U_p(\rho(\gamma)), U_p(\rho(\eta))\big) \right| \, .
$$
\end{lem}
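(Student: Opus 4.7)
The plan is to show that if $d(\gamma,\eta)$ is small then the subspaces $U_p(\rho(\gamma))$ and $U_p(\rho(\eta))$ must be close, and then simply solve for $d(\gamma,\eta)$. Set $\alpha \coloneqq \eta^{-1}\gamma$, so that $d(\gamma,\eta) = |\alpha|$ and $\rho(\gamma) = \rho(\eta)\rho(\alpha)$. The definition~\eqref{e.K} of $K$ gives $\|\rho(\alpha)^{\pm 1}\| \le K^{|\alpha|}$, while the hypothesis $|\gamma|,|\eta| \ge \ell_0$ combined with \eqref{e.ell0} ensures that $U_p(\rho(\gamma))$ and $U_p(\rho(\eta))$ are well-defined.

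The key tool will be Lemma~\ref{l.nochangeright} from the appendix, which controls $d(U_p(A), U_p(AB))$ in terms of $\|B\|$, $\|B^{-1}\|$, and the singular-value gap $\sigma_{p+1}(A)/\sigma_p(A)$. Applying it with $A = \rho(\eta)$ and $B = \rho(\alpha)$, and feeding in the $p$-domination bound~\eqref{eq:domrep} for $\eta$, I expect an estimate of the form
$$
d\bigl(U_p(\rho(\eta)), U_p(\rho(\gamma))\bigr) \;\le\; C' K^{2|\alpha|}\, e^{-\lambda|\eta|}
$$
for some constant $C'$ depending only on $C$ and the lemma. Taking logarithms then converts this into a linear upper bound on $|\eta|$ in terms of $|\alpha|$ and $\bigl|\log d(U_p(\rho(\gamma)), U_p(\rho(\eta)))\bigr|$; note that the inequality remains valid even in the regime where the right-hand side above exceeds $1$, since $|\log d|\ge 0$ is being added on the trivial side.

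Running the symmetric argument with $\gamma$ and $\eta$ exchanged—that is, writing $\rho(\eta)=\rho(\gamma)\rho(\alpha^{-1})$ and using $|\alpha^{-1}|=|\alpha|$ together with the gap~\eqref{eq:domrep} for $\gamma$—yields the same bound for $|\gamma|$. Adding the two and solving for $|\alpha|$ gives the claim, with constants essentially $\nu = \lambda/(4 \log K)$ and $c_1 = 1/(2\log K)$, up to shrinking $\nu$ to satisfy $\nu<1$ and absorbing multiplicative constants into $c_0$.

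The only substantive step is the perturbation estimate from the appendix; everything else is bookkeeping on logarithms. The one degenerate case to flag is $K = 1$: then $\rho$ is uniformly bounded, so the $p$-domination inequality~\eqref{eq:domrep} forces $\sigma_p(\rho(\gamma))/\sigma_{p+1}(\rho(\gamma)) \to \infty$ against bounded singular values, whence $\Gamma$ is finite and the statement is vacuous once $|\gamma|,|\eta|$ exceed its diameter.
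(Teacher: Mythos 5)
Your proof is correct and follows essentially the same route as the paper: both hinge on Lemma~\ref{l.nochangeright} applied to $A=\rho(\eta)$, $B=\rho(\eta^{-1}\gamma)$ together with the domination bound~\eqref{eq:domrep}, then solve for $d(\gamma,\eta)$ after taking logarithms. The only cosmetic difference is that you symmetrize (apply the estimate to both $\gamma$ and $\eta$ and add), whereas the paper assumes WLOG $|\gamma|\le|\eta|$ and uses $|\eta|\ge(|\gamma|+|\eta|)/2$; the resulting constants are identical, and your remark on the $K=1$ degenerate case is a harmless extra.
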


\begin{proof}
Consider two elements $\gamma$, $\eta \in \Gamma$ with word-length at least $\ell_0$.
Assume $|\gamma| \le |\eta|$, the other case being analogous.
Applying Lemma~\ref{l.nochangeright} to $A = \rho(\eta)$ and $B =  \rho(\eta^{-1}\gamma)$, 
and using \eqref{eq:domrep} and \eqref{e.K}, we obtain: 
\begin{equation}\label{e.useful}
d \big( U_p(\rho(\eta)), U_p(\rho(\gamma)) \big)
\leq K^{2 |\eta^{-1} \gamma|}  \, C e^{-\lambda |\eta|} \, ,
\end{equation}
or equivalently,
$$
d(\gamma,\eta)  =  |\eta^{-1} \gamma| \ge \frac{\lambda |\eta| - \log C - \left| \log d \big( U_p(\rho(\eta)), U_p(\rho(\gamma)) \big)\right|}{2 \log K} \, .
$$
Using that $|\eta | \ge (|\gamma|+|\eta|)/2$, we obtain the lemma. 
\end{proof}

\begin{lem}\label{l.transv} 
For every $\eps>0$ there exist $\ell_1 \ge \ell_0$  
and $\delta \in (0, \frac{\pi}{2})$ with the following properties:
If $\gamma$, $\eta \in \Gamma$ are such that:
\begin{enumerate}
\item 
$|\gamma|$, $|\eta| > \ell_1$, and
\item\label{i.eps_dist} 
the distance $d \big( U_p(\rho(\gamma)), U_p(\rho(\eta)) \big) >\eps$,   
\end{enumerate}
then it follows that $\angle \big( U_p(\rho(\gamma)), S_{d-p}(\rho(\eta^{-1})) \big) > \delta$. 
\end{lem}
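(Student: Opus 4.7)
The plan is to argue by contradiction via Lemma~\ref{l.seq_splitting}, constructing from each supposed counterexample a long sequence of matrices in $\cD(K,p,\mu,c,\cdot)$ whose ``past'' end records $\rho(\gamma_j)$ and whose ``future'' end records $\rho(\eta_j^{-1})$. Suppose the conclusion fails: there exist $\eps>0$ and sequences $(\gamma_j),(\eta_j)\subset\Gamma$ with $|\gamma_j|,|\eta_j|\to\infty$, with $d(U_p(\rho(\gamma_j)),U_p(\rho(\eta_j)))>\eps$, and with $\angle(U_p(\rho(\gamma_j)),S_{d-p}(\rho(\eta_j^{-1})))\to 0$. Fix geodesic expressions $\gamma_j=s^{(j)}_1\cdots s^{(j)}_{n_j}$ and $\eta_j^{-1}=t^{(j)}_1\cdots t^{(j)}_{m_j}$ with letters in $S$, and define matrices $(A^{(j)}_i)$ indexed by $i\in I_j\coloneqq\{-n_j,\dots,m_j-1\}$ by
\[
A^{(j)}_{-k}\coloneqq\rho(s^{(j)}_k)\text{ for }1\le k\le n_j,\qquad A^{(j)}_k\coloneqq\rho(t^{(j)}_{m_j-k})\text{ for }0\le k\le m_j-1,
\]
so that $A^{(j)}_{-1}\cdots A^{(j)}_{-n_j}=\rho(\gamma_j)$ and $A^{(j)}_{m_j-1}\cdots A^{(j)}_0=\rho(\eta_j^{-1})$.

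The heart of the argument is to verify that $(A^{(j)}_i)_{i\in I_j}$ lies in $\cD(K,p,\mu,c,I_j)$ for constants $\mu,c>0$ independent of $j$. Granting this, Lemma~\ref{l.seq_splitting} applied at the cut $k=0$ (with $\min\{n_j,m_j\}\to\infty$) yields $\angle(U_p(\rho(\gamma_j)),S_{d-p}(\rho(\eta_j^{-1})))>\delta$ for a uniform $\delta>0$ and all $j$ large, contradicting the assumption. Subproducts confined to the negative (resp.\ positive) half of $I_j$ correspond to $\rho$ of sub-words of the geodesic expression for $\gamma_j$ (resp.\ $\eta_j^{-1}$); since a sub-word of a geodesic word is itself geodesic, inequality~\eqref{eq:domrep} immediately supplies the required exponential gap bound.

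The delicate case is that of a subproduct $A^{(j)}_{j}\cdots A^{(j)}_{i}$ that straddles the cut, i.e.\ with $-n_j\le i<0\le j\le m_j-1$. A direct computation identifies such a subproduct with $\rho(\beta\alpha)$, where $\beta=t^{(j)}_{m_j-j}\cdots t^{(j)}_{m_j}$ is the length-$(j+1)$ suffix of the geodesic for $\eta_j^{-1}$ and $\alpha=s^{(j)}_1\cdots s^{(j)}_{-i}$ is the length-$(-i)$ prefix of the geodesic for $\gamma_j$; since $S$ is symmetric, the group element $\beta^{-1}$ coincides with the length-$(j+1)$ prefix of the geodesic for $\eta_j$. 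To obtain the needed exponential gap for $\rho(\beta\alpha)$ one needs $|\beta\alpha|$ to be a uniform positive fraction of $(j+1)+(-i)$. Applied to the pair $(\alpha,\beta^{-1})$, Lemma~\ref{l.desechable} reduces this to a uniform lower bound on $d(U_p(\rho(\alpha)),U_p(\rho(\beta^{-1})))$. I would then split into cases: when both $(-i)$ and $(j+1)$ are at least a suitable fixed fraction of $n_j$ and $m_j$ respectively, Lemma~\ref{l.nochangeright} shows that $U_p(\rho(\alpha))$ is exponentially close to $U_p(\rho(\gamma_j))$ and $U_p(\rho(\beta^{-1}))$ to $U_p(\rho(\eta_j))$, so the hypothesis $d(U_p(\rho(\gamma_j)),U_p(\rho(\eta_j)))>\eps$ transfers to a lower bound of, say, $\eps/2$ on $d(U_p(\rho(\alpha)),U_p(\rho(\beta^{-1})))$; when one prefix is short relative to its total, the length of the straddling subproduct is dominated by the ``long'' side, and the required estimate follows from sub-multiplicativity of norms combined with $p$-domination applied to that long prefix.

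The hard part is exactly this straddling-subproduct analysis. The concatenated word $\beta\alpha$ need not be geodesic---substantial cancellation is a priori possible---so $p$-domination alone does not give an exponential bound in terms of $(j+1)+(-i)$. The hypothesis on $d(U_p(\rho(\gamma_j)),U_p(\rho(\eta_j)))$ is precisely what, through Lemma~\ref{l.nochangeright}, forbids such cancellations once it is transferred to the prefixes; making this work with constants uniform in $i,j$ and over all regimes of relative prefix length is where the real technical care is required.
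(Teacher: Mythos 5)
Your overall blueprint matches the paper's: build the two-sided matrix sequence whose negative indices encode a geodesic word for $\gamma$ and whose nonnegative indices encode one for $\eta^{-1}$, check that it lies in some $\cD(K,p,\mu,c,I)$ with constants uniform in $\gamma,\eta$, and then apply Lemma~\ref{l.seq_splitting} at the cut $k=0$. You also correctly identify the straddling subproducts (which correspond to $\rho(\eta_{j}^{-1}\gamma_{i})$ for the two families of prefixes) as the crux, and you see that Lemma~\ref{l.desechable} is what converts a lower bound on $d(U_p(\rho(\gamma_i)),U_p(\rho(\eta_j)))$ into the required length lower bound. The contradiction framing is superfluous — Lemma~\ref{l.seq_splitting} already delivers a uniform $\ell_1,\delta$ directly — but that's cosmetic.

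The genuine gap is in how you propose to transfer the hypothesis $d(U_p(\rho(\gamma)),U_p(\rho(\eta)))>\eps$ down to the prefixes. You invoke Lemma~\ref{l.nochangeright} \emph{once}, which gives $d\big(U_p(\rho(\gamma_i)),U_p(\rho(\gamma))\big)\le K^{2(n-i)}\,C e^{-\lambda i}$ for a prefix of length $i$ out of $n$. This decays only when $i$ exceeds roughly $\tfrac{2\ln K}{2\ln K+\lambda}\,n$, a fraction that can be arbitrarily close to $1$. Your two cases — ``both prefixes are at least a fixed fraction of their totals'' and ``one prefix is short relative to its total'' — therefore leave an uncovered middle regime: for instance, when both $i$ and $j$ equal roughly half of $n$ and $m$, a single application of Lemma~\ref{l.nochangeright} gives a bound that grows exponentially if $\lambda<2\ln K$, and the sub-multiplicativity estimate $\tfrac{\sigma_{p+1}}{\sigma_p}(\rho(\eta_j^{-1}\gamma_i))\le K^{2i}C e^{-\lambda j}$ is also useless since cancellation could swamp the domination. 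So the straddling condition fails to be verified on a large part of the $(i,j)$-range.

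The missing ingredient is Lemma~\ref{l.seq_convergence}, which is the \emph{iterated} version of Lemma~\ref{l.nochangeright}: applying the one-step estimate consecutively and summing the geometric series yields $d\big(U_p(\rho(\gamma_i)),U_p(\rho(\gamma))\big)\le \tilde c\, e^{-\mu i}$, a bound that depends on the \emph{absolute} prefix length $i$ alone, not on $n-i$. This is exactly what the paper uses: choose $\ell_*$ with $\tilde c e^{-\mu\ell_*}<\eps/3$; for $i,j>\ell_*$ the triangle inequality gives $d(U_p(\rho(\gamma_i)),U_p(\rho(\eta_j)))>\eps/3$ and Lemma~\ref{l.desechable} applies; and for $i\le\ell_*$ or $j\le\ell_*$ the elementary bound $d(\gamma_i,\eta_j)\ge |i-j|\ge i+j-2\ell_*$ closes the argument with no Grassmannian estimate at all. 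Replacing your fraction-threshold case split by this absolute threshold $\ell_*$ would complete your proof.
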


\begin{proof} 
Consider two elements $\gamma$, $\eta \in \Gamma$ with word-length at least $\ell_0$.
Let $\epsilon>0$, and suppose the hypothesis (\ref{i.eps_dist}) is satisfied.
Write $\gamma  =  g_1 \cdots g_n$ with each $g_i$ in the symmetric generating set of $\Gamma$ in such a way that $n$ is minimal, that is, $n = |\gamma|$. Similarly, write $\eta  =  h_1 \cdots h_m$ with each $h_i$ in the symmetric generating set of $\Gamma$ in such a way that  $|\eta| = m$.

Let $\gamma_i \coloneqq g_1 \cdots g_i$ and $\eta_i \coloneqq h_1 \cdots h_i$. Note that for $j>i$ we have $d(\gamma_i,\gamma_j) = |\gamma_i^{-1}\gamma_j|  =  |g_{i+1} \cdots g_j|  =  j-i$ so that the sequence $\{\gamma_i\}$ is a geodesic from $\id$ to $\gamma$. The same holds for $\{\eta_i\}$ (but note that $\{\eta_i^{-1}\}$ is not necessarily a geodesic). 

By the domination condition \eqref{eq:domrep} and Lemma~\ref{l.seq_convergence},
we can find $\ell_*  =  \ell_*(\eps) > \ell_0$
such that if $n \ge i>\ell_*$ then 
$$
d(U_p(\rho(\gamma_i)) , U_p(\rho(\gamma))) < \eps/3 \, ,
$$
and analogously,  if $m \ge j>\ell_*$ then 
$$
d(U_p(\rho(\eta_j)) , U_p(\rho(\eta))) < \eps/3.
$$
In particular, if both conditions hold then hypothesis \eqref{i.eps_dist} implies that
\begin{equation}\label{e.eps3}
d(U_p(\rho(\gamma_i)) , U_p(\rho(\eta_j)) > \eps/3.
\end{equation}

Let $\nu \in (0,1)$, $c_0$, $c_1$ be given by Lemma~\ref{l.desechable}.
Let $c \coloneqq \max \{2\ell_*, c_0 + c_1 \log(3/\epsilon) \}$.
We claim that 
\begin{equation}\label{e.QG}
d(\gamma_i, \eta_j)  =  |\eta_j^{-1} \gamma_i| \ge \nu (i+j) - c
\quad \text{for all $i\in \{1,\dots,n\}$, $j\in \{1,\dots,m\}$.} 
\end{equation}
To prove this, consider first the case when $j \le \ell_*$.
Then
$$
d(\gamma_i,\eta_j) \ge d(\gamma_i,\id) - d(\eta_j,\id)  =  i-j \ge i+j - 2\ell_* \ge \nu (i+j) - c \, ,
$$
as claimed.
The case $i \le \ell_*$ is dealt with analogously.
The remaining case where $i$ and $j$ are both bigger than $\ell_*$
follows from Lemma~\ref{l.desechable} and property \eqref{e.eps3}.
This proves \eqref{e.QG}.

As a consequence of \eqref{eq:domrep} and \eqref{e.QG}, we obtain:
\begin{equation}\label{e.almostthere}
\frac{\sigma_{p+1}}{\sigma_p}(\eta_j^{-1} \gamma_i) \le \hat{C} e^{-\mu (i+j)} 
\quad \text{for all $i\in \{1,\dots,n\}$, $j\in \{1,\dots,m\}$,} 
\end{equation}
where $\hat{C} \coloneqq C e^{\lambda c}$ and $\mu \coloneqq \lambda \nu$.
Now consider the following sequence of matrices:
\begin{equation*} 
\big( A_{-n}, \dots, A_{-1}, A_0, A_1, \dots, A_{m-1} \big) \coloneqq  
\big( \rho(g_n), \dots, \rho(g_1), \rho(h_1^{-1}), \rho(h_2^{-1}), \dots, \rho(h_m^{-1}) \big) \, .
\end{equation*}
So
$A_{-1} A_{-2} \cdots A_{-i}  =  \rho(\gamma_i)$ and $A_{j-1} A_{j-2} \cdots A_0  =  \rho(\eta_j^{-1})$.
It follows from \eqref{eq:domrep} and \eqref{e.almostthere}, together with the facts that $\hat{C}>C$ and $\mu<\lambda$,
that the sequence of matrices defined above belongs to the set 
$\cD(K,p,\mu,\hat{C},I)$, where $I \coloneqq \{-n,\dots,m-1\}$
and $K$ is defined by equation~\eqref{e.K}.
Now let $\ell_1$ and $\delta$ be given by Lemma~\ref{l.seq_splitting}.
Then
$$
\angle \big(U_p(A_{-1} \cdots A_{-n}), S_{d-p}(A_{m-1}\cdots A_0) \big) > \delta \, ,
$$
provided that $|\gamma| = n$ and $|\eta| = m$ are both bigger than $\ell_1$.
This concludes the proof.
\end{proof}

\subsection{Candidate for boundary of \texorpdfstring{$\Gamma$}{Gamma}} 

In this subsection we define a candidate for the role of $M$ in Theorem~\ref{teo-bow} using the domination of the representation and show some of the topological properties of $M$ required for applying Theorem~\ref{teo-bow}. 
The set we shall consider is the following: 
\begin{equation}\label{eq-conjuntoM}
M \coloneqq \bigcap_{n \ge \ell_0} \overline{ \{ U_{p}(\rho(\gamma)) \st \ |\gamma|\geq n \} } \en \Gr_p(\RR^d)  \, ,
\end{equation}
where $\ell_0$ is as in \eqref{e.ell0}. This set has been considered before and named \emph{limit set} by Benoist (see \cite[Section 6]{BenoistNotes}) in the Zariski dense context and extended in \cite[Definition 5.1]{GGKW} to a more general setting.  

\medskip

The first properties to be established about $M$ are the following ones:

\begin{prop}\label{p.PropsSetM} The set $M$ is compact, non-empty, and $\rho(\Gamma)$-invariant. 
\end{prop}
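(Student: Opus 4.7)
The plan is to verify the three assertions in order, with the first two being essentially topological and the third requiring a perturbation estimate coming from the domination hypothesis.

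For compactness, I would simply observe that $M$ is defined as an intersection of closed subsets of the Grassmannian $\Gr_p(\R^d)$, which is itself a compact metric space; therefore $M$ is closed in a compact space, hence compact. No work is needed here.

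For non-emptiness, note that the assertion implicitly assumes $\Gamma$ is infinite (as will be the relevant case; the elementary case of Theorem~\ref{teo:dominationimplieshyp} is handled separately as announced in \S\ref{ss.crithyp}). Under this assumption, for every $n \ge \ell_0$ the set $\{\gamma \in \Gamma \st |\gamma| \ge n\}$ is non-empty and so is $\{U_p(\rho(\gamma)) \st |\gamma| \ge n\}$, since $U_p(\rho(\gamma))$ is well-defined whenever $|\gamma| \ge \ell_0$ by the choice \eqref{e.ell0}. The closures appearing in \eqref{eq-conjuntoM} form a nested (decreasing) family of non-empty closed subsets of the compact space $\Gr_p(\R^d)$, and thus by the finite intersection property their total intersection $M$ is non-empty.

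The interesting point is $\rho(\Gamma)$-invariance. Given $x \in M$ and $\eta \in \Gamma$, I would pick a sequence $\gamma_i \in \Gamma$ with $|\gamma_i| \to \infty$ and $U_p(\rho(\gamma_i)) \to x$ (such a sequence exists by definition of $M$). Then $|\eta \gamma_i| \ge |\gamma_i| - |\eta| \to \infty$, so $U_p(\rho(\eta \gamma_i))$ is eventually defined and lies in every set being intersected in \eqref{eq-conjuntoM} past a certain index. The claim to establish is that $U_p(\rho(\eta\gamma_i)) \to \rho(\eta)x$; granted this, the limit lies in $M$ by closedness, proving invariance. To establish the claim I would invoke Lemma~\ref{l.domination_implies_slow_change} from the appendix with $A = \rho(\gamma_i)$ and $B = \rho(\eta)$: this lemma bounds $d\bigl(\rho(\eta)\, U_p(\rho(\gamma_i)),\, U_p(\rho(\eta \gamma_i))\bigr)$ by a quantity controlled by $\|\rho(\eta)\|$, $\|\rho(\eta)^{-1}\|$, and the singular value gap $\sigma_{p+1}(\rho(\gamma_i))/\sigma_p(\rho(\gamma_i))$. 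By the $p$-domination condition \eqref{eq:domrep}, that gap decays as $Ce^{-\lambda|\gamma_i|} \to 0$; since $\rho(\eta)$ is a fixed linear map, the bound tends to zero. Combined with the continuity of $\rho(\eta)$ acting on $\Gr_p(\R^d)$, which gives $\rho(\eta)\, U_p(\rho(\gamma_i)) \to \rho(\eta) x$, we conclude $U_p(\rho(\eta\gamma_i)) \to \rho(\eta) x$, as required.

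The only non-routine step is this invariance argument, and the main obstacle is precisely the control of how $U_p$ transforms under left multiplication, which is where the appendix lemma plays an essential role; everything else is a consequence of compactness of the Grassmannian and the basic setup.
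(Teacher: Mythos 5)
Your proof is correct and follows essentially the same route as the paper: compactness and non-emptiness come from $M$ being a decreasing intersection of non-empty closed subsets of the compact Grassmannian, and invariance is obtained exactly as in the paper by combining Lemma~\ref{l.domination_implies_slow_change} with the domination estimate~\eqref{eq:domrep} to show $U_p(\rho(\eta\gamma_i)) \to \rho(\eta)P$. Your explicit remark that non-emptiness requires $\Gamma$ to be infinite is a reasonable observation (the paper leaves it implicit, as the finite case is excluded elsewhere), but it does not constitute a departure in method.
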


\begin{proof} The fact that $M$ is compact and non-empty follows at once since it is a decreasing intersection of non-empty closed subsets of the compact space $\Gr_p(\RR^d)$.

Let us show that $M$ is $\rho(\Gamma)$-invariant. Fix $\eta \in \Gamma$ and  $P \in M$.
Choose a sequence $(\gamma_n)$ in $\Gamma$ such that $|\gamma_n| \to \infty$ and  $U_p(\rho(\gamma_n)) \to P$.
Note that 
the spaces $U_p(\rho(\eta\gamma_n))$ are defined
for large enough $n$ (namely, whenever $|\gamma_n| \ge \ell_0 + |\eta|$); 
moreover, by Lemma~\ref{l.domination_implies_slow_change} and the domination condition~\eqref{eq:domrep}, we have:
$$ 
d\big(\rho(\eta) U_p(\rho(\gamma_n)), U_p(\rho(\eta\gamma_n))\big) \le
\|\rho(\eta)\| \, \|\rho(\eta)^{-1}\| \, C \, e^{-\lambda|\gamma_n|} \to 0 \quad \text{as }
n\to \infty.
$$
This shows that $U_p(\rho(\eta\gamma_n)) \to \rho(\eta) P$ as $n\to \infty$,
and in particular $\rho(\eta) P \in M$,
as we wished to prove.
\end{proof}

If one manages to show that $M^{(3)}$ is non-empty then
one of the assumptions of Theorem~\ref{teo-bow} is satisfied:

\begin{lem}\label{l.3pointsperfect}
If the set $M$ has at least $3$~points then it is perfect. 
\end{lem}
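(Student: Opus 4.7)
The plan is a proof by contradiction: assume some $P \in M$ is isolated, and deduce $|M| \le 2$. Fix an open neighborhood $V$ of $P$ in $\Gr_p(\R^d)$ with $V \cap M = \{P\}$. Using the definition of $M$, choose $(\gamma_n) \subset \Gamma$ with $|\gamma_n| \to \infty$ and $U_p(\rho(\gamma_n)) \to P$, and pass to a subsequence so that $S_{d-p}(\rho(\gamma_n))$ converges to some $S \in \Gr_{d-p}(\R^d)$ and $U_p(\rho(\gamma_n^{-1}))$ converges to some $P' \in M$, where $P' \in M$ follows from $|\gamma_n^{-1}| = |\gamma_n| \to \infty$ and the definition of $M$.

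The first step establishes that at most one point of $M$ is transverse to $S$. If $Q \in M$ satisfies $\angle(Q, S) > 0$, then for all large $n$ the subspace $Q$ is uniformly transverse to $S_{d-p}(\rho(\gamma_n))$, and since the gap $\sigma_{p+1}/\sigma_p$ of $\rho(\gamma_n)$ tends to $0$ by $p$-domination, Lemma~\ref{l.dominationattractor} forces $d(\rho(\gamma_n)Q, U_p(\rho(\gamma_n))) \to 0$ and hence $\rho(\gamma_n) Q \to P$. Since $M$ is $\rho(\Gamma)$-invariant (Proposition~\ref{p.PropsSetM}) and $P$ is isolated in $M$, we get $\rho(\gamma_n) Q = P$ for all large $n$, so $Q = \rho(\gamma_n)^{-1}(P)$ is uniquely determined; two distinct transverse points would contradict this uniqueness.

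The second step shows that the hypothesis $|M| \ge 3$ produces at least two transverse points, yielding the contradiction. The mechanism, supplied by Lemma~\ref{l.transv}, is this: for any $R \in M$ with $R \neq P'$, pick $(\eta_n) \subset \Gamma$ with $|\eta_n| \to \infty$ and $U_p(\rho(\eta_n)) \to R$. Since $U_p(\rho(\eta_n)) \to R$ and $U_p(\rho(\gamma_n^{-1})) \to P'$ with $R \neq P'$, for large $n$ the pair $(\eta_n, \gamma_n^{-1})$ satisfies the hypothesis of Lemma~\ref{l.transv} with $\epsilon \coloneqq d(R, P')/2$, whence $\angle(U_p(\rho(\eta_n)), S_{d-p}(\rho(\gamma_n)))$ is bounded below by a positive constant $\delta$ and, in the limit, $\angle(R, S) \ge \delta > 0$. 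Now if $P' \neq P$, pick any $Q \in M \setminus \{P, P'\}$ (nonempty since $|M| \ge 3$): both $P$ and $Q$ are distinct from $P'$ and hence transverse to $S$. If $P' = P$, pick distinct $Q_1, Q_2 \in M \setminus \{P\}$: each is distinct from $P' = P$ and hence transverse to $S$. In either case we have two distinct points of $M$ transverse to $S$, contradicting the first step.

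The main obstacle is precisely the case $P' = P$, and this is where the hypothesis $|M| \ge 3$ (rather than $|M| \ge 2$) is indispensable: the extra point of $M$ guarantees that $M \setminus \{P\}$ contains two distinct candidates to feed into the transversality mechanism, producing two distinct transverse points even when $P$ itself might fail to be transverse to $S$.
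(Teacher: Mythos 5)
Your proof is correct, and it takes a genuinely different route from the paper's. The paper argues directly and quantitatively: given three points $P_1,P_2,P_3\in M$ and $\epsilon'>0$, it picks a single element $\eta\in\Gamma$ with $|\eta|$ large and $U_p(\rho(\eta))$ within $\epsilon'$ of $P_1$, observes that $U_p(\rho(\eta^{-1}))$ can be close to at most one of the $P_i$, and then uses the "attraction" Lemma~\ref{l.diciembre} to push (at least) two of the $P_i$ into the $2\epsilon'$-ball around $P_1$, producing a new point of $M$ there. You instead argue by contradiction with a compactness step: assuming $P$ isolated, you extract limit spaces $S = \lim S_{d-p}(\rho(\gamma_n))$ and $P' = \lim U_p(\rho(\gamma_n^{-1}))\in M$ along a subsequence witnessing $P\in M$, then use Lemma~\ref{l.dominationattractor} to show every $M$-point transverse to $S$ must actually equal $\rho(\gamma_n^{-1})P$ for large $n$ (so at most one such point exists), and Lemma~\ref{l.transv} to show every $M$-point other than $P'$ is transverse to $S$; the hypothesis $|M|\ge 3$ then gives an immediate contradiction. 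Both arguments run on the same two engines — transversality of $U_p$'s and $S_{d-p}$'s for geodesically aligned elements, and the contracting action of $\rho(\gamma)$ on subspaces transverse to $S_{d-p}(\rho(\gamma))$ — but you apply them directly to sequences and their limits, bypassing the paper's intermediate packaging into Lemma~\ref{l.diciembre}, at the cost of a diagonal/compactness argument the paper avoids. One minor streamlining: your closing case-split on whether $P'=P$ is unnecessary, since $|M\setminus\{P'\}|\ge 2$ already and your step~2 shows every element of $M\setminus\{P'\}$ is transverse to $S$, giving the contradiction with step~1 immediately.
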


We first show the following lemma:

\begin{lem}\label{l.diciembre}
Given $\epsilon>0$, $\epsilon'>0$, there exists $\ell > \ell_0$ (where $\ell_0$ is as in \eqref{e.ell0}) with the following properties: 
If $\eta \in \Gamma$ is such that $|\eta|>\ell$ 
and $P \in M$ is such that 
$$
d \big( P, \, U_p(\rho(\eta^{-1})) \big) > \epsilon \, ,
$$
then 
$$
d \big( \rho(\eta) P, \, U_p(\rho(\eta)) \big) < \epsilon' \, .
$$
\end{lem}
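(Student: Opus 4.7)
The plan is to view $P\in M$ as a limit $P=\lim_n U_p(\rho(\gamma_n))$, use Lemma~\ref{l.transv} to upgrade the hypothesis $d(P,U_p(\rho(\eta^{-1})))>\epsilon$ into a uniform transversality between $P$ and the ``repelling'' subspace $S_{d-p}(\rho(\eta))$, and then invoke Lemma~\ref{l.dominationattractor} to conclude that $\rho(\eta)P$ is $\epsilon'$-close to $U_p(\rho(\eta))$, provided the singular gap $\sigma_{p+1}(\rho(\eta))/\sigma_p(\rho(\eta))$ is sufficiently small. The latter is guaranteed by the $p$-domination bound~\eqref{eq:domrep} as soon as $|\eta|$ is large enough.

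Concretely, I would first apply Lemma~\ref{l.transv} with parameter $\epsilon/2$ to obtain constants $\ell_1\ge\ell_0$ and $\delta>0$. Then, using Lemma~\ref{l.dominationattractor}, I would choose $\ell_2\ge \ell_1$ so large that for any $\eta$ with $|\eta|>\ell_2$ and any $Q\in\Gr_p(\R^d)$ with $\angle(Q,S_{d-p}(\rho(\eta)))\ge\delta$, one has $d(\rho(\eta)Q, U_p(\rho(\eta)))<\epsilon'$; this is possible because \eqref{eq:domrep} forces the singular gap of $\rho(\eta)$ to be exponentially small in $|\eta|$, while $\delta$ stays fixed. Set $\ell\coloneqq\ell_2$.

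Now, given $\eta$ with $|\eta|>\ell$ and $P\in M$ with $d(P, U_p(\rho(\eta^{-1})))>\epsilon$, use the definition~\eqref{eq-conjuntoM} of $M$ to pick a sequence $\gamma_n\in\Gamma$ with $|\gamma_n|\to\infty$ and $U_p(\rho(\gamma_n))\to P$. For all sufficiently large $n$ we have $|\gamma_n|>\ell_1$ and $d(U_p(\rho(\gamma_n)), U_p(\rho(\eta^{-1})))>\epsilon/2$, so applying Lemma~\ref{l.transv} to the pair $(\gamma_n,\eta^{-1})$ — whose word lengths both exceed $\ell_1$ — yields
\[
\angle\bigl(U_p(\rho(\gamma_n)),\, S_{d-p}(\rho(\eta))\bigr)>\delta,
\]
since $S_{d-p}(\rho((\eta^{-1})^{-1}))=S_{d-p}(\rho(\eta))$. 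Because the minimal angle is continuous, passing to the limit in $n$ gives $\angle(P, S_{d-p}(\rho(\eta)))\ge\delta$, and Lemma~\ref{l.dominationattractor} then finishes the argument.

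The main obstacle I expect is the quantitative control provided by Lemma~\ref{l.dominationattractor}: one needs an estimate of the form $d(\rho(\eta)Q, U_p(\rho(\eta)))\lesssim \sigma_{p+1}(\rho(\eta))/\sigma_p(\rho(\eta))$ with a constant depending only on the fixed angular separation $\delta$, so that the exponential decay of the singular ratio from~\eqref{eq:domrep} yields a threshold $\ell_2$ depending only on $\epsilon,\epsilon'$, and the domination constants $C,\lambda$. The remainder is a compactness–continuity packaging of the transversality statement together with the invariance and closedness of $M$.
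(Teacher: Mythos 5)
Your proof is correct and follows essentially the same route as the paper: invoke Lemma~\ref{l.transv} to convert the hypothesis into a uniform angle bound $\angle(\,\cdot\,,S_{d-p}(\rho(\eta)))>\delta$ along an approximating sequence $U_p(\rho(\gamma_n))\to P$, then push through Lemma~\ref{l.dominationattractor} together with the exponential decay of $\sigma_{p+1}/\sigma_p$ from~\eqref{eq:domrep}. The only cosmetic differences are that you invoke Lemma~\ref{l.transv} with parameter $\epsilon/2$ (the paper uses $\epsilon$ directly, absorbing the approximation into a ``without loss of generality'') and that you pass to the limit on the angle estimate before applying Lemma~\ref{l.dominationattractor} to $P$, whereas the paper applies Lemma~\ref{l.dominationattractor} termwise and takes the limit of the resulting distance bounds; both orderings are harmless.
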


\begin{proof}
Let $\ell_1 \ge \ell_0$ and $\delta>0$ be given by Lemma~\ref{l.transv}, depending on $\epsilon$.
Let $\ell>\ell_1$ be such that
$C e^{-\lambda \ell} < (\epsilon' \sin \delta)/2$,
where $C$ and $\lambda$ are the domination constants, as in \eqref{eq:domrep}.
Now fix $\eta \in \Gamma$  and $P \in M$
such that $|\eta|>\ell$ and $d \big( P, U_p(\rho(\eta^{-1})) \big) > \epsilon$.
Choose a sequence $(\gamma_n)$ in $\Gamma$ such that $|\gamma_n| \to \infty$ and  $U_p(\rho(\gamma_n)) \to P$.
Without loss of generality, we can assume that for each $n$ we have $|\gamma_n|> \ell_1$ and 
$$
d \big( U_p(\rho(\gamma_n)) , U_p(\rho(\eta^{-1})) \big) > \epsilon.
$$
It follows from Lemma~\ref{l.transv} that
$$
\angle \big( U_p(\rho(\gamma_n)) , S_{d-p}(\rho(\eta)) \big) > \delta.
$$
Using Lemma~\ref{l.dominationattractor} and the domination condition \eqref{eq:domrep}, we obtain:
$$
d \Big( \rho(\eta) \big( U_p(\rho(\gamma_n)) \big) \, , U_p(\rho(\eta)) \Big)
< \tfrac{\sigma_{p+1}}{\sigma_p} \big( \rho(\eta) \big) \  \frac{1}{\sin \delta} < \frac{\epsilon'}{2} \, .
$$
Letting $n \to \infty$ yields
$d \big( \rho(\eta) P \, , U_p(\rho(\eta)) \big) \le \epsilon'/2$.
\end{proof}

\begin{proof}[Proof of Lemma~\ref{l.3pointsperfect}]
Let $P_1$, $P_2$, $P_3$ be three distinct points in $M$, and let $\epsilon' > 0$.
We will show that the $2\epsilon'$-neighborhood of $P_1$ contains another element of $M$.

Let $\eps \coloneqq \frac 1 2 \min_{i\neq j} d(P_i, P_j)$.
Let $\ell > \ell_0$ be given by Lemma~\ref{l.diciembre}, depending on $\eps$ and $\eps'$.
Choose $\eta \in \Gamma$ such that $|\eta|>\ell$ and $d\big( U_p(\rho(\eta)) , P_1 \big) < \epsilon'$.
Consider the space $U_p(\rho(\eta^{-1}))$; it can be $\epsilon$-close to at most one of the spaces $P_1$, $P_2$, $P_3$. 
In other words, there are different indices $i_1$, $i_2 \in \{1,2,3\}$ such that 
for each $j \in \{1,2\}$ we have:
$$
d \big( P_{i_j} \, , U_p(\rho(\eta^{-1})) \big) > \epsilon 
$$
In particular, by Lemma~\ref{l.diciembre},
$$
d \big( \rho(\eta) P_{i_j}, P_1 \big) 
\le d \big( \rho(\eta) P_{i_j}, U_p(\rho(\eta)) \big) + \epsilon'
< 2\epsilon' \, .
$$
By Proposition~\ref{p.PropsSetM}, the spaces $\rho(\eta) P_{i_1}$ and $\rho(\eta) P_{i_2}$ belong to $M$.
Since at most one of these can be equal to $P_1$, we conclude that the $2\epsilon'$-neighborhood of $P_1$ contains another element of $M$, as we wished to prove.
\end{proof}

So we would like to ensure that $M$ has at least $3$ points (provided $\Gamma$ is non-elementary). 
This requires some preliminaries. 
First, we show the following estimate:
\begin{lem}\label{l.2}
$M$ has at least $2$ elements.
\end{lem}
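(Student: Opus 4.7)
The strategy is to rule out $|M|=1$ by a symmetry argument between $\gamma$ and $\gamma^{-1}$. If $M=\{P\}$, then invariance of $M$ (Proposition~\ref{p.PropsSetM}) forces every $\rho(\gamma)$ to fix $P$; applying the definition of $M$ both to a sequence $(\gamma_{n})$ and to its inverses $(\gamma_{n}^{-1})$ will force the top and the bottom singular values of $\rho(\gamma_{n})$ to stay comparable, contradicting $p$-domination.

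First I would reduce to the case $p=1$. By Remark~\ref{rem-lie}, $\Lambda^{p}\rho$ is a $1$-dominated representation on $\Lambda^{p}\R^{d}$; the singular value decomposition gives $\sigma_{1}(\Lambda^{p}A)=\sigma_{1}(A)\cdots\sigma_{p}(A)$, with corresponding top singular line $\Lambda^{p}U_{p}(A)$ in $\Lambda^{p}\R^{d}$. Hence the Plücker embedding $\Gr_{p}(\R^{d})\hookrightarrow\P(\Lambda^{p}\R^{d})$, $V\mapsto[\Lambda^{p}V]$, sends each $U_{p}(\rho(\gamma))$ to $U_{1}(\Lambda^{p}\rho(\gamma))$, and since it is a topological embedding it identifies $M$ with the limit set of $\Lambda^{p}\rho$. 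So it is enough to prove the lemma when $p=1$ (with $d$ replaced by $\binom{d}{p}\geq 2$).

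Assume now $p=1$ and, by way of contradiction, that $M=\{[v_{0}]\}$ with $\|v_{0}\|=1$. Invariance gives $\rho(\gamma)v_{0}=\lambda_{\gamma}v_{0}$ for scalars $\lambda_{\gamma}\in\R\setminus\{0\}$. Given any sequence $(\gamma_{n})$ with $|\gamma_{n}|\to\infty$, I pick unit vectors $u_{n}\in U_{1}(\rho(\gamma_{n}))$ and, after passing to a subsequence and adjusting signs, assume $u_{n}\to v_{0}$. Using $\|\rho(\gamma_{n})u_{n}\|=\sigma_{1}(\rho(\gamma_{n}))$, $\|\rho(\gamma_{n})v_{0}\|=|\lambda_{\gamma_{n}}|$, and the Lipschitz bound $|\,\|\rho(\gamma_{n})u_{n}\|-\|\rho(\gamma_{n})v_{0}\|\,|\leq\sigma_{1}(\rho(\gamma_{n}))\|u_{n}-v_{0}\|$, I obtain $|\lambda_{\gamma_{n}}|\geq\tfrac{1}{2}\sigma_{1}(\rho(\gamma_{n}))$ for $n$ large. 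Applying the same reasoning to $(\gamma_{n}^{-1})$, with $\rho(\gamma_{n}^{-1})v_{0}=\lambda_{\gamma_{n}}^{-1}v_{0}$ and $\sigma_{1}(\rho(\gamma_{n}^{-1}))=\sigma_{d}(\rho(\gamma_{n}))^{-1}$, yields $|\lambda_{\gamma_{n}}|\leq 2\sigma_{d}(\rho(\gamma_{n}))$. Combining the two bounds together with $\sigma_{d}\leq\sigma_{2}$ gives $\sigma_{1}(\rho(\gamma_{n}))/\sigma_{2}(\rho(\gamma_{n}))\leq 4$, contradicting the exponential gap $\sigma_{1}/\sigma_{2}\geq C^{-1}e^{\lambda|\gamma_{n}|}$ forced by $1$-domination.

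The delicate point is the Plücker reduction, namely identifying the limit set of $\Lambda^{p}\rho$ with the Plücker image of $M$; once $p=1$ is in hand, the contradiction reduces to the two-sided estimate of $|\lambda_{\gamma_{n}}|$ in terms of the extremal singular values of $\rho(\gamma_{n})$ and $\rho(\gamma_{n}^{-1})$.
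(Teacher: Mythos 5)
Your reduction to $p=1$ via the Pl\"ucker embedding is sound: since $U_1(\Wedge^p A) = \iota(U_p(A))$ (equation~\eqref{e.ext_U}) and $\iota$ is a closed topological embedding, the limit set of $\Wedge^p\rho$ is exactly $\iota(M)$, so the singleton question transfers. The gap is in the singular-value estimate that drives the contradiction. You assert $\|\rho(\gamma_n)u_n\| = \sigma_1(\rho(\gamma_n))$ for $u_n$ a unit vector in $U_1(\rho(\gamma_n))$, but with the paper's convention this is false: $U_1(A)$ is the eigenline of $\sqrt{AA^*}$ for the largest eigenvalue, i.e.\ the direction of the longest axis of the \emph{image} ellipsoid, whereas the source direction on which $A$ realizes $\sigma_1$ is $S_{d-1}(A)^\perp = A^{-1}(U_1(A))$, a different line in general. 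For instance $A = \left(\begin{smallmatrix}0&2\\ 1/2&0\end{smallmatrix}\right)$ has $\sigma_1(A)=2$ and $U_1(A)=\R e_1$, yet $\|Ae_1\|=1/2=\sigma_2(A)$.

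Nor can you fix this by replacing $U_1(\rho(\gamma_n))$ with $S_{d-1}(\rho(\gamma_n))^\perp$: the set $M$ records only where the $U_1$'s accumulate and carries no information about the $S_{d-1}^\perp$'s. In fact the singleton hypothesis works against you here. Since $U_1(\rho(\gamma_n^{-1})) = U_1(\rho(\gamma_n)^{-1})$ is contained in $U_{d-1}(\rho(\gamma_n)^{-1}) = S_{d-1}(\rho(\gamma_n))$, the convergence $U_1(\rho(\gamma_n^{-1}))\to[v_0]$ forces $[v_0]$ to \emph{approach} the contracted subspace $S_{d-1}(\rho(\gamma_n))$, so the correct lower bound $\|\rho(\gamma_n)v_0\| \ge \sigma_1(\rho(\gamma_n))\sin\angle(v_0,S_{d-1}(\rho(\gamma_n)))$ from \eqref{e.pit1} degenerates. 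What is missing is a uniform transversality between the $U_p$ and $S_{d-p}$ spaces, which the paper extracts from the group geometry via Lemma~\ref{l.seq_splitting} applied along geodesic concatenations (and then exploits through the nesting of $U_p(\rho(\eta^{-1}))$ and $U_{d-p}(\rho(\eta^{-1}))=S_{d-p}(\rho(\eta))$); that is a genuinely additional ingredient, and a purely pointwise argument from the invariance of $[v_0]$ does not recover it.
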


\begin{proof}
Using Lemma~\ref{l.seq_splitting} 
we can find $\ell_1 \in \N$ and $\delta>0$ such that if $\gamma$, $\eta \in \Gamma$ have lengths both bigger than $\ell_1$, and satisfy $|\eta \gamma|  =  |\eta| + |\gamma|$,
then 
\begin{equation}\label{e.Us_apart}
\angle \big( U_p(\rho(\gamma)) , S_{d-p}(\rho(\eta))  \big) > \delta.
\end{equation}

We already know that $M$ is nonempty, so assume by contradiction that it is a singleton $\{P\}$.
Take $n>\ell_1$ such that if an element $\gamma \in \Gamma$ has length $|\gamma|>n$ then $d \big( U_p(\rho(\gamma)) , P \big) < \frac{1}{2} \sin \delta$.
Now fix arbitrary elements $\gamma$, $\eta \in \Gamma$ with lengths both bigger than $\ell_1$ such that $|\eta \gamma|  =  |\eta| + |\gamma|$.
Then $d \big( U_p(\rho(\gamma)) , U_p(\rho(\eta^{-1})) \big) < \sin \delta$. The space $U_p(\rho(\eta^{-1}))$ either contains or is contained in $U_{d-p}(\rho(\eta^{-1}))  =  S_{d-p}(\rho(\eta))$.
Using the trivial bound \eqref{e.stupid_bound2}, we contradict \eqref{e.Us_apart}.
\end{proof}

Recalling Remark~\ref{rem-det1}, let us assume for convenience for the remainder of this subsection that 
$$
|\det\rho(\gamma)| = 1 \quad \text{for every } \gamma \in \Gamma \, .
$$
Let $\jac( \mathord{\cdot})$ denote the \emph{jacobian} of a linear map, i.e., the product of its singular values. 
The next step is to prove the expansion property expressed by the following lemma:

\begin{lem}\label{l.somebody_expands} 
There exists a constant $\ell \in \N$ such that for every $\gamma \in \Gamma$ with length $|\gamma| \ge \ell$ there exists $P \in M$ such that $\jac(\rho(\gamma)|_P) \ge 2$.
\end{lem}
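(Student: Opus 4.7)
The plan is to locate some $P\in M$ whose angle with $S_{d-p}(\rho(\gamma))$ is bounded below \emph{uniformly in $\gamma$}, and then to exploit the exponential growth of $\sigma_1(\rho(\gamma))\cdots\sigma_p(\rho(\gamma))$. Three ingredients are needed. The first is an arithmetic consequence of the normalization $|\det\rho(\gamma)|=1$: from $\sigma_1\cdots\sigma_d=1$ together with the elementary bounds $\sigma_p\le(\sigma_1\cdots\sigma_p)^{1/p}$ and $\sigma_{p+1}\ge(\sigma_{p+1}\cdots\sigma_d)^{1/(d-p)}=(\sigma_1\cdots\sigma_p)^{-1/(d-p)}$, one deduces
$$
\sigma_p(\rho(\gamma))/\sigma_{p+1}(\rho(\gamma))\le\bigl(\sigma_1(\rho(\gamma))\cdots\sigma_p(\rho(\gamma))\bigr)^{d/(p(d-p))},
$$
which combined with the $p$-domination inequality~\eqref{eq:domrep} yields $\sigma_1(\rho(\gamma))\cdots\sigma_p(\rho(\gamma))\ge c_0\,e^{\mu|\gamma|}$ for some $c_0>0$ and $\mu=\lambda p(d-p)/d>0$. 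The second is an elementary linear-algebra estimate: whenever $\angle(P,S_{d-p}(A))\ge\delta$, writing $P$ as the graph of some $L\colon V_p(A)\to S_{d-p}(A)$ with $\|L\|\le\cot\delta$ (here $V_p(A):=S_{d-p}(A)^\perp$) and computing in an SVD basis for $A$ yields
$$
\jac(A|_P)\ge(\sin\delta)^p\,\sigma_1(A)\cdots\sigma_p(A).
$$

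The crucial third ingredient is the uniform lower bound on the angle. By Lemma~\ref{l.2}, fix distinct $P_1,P_2\in M$ with $2\epsilon:=d(P_1,P_2)>0$, together with sequences $\gamma^{(i)}_n\in\Gamma$ satisfying $|\gamma^{(i)}_n|\to\infty$ and $U_p(\rho(\gamma^{(i)}_n))\to P_i$. Given any $\gamma\in\Gamma$ with $|\gamma|$ large, the triangle inequality on $\Gr_p(\R^d)$ forces an index $i_0\in\{1,2\}$ (possibly depending on $\gamma$ and $n$) with
$$
d\bigl(U_p(\rho(\gamma^{(i_0)}_n)),\,U_p(\rho(\gamma^{-1}))\bigr)\ge\epsilon/2
\qquad\text{for all $n$ sufficiently large.}
$$
Applying Lemma~\ref{l.transv} with parameter $\epsilon/2$ to the pair $(\gamma^{(i_0)}_n,\gamma^{-1})$ yields thresholds $\ell_1$ and $\delta_0>0$ depending only on $\epsilon$ such that $\angle(U_p(\rho(\gamma^{(i_0)}_n)),S_{d-p}(\rho(\gamma)))\ge\delta_0$ whenever $|\gamma|>\ell_1$ and $n$ is large. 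Passing to the limit $n\to\infty$ produces $P\in\{P_1,P_2\}$ with $\angle(P,S_{d-p}(\rho(\gamma)))\ge\delta_0$.

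Combining the three ingredients, for any $\gamma$ with $|\gamma|\ge\ell$ large enough that $c_0(\sin\delta_0)^p e^{\mu|\gamma|}\ge 2$, one obtains $P\in\{P_1,P_2\}\subset M$ with $\jac(\rho(\gamma)|_P)\ge 2$, as required. The main obstacle lies in the third ingredient: one must leverage two-pointedness of $M$ together with the symmetry in Lemma~\ref{l.transv} between $U_p$ of one element and $S_{d-p}$ of the inverse of the other to guarantee that $\delta_0$ is independent of $\gamma$. The first two ingredients are then routine consequences of standard linear algebra and the $|\det|=1$ normalization.
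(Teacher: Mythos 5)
Your proof is correct and follows the same skeleton as the paper's: first obtain, uniformly in $\gamma$, a point $P\in M$ with $\angle(P,S_{d-p}(\rho(\gamma)))$ bounded below, then exploit $|\det\rho(\gamma)|=1$ together with the domination inequality to make $\sigma_1\cdots\sigma_p$ exponentially large, and finally convert the angle bound and the singular-value growth into a Jacobian bound. The two proofs share the same key lemmas (Lemma~\ref{l.2} for existence of two points, Lemma~\ref{l.transv} for the angle transversality). The differences are local. (i) For the uniform angle, you fix two points $P_1,P_2\in M$ together with approximating sequences and apply a pigeonhole/triangle-inequality argument, then pass to the limit; the paper instead takes $Q\in M$ near $U_p(\rho(\gamma^{-1}))$, then some $P\in M$ with $d(P,Q)>2\epsilon$ (a uniform strengthening of Lemma~\ref{l.2} that needs compactness of $M$), then $\eta\in\Gamma$ with $U_p(\rho(\eta))$ near $P$, and propagates the angle bound from $U_p(\rho(\eta))$ to $P$. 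Your route is slightly shorter since it avoids the intermediate step of showing that $M$ lies within a small neighborhood of $\{U_p(\rho(\gamma)): |\gamma|\ge\ell\}$. One phrasing point: you let $i_0$ depend on $n$, in which case passing to the limit needs a short remark (pass to a subsequence where $i_0$ is constant, or better, determine $i_0$ from $\gamma$ alone by noting that one of $P_1,P_2$ already satisfies $d(P_{i_0},U_p(\rho(\gamma^{-1})))\ge\epsilon$). (ii) For the Jacobian bound, you work in graph coordinates, using $\|L\|\le\cot\delta$ and $\det(A_1^{\mathsf T}A_1 + L^{\mathsf T}A_2^{\mathsf T}A_2 L)\ge\det(A_1^{\mathsf T}A_1)$, arriving at $\jac(A|_P)\ge(\sin\delta)^p\,\sigma_1\cdots\sigma_p$; the paper instead uses the Pl\"ucker embedding and exterior powers and obtains the exponent $\min\{p,d-p\}$. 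Both give a constant times $\sigma_1\cdots\sigma_p$, which is all the argument needs.
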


\begin{proof}
As a rephrasing of Lemma~\ref{l.2}, there exists $\epsilon > 0$ such that for each $Q \in M$ there exists $P \in M$ such that $d(P,Q) > 2 \epsilon$.
Let $\ell_1 \ge \ell_0$ and $\delta \in (0, \frac{\pi}{2})$ be given by Lemma~\ref{l.transv}, depending on $\epsilon$.
Let 
$$
\delta' \coloneqq \min \tfrac{1}{2} \big\{ \epsilon, 1 - \cos \delta \big\} \, .
$$
Let $\ell_2 \ge \ell_1$ be such that the sets $M$ and 
$\big \{ U_p(\rho(\gamma)) \st \gamma \in \Gamma, \ |\gamma|\ge \ell \big\}$
are each contained in the $\delta'$-neighborhood of the other.
Fix a large $\ell \ge \ell_2$; how large it needs to be will be clear at the end.

Now fix $\gamma \in \Gamma$ with length $|\gamma| \ge \ell$.
Take $Q \in M$ such that $d\big( Q, U_p(\rho(\gamma^{-1})) \big) < \delta'$.
Let $P \in M$ be such that $d(P,Q) > 2 \epsilon$.
Let $\eta \in \Gamma$ be such that $|\eta| \ge \ell$ and $d\big(P, U_p(\rho(\eta)) \big) < \delta'$.
Then 
\begin{align*}
d\big( U_p(\rho(\eta)), U_p(\rho(\gamma^{-1})) \big) 
&\ge d(P,Q) - d\big( U_p(\rho(\eta)), P \big) - d\big( Q, U_p(\rho(\gamma^{-1})) \big) \\
&> 2 \epsilon - 2 \delta'  \\
&> \epsilon \, . 
\end{align*}
Therefore Lemma~\ref{l.transv} guarantees that
$$
\angle \big( U_p(\rho(\eta)) , S_{d-p}(\rho(\gamma)) \big) > \delta.
$$
So, using identity \eqref{e.distGr4},
\begin{align*}
\cos \angle \big( P, S_{d-p}(\rho(\gamma)) \big)
& =    d\big( P, S_{d-p}(\rho(\gamma))^\perp \big) \\
&\le d\big( P, U_p(\rho(\eta)) \big) + d\big( U_p(\rho(\eta)) , S_{d-p}(\rho(\gamma))^\perp \big) \\
& =  d\big( P, U_p(\rho(\eta)) \big) + \cos \angle \big( U_p(\rho(\eta)) , S_{d-p}(\rho(\gamma)) \big) \\
&\le \delta' + \cos \delta  \\
&< 1 \, .
\end{align*}
Write $A \coloneq \rho(\gamma)$ for simplicity.
We must estimate the jacobian of $A|_P$.
We will use some facts about exterior powers: see Subsection~\ref{ss.ext}.
Let $\iota(P) \in \grass_1(\Wedge^p \R^d)$ be the image of $P$ under the Pl\"ucker embedding, and take a nonzero $w \in \iota(P)$.
Then:
\begin{alignat*}{2}
\jac(A|_P) 
&=   \frac{\|(\Wedge^p A) w\|}{\|w\|}  &\quad &\text{(by \eqref{e.jac_norm})}    \\
&\ge \sigma_1(\Wedge^p A) \sin \angle \Big( w , S_{\binom{d}{p}-1}(\Wedge^p A) \Big) &\quad &\text{(by \eqref{e.pit1})} \\
&=   \sigma_1(A) \cdots \sigma_p(A) \sin \angle \Big( \iota(P) , \big( \iota(S_{d-p}(A)^\perp) \big)^\perp  \Big) &\quad &\text{(by \eqref{e.ext_sigma1} and \eqref{e.ext_S})} \\ 
&\ge  \sigma_1(A) \cdots \sigma_p(A) \big[ \sin \angle ( P , S_{d-p}(A) \big]^{\min\{p,d-p\}} &\quad &\text{(by \eqref{e.plucker_ang})}
\end{alignat*} 
The sine can be bounded from below by a positive constant.
On the other hand, since the product of the singular values of $A$ is $|\det A|  =  1$, we have:
$$
\sigma_1(A) \cdots \sigma_p(A)
 =  \frac{\big[\sigma_1(A) \cdots \sigma_p(A) \big]^{\frac{d-p}{d}}}{\big[\sigma_{p+1}(A) \cdots \sigma_d(A) \big]^{\frac{p}{d}}}
\ge \left[ \frac{\sigma_p(A)}{\sigma_{p+1}(A)} \right]^{\frac{p(d-p)}{d}}
\, ,
$$
which is exponentially large with respect to $|\gamma| \ge \ell$.
We conclude that $\jac(A|_P) \ge 2$ if $\ell$ is large enough. 
\end{proof}

As a last digression, we show that virtually abelian groups cannot have a dominated representation unless they are virtually cyclic. 

\begin{lem}\label{l.Z2notdominated}
Let $\rho \colon \Gamma \to \GL(d,\R)$ be a $p$-dominated representation.
Let $\Gamma'$ be a finite-index subgroup of $\Gamma$, and let $m\ge 2$.
Then there exists no surjective homomorphism $\varphi \colon \Gamma' \to \Z^m$ with finite kernel.
\end{lem}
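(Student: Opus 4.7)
I would argue by contradiction: suppose $\varphi \colon \Gamma' \to \Z^m$ is a surjective homomorphism with finite kernel. The plan is to reduce to the case $\Gamma' = \Z^m$, translate the $p$-domination inequality into a linear gap condition on the eigenvalue exponents of $\rho$, and then derive a contradiction from the connectedness of $S^{m-1}$.

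First, $\Gamma'$ is virtually $\Z^m$ (being a finite extension of $\Z^m$), so it contains a finite-index subgroup $\Lambda$ isomorphic to $\Z^m$: one passes to the centralizer of the finite kernel and then replaces a set of lifts of a basis by sufficiently high powers so that their pairwise commutators (already central and of bounded order) become trivial. As a finite-index subgroup of $\Gamma$, $\Lambda$ is quasi-isometrically embedded for the word metric, so $\rho|_\Lambda$ remains $p$-dominated (with possibly different constants). I may therefore assume $\Gamma = \Z^m$ and set $A_i := \rho(e_i)$ for $i = 1, \dots, m$.

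Since $\Z^m$ is abelian, the matrices $A_i$ commute, and after complexifying they can be simultaneously upper-triangularized by Lie's theorem. Let $a_{i,k}$ denote the $k$-th diagonal entry of $A_i$ in this common basis; then for every $v \in \Z^m$, the complex eigenvalues (with multiplicity) of $\rho(v) = A_1^{v_1} \cdots A_m^{v_m}$ have moduli $e^{\ell_1(v)}, \ldots, e^{\ell_d(v)}$, where $\ell_k(v) := \sum_i v_i \log|a_{i,k}|$ is a linear functional on $\R^m$. Applying $p$-domination to $Nv$ for $N \in \N$ and invoking Yamamoto's asymptotic formula $\tfrac{1}{N} \log \sigma_k(M^N) \to \log|\mu_k(M)|$ (the $k$-th largest eigenvalue modulus) with $M = \rho(v)$, I obtain
\[
\ell_{(p)}(v) - \ell_{(p+1)}(v) \;\ge\; \lambda\,\|v\|_1
\]
for every $v \in \Z^m$, and then for every $v \in \R^m$ by positive homogeneity and continuity; here $\ell_{(k)}(v)$ denotes the $k$-th largest among $\ell_1(v), \ldots, \ell_d(v)$.

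The last step is a short topological contradiction. Since the gap $\ell_{(p)} - \ell_{(p+1)}$ is strictly positive on the unit sphere $S^{m-1}$, the top-$p$ index set $I(v) := \{k : \ell_k(v) \ge \ell_{(p)}(v)\}$ has exactly $p$ elements and is locally constant. Connectedness of $S^{m-1}$, which is the only place where the hypothesis $m \ge 2$ is used, forces $I \equiv I_0$ for some fixed $p$-subset $I_0$. Then for any $k \in I_0$ and $j \notin I_0$, the linear functional $\ell_k - \ell_j$ would have to be strictly positive everywhere on $S^{m-1}$, which is impossible by antipodal symmetry. The main obstacle in this argument is the middle step, where $p$-domination of singular values has to be converted into a linear gap for eigenvalue exponents via Yamamoto's theorem; the reduction and the sphere argument are much shorter by comparison.
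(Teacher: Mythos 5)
Your proof is correct, but it takes a genuinely different route from the paper's. The paper never exploits the group structure of $\Z^m$ beyond the combinatorics of its Cayley graph: it fixes $g\in\Gamma'$ with $\varphi(g)=(1,0,\dots,0)$, observes via Lemma~\ref{l.seq_splitting} applied to the bi-infinite geodesic $(g^n)_{n\in\Z}$ that $U_p(\rho(g^n))$ and $U_p(\rho(g^{-n}))=S_{d-p}(\rho(g^n))$ must stay a definite distance apart, and then builds a rectangular detour $\gamma_0=g^n,\dots,\gamma_{4n}=g^{-n}$ in $\Z^m$ with every $|\gamma_i|\ge n$; the Lipschitz estimate~\eqref{e.useful} for $U_p(\rho(\cdot))$ then forces $U_p(\rho(g^n))$ and $U_p(\rho(g^{-n}))$ to be $O(n e^{-\lambda n})$-close, a contradiction. (There, the hypothesis $m\ge 2$ enters as the existence of the detour.) You instead pass to an honest $\Z^m$ inside $\Gamma'$, use commutativity to simultaneously upper-triangularize over $\C$, convert the singular-value gap into a strict linear gap in the eigenvalue-modulus exponents $\ell_{(p)}-\ell_{(p+1)}$ via Yamamoto's asymptotic formula, and finish with a topological argument: the leading index set is locally constant on the connected sphere $S^{m-1}$, hence constant, contradicting antipodal antisymmetry of the linear functionals $\ell_k-\ell_j$. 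Your version cleanly isolates where $m\ge 2$ enters (connectedness of $S^{m-1}$) and is conceptually closer to the passage from singular values to eigenvalues discussed after Lemma~\ref{l.Z2notdominated} (cf.\ relation~\eqref{e.eigen_gap}); its costs are the extra group-theoretic step of extracting a $\Z^m$ subgroup from the finite-by-$\Z^m$ group $\Gamma'$ (the centralizer/high-powers argument, plus splitting off the torsion of the resulting f.g.\ abelian group), and the reliance on Yamamoto's theorem, neither of which the paper needs since it works directly with $\Gamma'$ and its generating set $\varphi^{-1}(Z)$ and stays entirely within the $U_p$/$S_{d-p}$ estimates already set up.

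Two small points worth tightening if you write this up. First, when you reduce to $\Lambda\cong\Z^m$ you should note explicitly that the abelian subgroup $\langle \tilde e_1^k,\dots,\tilde e_m^k\rangle$ you produce is only $\Z^m\times(\text{finite})$ a priori, and invoke the structure theorem to split off the free part. Second, the extension of $\ell_{(p)}(v)-\ell_{(p+1)}(v)\ge\lambda\|v\|_1$ from $\Z^m$ to $\R^m$ uses positive $\Q$-homogeneity of both sides together with continuity of the functions $\ell_{(k)}$ (each a max of finitely many linear forms); this is fine but deserves a sentence.
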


\begin{proof}  
Assume for a contradiction that $\Gamma$ contains a subgroup $\Gamma'$ which admits a homomorphism $\varphi$ onto $\Z^m$ with finite kernel, where $m \ge 2$. 
Let $Z$ be the standard symmetric generating set for $\Z^m$, with cardinality $2m$; then $\varphi^{-1}(Z)$ is a finite symmetric generating set for $\Gamma'$. Since $\Gamma'$ is a finite-index subgroup of $\Gamma$, the inclusion $\Gamma' \hookrightarrow \Gamma$ is a quasi-isometry (using e.g.\ the Svarc--Milnor lemma \cite[Proposition I.8.19]{BH}), and therefore the restriction of $\rho$ to $\Gamma'$ is $p$-dominated.
For simplicity of notation, we assume that $\Gamma  =  \Gamma'$, with generating set $S  =  \varphi^{-1}(Z)$. Fix the constants $C$, $\lambda$, $K$, and $\ell_0$ satisfying \eqref{eq:domrep}, \eqref{e.K}, and \eqref{e.ell0}.

Fix $g \in \Gamma$ such that $\varphi(g)$ is an element of $Z$, say $(1,0,\dots,0)$.
Since the representation is $p$-dominated and $(g^n)_{n \in \Z}$ is a geodesic in $\Gamma$,
it follows from Lemma~\ref{l.seq_splitting} that there exist $\ell_1 > \ell_0$ and $\delta>0$ such that 
$$
\angle \big( U_p(\rho(g^n)) , S_{d-p}(\rho(g^n)) \big) > \delta \quad \text{for all $n > \ell_1$.}
$$
The space $S_{d-p}(\rho(g^n))  =  U_{d-p}(\rho(g^{-n}))$ either contains or is contained in $U_p(\rho(g^{-n}))$; so the trivial bound \eqref{e.stupid_bound2} yields: \begin{equation}\label{e.domination_will_tear_us_apart}
d \big( U_p(\rho(g^n)) , U_p(\rho(g^{-n})) \big) \ge \sin \delta 
\quad \text{for all $n > \ell_1$.}
\end{equation}

Fix $n > \ell_1$.
Since $m \ge 2$, we can find $\gamma_0$, $\gamma_1$, \dots, $\gamma_{4n} \in \Gamma$ with the following properties:
$$
\gamma_0  =  g^n, \quad \gamma_{4n}  =  g^{-n}, \quad
|\gamma_{i+1}^{-1}\gamma_i|  =  1,\quad
|\gamma_i| \ge n.
$$
Indeed, we can take a preimage under $\varphi$ of an appropriate path in $\Z^m$, sketched in Fig.~\ref{f.path} for the case $m = 2$.

\begin{figure}[htb]
	\begin{tikzpicture}[scale = .3,font = \small]
		\draw[->] (-6,0)--(6,0);
		\draw[->] (0,-2)--(0,6);
		\draw[very thick,-stealth] (4,0) node[below]{$(n,0)$} --(4,4)--(-4,4)--(-4,0) node[below]{$(-n,0)$};
	\end{tikzpicture}
	\caption{A path in $\Z^2$.}\label{f.path}
\end{figure}

Now we estimate:
\begin{alignat*}{2}
d \big( U_p(\rho(g^n)) , U_p(\rho(g^{-n})) \big) 
&\le \sum_{i = 0}^{4n-1} d \big( U_p(\rho(\gamma_i)) , U_p(\rho(\gamma_{i+1})) \big)				\\
&\le \sum_{i = 0}^{4n-1} K^{2 |\gamma_{i+1}^{-1}\gamma_i|} \ C \, e^{-\lambda|\gamma_i|}	&\quad\text{(by estimate \eqref{e.useful})}\\
&\le 4n K^2 C e^{-\lambda n} \, .
\end{alignat*}
Taking $n$ large enough, we contradict \eqref{e.domination_will_tear_us_apart}.
This proves the lemma.
\end{proof}

Now we are ready to obtain the topological property of $M$ required by Theorem~\ref{teo-bow}.
(Later we will have to check the hypotheses about the action on $M^{(3)}$.)
Recall that $\Gamma$ is elementary if it is finite or virtually cyclic (i.e.\ virtually $\Z$).

\begin{prop}\label{p.perfect_again} 
If $\Gamma$ is non-elementary then the set $M$ is perfect. 
\end{prop}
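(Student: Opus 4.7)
The plan is to reduce, via Lemma~\ref{l.3pointsperfect}, to showing that $|M|\ge 3$ whenever $\Gamma$ is non-elementary. Since Lemma~\ref{l.2} already gives $|M|\ge 2$, it suffices to rule out $|M|=2$; so I argue by contradiction, assuming $M=\{P_1,P_2\}$ and aiming to show that $\Gamma$ must then be elementary. The key ingredients will be Lemma~\ref{l.somebody_expands} (to manufacture a homomorphism with finite kernel) together with Lemma~\ref{l.Z2notdominated} (to restrict its rank).

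By Proposition~\ref{p.PropsSetM} the group $\rho(\Gamma)$ permutes $M=\{P_1,P_2\}$, giving a homomorphism $\Gamma\to\mathrm{Sym}\{P_1,P_2\}$ whose kernel $\Gamma'$ has index at most $2$ in $\Gamma$ and fixes each $P_i$ individually. Under the normalization $|\det\rho|\equiv 1$ in force throughout this subsection, each restriction $\rho(\gamma)|_{P_i}\colon P_i\to P_i$ is a linear automorphism of $P_i$ whose jacobian equals $|\det(\rho(\gamma)|_{P_i})|$, so I can define group homomorphisms
$$
\chi_i\colon\Gamma'\to\R, \qquad \chi_i(\gamma)\coloneqq\log|\det(\rho(\gamma)|_{P_i})|,
$$
and combine them into a single homomorphism $\chi\coloneqq(\chi_1,\chi_2)\colon\Gamma'\to\R^2$.

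Lemma~\ref{l.somebody_expands}, applied to $\gamma\in\Gamma'$ with $|\gamma|\ge\ell$, produces some $P\in M$ with $\jac(\rho(\gamma)|_P)\ge 2$; since $P\in\{P_1,P_2\}$ and $\gamma$ stabilizes $P$, this reads $\chi_i(\gamma)\ge\log 2$ for some index $i$. Hence $\chi(\gamma)\ne 0$ for every long $\gamma\in\Gamma'$, so $\ker\chi$ is contained in the $\Gamma$-ball of radius $\ell$ around the identity and is therefore finite. Consequently $\Gamma'/\ker\chi$ embeds as a finitely generated subgroup of $(\R^2,+)$ and is isomorphic to $\Z^k$ for some $k\in\{0,1,2\}$.

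Since $\Gamma'$ has finite index in $\Gamma$, Lemma~\ref{l.Z2notdominated} forces $k\le 1$: if $k=0$ then $\Gamma'$ and hence $\Gamma$ is finite, while if $k=1$ then $\Gamma'$ and hence $\Gamma$ is virtually cyclic. Either alternative contradicts the hypothesis that $\Gamma$ is non-elementary, completing the proof. The only delicate point is the conversion, in the previous paragraph, of the dynamical expansion provided by Lemma~\ref{l.somebody_expands} into the algebraic inequality $\chi_i(\gamma)\ge\log 2$; this step crucially uses the hypothesis $|M|=2$, since only then are the subspaces $P_i$ stabilized by a finite-index subgroup and the jacobians along $P_i$ become genuine multiplicative characters.
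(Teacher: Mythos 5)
Your argument is essentially the paper's own: the paper likewise invokes Lemma~\ref{l.3pointsperfect} to reduce to ruling out finite $M$, passes to the finite-index stabilizer $\Gamma'$, forms the jacobian homomorphism $\varphi\colon\Gamma'\to\R^k$, deduces finiteness of $\ker\varphi$ from Lemma~\ref{l.somebody_expands}, and concludes via Lemma~\ref{l.Z2notdominated}. Your specialization to $|M|=2$ is logically valid given Lemmas~\ref{l.2} and~\ref{l.3pointsperfect}; the paper simply argues directly with $M=\{P_1,\dots,P_k\}$ for arbitrary finite $k$, which is a little cleaner and avoids the extra cardinality reduction.

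One inaccuracy worth flagging: it is not true that a finitely generated subgroup of $(\R^2,+)$ has free rank at most $2$ (e.g.\ $\Z+\Z\sqrt2+\Z\sqrt3\subset\R$ is free abelian of rank $3$), so your assertion that $k\in\{0,1,2\}$ is false. This is harmless here, since the bound $k\le 1$ that you actually use is supplied by Lemma~\ref{l.Z2notdominated} and does not rely on the target being $\R^2$. For the same reason, your closing remark somewhat overstates the role of $|M|=2$: the conversion of the expansion from Lemma~\ref{l.somebody_expands} into an inequality on multiplicative characters works for any finite $\rho(\Gamma)$-invariant $M$, exactly as the paper's proof uses it.
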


\begin{proof} By Lemma \ref{l.3pointsperfect}, it is enough to show that $M$ is infinite. Recall that $M$ is non-empty. We assume by contradiction that $M$ is finite, say $M = \{P_1, \ldots, P_k \}$. 

By Proposition~\ref{p.PropsSetM}, the set $M$ is $\rho(\Gamma)$-invariant. Consider the set $\Gamma' \en \Gamma$ of those elements such that $\rho(\gamma)P_i  =  P_i$ for all $i$. Then $\Gamma'$ is a finite-index subgroup of $\Gamma$. 
Consider the map $\varphi \colon \Gamma' \to \R^k$ defined by:
$$ \gamma \in \Gamma' \mapsto ( \log \jac( \rho(\gamma)|_{P_i})  )_{i = 1, \ldots, k} \ \ . $$
Invariance of the subspaces $P_i$ implies that the jacobian is multiplicative and so $\varphi$ is a homomorphism.
By Lemma~\ref{l.somebody_expands}, the kernel of $\varphi$ is finite. On the other hand,
the image of $\varphi$ is a subgroup of $\RR^k$ and therefore is abelian, without torsion, and finitely generated (since so is $\Gamma'$); therefore this image is isomorphic to some $\ZZ^m$.
Lemma~\ref{l.Z2notdominated} now yields that $m = 1$, and it follows that $\Gamma$ is elementary.
This contradiction concludes the proof.
\end{proof}


\subsection{Proper discontinuity}

Given a triple $T  =  (P_1,P_2,P_3) \in M^{(3)}$, let us denote
\begin{equation}\label{e.triple}
|T| \coloneqq \min_{i\neq j} d( P_i, P_j) \, .
\end{equation}
Note that for any $\delta>0$, the set 
$\big\{T \in M^{(3)} \st |T| \ge \delta \big\}$
is a compact subset of $M^{(3)}$;
conversely, every compact subset of $M^{(3)}$ is contained in a subset of that form.

\begin{prop}\label{p.propdisc}
For every $\delta>0$ there exists $\ell \in \N$ such that if $T \in M^{(3)}$ satisfies $|T|>\delta$ and $\eta\in \Gamma$ satisfies $|\eta|> \ell$, then we have $|\rho(\eta)T| < \delta$.
\end{prop}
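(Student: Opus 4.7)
The plan is to exploit the contracting behavior of long group elements on $M$ that has already been packaged into Lemma~\ref{l.diciembre}: outside a small neighborhood of $U_p(\rho(\eta^{-1}))$, the map $\rho(\eta)$ crushes $M$ into a small neighborhood of the single point $U_p(\rho(\eta))$. Since a triple with $|T|>\delta$ is by definition spread out, at most one of its three coordinates can lie close to $U_p(\rho(\eta^{-1}))$, so the other two are pushed close to $U_p(\rho(\eta))$, and therefore close to each other, once $\eta$ is long enough.

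More concretely, given $\delta>0$ I would choose two auxiliary scales, say $\epsilon \coloneqq \delta/3$ and $\epsilon' \coloneqq \delta/3$. The first has the pigeonhole effect: if $|T|>\delta$ and some $Q \in \Gr_p(\R^d)$ satisfied $d(P_i,Q) \le \epsilon$ for two distinct indices $i$, then $d(P_i,P_j) \le 2\epsilon < \delta$, contradicting $|T|>\delta$; hence $U_p(\rho(\eta^{-1}))$ is $\epsilon$-close to at most one of $P_1,P_2,P_3$. The second gives the triangle-inequality collision: any two points of $\Gr_p(\R^d)$ both within $\epsilon'$ of a common point are within $2\epsilon'<\delta$ of each other.

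Then I would apply Lemma~\ref{l.diciembre} with these $\epsilon$ and $\epsilon'$ to obtain the corresponding threshold $\ell$, and take exactly this $\ell$ in the statement. For $\eta \in \Gamma$ with $|\eta|>\ell$ and $T = (P_1,P_2,P_3) \in M^{(3)}$ with $|T|>\delta$, pick two indices $i\ne j$ (guaranteed by the previous paragraph) with
\[
d\bigl(P_i,U_p(\rho(\eta^{-1}))\bigr) > \epsilon
\quad\text{and}\quad
d\bigl(P_j,U_p(\rho(\eta^{-1}))\bigr) > \epsilon.
\]
Lemma~\ref{l.diciembre} then yields $d(\rho(\eta)P_i,U_p(\rho(\eta)))<\epsilon'$ and $d(\rho(\eta)P_j,U_p(\rho(\eta)))<\epsilon'$, and the triangle inequality gives $d(\rho(\eta)P_i,\rho(\eta)P_j)<2\epsilon'<\delta$. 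Since $\rho(\Gamma)$ preserves $M$ (Proposition~\ref{p.PropsSetM}) and $\rho(\eta)$ is a bijection, $\rho(\eta)T$ still lies in $M^{(3)}$, and the bound on two of its pairwise distances gives $|\rho(\eta)T|<\delta$, as required.

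There is no real obstacle at this stage: the genuine analytic content—transversality of $U_p(\rho(\gamma))$ to $S_{d-p}(\rho(\eta))$ (Lemma~\ref{l.transv}) together with the ``attractor'' Lemma~\ref{l.dominationattractor}—has been absorbed into Lemma~\ref{l.diciembre}. What remains is purely combinatorial pigeonholing among three points against a single dual object $U_p(\rho(\eta^{-1}))$, coupled with a two-term triangle inequality, so the proof reduces to bookkeeping of the constants $\epsilon$ and $\epsilon'$.
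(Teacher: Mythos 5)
Your proof is correct and follows essentially the same route as the paper: both apply Lemma~\ref{l.diciembre} after a pigeonhole argument showing that at most one of the three points can be close to $U_p(\rho(\eta^{-1}))$. The only difference is the choice of constants ($\delta/3$ versus the paper's $\delta/2$), which is immaterial.
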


\begin{proof}
Given $\delta>0$, let $\ell$ be given by Lemma~\ref{l.diciembre} with 
$\epsilon  =  \epsilon'  =  \delta/2$.
Now consider $(P_1,P_2,P_3) \in M^{(3)}$ such that $|T| > \delta$ and $\eta \in \Gamma$ such that $|\eta| > \ell$.
Note that $d(U_p(\eta^{-1}),P_i) > \delta/2$ for at least two of the spaces $P_1$, $P_2$, $P_3$ -- say, $P_1$ and $P_2$.
Lemma~\ref{l.diciembre} yields 
$d \big( \rho(\eta) P_i, U_p(\rho(\eta)) \big) < \delta/2$ for each $i = 1,2$.
In particular, $d \big( \rho(\eta) P_1, \rho(\eta) P_2 \big) < \delta$ and 
so $|\rho(\eta)T|<\delta$, as we wanted to show.
\end{proof}

\subsection{Cocompactness}\label{ss.coco}

The purpose of this subsection is to prove the following proposition which will complete the proof of Theorem~\ref{teo:dominationimplieshyp}. Recall notation \eqref{e.triple}.

\begin{prop}\label{p.cocompact}
There exists $\eps>0$ such that for every $T \in M^{(3)}$ there exists $\gamma \in \Gamma$ such that 
$|\rho(\gamma) T| \ge \epsilon$.
\end{prop}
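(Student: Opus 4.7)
The plan is to argue by contradiction. Suppose the conclusion fails: then one obtains a sequence $T_n = (P_1^{(n)}, P_2^{(n)}, P_3^{(n)}) \in M^{(3)}$ with $\sup_{\gamma \in \Gamma} |\rho(\gamma) T_n| \to 0$. Taking $\gamma = \id$ gives $|T_n| \to 0$, so after passing to a subsequence and relabelling I may assume that $P_1^{(n)}$ and $P_2^{(n)}$ both tend to a common point $Q \in M$, with $\delta_n \coloneqq d(P_1^{(n)}, P_2^{(n)}) \to 0$, while $P_3^{(n)}$ tends to some $R \in M$.

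From the very definition of $M$ as the closure of attracting subspaces, I can pick $\eta_k \in \Gamma$ with $|\eta_k| \to \infty$ and $U_p(\rho(\eta_k)) \to Q$. Setting $\gamma_k \coloneqq \eta_k^{-1}$ yields $U_p(\rho(\gamma_k^{-1})) \to Q$; after a further subsequence, $U_p(\rho(\gamma_k)) \to b$ for some $b \in M$. Using that $M$ is perfect (Proposition~\ref{p.perfect_again}), I can arrange, by pre- or post-multiplying the $\gamma_k$ by a suitable fixed element of $\Gamma$ if needed, that $b$ is bounded away from both $Q$ and $R$ by a uniform positive amount.

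The heart of the argument is a diagonal selection $\gamma'_n \coloneqq \gamma_{k(n)}$ in which $k(n)$ is tuned so that the \emph{expansion factor} $\sigma_p(\rho(\gamma_{k(n)}))/\sigma_{p+1}(\rho(\gamma_{k(n)}))$ -- which by domination is at least $C^{-1} e^{\lambda |\gamma_{k(n)}|}$ -- is comparable to $\delta_n^{-1}$. On a small neighborhood of the repelling subspace $U_p(\rho(\gamma_{k(n)}^{-1}))$, the map $\rho(\gamma_{k(n)})$ acts on $\Gr_p(\R^d)$ as an expansion of order $\sigma_p/\sigma_{p+1}$, which is the dual incarnation of the contraction expressed by Lemma~\ref{l.dominationattractor} (and quantified by Lemma~\ref{l.expand}). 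Consequently $d \big( \rho(\gamma'_n) P_1^{(n)}, \rho(\gamma'_n) P_2^{(n)} \big) \ge c$ for some uniform $c>0$. Simultaneously, since $P_3^{(n)}$ stays at definite distance from $Q$, and hence from the repeller, Lemma~\ref{l.diciembre} gives $\rho(\gamma'_n) P_3^{(n)} \to b$. Combined with the separation of $b$ from both $Q$ and $R$, this forces all three pairwise distances of $\rho(\gamma'_n) T_n$ to stay bounded below, contradicting $|\rho(\gamma'_n) T_n| \to 0$.

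The main obstacle is the quantitative expansion estimate near the repelling subspace, together with the simultaneous control of $|\gamma_k|$ and of the approximation $U_p(\rho(\gamma_k^{-1})) \to Q$ -- effectively a bounded-distortion statement for the action of $\rho(\gamma)$ on $\Gr_p(\R^d)$ near its repelling subspace. This asks for a careful diagonal argument combining the dynamical picture of Lemma~\ref{l.diciembre} with the finer linear-algebraic estimates of the appendix.
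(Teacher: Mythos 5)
Your plan diverges from the paper's and contains a gap at the final step. The paper first proves a uniform Expansivity Lemma (Lemma~\ref{l.expansivity}): there are $\delta>0$ and $\ell$ such that for every $P\in M$ there is a $\gamma$ with $|\gamma|\le\ell$ expanding distances by a factor $2$ on the $\delta$-neighborhood of $P$. It then iterates the claim $|\rho(\gamma)T|\ge\min\{2|T|,\epsilon\}$, where $\epsilon$ is defined as the infimum of distances $d(\rho(\gamma)P,\rho(\gamma)P')$ over the \emph{finite} set $\{|\gamma|\le\ell\}$ and all pairs at distance $\ge\delta/2$. Two features are essential and are both missing from your one-shot construction: the word length $|\gamma|$ is uniformly bounded (so it cannot crush any already-separated pair below $\epsilon$), and the result of one step is only $\min\{2|T|,\epsilon\}$, which is then fed back into the recursion.

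The concrete gap in your argument is the last two sentences. After applying $\rho(\gamma'_n)$ you only control one pairwise distance from below: $d(\rho(\gamma'_n)P_1^{(n)},\rho(\gamma'_n)P_2^{(n)})\ge c$. You also know $\rho(\gamma'_n)P_3^{(n)}\to b$. But nothing prevents, say, $\rho(\gamma'_n)P_1^{(n)}\to b$ as well: the tuned expansion tells you where the images are \emph{relative to each other}, not where they land in $\Gr_p(\R^d)$. Then $d(\rho(\gamma'_n)P_1^{(n)},\rho(\gamma'_n)P_3^{(n)})\to0$ and $|\rho(\gamma'_n)T_n|\to0$, so there is no contradiction. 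Your elements $\gamma'_n$ have unbounded word length, precisely the situation in which $\rho(\gamma'_n)$ can collapse a well-separated pair. In addition, you do not rule out $R=Q$ (so $P_3^{(n)}$ may \emph{not} stay at definite distance from $Q$, invalidating the application of Lemma~\ref{l.diciembre}), and the claim that one can ``arrange $b$ to be bounded away from both $Q$ and $R$ by pre- or post-multiplying the $\gamma_k$'' is asserted without justification. To repair the argument one essentially has to re-introduce the paper's safeguard: work with a \emph{bounded-length} expanding element near the closest pair, accept the weaker one-step gain $\min\{2|T|,\epsilon\}$, and iterate. That is the content of the paper's Lemma~\ref{l.expansivity} (which, via Corollary~\ref{c.seq_complement}, makes the expansion quantitative and uniform over $M$) together with the recursive argument.
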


We need some preliminaries.
Recall that a \emph{geodesic ray} from the identity is a sequence $(\eta_j)_{j\ge 0}$ such that $\eta_0  =  \id$, $\eta_j  =  g_1 \cdots g_j$ where each $g_j$ belongs to $S$ (the fixed symmetric generating set of $\Gamma$), and $|\eta_j|  =  j$. 
Then we have the following characterization of $M$:

\begin{lem}\label{l.diagonal}
For every $P \in M$ there exists a geodesic ray $(\eta_j)_{j\ge 0}$ from the identity such that $U_p(\rho(\eta_j)) \to P$ as $j \to \infty$.
\end{lem}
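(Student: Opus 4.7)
The plan is to combine a diagonal extraction, which uses only that the symmetric generating set $S$ is finite, with a uniform Cauchy estimate for $U_p$ along geodesics which is a direct consequence of $p$-domination and Lemma~\ref{l.nochangeright} from the appendix. The diagonal argument produces the candidate ray; the Cauchy estimate forces it to track $P$.

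First, since $P \in M$, fix a sequence $(\gamma_n)$ in $\Gamma$ with $|\gamma_n| \to \infty$ and $U_p(\rho(\gamma_n)) \to P$, and write each $\gamma_n$ as a geodesic word $g_1^{(n)} g_2^{(n)} \cdots g_{L_n}^{(n)}$ with $g_i^{(n)} \in S$ and $L_n = |\gamma_n|$. Because $S$ is finite, a standard diagonal extraction provides a subsequence (still denoted $(\gamma_n)$) and a sequence $(g_j)_{j\ge 1}$ in $S$ such that, for each fixed $j$, one has $g_i^{(n)} = g_i$ for $i = 1,\dots,j$ as soon as $n$ is large enough. Setting $\eta_j \coloneqq g_1 g_2 \cdots g_j$ (and $\eta_0 \coloneqq \id$) yields a geodesic ray from the identity, because geodesity is preserved under this pointwise stabilization.

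Now I claim the uniform Cauchy estimate: there exists a constant $c_*>0$ such that for every $j \ge \ell_0$ and every $\zeta\in\Gamma$ admitting a geodesic expression whose first $j$ letters are $g_1,\ldots,g_j$,
\[
d\bigl(U_p(\rho(\eta_j)),\, U_p(\rho(\zeta))\bigr) \le c_* e^{-\lambda j}.
\]
Indeed, writing $\zeta = \eta_j\cdot g_{j+1}\cdots g_{|\zeta|}$ along the geodesic and applying Lemma~\ref{l.nochangeright} one letter at a time (with $A$ the current prefix and $B = \rho(g_{k+1})$, noting $\|\rho(g_{k+1})^{\pm 1}\| \le K$), the $p$-domination bound $\sigma_{p+1}/\sigma_p(\rho(\eta_k)) \le Ce^{-\lambda k}$ gives $d(U_p(\rho(\eta_k)), U_p(\rho(\eta_{k+1}))) \le K^2 C e^{-\lambda k}$ for each $k\ge \ell_0$. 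The same bound applies at every intermediate prefix of $\zeta$, so summing the geometric series yields the claim with $c_* \coloneqq K^2 C/(1-e^{-\lambda})$; in particular the sequence $(U_p(\rho(\eta_j)))_j$ is Cauchy in $\grass_p(\R^d)$.

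To identify the limit with $P$, fix $j \ge \ell_0$ and take $n$ so large that $g_i^{(n)} = g_i$ for $i\le j$, i.e.\ $\gamma_n$ has a geodesic expression starting with $g_1\cdots g_j$. The Cauchy estimate gives $d(U_p(\rho(\eta_j)), U_p(\rho(\gamma_n))) \le c_* e^{-\lambda j}$; passing to the limit $n\to\infty$ with $j$ held fixed and using $U_p(\rho(\gamma_n)) \to P$ produces $d(U_p(\rho(\eta_j)), P) \le c_* e^{-\lambda j}$, which tends to $0$ as $j \to \infty$. The only substantive step is the Cauchy estimate, and there is no real obstacle: it is telescoping applied to the single-letter version of Lemma~\ref{l.nochangeright}, with the exponential gap supplied by $p$-domination. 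The diagonal extraction and passage to the limit in $n$ are routine.
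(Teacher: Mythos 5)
Your proposal is correct and follows essentially the same approach as the paper: a diagonal extraction from geodesic word representations of the $\gamma_n$, followed by the telescoping exponential estimate from Lemma~\ref{l.nochangeright} together with $p$-domination to show that $U_p(\rho(\eta_j))$ tracks $U_p(\rho(\gamma_n))$ whenever the geodesic prefixes agree. The only cosmetic difference is that you state the Cauchy estimate uniformly over all $\zeta$ sharing a prefix, whereas the paper applies it directly to the tail of $\gamma_{i_j}$; the underlying computation is identical.
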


\begin{proof}
Fix $P \in M$.
By definition, there exists a sequence $(\gamma_i)$ in $\Gamma$ with $n_i \coloneqq |\gamma_i| \to \infty$ such that $U_p(\rho(\gamma_i)) \to P$.
We can assume that $n_1 < n_2 < \cdots$.
Write each $\gamma_i$ as a product of elements of $S$, say 
$\gamma_i  =  g_1^{(i)} g_2^{(i)} \cdots g_{n_i}^{(i)}$.
By a diagonal argument, we can assume that each of the sequences $(g_j^{(i)})_i$ stabilizes; more precisely, for every $j \ge 1$ there exists $g_j \in S$ and $k_j$ such that $n_{k_j} \ge j$ and $g_j^{(i)}  =  g_j$ for every $i \ge k_j$. 

Consider the geodesic ray $(\eta_j)$ defined by $\eta_j \coloneqq g_1 \cdots g_j$.
Let $i_j \coloneqq \max \{k_1, \dots, k_j\}$.
We will prove that:
\begin{equation}\label{e.same_lim}
d \big( U_p(\rho(\eta_j)) , U_p(\rho(\gamma_{i_j})) \big) \to 0 \, ,
\end{equation}
which has the desired property $U_p(\rho(\eta_j)) \to P$ as a consequence.

Consider the truncated products 
$\gamma_i^{[j]} \coloneqq g_1^{(i)} g_2^{(i)} \cdots g_{j}^{(i)}$.
Using Lemma~\ref{l.nochangeright} and the fact that the representation $\rho$ is $p$-dominated (or, equivalently, using estimate~\eqref{e.useful}), we see that 
$d \big( U_p(\rho(\gamma_i^{[j]})) , U_p(\rho(\gamma_i^{[j+1]})) \big)$ can be bounded by a quantity exponentially small with respect to $j$ and independent of $i$. 
Therefore  $d \big( U_p(\rho(\gamma_i^{[j]})) , U_p(\rho(\gamma_i)) \big)$ is also exponentially small with respect to $j$. 
Applying this to $i  =  i_j$, we obtain \eqref{e.same_lim}.
\end{proof}

\begin{lem}[Expansivity]\label{l.expansivity}
There exist constants $\delta>0$ and $\ell \in \N$ with the following properties.
For every $P \in M$ there exists $\gamma \in \Gamma$ with $|\gamma|\le \ell$
such that if $P'$, $P''$ belong to the $\delta$-neighborhood of $P$ in $\Gr_p(\R^d)$
then 
$$
d(\rho(\gamma) P' , \rho(\gamma) P'') \ge 2 d(P', P'') \, .
$$
\end{lem}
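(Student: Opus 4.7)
The strategy is a compactness reduction: it suffices to establish the pointwise version, namely that for each $P \in M$ there exist $\gamma_P \in \Gamma$ and $\delta_P > 0$ such that $\rho(\gamma_P)$ expands Grassmann distances by a factor of at least $2$ on the ball $B(P,\delta_P) \subset \Gr_p(\R^d)$. Granted this, the collection $\{B(P,\delta_P/2)\}_{P \in M}$ is an open cover of the compact set $M$, from which I extract a finite subcover $\{B(P_i,\delta_{P_i}/2)\}_{i=1}^{N}$ and set $\ell \coloneqq \max_i |\gamma_{P_i}|$, $\delta \coloneqq \tfrac{1}{2}\min_i \delta_{P_i}$. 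For any $P \in M$ lying in some $B(P_i,\delta_{P_i}/2)$, the $\delta$-ball around $P$ sits inside $B(P_i,\delta_{P_i})$, on which $\rho(\gamma_{P_i})$ is $2$-expanding; take $\gamma \coloneqq \gamma_{P_i}$.

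For the pointwise claim, fix $P \in M$ and use the definition of $M$ to choose $\eta \in \Gamma$ with $|\eta|$ as large as desired and $U_p(\rho(\eta))$ as close to $P$ as desired. Set $\gamma_P \coloneqq \eta^{-1}$ and abbreviate $A \coloneqq \rho(\gamma_P) = \rho(\eta)^{-1}$. Two features of this choice are crucial. First, using $\sigma_i(B^{-1}) = 1/\sigma_{d-i+1}(B)$ together with Remark~\ref{rem-pdominatedimpliesd-p} (so that $\rho$ is also $(d-p)$-dominated), we obtain
$$
\frac{\sigma_{p+1}(A)}{\sigma_p(A)} \;=\; \frac{\sigma_{d-p+1}(\rho(\eta))}{\sigma_{d-p}(\rho(\eta))} \;\le\; C e^{-\lambda |\eta|},
$$
so the singular value gap of $A$ at index $p$ is exponentially large in $|\eta|$. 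Second, $S_{d-p}(A) = U_{d-p}(\rho(\eta))$ by the very definition of $S_{d-p}$, and $U_p(\rho(\eta)) \cap U_{d-p}(\rho(\eta)) = U_{\min(p,d-p)}(\rho(\eta))$ has dimension $\min(p,d-p) \geq 1$; hence $U_p(\rho(\eta))$ lies in the ``repelling locus'' $\{Q \in \Gr_p(\R^d) \st Q \cap S_{d-p}(A) \neq 0\}$ of $A$. Translating via the metric formula $d(Q,R) = \cos\angle(Q^\perp,R)$, this is exactly the statement that $d(U_p(\rho(\eta)),\, S_{d-p}(A)^\perp) = 1$, and by continuity $d(P, S_{d-p}(A)^\perp)$ can be made arbitrarily close to $1$ by choosing $\eta$ appropriately.

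This places $P$ in the regime where Lemma~\ref{l.expand} applies, yielding a $\delta_P$-neighborhood of $P$ on which $A$ expands distances by a factor $\geq c \cdot \sigma_p(A)/\sigma_{p+1}(A)$, where $c>0$ depends only on the margin $d(P,S_{d-p}(A)^\perp) > 1-\alpha$. Taking $|\eta|$ large enough, the first factor alone is $\geq 2/c$, which completes the pointwise step. The main obstacle is to execute this while keeping all constants usable in the final compactness extraction: one must first commit to a definite margin $\alpha$ (which fixes $c = c(\alpha)$ and a lower bound on $\delta_P$), and only then choose $|\eta|$ large enough to swamp $c(\alpha)$ via the exponential gap in singular values. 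Because Lemma~\ref{l.seq_convergence} gives exponential convergence of $U_p(\rho(\eta_i))$ along geodesic rays, both requirements can be met simultaneously, and the constants fit together cleanly in the finite subcover argument.
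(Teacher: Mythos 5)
Your compactness reduction is exactly the paper's first step, and it is correct. The gap is in the pointwise claim. Lemma~\ref{l.expand} has a \emph{two-sided} hypothesis: one must produce a complement $Q\in\Gr_{d-p}(\R^d)$ satisfying both $\angle(P,Q)\ge\alpha$ \emph{and} $\angle(AP,AQ)\ge\alpha$, and the expansion rate it then yields is $b\,\mm(A|_Q)/\|A|_P\|$. Your argument never specifies a $Q$ and never touches the second angle condition. That condition is not automatic: the matrix $A=\rho(\eta)^{-1}$ with $|\eta|$ large is precisely what makes the expansion rate large, but such a strongly anisotropic map can collapse the angle between the $A$-images of a generic transverse pair of complementary subspaces. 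Moreover, near $S_{d-p}(A)$ the induced action of $A$ on $\Gr_p(\R^d)$ is expansive, so $AP$ is extremely sensitive to the position of $P$; one cannot control $\angle(AP,AQ)$ just from knowing that $P$ is close to $U_p(\rho(\eta))$ and that the singular-value gap is big. The "obvious'' guess $Q=P^\perp$ is not shown to work, and the heuristic ``$P$ lies in the repelling locus'' does not substitute for verifying the hypothesis.

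The paper discharges exactly this point via Corollary~\ref{c.seq_complement}. After fixing a geodesic ray $(\eta_n)$ with $U_p(\rho(\eta_n))\to P$ (Lemma~\ref{l.diagonal}), the corollary produces a single $\tilde P\in\Gr_{d-p}(\R^d)$ with $\angle\big(\rho(\eta_n^{-1})\tilde P,\rho(\eta_n^{-1})P\big)\ge\alpha$ for \emph{all} $n$, with $\alpha$ independent of $P$, together with the decay $\|\rho(\eta_n^{-1})|_P\|/\mm(\rho(\eta_n^{-1})|_{\tilde P})<\tilde c\,e^{-\tilde\mu n}$. This uniform transversality is the substantive input --- it is obtained by extending the one-sided matrix sequence along the ray to a bi-infinite one (Lemma~\ref{l.seq_extension}) and then applying the dominated splitting of the extension (Proposition~\ref{p.BG_sequences}) --- and it is exactly what lets one apply Lemma~\ref{l.expand} centered at $P$ itself, with $Q=\tilde P$ and expansion rate $b\,\mm(A|_{\tilde P})/\|A|_P\|>b\,\tilde c^{-1}e^{\tilde\mu k}$. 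Your stated rate $c\,\sigma_p(A)/\sigma_{p+1}(A)$ is also not what Lemma~\ref{l.expand} gives; the identification of $b\,\mm(A|_Q)/\|A|_P\|$ with a singular-value ratio requires pinning down $Q$ and the position of $P$, which is again where Corollary~\ref{c.seq_complement} does the work. To repair the proof you would need to reinstate that corollary (or an equivalent argument producing a uniformly transverse $\tilde P$).
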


\begin{proof}
By compactness of $M$, it is sufficient to prove that for every $P \in M$ there exists $\gamma \in \Gamma$ such that
if $P'$, $P''$ belong to a sufficiently small neighborhood of $P$ in $\Gr_p(\R^d)$
then $d(\rho(\gamma) P' , \rho(\gamma) P'') \ge 2 d(P', P'')$.

Given $P \in M$, by Lemma~\ref{l.diagonal} there exists a geodesic ray $(\eta_n)$ such that  $U_p(\rho(\eta_n)) \to P$. Write $\eta_n  =  g_1 \cdots g_n$, where each $g_n \in S$.
Consider the sequence of matrices $(A_0,A_1,\dots)$ given by $A_n \coloneqq \rho(g_{n-1}^{-1})$
By the domination condition~\eqref{eq:domrep}, the sequence belongs to some $\cD(\mu,c,K,d-p,\N)$.
Note that $S_p(A_{n-1} \dots A_0)$ equals $U_p(\rho(\eta_n))$ and thererfore converges to $P$.
Applying Corollary~\ref{c.seq_complement} to the sequence of matrices, we find $\tilde P \in \Gr_{d-p}(\R^d)$ such that, for all $n \ge 0$, \begin{gather*}
\angle \big(\rho(\eta_n^{-1}) \tilde P , \rho(\eta_n^{-1}) P \big) \ge \alpha \, , \\
\frac{\| \rho(\eta_n^{-1})|_{P} \| }{\mm( \rho(\eta_n^{-1})|_{\tilde P})} < \tilde{c} e^{-\tilde{\mu} n} \, ,
\end{gather*} 
where $\alpha$, $\tilde c$, $\tilde \mu$ are positive constants that do not depend on $P$.
Let $b>0$ be given by Lemma~\ref{l.expand}, depending on $\alpha$.
Fix $k$ such that $b \tilde{c}^{-1} e^{\tilde{\mu} k} > 2$, and let $\gamma \coloneqq \eta_k^{-1}$.
Applying Lemma~\ref{l.expand} to $A \coloneqq \rho(\gamma)$, we conclude that 
for all $P'$, $P''$ in a sufficiently small neighborhood of $P$ in $\Gr_p(\R^d)$ we have
$$
d(\rho(\gamma) P' , \rho(\gamma) P'') \ge 2 d(P', P'') \, ,
$$
as we wanted to show.
\end{proof}

\begin{proof}[Proof of Proposition~\ref{p.cocompact}]
Let $\delta$ and $\ell$ be given by Lemma~\ref{l.expansivity}.
Let 
$$
\epsilon \coloneqq \inf \Big\{ d\big( \rho(\gamma)P, \rho(\gamma)P' \big) \st 
\gamma \in \Gamma, \ |\gamma| \le \ell, \ P,P'\in \Gr_p(\R^d), \ d(P,P') \ge \frac{\delta}{2}
\Big\} \, .
$$
So $0 < \epsilon \le \delta/2$.
We claim that
\begin{equation}\label{e.claim}
\forall T \in M^{(3)} \  \exists \gamma \in \Gamma \text{ such that }
|\rho(\gamma)T| \ge \min \{ 2|T|, \epsilon\} \, .
\end{equation}
Indeed, given $T  =  (P_1,P_2,P_3) \in M^{(3)}$, 
we can suppose that $|T| < \epsilon$, otherwise we simply take $\gamma  =  \id$.
Permuting indices if necessary we can assume that $d(P_1,P_2)  =  |T|$.
We apply Lemma~\ref{l.expansivity} and find $\gamma \in \Gamma$ such that the action of $\rho(\gamma)$ on
$N_\delta(P_1)$ (the $\delta$-neighborhood of $P_1$) expands distances by a factor of at least $2$.
Since $\epsilon \le \delta/2$, for each pair $\{i \neq j\} \subset \{1,2,3\}$ we have
$$
\{P_i, P_j \} \subset N_\delta(P_1) \quad \text{or} \quad d(P_i,P_j) \ge \frac{\delta}{2} \, .
$$
So $d(\rho(\gamma) P_i, \rho(\gamma) P_j) \ge \min \{ 2|T|, \epsilon\}$, thus proving the claim \eqref{e.claim}.
Now the proposition follows by an obvious recursive argument.
\end{proof}

\subsection{Conclusion}\label{ss.conclusion}
Now we join the pieces and obtain the main result of this section:

\begin{proof}[Proof of  Theorem \ref{teo:dominationimplieshyp}]
Consider a $p$-dominated representation $\rho \colon \Gamma \to \GL(d,\R)$.
If $\Gamma$ is an elementary group then it is word-hyperbolic and there is nothing to prove.
So assume that $\Gamma$ is non-elementary.
Using the representation $\rho$, we define an action of $\Gamma$ on $\Gr_p(\R^d)$.
Consider the set $M \subset \Gr_p(\R^d)$ defined by \eqref{eq-conjuntoM}, 
which is perfect (by Proposition~\ref{p.perfect_again})
and invariant under the action of $\Gamma$ (by Proposition~\ref{p.PropsSetM}).
The diagonal action of $\Gamma$ on $M^{(3)}$ is 
properly discontinuous  (by Proposition~\ref{p.propdisc})
and cocompact (by Proposition~\ref{p.cocompact}).
Therefore Theorem~\ref{teo-bow} assures that $\Gamma$ is word-hyperbolic.
\end{proof}


\section{Anosov representations and dominated representations}\label{s.equiv}

The main goal of this section is to show that being $p$-dominated (c.f.\ condition \eqref{eq:domrep}) and satisfying the Anosov condition as defined by Labourie \cite{Labourie-AnosovFlows} (and extended by Guichard-Wienhard \cite{GuichardWienhard} to arbitrary hyperbolic groups) are equivalent. 
That equivalence (among others) is contained in the results of \cite{KLP1,KLP2,KLP3}.
Our approach also yields a slightly different characterization directly related to dominated splittings (see Proposition~\ref{p.DS_implies_dom}).

In the final subsection we discuss relations with characterizations of \cite{GGKW},
and pose some questions.

\medskip

We first introduce the notion of Anosov representations into $\GL(d,\RR)$ which requires introducing the geodesic flow of a hyperbolic group.

\subsection{The geodesic flow}\label{ss-geodesicflow}

In order to define the Anosov property for a representation of a hyperbolic group, we need to recall the \emph{Gromov geodesic flow} of $\Gamma$.

Given a word-hyperbolic group $\Gamma$ we can define its visual boundary $\partial \Gamma$ (c.f.\ Remark \ref{rem-boundary}).
Denote 
$\partial^{(2)} \Gamma \coloneqq \{ (x,y) \in \partial \Gamma \times \partial \Gamma \st  x\neq y \}$.
We define a flow on the space $\widetilde{U\Gamma} \coloneqq \partial^{(2)}\Gamma \times \RR$, called the \emph{lifted geodesic flow} by the formula $\tilde \phi^t(x,y,s) \coloneqq (x,y,s+t)$. 

A function 
$c: \Gamma \times \partial^{(2)} \Gamma \to \RR$ such that 
$$
c(\gamma_0\gamma_1, x, y)  =  c(\gamma_0, \gamma_1(x, y)) + c(\gamma_1, x, y)
\quad \text{for any $\gamma_0,\gamma_1 \in \Gamma$ and $(x,y)\in \partial^{(2)}\Gamma$}
$$
is called a \emph{cocycle}.
Recall that every infinite order element $\gamma \in \Gamma$  
acts on $\partial \Gamma$ leaving only two fixed points, an attractor $\gamma^+$ and a repeller $\gamma^-$.
Let us say that a cocycle $c$ is \emph{positive} if $c(\gamma,\gamma^-,\gamma^+)>0$ for every such $\gamma$.

Given a cocycle, we can define an action of $\Gamma$ on $\widetilde {U\Gamma}$ by $\gamma \cdot (x,y,s) =  (\gamma \cdot x, \gamma \cdot y, s - c(x,y,\gamma))$, which obviously commutes with the lifted geodesic flow. 
Gromov \cite{gromov} (see also \cite{matheus, champetier,mineyev}) proved that there exists a positive cocycle such that the latter action is properly discontinuous and cocompact. This allows to define the \emph{geodesic flow} $\phi^t$ of $\Gamma$ on $U\Gamma \coloneqq \widetilde{U\Gamma}/\Gamma$, the \emph{unit tangent bundle} of $\Gamma$.

There is a metric on $\widetilde{U\Gamma}$, well-defined up to H\"older equivalence, so that $\G$ acts by isometries,
the lifted geodesic flow acts by bi-Lipschitz homeomorphisms, and its flow lines are quasi-geodesics.

\begin{obs}
If $\Gamma  =  \pi_1(M)$ is the fundamental group of a negatively curved closed manifold $M$ then the geodesic flow on the unit tangent bundle $UM$ is hyperbolic and equivalent to the abstract geodesic flow defined above. In that case, the unit tangent bundle of the universal cover $\tilde M$ is homeomorphic to $\widetilde{U\Gamma}$ by means of the Hopf parametrization. For details, see \cite{Ledrappier-Bord}.
\end{obs}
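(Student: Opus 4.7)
The plan is to upgrade the classical Hopf parametrization of $U\tilde M$ into a $\Gamma$-equivariant identification with $\widetilde{U\Gamma}=\partial^{(2)}\Gamma\times\R$, and then to invoke a uniqueness statement for Gromov's construction in order to match the flows. Hyperbolicity of the geodesic flow on $UM$ is a separate, classical fact due to Anosov, obtained from the fact that the Jacobi fields along any geodesic in $\tilde M$ grow and contract exponentially fast because the sectional curvatures of $M$ are pinched between two negative constants; I would simply cite this and focus on the identification.

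First I would identify $\partial\Gamma$ with the sphere at infinity $\partial_\infty\tilde M$. Fix a basepoint $o\in\tilde M$; the orbit map $\gamma\mapsto\gamma\cdot o$ is a quasi-isometric embedding $\Gamma\to\tilde M$ (Svarc--Milnor), and under Gromov hyperbolicity of $\tilde M$ (which follows from negative curvature) it induces a $\Gamma$-equivariant homeomorphism $\partial\Gamma\xrightarrow{\sim}\partial_\infty\tilde M$. Next I would define the Hopf parametrization
\[
H\colon U\tilde M \longrightarrow \partial_\infty^{(2)}\tilde M\times\R,
\qquad v\longmapsto\bigl(\gamma_v(-\infty),\,\gamma_v(+\infty),\,\beta_o(v)\bigr),
\]
where $\gamma_v$ is the unique geodesic with $\dot\gamma_v(0)=v$ and $\beta_o$ is a Busemann-type normalization (for instance, the signed distance from $\pi(v)$ to the foot of the perpendicular dropped from $o$ onto $\gamma_v$, or equivalently a difference of Busemann functions at $o$). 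Standard convexity arguments in $\mathrm{CAT}(-1)$-type geometry show that $H$ is a $\Gamma$-equivariant homeomorphism that conjugates the geodesic flow on $U\tilde M$ to the translation $(x,y,s)\mapsto(x,y,s+t)$, while the $\Gamma$-action on $\partial_\infty^{(2)}\tilde M\times\R$ reads $\gamma\cdot(x,y,s)=(\gamma x,\gamma y,s-c_o(\gamma,x,y))$ for the Busemann cocycle
\[
c_o(\gamma,x,y)\coloneqq\beta_o(\gamma^{-1}o,x)-\tfrac12\bigl[\beta_o(\gamma^{-1}o,x)+\beta_o(\gamma^{-1}o,y)\bigr],
\]
up to the chosen normalization.

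I would then verify the three properties needed to recognize this as \emph{the} Gromov construction: (i)~positivity of the cocycle, which in negative curvature follows because for an infinite-order $\gamma$ with axis connecting $\gamma^-$ and $\gamma^+$, $c_o(\gamma,\gamma^-,\gamma^+)$ equals the translation length of $\gamma$, hence is positive; (ii)~proper discontinuity of the $\Gamma$-action on $\partial_\infty^{(2)}\tilde M\times\R$, inherited from the proper discontinuity of $\Gamma\curvearrowright U\tilde M$; and (iii)~cocompactness, which is equivalent to compactness of $UM=U\tilde M/\Gamma$. Finally, I would transfer the Riemannian metric on $U\tilde M$ through $H$ to obtain a metric on $\widetilde{U\Gamma}$ in which $\Gamma$ acts by isometries, the flow acts by bi-Lipschitz homeomorphisms, and flow lines are (in fact) geodesics, so Gromov's uniqueness-up-to-H\"older-equivalence for such metrics implies that the quotient flow on $UM$ coincides with $\phi^t$ on $U\Gamma$.

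The main obstacle I anticipate is bookkeeping rather than conceptual: carefully choosing the Busemann normalization so that the induced cocycle is genuinely the one produced by Gromov's construction (rather than merely a cohomologous one). Since Gromov's cocycle is only well-defined up to coboundaries and the resulting flow only up to H\"older orbit equivalence, the cleanest route is to check properties (i)--(iii) for \emph{some} explicit choice of $\beta_o$ and invoke the uniqueness statement, as is done in detail in Ledrappier's note \cite{Ledrappier-Bord}.
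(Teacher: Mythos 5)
Your sketch is correct and is precisely the argument that \cite{Ledrappier-Bord} (the reference the paper cites in lieu of a proof) carries out: identify $\partial\Gamma$ with $\partial_\infty\tilde M$ via Svarc--Milnor, use the Hopf coordinates to exhibit $U\tilde M\cong\partial^{(2)}_\infty\tilde M\times\R$ equivariantly with the flow given by translation and the deck action encoded in a Busemann cocycle, then transport the Sasaki metric and appeal to the uniqueness-up-to-H\"older-equivalence of Gromov's construction. Your parenthetical that the flow lines are genuine geodesics (not merely quasi-geodesics) in the transported metric is also correct, since a geodesic-flow orbit is the horizontal lift of a geodesic, which is a Sasaki geodesic of $TM$ lying in the hypersurface $UM$, hence a geodesic of $UM$.
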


\begin{lem}\label{l.comparison}
For any compact set $K \subset \widetilde{U\Gamma}$, there exist $a>0$ and $\kappa>1$ such that 
if $t \in \R$ and $\gamma \in \Gamma$ satisfy 
$$
\tilde\phi^t(K) \cap \gamma(K) \neq \emptyset
$$
then
$$
\kappa^{-1} |t| - a \le |\gamma| \le \kappa |t| + a \, .
$$
\end{lem}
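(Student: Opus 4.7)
The statement is a standard consequence of the \v{S}varc--Milnor type principle applied to the $\Gamma$-action on $\widetilde{U\Gamma}$. The plan is to fix a basepoint in $K$, compare word-length with displacement in the metric $\tilde d$, and then compare displacement with flow-time using the quasi-geodesic property of flow lines.

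First, I would note that, by assumption, $\widetilde{U\Gamma}$ carries a metric $\tilde d$ (H\"older class is irrelevant here) in which $\Gamma$ acts by isometries properly discontinuously and cocompactly. By the \v{S}varc--Milnor lemma (see \cite[Proposition~I.8.19]{BH}), picking any basepoint $x_0\in\widetilde{U\Gamma}$, the orbit map $\gamma\mapsto \gamma\cdot x_0$ is a quasi-isometry between $(\Gamma,d)$ and $(\widetilde{U\Gamma},\tilde d)$. Hence there exist constants $A\ge 1$ and $B\ge 0$, depending only on the action, such that
\begin{equation}\label{e.SM}
A^{-1}|\gamma|-B\ \le\ \tilde d(x_0,\gamma\cdot x_0)\ \le\ A|\gamma|+B \quad\text{for all }\gamma\in\Gamma.
\end{equation}
Enlarging $K$ if necessary, I would assume $x_0\in K$, and set $D\coloneqq\operatorname{diam}(K)$, which is finite since $K$ is compact.

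Next, I would use the hypothesis that the flow lines of $\tilde\phi^t$ are quasi-geodesics in $(\widetilde{U\Gamma},\tilde d)$: up to enlarging $A$ and $B$, we have
\begin{equation}\label{e.QG2}
A^{-1}|t|-B\ \le\ \tilde d(y,\tilde\phi^t(y))\ \le\ A|t|+B \quad\text{for all }y\in\widetilde{U\Gamma},\ t\in\R.
\end{equation}
(The upper bound is immediate from the bi-Lipschitz property of $\tilde\phi^t$ on compact time intervals together with the group-equivariance; the lower bound is the quasi-geodesic condition.)

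Now suppose $\tilde\phi^t(K)\cap\gamma(K)\neq\emptyset$, and pick $y,z\in K$ with $\tilde\phi^t(y)=\gamma\cdot z$. By the triangle inequality, the isometry property of $\Gamma$, and $x_0\in K$,
\[
\tilde d(y,\tilde\phi^t(y))\ \le\ \tilde d(y,x_0)+\tilde d(x_0,\gamma\cdot x_0)+\tilde d(\gamma\cdot x_0,\gamma\cdot z)\ \le\ 2D+\tilde d(x_0,\gamma\cdot x_0),
\]
and symmetrically
\[
\tilde d(x_0,\gamma\cdot x_0)\ \le\ \tilde d(x_0,y)+\tilde d(y,\tilde\phi^t(y))+\tilde d(\gamma\cdot z,\gamma\cdot x_0)\ \le\ 2D+\tilde d(y,\tilde\phi^t(y)).
\]
Combining these with \eqref{e.SM} and \eqref{e.QG2} gives
\[
A^{-1}|\gamma|-B\ \le\ 2D+A|t|+B\qquad\text{and}\qquad A^{-1}|t|-B\ \le\ 2D+A|\gamma|+B,
\]
which yields both inequalities of the lemma with $\kappa\coloneqq A^2$ and $a\coloneqq A(2D+2B)$.

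I do not foresee a real obstacle; the only subtle point is invoking the quasi-geodesic property of flow lines in \eqref{e.QG2}, which is part of the construction of the Gromov geodesic flow recalled in \S\ref{ss-geodesicflow}. Everything else is a direct two-sided triangle inequality combined with \v{S}varc--Milnor.
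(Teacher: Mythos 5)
Your proof is correct and follows essentially the same approach as the paper's: use the quasi-isometry between $(\Gamma,d)$ and $(\widetilde{U\Gamma},\tilde d)$ provided by the construction of the metric (the paper cites the construction directly; you invoke \v{S}varc--Milnor, which gives the same statement), use the quasi-geodesic property of flow lines to compare $|t|$ with $\tilde d(y,\tilde\phi^t y)$, and close with a two-sided triangle inequality over the compact set $K$. The only minor difference is that the paper uses the (approximate) equality $|t|\approx d(\tilde\phi^t u_1,u_1)$ coming from the fact that flow orbits are geodesics in Mineyev's construction, whereas you keep the more general two-sided quasi-geodesic estimate; this changes nothing of substance.
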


\begin{proof}
Take a ball $B(u_0, r)$ containing $K$. From the construction (see e.g. \cite[Theorem IV.1 (ii)]{matheus}  or \cite{mineyev}) of the metric in $\widetilde{U \Gamma}$ one has that  the map $\gamma \in \Gamma \mapsto \gamma u_0 \in \widetilde{U\Gamma}$ is a quasi-isometry, so there exist $\kappa>1$, $b>0$ such that for all $\gamma_1$, $\gamma_2 \in \Gamma$ we have 
$$
\kappa^{-1} d(\gamma_1,\gamma_2) - b \le d(\gamma_1 u_0, \gamma_2 u_0) \le \kappa d(\gamma_1,\gamma_2) + b
$$
In particular,
$$
\kappa^{-1} |\gamma| - b \le d(\gamma u_0, u_0) \le \kappa |\gamma| + b \, .
$$
Now assume that $t$ and $\gamma$ satisfy 
$\tilde\phi^t(K) \cap \gamma(K) \neq \emptyset$,
that is, there exist $u_1$, $u_2 \in K$ such that $\tilde\phi^t u_1  =  \gamma u_2$.
So\footnote{To get exact equality in $(*)$ we need the construction in \cite{mineyev}, for which orbits of the flow are geodesics. If instead we use \cite{champetier} or \cite{matheus}, the orbits of the flow are quasi-geodesics, so the equality $(*)$ is only approximate, which is sufficient for our purpose.}
$$
|t| \stackrel{*}{ = } d(\tilde\phi^t u_1, u_1)  =  d(\gamma u_2, u_1)
\begin{cases}
	\le d(\gamma u_0, u_0) + 2r \\ 
	\ge d(\gamma u_0, u_0) - 2r 
\end{cases}
$$
The desired inequalities follow.
\end{proof}

\subsection{Equivariant maps and the definition of Anosov representations} 

Let $\rho: \Gamma \to \GL(d,\RR)$ be a representation of a word-hyperbolic group $\Gamma$. The definitions here can be adapted for representations into general semisimple Lie groups and the results are equivalent. In order to be able to present our results in a more elementary manner, we have deferred the introduction of the general context to section \ref{s.general}.

We say that the representation $\rho$ is \emph{$p$-convex} if there exist continuous maps $\xi: \partial \Gamma \to \Gr_{p}(\RR^d)$ and $\theta: \partial \Gamma \to \Gr_{d-p}(\RR^d)$ such that:
\begin{itemize}
\item ({\bf transversality:}) for every $x \neq y \in \partial \Gamma$ we have $\xi(x)\oplus \theta(y) =  \RR^d$, 
\item ({\bf equivariance:}) for every $\gamma \in \Gamma$ we have $\xi (\gamma \cdot x) =  \rho(\gamma) \xi(x)$ and $\theta( \gamma \cdot x)  =  \rho(\gamma) \theta(x)$. 
\end{itemize}

Using the representation $\rho$, it is possible to construct a linear flow $\psi^t$  over the geodesic flow $\phi^t$ of $\Gamma$ as follows. 
Consider the lifted geodesic flow $\tilde \phi^t$ on $\widetilde{U\Gamma}$, and define  a linear flow on $\tilde E \coloneqq \widetilde{U\Gamma} \times \RR^d$ by: 
$$ 
\tilde \psi^t ((x,y,s),v) \coloneqq (\tilde\phi^t(x,y,s), v) , 
$$ 
Now consider the action of $\Gamma$ on $\tilde E$ given by: 
$$ 
\gamma \cdot ((x,y,s),v) \coloneqq (\gamma \cdot (x,y,s), \rho(\gamma)v) 
$$
where the action of $\Gamma$ in $\widetilde{U\Gamma}$ is the one explained in Subsection \ref{ss-geodesicflow}. It follows that $\tilde \psi^t$ induces in $E_\rho \coloneqq \tilde E /_{\Gamma}$ (which is a vector bundle over $U\Gamma$) a linear flow $\psi^t$ which covers $\phi^t$. See Fig.~\ref{f.cube}.

\begin{figure}[htb] 
\begin{center}
\includegraphics{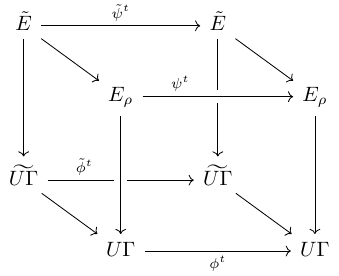}
\end{center}
\caption{A commutative diagram. 
The $\downarrow$ arrows are vector bundle projections. The $\searrow$ arrows are quotient maps w.r.t.\ the corresponding actions of $\Gamma$. The  $\rightarrow$ arrows are the flow actions, as indicated.}\label{f.cube}
\end{figure}

When the representation $\rho$ is $p$-convex, by equivariance there exists a $\psi^t$-invariant splitting of the form $E_\rho =  \Xi \oplus \Theta$; it is obtained taking the quotient of the bundles $\tilde \Xi (x,y,s) \coloneqq \xi(x)$ and $\tilde \Theta(x,y,s) \coloneqq \theta(y)$ with respect with the $\Gamma$-action.

We say that a $p$-convex representation $\rho$ is \emph{$p$-Anosov} if the splitting $E_\rho =  \Xi \oplus \Theta$ is a dominated splitting for the linear bundle automorphism $\psi^t$, with $\Xi$ dominating $\Theta$. This is equivalent to the fact that the bundle $\mathrm{Hom}(\Theta,\Xi)$ is uniformly contracted by the flow induced by $\psi^t$ (see \cite{BCLS}). 

Conversely, dominated splittings for the linear flow $\psi^t$ must be of the form $\Xi \oplus \Theta$ as above: see Propositions~\ref{p.DS_implies_dom} and \ref{p.DominatedImpliesAnosov} below.

Let us mention that by \cite[Theorem 1.5]{GuichardWienhard}, if the image of the representation $\rho$ is Zariski dense, being $p$-Anosov is a direct consequence of being $p$-convex. 

\begin{obs} As before, it is possible to use exterior powers to transform a $p$-Anosov representation into a $1$-Anosov one. The latter are called \emph{projective Anosov} and are discussed in Section~\ref{s.general} as it is shown that any Anosov representation to an arbitrary semisimple Lie group can be transformed into a projective Anosov one. See also \cite[Section 2.3]{BCLS}.
\end{obs}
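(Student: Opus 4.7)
The goal is to verify the remark's assertion: if $\rho : \Gamma \to \GL(d,\R)$ is $p$-Anosov, then $\Wedge^p \rho : \Gamma \to \GL(\Wedge^p \R^d)$ is $1$-Anosov. My plan is to transfer both ingredients of the $p$-Anosov structure, namely the domination inequality and the pair of equivariant limit maps, through the Pl\"ucker embedding, using the elementary singular-value calculus for exterior powers.

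First I would promote domination. The $p$-Anosov hypothesis, combined with the equivalence $p$-Anosov $\Leftrightarrow$ $p$-dominated that is the main theme of Section~\ref{s.equiv}, gives constants $C,\lambda>0$ with $\sigma_{p+1}(\rho(\gamma))/\sigma_p(\rho(\gamma)) \le C e^{-\lambda|\gamma|}$. Since the singular values of $\Wedge^p A$ are the sorted products $\sigma_{i_1}(A)\cdots\sigma_{i_p}(A)$ indexed by multi-indices $i_1<\cdots<i_p$, one has
\[
\frac{\sigma_2(\Wedge^p A)}{\sigma_1(\Wedge^p A)}
= \frac{\sigma_1(A)\cdots\sigma_{p-1}(A)\,\sigma_{p+1}(A)}{\sigma_1(A)\cdots\sigma_p(A)}
= \frac{\sigma_{p+1}(A)}{\sigma_p(A)}\,.
\]
Applied with $A = \rho(\gamma)$, this is precisely the $1$-domination inequality for $\Wedge^p \rho$.

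Next I would construct the limit maps for $\Wedge^p\rho$. The Pl\"ucker embedding $\iota : \Gr_p(\R^d) \hookrightarrow \Gr_1(\Wedge^p \R^d)$ is continuous, injective and $\GL(d,\R)$-equivariant, so $\hat\xi \coloneqq \iota \circ \xi$ is a continuous equivariant map. For the complementary hyperplane map I would exploit the wedge pairing $\Wedge^p \R^d \otimes \Wedge^{d-p}\R^d \to \Wedge^d \R^d \cong \R$: given $V \in \Gr_{d-p}(\R^d)$ and any nonzero $\omega_V \in \Wedge^{d-p} V$, set
\[
H_V \coloneqq \{ w \in \Wedge^p \R^d \st w \wedge \omega_V = 0 \}\,,
\]
a hyperplane independent of the choice of $\omega_V$, and whose formation is continuous and $\GL(d,\R)$-equivariant. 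The standard identity $\iota(U) \subset H_V \Leftrightarrow U \cap V \neq 0$ converts the transversality $\xi(x) \oplus \theta(y) = \R^d$ for $x \neq y$ into $\hat\xi(x) \oplus \hat\theta(y) = \Wedge^p \R^d$, where $\hat\theta(y) \coloneqq H_{\theta(y)}$. Equivariance and continuity of $\hat\theta$ are inherited from $\theta$, so $\Wedge^p \rho$ is $1$-convex.

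Finally, to conclude $1$-Anosov, I would check that the invariant splitting $\hat\Xi \oplus \hat\Theta$ induced on the bundle $E_{\Wedge^p \rho}$ over $\UG$ by the maps above is dominated, with $\hat\Xi$ (rank one) dominating $\hat\Theta$. This follows either by applying Theorem~\ref{t.BG} directly to the linear flow associated with $\Wedge^p\rho$ using the $1$-domination established in the first step, or, more efficiently, by invoking the equivalence $1$-dominated $\Leftrightarrow$ $1$-Anosov of Section~\ref{s.equiv} applied to $\Wedge^p\rho$. The only real subtlety is in the middle step, where one must check that the wedge pairing packages transversality cleanly into $\hat\theta$; once this is in place, the singular-value identity and the subsequent promotion to a dominated splitting are essentially automatic.
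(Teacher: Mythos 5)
Your proposal is correct and follows precisely the route that the paper hints at (exterior powers plus the Pl\"ucker embedding); the singular-value identity you use is \eqref{e.ext_sigma1}--\eqref{e.ext_sigma2} from the appendix, and once $1$-domination of $\Wedge^p\rho$ is established the $1$-Anosov structure follows from the section's equivalence. One detail worth highlighting as a genuine strength of your write-up: you define $\hat\theta$ via the wedge pairing $\Wedge^p\R^d\otimes\Wedge^{d-p}\R^d\to\Wedge^d\R^d$ rather than by orthogonal complements inside $\Wedge^p\R^d$, which is the right choice because the pairing is $\GL(d,\R)$-equivariant (up to $\det$) whereas orthogonal complementation is not; this is exactly what makes $\hat\theta$ equivariant, and is the same mechanism that underlies the Pl\"ucker representation construction of Subsection~\ref{plucker} in the general semisimple case.
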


\subsection{Equivalence between the definitions}\label{ss.equivalences}

We will show that a representation $\rho: \Gamma \to \GL(d,\R)$ is $p$-dominated if and only if it is $p$-Anosov.
Note that the definition of $p$-Anosov representation requires the group to be word-hyperbolic.
On the other hand, we have shown in Section \ref{s.dominationimplieshyp} that if $\rho$ is $p$-dominated then the group $\Gamma$ is automatically word-hyperbolic. 
So we can assume in what follows that $\Gamma$ is word hyperbolic.

Let us first show the following:

\begin{lem}\label{l.comparison2}
Endow $E_\rho$ with a Riemannian metric\footnote{The Riemannian metric allows us to consider singular values for the linear maps $\psi^t_x$.}.
Then there exist constants $\kappa>1$, $a>0$, and $C>1$ with the following properties:
\begin{enumerate}
\item\label{i.U_to_Gamma}
For every $z \in U\Gamma$ and $t \in \R$, there exists $\gamma\in \Gamma$ such that:
\begin{alignat}{3}
\kappa^{-1} |t| - a &\le \hspace{1.5em} |\gamma| &&\le \kappa |t| + a &\quad &\text{and} \label{e.comp_t_gamma}\\
C^{-1}\sigma_p(\psi_z^t) &\le \sigma_p(\rho(\gamma)) && \le C \sigma_p(\psi_z^t)
&\quad &\text{for every $p = 1,\dots,d-1$.}  \label{e.comp_psi_rho}
\end{alignat}

\item\label{i.Gamma_to_U}
Conversely, for every $\gamma\in \Gamma$ there exist $z \in U\Gamma$ and $t \in \R$ such that \eqref{e.comp_t_gamma} and \eqref{e.comp_psi_rho} hold.

\end{enumerate}
\end{lem}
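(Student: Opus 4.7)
The plan is to identify $\psi^t_z$ with the matrix $\rho(\gamma)$ for a suitable $\gamma$, up to pre/post-composition with uniformly bounded linear maps, and then read off both comparisons from Lemma~\ref{l.comparison}.

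\textbf{Setup.} Fix a compact set $K \subset \widetilde{U\Gamma}$ with $\Gamma \cdot K = \widetilde{U\Gamma}$, which exists by cocompactness of the $\Gamma$-action. Over the trivial bundle $\tilde E = \widetilde{U\Gamma} \times \RR^d$, the linear flow $\tilde\psi^t$ is the identity in the $\RR^d$-factor. Consequently, the quotient linear flow $\psi^t$ on $E_\rho$ behaves as follows: if $\tilde z \in K$ is a lift of $z$ and $\tilde\phi^t\tilde z = \gamma\cdot\tilde z'$ with $\tilde z' \in K$, then, under the identifications $(E_\rho)_z \cong \RR^d$ and $(E_\rho)_{\phi^t z} \cong \RR^d$ coming from the lifts $\tilde z$, $\tilde z'$, the map $\psi^t_z$ is simply $\rho(\gamma^{-1})$. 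Since the Riemannian metric on $E_\rho$ pulls back to a $\Gamma$-invariant metric on $\tilde E$ that is continuous on the compact set $K$, it is comparable to the standard product metric there up to some uniform constant $C_0 \ge 1$.

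\textbf{Proof of (i).} Given $z$ and $t$, lift $z$ to $\tilde z \in K$ and choose $\gamma$ such that $\gamma^{-1}\tilde\phi^t\tilde z \in K$. Then $\tilde\phi^t(K)\cap \gamma(K)\neq\emptyset$, so Lemma~\ref{l.comparison} applied to $K$ gives \eqref{e.comp_t_gamma} (with constants $\kappa$, $a$ depending only on $K$). Replacing $\gamma$ by $\gamma^{-1}$ (which has the same word length), the previous paragraph shows $\psi^t_z = T_2 \circ \rho(\gamma) \circ T_1$, where $T_1 \colon (E_\rho)_z \to \RR^d$ and $T_2 \colon \RR^d \to (E_\rho)_{\phi^t z}$ are linear identifications with $\|T_i^{\pm 1}\| \le C_0$. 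The standard inequality $\sigma_p(AB) \le \|A\|\,\sigma_p(B)$ (and its mirror), applied twice, yields $C_0^{-2}\sigma_p(\rho(\gamma)) \le \sigma_p(\psi^t_z) \le C_0^2 \sigma_p(\rho(\gamma))$, which is \eqref{e.comp_psi_rho} with $C \coloneqq C_0^2$.

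\textbf{Proof of (ii).} Given $\gamma\in\Gamma$, we need to produce $(z,t)$. Fix a base point $u_0 \in K$. The orbit map $\eta \mapsto \eta u_0$ is a quasi-isometry, so $d(u_0,\gamma u_0)$ is comparable to $|\gamma|$. Since $\Gamma$ is word-hyperbolic, any two points are joined by a biinfinite geodesic (up to bounded Hausdorff error, using the Morse lemma and extending the segment to the boundary); thus there exists $(x,y)\in\partial^{(2)}\Gamma$ so that the flow line $L \coloneqq \{\tilde\phi^s(x,y,0)\}$ passes within a uniformly bounded distance $R$ of both $u_0$ and $\gamma u_0$ (in the elementary or bounded cases \eqref{e.comp_t_gamma} holds trivially after enlarging $a$). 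Choose $s_0$, $t$ with $\tilde\phi^{s_0}(x,y,0)$ within $R$ of $u_0$ and $\tilde\phi^{s_0+t}(x,y,0)$ within $R$ of $\gamma u_0$. Let $\tilde z \coloneqq \tilde\phi^{s_0}(x,y,0)$ and $z \coloneqq [\tilde z]\in U\Gamma$. Then $\tilde\phi^t(\tilde z)$ lies within $R$ of $\gamma u_0$, so $\tilde\phi^t(K_R)$ meets $\gamma(K_R)$ for $K_R$ an $R$-neighborhood of $K$, and Lemma~\ref{l.comparison} (applied to $K_R$) gives \eqref{e.comp_t_gamma}. The singular-value comparison \eqref{e.comp_psi_rho} follows from the trivialization argument of Step 1 applied to the enlarged compact set.

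\textbf{Main obstacle.} The conceptual content is the identification of $\psi^t$ with $\rho$ through a fundamental domain, which is essentially tautological once the bundle construction is unwound. The genuinely delicate step is (ii): given an arbitrary element $\gamma\in\Gamma$, we need a \emph{flow line} of $\tilde\phi$ passing near both $\id$ and $\gamma$, which requires the fact that flow lines are quasi-geodesics in $\widetilde{U\Gamma}$ and that any pair of points in a hyperbolic group lies on (or near) a biinfinite geodesic. Handling small $|\gamma|$ and the elementary case requires minor adjustments to the additive constants $a$ and $C$.
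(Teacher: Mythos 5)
Part~(i) is essentially the paper's argument: lift the Riemannian metric through the covering $\tilde E \to E_\rho$, compare it with the Euclidean product metric over a compact fundamental set using Lemma~\ref{l.sing_value_change}, and invoke Lemma~\ref{l.comparison} for \eqref{e.comp_t_gamma}. No issues there.

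Part~(ii) has the right outline but leaves the key step unjustified. You need, for each $\gamma$, a flow line of $\tilde\phi$ passing within a \emph{uniformly} bounded distance of both $u_0$ and $\gamma u_0$; you assert this from ``any pair of points in a hyperbolic group lies on (or near) a biinfinite geodesic.'' That is precisely the nontrivial content, not a standard tautology: a geodesic segment in a hyperbolic group need not extend to a biinfinite geodesic, and the uniform version you need is essentially the bounded dead-end depth of hyperbolic groups (Bogopolski \cite{Bogo}; equivalently the content of Lemma~\ref{l.dense_geodesics}, which in the paper is proved later via cone types and would be a forward reference here). The paper sidesteps this entirely with a short self-contained pigeonhole on $\partial\Gamma$: after fixing $\delta < \frac14\mathrm{diam}\,\partial\Gamma$, one shows that for every $\gamma$ there exist $x,y\in\partial\Gamma$ with both $d(x,y)\ge\delta$ and $d(\gamma^{-1}x,\gamma^{-1}y)\ge\delta$ (take $x_1,x_2$ with $d(x_1,x_2)>3\delta$ and pick suitable pairs among $x_1,x_2,\gamma^{\pm1}x_1,\gamma^{\pm1}x_2$). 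Setting $K=\{(x,y,0): d(x,y)\ge\delta\}$, this immediately yields $\tilde z=(x,y,0)\in K$ and a time $t=c(x,y,\gamma)$ with $\tilde\phi^t(\tilde z)\in\gamma(K)$, after which Lemma~\ref{l.comparison} finishes as in part~(i). Your route can be made to work, but you would need to supply a proof or reference for the geodesic-extension claim; as written, the delicate step you correctly identified in your final paragraph remains an unproved assertion.
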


\begin{proof}
Using the covering map $\tilde E \to E_\rho$, we lift the fixed Riemannian metric on $E_\rho$, obtaining a Riemannian metric $\| \mathord{\cdot}\|_*$ on $\tilde E$ preserved by the action of $\Gamma$.

On the other hand, since the vector bundle $\tilde E$ is trivial, we can also endow it with the euclidian metric $\| \mathord{\cdot}\|$ on the fibers. Let $K \subset \widetilde{U\Gamma}$ be a compact set intersecting every $\Gamma$-orbit. Then there exists $C_K>1$ such that, for every $v \in \tilde{E}$ that projects on $K$,
$$
C_K^{-1} \|v\| \le \|v\|_* \le C_K \|v\| \, .
$$
By Lemma~\ref{l.sing_value_change}, a bounded change of inner product has a bounded effect on the singular values. It follows that for all $\tilde{z} \in K$, $t \in \R$, and $\gamma \in \Gamma$ such that $\tilde\phi^t(\tilde z) \in \gamma(K)$, if $z$ is the projection of $\tilde z$ in $U\Gamma$, inequality \eqref{e.comp_psi_rho} holds for $C  =  C_K^2$.

The rest of the proof of part~(\ref{i.U_to_Gamma}) follows directly from Lemma~\ref{l.comparison}.

Now let us prove part~(\ref{i.Gamma_to_U}).
Consider the action of $\Gamma$ on the compact metric space $\partial \Gamma$. 
Fix a positive $\delta < \frac{1}{4} \mathrm{diam}\, \partial \Gamma$.

\begin{claim} For every $\gamma \in \Gamma$ there exists $x,y \in \partial \Gamma$ such that $d(x,y) \geq \delta$ and $d(\gamma^{-1}x, \gamma^{-1} y) \ge \delta$.
\end{claim}
\begin{proof}
Fix $x_1$, $x_2 \in \partial\Gamma$ such that $d(x_1, x_2) > 3\delta$.
We can assume that $d(\gamma^{-1}x_1, \gamma^{-1} x_2) < \delta$, because otherwise we would take $(x,y) \coloneqq (x_1,x_2)$.
It follows that there exists $i \in \{1,2\}$ such that $d(x_i, \gamma^{-1} x_1)$ and $d(x_i, \gamma^{-1} x_2)$ are both bigger than $\delta$.
Analogously, we can assume that $d(\gamma x_1, \gamma x_2) < \delta$, because otherwise we would take $(x,y) \coloneqq (\gamma x_1,\gamma x_2)$.
It follows that there exists $j \in \{1,2\}$ such that $d(x_j, \gamma x_1)$ and $d(x_j, \gamma x_2)$ are both bigger than $\delta$.
Therefore the pair $(x,y) \coloneqq (x_j, \gamma x_i)$ has the desired properties.
\end{proof}

Let $K \coloneqq \{(x,y,0) \st x,y \in \partial\Gamma, \ d(x,y)\ge \delta\}$; this is a compact subset of $\widetilde{U\Gamma}$. Therefore, letting $\tilde{z} \coloneqq (x,y,0)$ and $t \coloneqq c(x,y,\gamma)$,
we conclude with the same arguments as above.
\end{proof} 

Now let us prove the equivalence between $p$-dominated and $p$-Anosov representations.
We first show: 

\begin{prop}\label{p.AnosovImpliesDominated} Let $\rho: \Gamma \to \GL(d,\R)$ be a $p$-Anosov representation. Then $\rho$ is $p$-dominated.
\end{prop}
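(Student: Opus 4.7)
The plan is to translate the domination of the linear flow $\psi^t$ on $E_\rho$ directly into the singular value gap condition \eqref{eq:domrep} on the representation, using Theorem~\ref{t.BG} to pass from the abstract dominated splitting to quantitative singular value estimates, and then using Lemma~\ref{l.comparison2}(ii) to compare singular values of $\psi_z^t$ with those of $\rho(\gamma)$.

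First I will set up the singular value gap for the flow. Since $\rho$ is $p$-Anosov, the induced linear flow $\psi^t$ on $E_\rho$ admits a dominated splitting $\Xi \oplus \Theta$ with $\dim \Xi = p$ and $\Xi$ dominating $\Theta$. Applying Theorem~\ref{t.BG} (in its forward-time form), there exist constants $c>0$ and $\mu>0$ such that
$$
\frac{\sigma_{p+1}(\psi_z^t)}{\sigma_p(\psi_z^t)} < c\, e^{-\mu t}
\qquad \text{for all } z \in U\Gamma, \ t \geq 0.
$$
To handle the negative-time case, I will note that the same splitting $\Xi \oplus \Theta$ is dominated for the inverse flow $\{\psi^{-t}\}$, now with $\Theta$ (of dimension $d-p$) dominating. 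Applying Theorem~\ref{t.BG} to the inverse flow, and using the identity $\sigma_i(A)=\sigma_{d+1-i}(A^{-1})^{-1}$ for $A=\psi_z^t$, one obtains a symmetric estimate: there exist $c'>0$ and $\mu'>0$ such that
$$
\frac{\sigma_{p+1}(\psi_z^t)}{\sigma_p(\psi_z^t)} < c'\, e^{-\mu' |t|}
\qquad \text{for all } z \in U\Gamma, \ t \in \R.
$$

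Next, I will invoke Lemma~\ref{l.comparison2}(ii). Given any $\gamma \in \Gamma$, it yields $z \in U\Gamma$ and $t \in \R$ with
$$
\kappa^{-1}|t| - a \leq |\gamma| \leq \kappa|t| + a
\qquad \text{and} \qquad
C^{-1} \sigma_q(\psi_z^t) \leq \sigma_q(\rho(\gamma)) \leq C \sigma_q(\psi_z^t)
$$
for every $q \in \{1,\dots,d-1\}$. Dividing these two-sided bounds applied to indices $p$ and $p+1$ gives
$$
\frac{\sigma_{p+1}(\rho(\gamma))}{\sigma_p(\rho(\gamma))}
\leq C^2 \, \frac{\sigma_{p+1}(\psi_z^t)}{\sigma_p(\psi_z^t)}
\leq C^2 \, c'' \, e^{-\mu'' |t|}
\leq C^2 \, c'' \, e^{\mu'' a / \kappa} \, e^{-(\mu''/\kappa)|\gamma|},
$$
where $c'' = \max(c,c')$ and $\mu''=\min(\mu,\mu')$. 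This is exactly the $p$-domination condition \eqref{eq:domrep} with constants of the appropriate form.

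I do not anticipate a serious obstacle; the entire argument is a direct combination of Theorem~\ref{t.BG} with the dictionary Lemma~\ref{l.comparison2}. The only mildly delicate point is ensuring that the singular-value estimate holds regardless of the sign of $t$, which is resolved by observing that the inverse flow inherits the dominated splitting with roles exchanged, so that Theorem~\ref{t.BG} applied in both time directions yields the exponential decay in $|t|$.
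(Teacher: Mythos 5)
Your overall strategy matches the paper's: invoke Theorem~\ref{t.BG} to obtain the exponential singular-value gap for the linear flow $\psi^t$, then transfer this to $\rho(\gamma)$ using the dictionary Lemma~\ref{l.comparison2}. That part is sound.

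However, the ``symmetric estimate'' you claim for negative time is incorrect. When you time-reverse the flow, the dominated splitting $\Xi\oplus\Theta$ becomes a dominated splitting for $\{\psi^{-t}\}$ with dominating bundle $\Theta$ of dimension $d-p$, \emph{not} $p$. Applying Theorem~\ref{t.BG} to this inverse flow therefore yields, for $s>0$, a bound on $\sigma_{d-p+1}(\psi_z^{-s})/\sigma_{d-p}(\psi_z^{-s})$, i.e.\ a gap at index $d-p$. The singular-value identity $\sigma_i(A)=\sigma_{d+1-i}(A^{-1})^{-1}$ applied with $A=\psi_z^t$ (so $A^{-1}=\psi^{-t}_{\phi^t(z)}$) gives, for $t<0$,
\[
\frac{\sigma_{p+1}(\psi_z^t)}{\sigma_p(\psi_z^t)}
=\frac{\sigma_{d-p+1}(\psi^{-t}_{\phi^t(z)})}{\sigma_{d-p}(\psi^{-t}_{\phi^t(z)})},
\]
which is a ratio at index $d-p$ for a positive-time cocycle; Theorem~\ref{t.BG} controls the ratio at index $p$, not $d-p$, so there is no such bound unless $d=2p$. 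The claimed inequality $\sigma_{p+1}(\psi_z^t)/\sigma_p(\psi_z^t)<c'e^{-\mu'|t|}$ for all $t\in\R$ is therefore false in general.

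The concern that prompted this detour is legitimate (Lemma~\ref{l.comparison2}(\ref{i.Gamma_to_U}) is stated with $t\in\R$), but the correct fix is simpler and does not require any negative-time estimate: in the proof of that lemma one takes $t=c(x,y,\gamma)$, and the Gromov cocycle satisfies $c(y,x,\gamma)=-c(x,y,\gamma)$ by compatibility with the time-reversal involution $(x,y,s)\mapsto(y,x,-s)$. Since the separation conditions on the pair $(x,y)$ are symmetric in $x$ and $y$, one may swap them to ensure $t\geq 0$. With that observation your final computation only needs the $t\geq 0$ estimate from Theorem~\ref{t.BG}, and the rest of the argument goes through exactly as in the paper.
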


\begin{proof}
Since $\rho$ is a $p$-Anosov representation, Theorem \ref{t.BG} implies that there exists $C,\lambda>0$ such that for every $z\in U\Gamma$ and $t>0$ we have:
$$ 
\frac{\sigma_{p+1}(\psi^t_z)}{\sigma_{p}(\psi^t_z)} < C e^{-\lambda t} .
$$
Using part~(\ref{i.U_to_Gamma}) of Lemma~\ref{l.comparison2}, we can find constants $C', \lambda'>0$  so that for every $\gamma \in \Gamma$ we have:
$$ 
\frac{\sigma_{p+1}(\rho(\gamma))}{\sigma_{p}(\rho(\gamma))} < C' e^{-\lambda' |\gamma|}.
$$
This means that $\rho$ is $p$-dominated. 
\end{proof}

Note that the proof above only uses the fact that the linear flow $\psi^t$ has a dominated splitting, so we obtain:

\begin{prop}\label{p.DS_implies_dom}
If the linear flow $\psi^t$ on $E_\rho$ has a dominated splitting with dominating bundle of dimension $p$ then $\rho$ is $p$-dominated.
\end{prop}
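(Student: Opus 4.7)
The key observation is that the proof of Proposition~\ref{p.AnosovImpliesDominated} given just above never really uses transversality, equivariance, or the specific form $\Xi\oplus\Theta$ of the invariant splitting. It only uses the existence of \emph{some} dominated splitting of $\psi^t$ with dominating bundle of dimension $p$, fed into Theorem~\ref{t.BG}, and then compares $\psi^t$ to $\rho$ via Lemma~\ref{l.comparison2}. So the plan is to simply reuse that argument verbatim under the weaker hypothesis.

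More concretely, I would proceed as follows. Fix a Riemannian metric on the vector bundle $E_\rho$. Since by hypothesis the linear flow $\psi^t$ on $E_\rho$ has a dominated splitting whose dominating subbundle has dimension $p$, Theorem~\ref{t.BG} applied to this splitting yields constants $c>0$ and $\lambda>0$ such that
$$
\frac{\sigma_{p+1}(\psi^t_z)}{\sigma_p(\psi^t_z)} < c\,e^{-\lambda t}
\quad\text{for every } z\in U\Gamma \text{ and every } t\ge 0.
$$
Next I would invoke part~(\ref{i.Gamma_to_U}) of Lemma~\ref{l.comparison2}: for every $\gamma\in\Gamma$ there exist $z\in U\Gamma$ and $t\in\R$ satisfying both $|t|\ge \kappa^{-1}|\gamma|-a$ and $C^{-1}\sigma_i(\psi^t_z)\le \sigma_i(\rho(\gamma))\le C\sigma_i(\psi^t_z)$ for $i=p,p+1$. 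Taking the ratio of these comparisons gives
$$
\frac{\sigma_{p+1}(\rho(\gamma))}{\sigma_p(\rho(\gamma))}\le C^2\,\frac{\sigma_{p+1}(\psi^t_z)}{\sigma_p(\psi^t_z)} < C^2 c\,e^{-\lambda(\kappa^{-1}|\gamma|-a)},
$$
which is exactly the $p$-domination estimate \eqref{eq:domrep} with constants $C'=C^2 c\,e^{\lambda a}$ and $\lambda'=\lambda/\kappa$.

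There is no real obstacle here; the only point to watch is using the correct direction of Lemma~\ref{l.comparison2}, namely part~(\ref{i.Gamma_to_U}), so that every group element $\gamma$ is captured by some flow segment (rather than the other way around). Once this is in place the conclusion is immediate, and in particular Proposition~\ref{p.AnosovImpliesDominated} becomes a special case of Proposition~\ref{p.DS_implies_dom}. This also makes explicit what the remark before the statement was pointing out: the existence of \emph{any} dominated splitting for $\psi^t$ of the right dimension already forces $\rho$ to be $p$-dominated, without any a priori knowledge of equivariant limit maps.
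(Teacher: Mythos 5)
Your proof is correct and takes exactly the same route as the paper, which explicitly remarks that the proof of Proposition~\ref{p.AnosovImpliesDominated} only uses the existence of the dominated splitting (not the specific form $\Xi\oplus\Theta$ or any a priori equivariant maps) and then states Proposition~\ref{p.DS_implies_dom} as an immediate corollary of that observation. You also cite the logically correct part~(\ref{i.Gamma_to_U}) of Lemma~\ref{l.comparison2} — for each $\gamma$ one must produce a matching flow segment — whereas the paper's proof of Proposition~\ref{p.AnosovImpliesDominated} cites part~(\ref{i.U_to_Gamma}), which appears to be a typo since that direction transfers the estimate the wrong way.
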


To prove that $p$-domination implies the $p$-Anosov property, we shall first show the existence of the equivariant maps $\xi$, $\theta$. This is a relatively easy consequence of what is done in Section \ref{s.dominationimplieshyp} (see Remark \ref{rem-boundary}).  The equivariant maps exist under an even weaker hypothesis, as shown in \cite[Theorem~5.2]{GGKW}. We provide here a proof for completeness. 

Recall that a $(a,b)$-quasi-geodesic in $\Gamma$ is a sequence $\{\gamma_n\}$ so that 
$$
a^{-1} |n-m| - b < d(\gamma_n, \gamma_m) <  a |n-m| +b \, .
$$ 
We denote by $\mathcal{Q}_{(a,b)}^{\mathrm{id}}$ the set of $(a,b)$-quasi-geodesics such that $\gamma_0 = \mathrm{id}$.  

\begin{lem}\label{lem-equivariant} 
Let $\rho: \Gamma \to \GL(d,\RR)$ be a representation such that for some $a,b>0$ we have:
\begin{equation}\label{eq:uniformsum}  \sup_{{\gamma_n} \in \mathcal{Q}_{(a,b)}^{\mathrm{id}}} \sum_{n\geq n_0} \frac{\sigma_{p+1}(\rho(\gamma_n))}{\sigma_p(\rho(\gamma_n))} \xrightarrow[n_0 \to \infty]{} 0  \ . 
\end{equation}
Then there exists an equivariant continuous map $\xi: \partial \Gamma \to \Gr_p(\RR^d)$ defined by
$$ \xi (x) \coloneqq \lim_n U_p(\rho(\gamma_n)), $$
where $\{\gamma_n\}$ is any $(a,b)$-quasi-geodesic ray representing $x \in \partial \Gamma$. 
\end{lem}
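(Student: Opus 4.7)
The plan is to first pin down the limit along a single quasi-geodesic, then verify independence of the choice of representative, continuity, and finally equivariance.

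\medskip

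\noindent\textbf{Step 1: Existence of the limit.} Fix a $(a,b)$-quasi-geodesic ray $\{\gamma_n\}$ with $\gamma_0=\mathrm{id}$ representing $x\in\partial\Gamma$. The consecutive increments satisfy $|\gamma_n^{-1}\gamma_{n+1}|\le a+b$, so if we let $K=\max_{g\in S}\|\rho(g)\|$ (with $K\ge 1$), Lemma~\ref{l.nochangeright} applied to $A=\rho(\gamma_n)$, $B=\rho(\gamma_n^{-1}\gamma_{n+1})$ gives the key bound
$$
d \big( U_p(\rho(\gamma_n)),U_p(\rho(\gamma_{n+1}))\big) \le K^{2(a+b)}\,\frac{\sigma_{p+1}(\rho(\gamma_n))}{\sigma_{p}(\rho(\gamma_n))}\,.
$$
Summability hypothesis \eqref{eq:uniformsum} then makes $\{U_p(\rho(\gamma_n))\}$ a Cauchy sequence in the Grassmannian, and the convergence to its limit $\xi(x)$ is uniform over the whole class $\mathcal{Q}^{\mathrm{id}}_{(a,b)}$: for every $\eps>0$ there is $n_0$ such that $d(U_p(\rho(\gamma_n)),\xi(x))<\eps$ for every $n\ge n_0$ and every $(a,b)$-quasi-geodesic ray representing any $x\in\partial\Gamma$.

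\medskip

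\noindent\textbf{Step 2: Well-definedness.} I must check that two $(a,b)$-quasi-geodesic rays $\{\gamma_n\}$ and $\{\eta_n\}$ representing the same $x$ yield the same limit. This is where I expect the main (mild) obstacle. The summability hypothesis is weaker than $p$-domination, but by the discussion preceding the lemma we are working with a word-hyperbolic group $\Gamma$, so the Morse Lemma for quasi-geodesics in hyperbolic spaces provides a constant $D=D(a,b)$ and a reparametrization $m(n)\to\infty$ with $d(\gamma_n,\eta_{m(n)})\le D$ for large $n$. Applying Lemma~\ref{l.nochangeright} once more with $A=\rho(\gamma_n)$ and $B=\rho(\gamma_n^{-1}\eta_{m(n)})$ gives
$$
d\big(U_p(\rho(\gamma_n)),U_p(\rho(\eta_{m(n)}))\big)\le K^{2D}\,\frac{\sigma_{p+1}(\rho(\gamma_n))}{\sigma_{p}(\rho(\gamma_n))}\longrightarrow 0\,,
$$
so the two limits agree.

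\medskip

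\noindent\textbf{Step 3: Continuity.} Recall from Remark~\ref{rem-boundary} that a sequence $x_k\to x$ in $\partial\Gamma$ can be realized by $(a,b)$-quasi-geodesic rays $\{\gamma_n^{(k)}\}_n$ representing $x_k$ converging pointwise in $n$ to a quasi-geodesic ray $\{\gamma_n^{(\infty)}\}_n$ representing $x$. Since $\Gamma$ is discrete, pointwise convergence means that for each $n$ we eventually have $\gamma_n^{(k)}=\gamma_n^{(\infty)}$. Given $\eps>0$, pick $n_0$ as in Step~1 so that $d(U_p(\rho(\gamma_{n_0}^{(\cdot)})),\xi(\cdot))<\eps$ uniformly, and then take $k$ large enough that $\gamma_{n_0}^{(k)}=\gamma_{n_0}^{(\infty)}$. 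The triangle inequality yields $d(\xi(x_k),\xi(x))<2\eps$, proving continuity.

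\medskip

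\noindent\textbf{Step 4: Equivariance.} Fix $\gamma\in\Gamma$ and $x\in\partial\Gamma$. The sequence $\{\gamma\g_n\}$ is a quasi-geodesic representing $\gamma\cdot x$ (up to prepending a geodesic segment from $\mathrm{id}$ to $\gamma$ to keep it based at $\mathrm{id}$, at the cost of adjusting the constants to some $(a',b')$, which is immaterial because the limit is independent of the representative). By Lemma~\ref{l.domination_implies_slow_change},
$$
d\big(\rho(\gamma)U_p(\rho(\g_n)),\,U_p(\rho(\gamma\g_n))\big)\le \|\rho(\gamma)\|\,\|\rho(\gamma)^{-1}\|\,\frac{\sigma_{p+1}(\rho(\g_n))}{\sigma_{p}(\rho(\g_n))}\longrightarrow 0\,,
$$
so passing to the limit gives $\rho(\gamma)\xi(x)=\xi(\gamma\cdot x)$. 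The only genuinely delicate point in the argument is Step~2, where one needs hyperbolicity of $\Gamma$ to guarantee that distinct quasi-geodesic representatives of the same boundary point fellow-travel at bounded distance; everything else is a uniform application of the two basic perturbation estimates for the subspace $U_p$.
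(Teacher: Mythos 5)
Your proof is correct and follows essentially the same outline as the paper's: Cauchy-type estimates via Lemma~\ref{l.nochangeright} for existence and for independence of the representative, uniformity of those estimates for continuity, and Lemma~\ref{l.domination_implies_slow_change} for equivariance. You merely make explicit the fellow-travelling (Morse Lemma) step that the paper summarizes as ``follows directly from a similar estimate using Lemma~\ref{l.nochangeright},'' and you are right that word-hyperbolicity of $\Gamma$ is the implicit standing hypothesis needed both for $\partial\Gamma$ to make sense and for the fellow-travelling constant $D(a,b)$ to be uniform.
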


\begin{proof} 
For each $x \in \partial \Gamma$, choose $\{\gamma_n^x\} \in \mathcal{Q}_{(a,b)}^{\mathrm{id}}$ representing $x$. We define 
$$ \xi (x) \coloneqq \lim_{n} U_p(\rho(\gamma_n^x)) \ .$$
To see that this limit exists, let $C_0$ be an upper bound of $\|\rho(g)\| \|\rho(g^{-1})\|$ for $g\in S$ a finite generating set of $\Gamma$ and use Lemma \ref{l.nochangeright} to see that
$$ d(U_p(\rho(\gamma_n^x)), U_p(\rho(\gamma_{n-1}^x))) \leq C_0^{d(\gamma_n,\gamma_{n-1})} \frac{\sigma_{p+1}(\rho(\gamma^x_{n-1}))}{\sigma_p(\rho(\gamma^x_{n-1}))} $$
This implies that $U_p(\rho(\gamma_n^x))$ is a Cauchy sequence and therefore has a limit. The fact that the limit does not depend on the chosen $(a,b)$-quasi-geodesic follows directly from a similar estimate using Lemma \ref{l.nochangeright}. 

Since the estimates are uniform, this becomes a uniform limit as one changes $x\in \partial \Gamma$, providing continuity of the maps (recall the topology in $\partial \Gamma$ introduced in Remark \ref{rem-boundary}). Equivariance follows from Lemma \ref{l.domination_implies_slow_change}. 
\end{proof}

\begin{obs}
If $\rho$ is $p$-dominated then it satisfies the hypothesis of Lemma \ref{lem-equivariant}, since the terms in the sum of \eqref{eq:uniformsum} are uniformly exponentially small. 
\end{obs}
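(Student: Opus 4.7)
The proof is essentially immediate from the definition of $p$-domination once one unpacks the quasi-geodesic hypothesis. My plan is to combine a pointwise exponential bound with the linear lower bound on word length along a quasi-geodesic, and then conclude by comparison with a geometric series that is uniform over the entire class $\mathcal{Q}_{(a,b)}^{\mathrm{id}}$.

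\medskip

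First, by the $p$-domination hypothesis \eqref{eq:domrep} we have constants $C,\lambda>0$ such that
\[
\frac{\sigma_{p+1}(\rho(\gamma))}{\sigma_p(\rho(\gamma))} \le C\, e^{-\lambda |\gamma|} \quad \text{for every } \gamma \in \Gamma.
\]
Next, any $(a,b)$-quasi-geodesic $\{\gamma_n\}_{n\ge 0}$ with $\gamma_0 = \mathrm{id}$ satisfies $|\gamma_n| = d(\gamma_n,\mathrm{id}) \ge a^{-1} n - b$ by definition. Substituting this into the previous bound gives
\[
\frac{\sigma_{p+1}(\rho(\gamma_n))}{\sigma_p(\rho(\gamma_n))} \le C\, e^{\lambda b}\, e^{-(\lambda/a)\, n},
\]
and crucially this estimate holds \emph{uniformly} over all of $\mathcal{Q}_{(a,b)}^{\mathrm{id}}$, since the constants $C$, $\lambda$, $a$, $b$ do not depend on the particular quasi-geodesic.

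\medskip

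Finally, summing the geometric series yields
\[
\sup_{\{\gamma_n\} \in \mathcal{Q}_{(a,b)}^{\mathrm{id}}} \sum_{n \ge n_0} \frac{\sigma_{p+1}(\rho(\gamma_n))}{\sigma_p(\rho(\gamma_n))} \le C\, e^{\lambda b} \sum_{n \ge n_0} e^{-(\lambda/a)\, n} = \frac{C\, e^{\lambda b}\, e^{-(\lambda/a)\, n_0}}{1 - e^{-\lambda/a}},
\]
which tends to $0$ as $n_0 \to \infty$, verifying hypothesis \eqref{eq:uniformsum}. There is no serious obstacle here: the content is entirely in observing that $p$-domination gives a pointwise exponential decay that survives along any quasi-geodesic because word length grows at least linearly, and that the decay rate is uniform in the quasi-geodesic so one may pull the supremum inside the estimate.
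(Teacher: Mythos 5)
Your proof is correct and is exactly the argument the paper compresses into the phrase ``uniformly exponentially small'': the $p$-domination bound combined with the lower bound $|\gamma_n| \ge a^{-1}n - b$ along any $(a,b)$-quasi-geodesic from the identity gives a geometric tail whose sum is uniform over $\mathcal{Q}_{(a,b)}^{\mathrm{id}}$ and vanishes as $n_0 \to \infty$. There is no divergence in approach; you have simply written out the one-line justification in full.
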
 

\begin{prop}\label{p.DominatedImpliesAnosov} Let $\rho: \Gamma \to \GL(d,\R)$ be a $p$-dominated representation. Then $\rho$ is $p$-Anosov.
\end{prop}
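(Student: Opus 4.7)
The plan has three ingredients: construct the equivariant maps $\xi$ and $\theta$, prove transversality, and identify the resulting $\psi^t$-invariant splitting of $E_\rho$ with the dominated splitting produced by Theorem~\ref{t.BG}.

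First, the equivariant maps come essentially for free. Since $\rho$ is $p$-dominated, Lemma~\ref{lem-equivariant} immediately supplies a continuous equivariant map $\xi \colon \partial \Gamma \to \Gr_p(\R^d)$ with $\xi(x)=\lim_n U_p(\rho(\gamma_n))$ along any quasi-geodesic ray to $x$. By Remark~\ref{rem-pdominatedimpliesd-p}, $\rho$ is also $(d-p)$-dominated, so the same lemma at index $d-p$ produces $\theta \colon \partial \Gamma \to \Gr_{d-p}(\R^d)$ with $\theta(y)=\lim_n U_{d-p}(\rho(\eta_n))$. The identity $U_{d-p}(\rho(\eta_n))=S_{d-p}(\rho(\eta_n^{-1}))$ is what will dovetail with Lemma~\ref{l.transv}.

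Second, and this is the main point, I must prove transversality $\xi(x)\oplus\theta(y)=\R^d$ for $x\ne y$. I will fix geodesic rays $(\gamma_n)\to x$ and $(\eta_n)\to y$ out of $\mathrm{id}$, written as unit-step products of generators. Because $\Gamma$ is word-hyperbolic (by Theorem~\ref{teo:dominationimplieshyp}) and $x\ne y$, the Gromov products $(\gamma_n,\eta_m)_{\mathrm{id}}$ stay uniformly bounded, which yields for free the inequality $|\eta_m^{-1}\gamma_n|\ge n+m-C$. This is exactly the estimate~\eqref{e.QG} that drives the proof of Lemma~\ref{l.transv}, and re-running from that point the same argument --- concatenating the generator words into a bi-infinite matrix sequence in $\cD(K,p,\mu,\hat C,I)$ and invoking Lemma~\ref{l.seq_splitting} at the pivot $k=0$ --- produces a uniform $\delta>0$ with $\angle(U_p(\rho(\gamma_n)),S_{d-p}(\rho(\eta_m^{-1})))>\delta$. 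Passing to the limit via the identifications above gives $\angle(\xi(x),\theta(y))\ge\delta>0$, hence transversality.

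Third, the equivariance of $\xi$ and $\theta$ yields a continuous $\psi^t$-invariant splitting $E_\rho=\Xi\oplus\Theta$ with $\dim\Xi=p$, and I must show it is dominated. Applying Lemma~\ref{l.comparison2}(i), the singular values of $\psi^t_z$ are comparable to those of $\rho(\gamma)$ for some $\gamma$ whose word-length is comparable to $t$; together with $p$-domination of $\rho$ this yields the uniform gap $\sigma_{p+1}(\psi^t_z)/\sigma_p(\psi^t_z)\le C'e^{-\lambda' t}$. Theorem~\ref{t.BG} then produces a dominated splitting $E^\cu\oplus E^\cs$ of the right dimensions, and its limit formulas~\eqref{e.BG_cu}--\eqref{e.BG_cs}, re-expressed via Lemma~\ref{l.comparison2} as limits of $U_p$ and $S_{d-p}$ of $\rho(\gamma_t)$ along the $\Gamma$-orbit corresponding to the flow line through $z$, coincide with $\Xi$ and $\Theta$. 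Hence $\Xi\oplus\Theta$ is the dominated splitting and $\rho$ is $p$-Anosov.

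The delicate step is transversality: a direct appeal to Lemma~\ref{l.transv} would need a positive lower bound on $d(U_p(\rho(\gamma_n)),U_p(\rho(\eta_n)))$, effectively the injectivity of $\xi$, which is not yet available. The bypass is to extract the length estimate $|\eta_m^{-1}\gamma_n|\ge n+m-C$ directly from hyperbolicity of $\Gamma$ and enter Lemma~\ref{l.seq_splitting} at the corresponding stage of the proof.
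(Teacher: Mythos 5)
Your proof is correct and follows the same overall structure as the paper's: equivariant maps from Lemma~\ref{lem-equivariant}, transversality, transfer of the singular-value gap to the flow $\psi^t$ via Lemma~\ref{l.comparison2}, and then Theorem~\ref{t.BG}. The one place where you depart, and improve, is the transversality step, and your diagnosis of why is exactly right. The paper declares transversality to be a ``direct consequence of Lemma~\ref{l.transv}'', but hypothesis~(\ref{i.eps_dist}) of that lemma asks for a lower bound $d(U_p(\rho(\gamma_n)), U_p(\rho(\eta_m)))>\eps$, which in the limit becomes $\xi(x)\ne\xi(y)$ --- i.e.\ injectivity of $\xi$, not yet available and most naturally itself a corollary of the transversality one is trying to establish. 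Your bypass is correct: word-hyperbolicity of $\Gamma$ (already in hand from Theorem~\ref{teo:dominationimplieshyp}) bounds the Gromov product $(\gamma_n,\eta_m)_{\id}$ uniformly for $x\ne y$, giving $|\eta_m^{-1}\gamma_n|\ge n+m-C$, which is precisely the intermediate estimate~\eqref{e.QG} in the proof of Lemma~\ref{l.transv}; from there the remainder of that proof (packaging the generators into a sequence in $\cD(K,p,\mu,\hat C,I)$ and applying Lemma~\ref{l.seq_splitting} at $k=0$) runs unchanged and yields a uniform $\delta>0$ with $\angle(U_p(\rho(\gamma_n)),S_{d-p}(\rho(\eta_m^{-1})))>\delta$; passing to the limit using $\theta(y)=\lim_m S_{d-p}(\rho(\eta_m^{-1}))$ gives $\angle(\xi(x),\theta(y))\ge\delta>0$. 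In effect you have substituted the proof of Lemma~\ref{l.transv} for its statement, entering at the point where the distance hypothesis was used --- this is what the paper's ``direct consequence'' is gesturing at, and your version is the rigorous one. A small bookkeeping remark: your citation of part~(\ref{i.U_to_Gamma}) of Lemma~\ref{l.comparison2} is indeed the correct direction for transferring the gap from $\rho$ to $\psi^t$ (for each $z,t$ find a comparable $\gamma$); the published proof inadvertently cites part~(\ref{i.Gamma_to_U}).
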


\begin{proof} If a representation is $p$-dominated, then it is also $(d-p)$-dominated (see Remark \ref{rem-pdominatedimpliesd-p}). Lemma \ref{lem-equivariant} then provides two equivariant continuous maps $\xi: \partial \Gamma \to \Gr_p(\RR^d)$ and $\theta: \partial \Gamma \to \Gr_{d-p}(\RR^d)$. 

The fact that $\xi (x) \oplus \theta(y)  =  \RR^d$ for $x \neq y \in \partial \Gamma$ is a direct consequence of Lemma~\ref{l.transv} and the definition of the maps $\xi$ and $\theta$ given by Lemma~\ref{lem-equivariant}. 

Using  part~(\ref{i.Gamma_to_U}) of Lemma~\ref{l.comparison2}, we obtain an exponential gap in the singular values of $\psi^t$, and by Theorem \ref{t.BG}, the splitting $\xi \oplus \theta$ is dominated. Therefore the representation $\rho$ is $p$-Anosov.
\end{proof}

\subsection{Some questions}

Given a matrix $A \in \GL(d,\R)$, let
$$
\chi_1(A) \ge \chi_2(A) \ge \cdots \ge \chi_d(A)
$$
denote the absolute values of its eigenvalues, repeated according to multiplicity.

Given a finitely generated group, let 
$$
\ell(\eta) \coloneqq \inf_\eta |\eta^{-1} \gamma \eta|  =  \inf_\eta d(\gamma\eta, \eta)
$$
(i.e., the \emph{translation length}).
If $\Gamma$ is word-hyperbolic then there exists a constant $a > 0$ such that
for every $\gamma \in \Gamma$ we have:
\begin{equation}\label{e.CDP}
\ell(\gamma) - a \le \lim_{n \to \infty} \frac{|\gamma^n|}{n} \le \ell(\gamma) \, ;
\end{equation}
see \cite[p.~119]{CDP}.

Note that if $\rho \colon \Gamma \to \GL(d,\R)$ is a $p$-dominated representation then
there exists constants $C'>0$, $\lambda>0$ such that for all $\gamma \in \Gamma$ we have:
\begin{equation}\label{e.eigen_gap}
\frac{\chi_{p+1}(\rho(\gamma))}{\chi_p(\rho(\gamma))} < C' e^{-\lambda' \ell(\gamma)} \, .
\end{equation}
Indeed, if the domination condition \eqref{eq:domrep} holds then the group $\Gamma$ is word-hyperbolic by Theorem~\ref{teo:dominationimplieshyp} and, using \eqref{e.CDP}, we obtain:
$$
\frac{\chi_{p+1}(\rho(\gamma))}{\chi_p(\rho(\gamma))}  = 
\lim_{n \to \infty} \left( \frac{\sigma_{p+1}(\rho(\gamma^n))}{\sigma_p(\rho(\gamma^n))} \right)^{1/n} 
\le \lim_{n \to \infty} \left( C e^{-\lambda|\gamma^n|} \right)^{1/n} 
\le C' e^{-\lambda \ell(\gamma)} \, ,
$$
for $C' \coloneqq e^{a \lambda}$.

Condition \eqref{e.eigen_gap} is invariant under conjugacies, while condition \eqref{eq:domrep} does not enjoy this property.

It is natural to pose the following question:

\begin{question}\label{q.eigen_domination}
Let $\rho \colon \Gamma \to \GL(d,\R)$ be a representation of a finitely generated group $\Gamma$.
Suppose that there exists constants $p \in \{1,\dots,d-1\}$, $C'>0$, $\lambda>0$ such that relation \eqref{e.eigen_gap} holds.
Does it follow that $\rho$ is $p$-dominated?
\end{question}

Gu\'eritaud, Guichard, Kassel, and Wienhard have shown that for $p$-convex representations, the question above has a positive answer, even relaxing condition \eqref{e.eigen_gap}: see \cite[Theorem~1.6]{GGKW}.\footnote{Kassel has informed us that techniques similar to those in \cite{GGKW} allow one to give a positive answer to Question \ref{q.eigen_domination} for certain word hyperbolic groups, including free groups and surface groups.}

In terms of the linear flow $\{\psi^t\}$,
condition \eqref{e.eigen_gap} means that for every periodic orbit $\cO$ of $\{\phi^t\}$, say of period $\ell(\cO)$, there exists a gap between the $p$-th and $p+1$-th moduli of the eigenvalues of $\psi^{\ell(\cO)}$ which is exponentially large with respect to $\ell(\cO)$.
Question~\ref{q.eigen_domination} can be reformulated in the general context of linear flows over hyperbolic dynamics; however, that question has a negative answer: see for example \cite{Gogolev}.
Therefore a positive answer to Question~\ref{q.eigen_domination} would require a finer use of the fact that the linear flow comes from a representation.

\medskip

The following important result was obtained by Bonatti, D\'iaz, and Pujals \cite{BDP}: if a diffeomorphism $f$ of a compact manifold has that property that all sufficiently small $C^1$-perturbations have dense orbits, then the derivative cocycle $Df$ admits a dominated splitting.
This is an example of a general principle in Differentiable Dynamics, tracing back to Palis--Smale Stability Conjecture: robust dynamical properties often imply some uniform property for the derivative.
Coming back to the context of linear representations, one can try to apply the same principle.
For example, if a representation $\rho$ of a hyperbolic group is robustly faithful and discrete (or robustly quasi-isometric), does it follow that $\rho$ is $p$-dominated for some $p$?


\section{Characterizing dominated representations in terms of multicones}\label{s.multicones}

The main result of this section is Theorem~\ref{t.multicone_representation}, which gives another characterization of dominated representations. Related results have been obtained in \cite{ABY,BG}. In dimension two, such results have been used to study how domination can break along a deformation: see \cite[\S~4]{ABY}.

As a consequence of Theorem~\ref{t.multicone_representation}, domination obeys a ``local-to-global'' principle, a fact that was first shown in \cite{KLP1} by different methods.

\subsection{Sofic linear cocycles and a general multicone theorem} \label{ss.sofic}

In this subsection we introduce a special class of linear cocycles called \emph{sofic}.
Then we state a necessary and sufficient condition for the existence of a dominated splitting for these cocycles,
generalizing the ``multicone theorems'' of \cite{ABY,BG}.

\medskip

Let $\mathcal{G}$ be a \emph{graph}, or more precisely, a finite directed multigraph. 
This means that we are given finite sets $\mathcal{V}$ and $\mathcal{E}$ whose elements are called respectively \emph{vertices} and \emph{edges}, and that each edge has two (not necessarily different) associated vertices, called its \emph{tail} and its \emph{head}.
A \emph{bi-infinite walk} on $\mathcal{G}$ is a two-sided sequence of edges $(e_n)_{n \in \Z}$ such that 
for each $n \in \Z$, the head of $e_n$ equals the tail of $e_{n+1}$.

The graph $\mathcal{G}$ is called \emph{labeled} if in addition each edge has an associated \emph{label}, taking values in some finite set $\mathcal{L}$.
Let $(e_n)_{n \in \Z}$ be a bi-infinite walk on $\mathcal{G}$; then its \emph{label sequence} is defined as $(\ell_n)_{n\in \Z}$ where each $\ell_n$ is the label of the edge $e_n$.
Let $\Lambda$ be the set of all label sequences; this is a closed, shift-invariant subset of $\mathcal{L}^\Z$.
Let $T \colon \Lambda \to \Lambda$ denote the restriction of the shift map.
Then $T$ is called a \emph{sofic shift},
and the labeled graph from which it originates is called a \emph{presentation} of $T$.
We refer the reader to \cite{LindMarcus} for examples, properties, and alternative characterizations of sofic shifts. 
Let us just remark that every subshift of finite type is a sofic shift, and every sofic shift is a factor of a subshift of finite type.

Let us say that a graph is \emph{recurrent} if it is a union of directed cycles. 
Given a (labeled) graph $\cG$, let $\cG^*$ denote the maximal recurrent (labeled) subgraph, or equivalently, the subgraph containing all the bi-infinite walks on $\cG$.
Note that the sofic shifts presented by $\cG$ and $\cG^*$ are exactly the same.
Therefore we may always assume that $\cG$ is recurrent, if necessary.

\medskip

Fix a sofic shift $T$ and a presentation $\mathcal{G}$ as above.
Let $d \ge 2$ be an integer.
Given a family of matrices $(A_\ell)_{\ell \in \mathcal{L}}$ in $\GL(d,\R)$,
consider the map $A \colon \Lambda \to \GL(d,\R)$ defined by $A((\ell_n)_{n\in \Z}) \coloneqq A_{\ell_0}$.
We call the pair $(T,A)$ a \emph{sofic linear cocycle}. 
We are interested in the existence of dominated splitting for such cocycles.

\medskip

A \emph{multicone of index $p$} is an open subset of the projective space $\P(\R^d)  =  \Gr_1(\R^d)$ that contains the projectivization of some $p$-plane and does not intersect the projectivization of some $(d-p)$-plane. 
Such a multicone is called \emph{tame} if it has finitely many connected components, and these components have disjoint closures.

Suppose that for each vertex $v$ of the graph $\mathcal{G}$ it is given a multicone $M_v \subset \P(\R^d)$ of index $p$; then we say that $(M_v)_{v\in \mathcal{V}}$ is \emph{a family of multicones of index $p$}. We say that this family is \emph{strictly invariant} (with respect to the sofic linear cocycle) if for each edge $e \in \mathcal{E}$ we have\footnote{$X \Subset Y$ denotes that the closure of $X$ is contained in the interior of $Y$.}  
$$
A_\ell(M_{v_0}) \Subset M_{v_1} \, ,
$$
where $\ell$ is the label of $e$, $v_0$ is the tail of $e$, and $v_1$ is the head of $e$.

\begin{teo}\label{t.multicone_sofic} 
Let $T$ be a sofic shift with a fixed presentation.
Consider a sofic linear cocycle $(T,A)$.
The following statements are equivalent:
\begin{enumerate}[label = {\textnormal{(\alph*)}},ref = {\textnormal{(\alph*)}}]
\item\label{i.ds} 
the cocycle $(T,A)$ has a dominated splitting $E^{\cu} \oplus E^{\cs}$ 
where the dominating bundle $E^\cu$ has dimension $p$;
\item\label{i.multicone}
there exists a strictly invariant family of multicones of index $p$. 
\end{enumerate}
Moreover, in \ref{i.multicone} we can always choose a family composed of tame multicones.
\end{teo}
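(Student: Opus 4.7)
The plan is to prove the two implications separately. Theorem~\ref{t.BG} will supply the invariant bundles, and a Birkhoff-style cone-contraction argument will provide the converse. The overall strategy mirrors the one-matrix multicone theorems of \cite{ABY,BG}; the sofic setup requires only a small amount of additional combinatorial bookkeeping.

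For \ref{i.multicone}$\,\Rightarrow\,$\ref{i.ds}: given a strictly invariant family $(M_v)$, fix a bi-infinite walk in the recurrent subgraph $\cG^*$ with vertex sequence $(v_n)$ and label sequence $(\ell_n)$. The nested compact sets
$$
C_n(x) \coloneqq (A_{\ell_{-1}}\cdots A_{\ell_{-n}})\bigl(\overline{M_{v_{-n}}}\bigr) \subset \P(\R^d)
$$
satisfy $C_{n+1}(x) \Subset C_n(x)$, and since there are only finitely many vertices, edges, and matrices, this contraction happens at a uniform rate. Pulling back through the Pl\"ucker embedding $\Gr_p(\R^d) \hookrightarrow \P(\Wedge^p \R^d)$ and working with the induced action of $\Wedge^p A_\ell$, one sees that $\bigcap_n C_n(x)$ collapses to the projectivization of a unique $p$-plane $E^{cu}(x)$, invariant and continuous in $x$. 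The complementary bundle $E^{cs}$ is built symmetrically using the inverse cocycle with the family $(\P(\R^d) \setminus \overline{M_v})_v$, which is strictly invariant of index $d-p$ for $A^{-1}$. The uniform cone contraction translates into an exponential decay of $\sigma_{p+1}/\sigma_p$ for long compositions, so Theorem~\ref{t.BG} applies and certifies a genuine dominated splitting of the claimed dimensions.

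For \ref{i.ds}$\,\Rightarrow\,$\ref{i.multicone}: apply Theorem~\ref{t.BG} to obtain continuous bundles $E^{cu} \oplus E^{cs}$ of dimensions $p$ and $d-p$. Passing to the SFT cover $W$ of $\Lambda$ via the factor map $\pi \colon W \to \Lambda$ (which preserves the cocycle data), for each vertex $v$ of $\cG^*$ set
$$
K^{cu}_v \coloneqq \bigcup_{w \in W_v} \P\bigl(E^{cu}(\pi(w))\bigr), \qquad K^{cs}_v \coloneqq \bigcup_{w \in W_v} \P\bigl(E^{cs}(\pi(w))\bigr),
$$
where $W_v$ is the compact set of walks with tail of $e_0$ equal to $v$. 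Continuity and transversality of the bundles make these disjoint compact subsets of $\P(\R^d)$, and $K^{cu}_v$ contains $\P(E^{cu}(\pi(w)))$ for any fixed $w \in W_v$. A first candidate for $M_v$ is an open neighborhood of $K^{cu}_v$ disjoint from a neighborhood of $K^{cs}_v$. Strict invariance along an edge $e$ of label $\ell$ from $v_0$ to $v_1$ is verified using Lemma~\ref{l.dominationattractor}: $A_\ell$ attracts everything transverse to $K^{cs}_{v_0}$ towards $K^{cu}_{v_1}$, provided its singular-value gap is large enough. Since a single edge need not supply this gap, I would first replace $\cG$ by its higher-block graph $\cG^{(N)}$ (whose edges are length-$N$ walks of $\cG$ with composed labels $A_{\ell_{N-1}}\cdots A_{\ell_0}$) for $N$ large enough that every macro-edge enjoys the full exponential gap of Theorem~\ref{t.BG}, build strictly invariant multicones indexed by $\cG^{(N)}$-vertices, and then reassemble them into a family indexed by $\cG$-vertices.

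To arrange \textbf{tameness}, cover each compact $K^{cu}_v$ by finitely many small open balls of $\P(\R^d)$, shrink them slightly so their closures are pairwise disjoint, and let $M_v$ be the resulting finite disjoint union. If the radii are smaller than the contraction margin coming from Lemma~\ref{l.dominationattractor}, strict invariance is preserved. \textbf{The main difficulty} I anticipate is the combinatorial passage from $\cG^{(N)}$ back to $\cG$ while preserving both strict invariance and the prescribed index: a vertex of $\cG^{(N)}$ is a length-$(N-1)$ walk of $\cG$, so many macro-vertices share a common $\cG$-initial vertex $v$, and their associated multicones must be simultaneously absorbed by a single $M_v$ in a way compatible with all outgoing edges. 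Once this is arranged, the remaining verifications follow from the quantitative linear-algebraic estimates of Section~\ref{s.DS} and the Appendix.
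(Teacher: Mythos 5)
There is a genuine gap in your treatment of the implication \ref{i.ds}~$\Rightarrow$~\ref{i.multicone}, and you have put your finger on it yourself. Lemma~\ref{l.dominationattractor} only yields the kind of attraction you need when a \emph{single} matrix $A_\ell$ already has a strong singular-value gap of index $p$, which is false in general; that is why you resort to the higher-block graph $\cG^{(N)}$, and you then acknowledge not knowing how to carry the family of multicones back down to $\cG$ while keeping both strict invariance and the cone/anti-cone structure (and indeed the naive union of the $\cG^{(N)}$-multicones over all paths touching a given $\cG$-vertex need not avoid any $(d-p)$-plane). The paper avoids this problem entirely: on the open set of $p$-planes transverse to $\cK_v^{\cs}$ it defines an \emph{adapted metric}
$$
d_v(P,Q) \coloneqq \sum_{N=0}^\infty \sup_{(\ell_n)\in\Lambda_v} d\bigl(A_{\ell_{N-1}}\cdots A_{\ell_0}(P),\,A_{\ell_{N-1}}\cdots A_{\ell_0}(Q)\bigr),
$$
which converges by Lemma~\ref{l.expand} and, by telescoping, is \emph{strictly} contracted by a single cocycle step along any edge. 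Taking $M_v^*$ to be a small $d_v$-neighborhood of $\cK_v^{\cu}$ then gives strict invariance along single edges, with no passage to $\cG^{(N)}$; tameness follows for a suitable choice of the neighborhood radius, as in \cite[p.~288]{BG}. This adapted metric is the missing ingredient in your write-up.

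A smaller but still substantive point: you assert that ``continuity and transversality of the bundles'' make $K_v^{\cu}$ and $K_v^{\cs}$ disjoint. That sentence conflates transversality of $E^{\cu}(x)$ with $E^{\cs}(x)$ at a \emph{common} $x$ with the needed transversality between $E^{\cu}(x)$ and $E^{\cs}(y)$ for two \emph{different} sequences $x,y\in\Lambda_v$; the latter does not follow from continuity. The correct argument is the concatenation through the shared vertex $v$ at time $0$: splice the past of a walk realizing $x$ with the future of a walk realizing $y$ to get an admissible $z\in\Lambda_v$, and use that $E^{\cu}$ depends only on the past and $E^{\cs}$ only on the future (via equations~\eqref{e.BG_cu} and~\eqref{e.BG_cs}) to identify $E^{\cu}(x)=E^{\cu}(z)$ and $E^{\cs}(y)=E^{\cs}(z)$, which are transverse by domination. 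Since you already pass to an SFT cover, you have the structure needed to make this precise, but it must be stated. Your \ref{i.multicone}~$\Rightarrow$~\ref{i.ds} direction is essentially fine and can also be deduced from the cone-field criterion (Proposition~\ref{p.conefield}), though the claim that $\bigcap_n C_n(x)$ collapses to a single projectivized $p$-plane deserves a genuine cone-contraction argument rather than a one-line appeal to the Pl\"ucker embedding.
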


\begin{remark} 
It is not always possible to choose \emph{connected} multicones 
in \ref{i.multicone}.
Let us sketch the simplest example, referring the reader to \cite{ABY} for more information.
Let $T$ be the full shift on two symbols $1$, $2$, presented by the graph $\begin{tikzcd} \arrow[loop left, "1"] \bullet \arrow[loop right, "2"] \end{tikzcd}$. 
Let $A_1$, $A_2$ be matrices in $\SL(2,\R)$ whose unstable and stable directions in $\P(\R^2)$ can be cyclically ordered as: 
$$
E^\mathrm{u}(A_1) < E^\mathrm{s}(A_1) < E^\mathrm{u}(A_2) < E^\mathrm{s}(A_2)  \, .
$$
If the eigenvalues are sufficiently away from $1$ then the sofic cocycle $(T,A)$ has a dominated splitting. 
A strictly invariant family of multicones consists on a single element, since the graph has a single vertex. 
It is possible to chose a multicone $M \subset \P(\R^2)$ with two connected components.
But $M$ cannot be chosen connected, since it must contain $\{E^\mathrm{u}(A_1) , E^\mathrm{u}(A_2)\}$ and it cannot intersect $\{E^\mathrm{s}(A_1) , E^\mathrm{s}(A_2)\}$.

\end{remark}

Theorem~\ref{t.multicone_sofic} implies, for example, \cite[Theorem~2.2]{ABY}; to see this, note that the full shift on $N$ symbols can be presented by the graph with a single vertex and self-loops labeled $1$, \dots, $N$.
Theorem~\ref{t.multicone_sofic} also extends \cite[Theorem~2.3]{ABY} and \cite[Theorem~B]{BG} (in the case of finite sets of matrices).

One can prove Theorem~\ref{t.multicone_sofic} 
by minor adaptation of the proof of \cite[Theorem~B]{BG}; for the convenience of the reader we provide details in Subsection~\ref{ss.multicone_sketch}.

As a complement to Theorem~\ref{t.multicone_sofic}, let us explain how to obtain the dominating bundle $E^\cu$ in terms of the multicones:

\begin{prop}\label{p.multicone_sofic_Ecu}
Consider a sofic linear cocycle $(T,A)$ with a dominated splitting $E^\cu \oplus E^\cs$ where the dominating bundle $E^\cu$ has dimension $p$.
Let $\{M_v\}_v$ be a strictly invariant family of multicones. 
Consider an element $x  =  (\ell_n)_{n \in \Z} \in \Lambda$, that is, the label sequence of a bi-infinite walk $(e_n)_{n \in \Z}$.
Let $v_n$ be the tail of the edge $e_n$.
Let $(P_n)_{n\in \Z}$ be a sequence in $\Gr_p(\R^d)$ such that for each $n$, the projectivization of $P_n$ is contained in the multicone $M_{v_n}$.
Then
$$
E^\cu(x)  =  \lim_{n \to +\infty} A_{\ell_{-1}} A_{\ell_{-2}} \cdots A_{\ell_{-n}} (P_{-n}) \, .
$$
Moreover, the speed of convergence is exponential and can be estimated independently of $x$, $(e_n)$, and~$(P_n)$,
and is the same for all nearby sofic linear cocycles $(T,\tilde{A})$. 
\end{prop}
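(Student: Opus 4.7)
My approach is to combine the singular value asymptotics of Theorem~\ref{t.BG} with the attractor estimate of Lemma~\ref{l.dominationattractor}, using a uniform transversality supplied by the strict invariance of the multicones. Set $B_n \coloneqq A_{\ell_{-1}} A_{\ell_{-2}} \cdots A_{\ell_{-n}}$. By hypothesis (a) of Theorem~\ref{t.multicone_sofic}, the bi-infinite sequence $(A_{\ell_n})_{n\in\Z}$ lies in some $\cD(K,p,\mu,c,\Z)$ with constants uniform in $x \in \Lambda$, so Proposition~\ref{p.BG_sequences} yields the uniform exponential convergence
\[
E^\cu(x) \;=\; \lim_{n\to\infty} U_p(B_n)
\]
together with the uniform gap $\sigma_{p+1}(B_n)/\sigma_p(B_n) \le c\,e^{-\mu n}$. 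Lemma~\ref{l.dominationattractor} then gives
\[
d\bigl(B_n P, U_p(B_n)\bigr) \;\le\; \frac{c\,e^{-\mu n}}{\sin \angle\bigl(P, S_{d-p}(B_n)\bigr)}
\]
for every $P \in \Gr_p(\R^d)$. Applied with $P = P_{-n}$, this reduces the proposition to exhibiting a uniform lower bound $\sin \angle(P_{-n}, S_{d-p}(B_n)) \ge \delta > 0$.

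The uniform transversality is the core of the argument. By Theorem~\ref{t.BG} applied to the cocycle running forward from the basepoint $\sigma^{-n}(x)$, the subspace $S_{d-p}(B_n)$ is exponentially and uniformly close to $E^\cs_{\sigma^{-n}(x)}$. It therefore suffices to prove the uniform separation
\[
\angle\bigl(P, E^\cs_y\bigr) \;\ge\; \delta_0 \qquad \text{whenever } \P(P) \subset \overline{M_{v_0(y)}}, \ y \in \Lambda,
\]
with $\delta_0$ depending only on the cocycle. I would argue this by contradiction: if it failed, compactness of $\Lambda$ together with that of $\{P \in \Gr_p(\R^d) : \P(P) \subset \overline{M_v}\}$ for each vertex $v$ would yield a point $y^* \in \Lambda$ with vertex $v^*$ at time zero, together with a nonzero vector $q^* \in E^\cs_{y^*}$ whose direction lies in some $P^*$ satisfying $\P(P^*) \subset \overline{M_{v^*}}$. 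Iterating the cocycle forward from $y^*$, the projective image of $q^*$ would lie simultaneously in $\P(E^\cs)$, by invariance of the stable bundle, and in a nested sequence of compacts obtained by iterated strict invariance of the multicones. Using the finiteness of the vertex set and the uniform strictness, these iterates collapse exponentially onto $\P(E^\cu)$; since $E^\cu$ and $E^\cs$ are transverse, this contradicts permanent containment of the orbit in $\P(E^\cs)$.

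Granting the transversality, Lemma~\ref{l.dominationattractor} yields $d(B_n P_{-n}, U_p(B_n)) = O(e^{-\mu n})$ uniformly in $x$ and admissible $(P_{-n})$, and combining with $U_p(B_n) \to E^\cu(x)$ proves the proposition. The constants depend only on the domination pair $(c,\mu)$, on the strict invariance depth of $\{M_v\}$, and on the convergence rate of $S_{d-p}(B_n)$ to $E^\cs$; each of these varies robustly under $C^0$ perturbations of the matrix family $(A_\ell)$, so the same rate applies uniformly to all nearby sofic linear cocycles $(T,\tilde A)$. The main obstacle I anticipate is rigorously establishing the exponential collapse of the forward iterates of $\overline{M_{v^*}}$ onto $\P(E^\cu)$; a clean way to handle this is via a Hilbert-type projective metric on each $\overline{M_v}$ for which the strict invariance translates into a uniform contraction, in the spirit of Birkhoff's theorem on positive operators.
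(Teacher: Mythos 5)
Your high-level strategy — establish a uniform transversality between the multicones and $E^\cs$, then run a Grassmannian contraction estimate — matches the paper's, but you substitute different tools at both steps. The substitution of Lemma~\ref{l.dominationattractor} (together with the singular-value gap from Proposition~\ref{p.BG_sequences}) in place of the paper's Lemma~\ref{l.expand} is a legitimate alternative and in fact yields the convergence rate rather directly; the reduction from $\angle(P_{-n},S_{d-p}(B_n))$ to $\angle(P_{-n},E^\cs_{\sigma^{-n}x})$ via the uniform limit in Proposition~\ref{p.BG_sequences} is also sound.

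The gap is in your derivation of the uniform transversality $\angle(P,E^\cs_y)\ge\delta_0$. Your contradiction argument hinges on the forward iterates of $\overline{M_{v^*}}$ \emph{collapsing exponentially onto} $\P(E^\cu)$. But this collapse is, up to a shift of indices, exactly the conclusion of the proposition you are trying to prove (the iterates $A_{\ell_{n-1}}\cdots A_{\ell_0}(\overline{M_{v_0(y^*)}})$ converging to $\P(E^\cu_{T^n y^*})$ is the time-reversed form of $B_n(P_{-n})\to E^\cu(x)$), so as written the argument is circular. You flag this yourself and propose a Birkhoff/Hilbert-metric fix, but that route is delicate here: multicones are in general disconnected (so there is no single Hilbert metric), and for $p>1$ the contraction must be understood on $\Gr_p(\R^d)$ rather than on $\P(\R^d)$, requiring a metric on $p$-planes. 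The paper's adapted metric construction (Subsection~\ref{ss.multicone_sketch}) resolves all this, but it itself invokes Lemma~\ref{l.expand}, whose hypotheses presuppose the transversality.

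The clean, non-circular route — the one the paper actually takes — is to invoke \eqref{e.conefield_and_bundles} from Proposition~\ref{p.conefield} directly: for any strictly invariant field of multicones one has $E^\cu(x)\in\cC(x)$ and $\inf_{P\in\cC(x)}\angle\bigl(P,E^\cs(x)\bigr)>0$. This is the classical cone-criterion statement, combined with the uniqueness of dominated splittings (Proposition~\ref{p.uniqueness}) to identify the splitting that the multicone family produces with the given $E^\cu\oplus E^\cs$. Once you cite that angle bound, the rest of your proof goes through without further issue.
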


\subsection{How to prove Theorem~\ref{t.multicone_sofic} and Proposition~\ref{p.multicone_sofic_Ecu}}\label{ss.multicone_sketch}

The implication \ref{i.multicone} $\Rightarrow$ \ref{i.ds} in Theorem~\ref{t.multicone_sofic} and  Proposition~\ref{p.multicone_sofic_Ecu} actually hold in much greater generality:

\begin{prop}\label{p.conefield}
If a linear cocycle $(T,A)$ \footnote{or, more generally, a linear flow in the context of Subsection~\ref{ss.def_ds}} admits a strictly invariant continuous field $\cC$ of multicones of index $p,$ then the cocycle admits a dominated splitting $E^\cu \oplus E^\cs$ where the dominating bundle $E^\cu$ has dimension $p$.
Moreover, 
\begin{equation}\label{e.conefield_and_bundles}
E^{\cu}(x) \in \cC(x) \quad \text{and} \quad
\inf_{P \in \cC(x)} \angle\big(P, E^{\cs}(x) \big) > 0 
\quad \text{for every $x$.}
\end{equation}
\end{prop}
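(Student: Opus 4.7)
The plan is to construct $E^\cu$ and $E^\cs$ by forward and backward iteration of the cone field, establish their uniform exponential shrinking, and conclude domination through Theorem~\ref{t.BG}. For each $x \in X$, set
\[
\tilde{\cC}(x) \coloneqq \{P \in \Gr_p(\R^d) \st \P(P) \subset \cC(x)\}\,,
\]
which is nonempty by the definition of a multicone of index $p$. Strict invariance of $\cC$ gives $A_x(\tilde{\cC}(x)) \subset \mathrm{int}(\tilde{\cC}(Tx))$, so
\[
E^\cu(x) \coloneqq \bigcap_{n \ge 0} A^n_{T^{-n}x}\big(\tilde{\cC}(T^{-n}x)\big)
\]
is a nonempty compact $A$-invariant subset of $\Gr_p(\R^d)$. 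I want to show this intersection is a singleton, depending continuously on $x$, with diameters going to zero exponentially and uniformly.

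The central technical point, and the main obstacle, is precisely this uniform exponential contraction. Continuity of $\cC$ and compactness of $X$ yield a uniform ``buffer'' $\epsilon > 0$: for every $x$, $A_x(\overline{\cC(x)})$ sits at Hausdorff distance at least $\epsilon$ from $\P(\R^d) \setminus \cC(Tx)$. From this one deduces exponential shrinking either through a compactness-and-contradiction argument (a non-shrinking subsequence would, after passing to a limit in the base space and in $\Gr_p(\R^d)$, yield two distinct $A$-invariant $p$-planes trapped inside $\tilde{\cC}$, contradicting the buffer) or by transferring the problem through the Pl\"ucker embedding $\iota \colon \Gr_p(\R^d) \hookrightarrow \P(\Wedge^p \R^d)$ to an index-$1$ situation in the $p$-th exterior power, where a Birkhoff--Hilbert projective metric is available.

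The stable bundle $E^\cs$ is built by the dual construction applied to the inverse cocycle. Setting
\[
\tilde{\cC}^*(x) \coloneqq \{Q \in \Gr_{d-p}(\R^d) \st \P(Q) \cap \cC(x) = \emptyset\}\,,
\]
nonempty by the second half of the multicone definition, one checks that $A_x^{-1}(\tilde{\cC}^*(Tx)) \subset \mathrm{int}(\tilde{\cC}^*(x))$ as a direct consequence of the strict invariance of $\cC$. Repeating the previous argument for the inverse cocycle then produces a continuous $T$-equivariant $(d-p)$-plane bundle $E^\cs$ with $\P(E^\cs(x)) \cap \overline{\cC(x)} = \emptyset$ for every $x$ (the closure is gained after one strict iteration).

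Because $\P(E^\cu(x)) \subset \cC(x)$ and $\P(E^\cs(x))$ is disjoint from $\overline{\cC(x)}$, the two bundles are transverse and $E^\cu(x) \oplus E^\cs(x) = \R^d$. The exponential shrinking of both the forward and backward iterated cones translates, via the variational characterization of singular values, into a uniform estimate $\sigma_{p+1}(A^n_x)/\sigma_p(A^n_x) \le c e^{-\lambda n}$; Theorem~\ref{t.BG} then provides a dominated splitting whose bundles, by the uniqueness statement of Proposition~\ref{p.uniqueness}, must coincide with $E^\cu \oplus E^\cs$. The ``moreover'' part is then immediate: $E^\cu(x) \in \cC(x)$ is built in, and $\inf_{P \in \cC(x)} \angle(P, E^\cs(x)) > 0$ follows by continuity and compactness from $\P(E^\cs(x)) \cap \overline{\cC(x)} = \emptyset$.
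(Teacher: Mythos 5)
Your high-level plan --- intersect forward and backward iterated cones, check transversality through the dual field, feed the resulting singular value gap into Theorem~\ref{t.BG} --- is the standard route, and the set-up of $\tilde{\cC}$, $\tilde{\cC}^*$ together with the observation that a single strict iterate upgrades ``$\P(Q)$ misses $\cC$'' to ``$\P(Q)$ misses $\overline{\cC}$'' is correct. The trouble is exactly at the step you flag as central, and neither sub-argument you sketch closes it. In the compactness route, a non-shrinking subsequence does produce, after a diagonal extraction, a backward orbit $(z_m)_{m\le 0}$ and two $A$-equivariant families of $p$-planes $(Q^{(m)})$, $(Q^{\prime(m)})\subset\overline{\tilde{\cC}(z_m)}$ at a fixed positive distance, but this does \emph{not} contradict the buffer: the buffer is a one-step geometric condition satisfied by every plane in the cone, and two distinct backward-invariant families inside $\tilde{\cC}$ are perfectly compatible with it. Converting such a configuration into a contradiction needs additional dynamical input --- an Oseledets--Krylov--Bogoliubov argument as in \S~\ref{ss.sketch_BG}, or an adapted Lyapunov metric as in the proof of Theorem~\ref{t.multicone_sofic} --- none of which appears in your proposal.

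The Pl\"ucker route has a separate obstruction: the Birkhoff--Hilbert contraction theorem requires a \emph{proper convex} cone (projectively, a bounded convex domain in the affine chart determined by an excluded hyperplane), whereas a multicone is allowed to have several connected components and hence is not convex; nor is its Pl\"ucker image, which for $1<p<d-1$ is not even an open subset of $\P(\Wedge^p\R^d)$, the Pl\"ucker variety having positive codimension there. This can be salvaged by passing to convex hulls inside the affine chart (linear maps preserve convex hulls, and strict inclusion of compacta survives the operation), but that is a genuine extra step your sketch elides. Finally, the closing assertion that exponential shrinking of iterated cones ``translates, via the variational characterization of singular values'' into $\sigma_{p+1}(A^n_x)/\sigma_p(A^n_x)\le ce^{-\lambda n}$ is itself not a one-liner: extracting the ratio from cone diameters needs quantitative transversality \`a la Lemma~\ref{l.pitagoras}, plus care to avoid circularity, since $U_p$ and $S_{d-p}$ are only well-defined once the gap is known. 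So the skeleton of your proof is right, but the central estimate remains unjustified.
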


The proposition can be shown by straightforward adaptation of the proof of \cite[Theorem~2.6]{CroPo}, and we skip the details. 

\medskip

Proposition~\ref{p.multicone_sofic_Ecu} follows from the angle bound in \eqref{e.conefield_and_bundles} combined with Lemma~\ref{l.expand}.

\medskip

We now prove the implication \ref{i.ds} $\Rightarrow$ \ref{i.multicone} in Theorem~\ref{t.multicone_sofic}. Suppose that the sofic cocycle $(T,A)$ admits a dominated splitting $E^{\cu} \oplus E^{\cs}$. For each vertex $v$ of the graph $\mathcal{G}$, let $\Lambda_v$ be the (compact) set of label sequences $x  =  (\ell_n)$ in $\Lambda$ for which the tail of $\ell_0$ is $v$. 
The space $E^{\cu}(x)$ depends only on the past of the sequence (i.e.\ on $(\ell_n)_{n<0}$) while the space $E^{\cs}(x)$ depends only on its future (i.e.\ on $(\ell_n)_{n\geq 0}$); this follows from the characterizations \eqref{e.BG_cu} and \eqref{e.BG_cs}.

For each vertex $v$, the set $\mathcal{K}_v^{\cu}$ consisting of the spaces $E^{\cu}(x)$ with $x \in \Lambda_v$ is a compact subset of the Grassmannian $\grass_p(\R^d)$.
Analogously we define a compact set $\mathcal{K}_v^{\cs} \subset \grass_{d-p}(\R^d)$.
We claim that these two sets are \emph{transverse} in the sense that any $E^\cu(x)$ in $\mathcal{K}_v^{\cu}$ is  transverse to any $E^\cu(y)$ in $\mathcal{K}_v^{\cs}$.
Indeed, consider the sequence $z$ formed by concatenation of the past of $x$ with the future of $y$. Note that this is indeed the label sequence of a bi-infinite path (i.e.\ admissible sequence of edges) in the graph, also obtained by concatenation.
By a previous remark, $E^\cu(z)  =  E^\cu(x)$ and $E^\cu(z)  =  E^\cu(y)$, and so these two spaces are transverse.

Let $\mathcal{K}_v^{\pitchfork \cs}$ be the open subset of $\grass_p(\R^d)$ formed by the $p$-planes that are transverse to $\mathcal{K}_v^{\cs}$. 
Define a metric $d_v$ on $\mathcal{K}_v^{\pitchfork \cs}$ as follows:
$$
d_v(P,Q) \coloneqq \sum_{N = 0}^\infty \sup_{(\ell_n) \in \Lambda_v}  
d \big( A_{\ell_{N-1}} \cdots A_{\ell_{0}} (P) ,  A_{\ell_{N-1}} \cdots A_{\ell_{0}} (Q) \big) \, ;
$$
the convergence is exponential as a consequence of Lemma~\ref{l.expand}.
This family of metrics is \emph{adapted} in the sense that they are contracted by a single iteration of the cocycle. 
More precisely, if an edge $e \in \mathcal{E}$ has tail $v$, head $w$, and label $\ell$ then: 
$$
d_w (A_\ell(P), A_\ell(Q)) < d_v (P,Q) \quad \text{for all } P, Q \in \mathcal{K}_v^{\pitchfork \cs}.
$$
So, fixing $\varepsilon>0$ and letting $M_v^*$ denote the $\varepsilon$-neighborhood of $\mathcal{K}_v^{\cu}$ with respect to the metric $d_v$, we have the invariance property $A_\ell(M_v^*) \subset M_w^*$.
Let $M_v \subset \P(\R^d)$ be the union of lines contained in elements of $M_v^*$; then $\{M_v\}_v$ is an invariant family of multicones.
Moreover, by the same argument as in \cite[p.~288]{BG}, for an appropriate choice of $\varepsilon$ the multicones are tame and strictly invariant.

\subsection{Cone types for word-hyperbolic groups}\label{ss.cone_types}

Cone types were originally introduced by Cannon \cite{Cannon} for groups of hyperbolic isometries.

Let $\Gamma$ be a finitely generated group with a fixed finite symmetric generating set $S$.
Let $| \mathord{\cdot} |$ and $d(\mathord{\cdot} , \mathord{\cdot})$ denote word-length and the word-metric, respectively.

The \emph{cone type} of an element $\gamma \in \Gamma$ is defined as:
$$
C^+(\gamma) \coloneqq \big\{ \eta \in \Gamma \st |\eta\gamma|  =  |\eta| + |\gamma| \big\} \, .
$$
For example, $C^+(\gamma)  =  \Gamma$ if and only if $\gamma  =  \id$.


\begin{remark}\label{r.cone_types1}
Actually the usual definition is different:
$$
C^-(\gamma) \coloneqq \big\{ \eta \in \Gamma \st |\gamma\eta|  =  |\eta| + |\gamma| \big\} \, .
$$
But working with one definition is essentially equivalent to working with the other, because
$C^-(\gamma)  =  [C^+(\gamma^{-1})]^{-1}$.
If we need to distinguish between the two, we shall call them \emph{positive} and \emph{negative} cone types.
\end{remark}

A fundamental fact is that word-hyperbolic groups have only finitely many cone types: see \cite[p.~455]{BH} or \cite[p.~145]{CDP}.
\sout{In fact, there is a constant $k$ (depending only on the hyperbolicity constant of the group) such that for any $\gamma\in \Gamma$, the cone type $C^+(\gamma)$ is uniquely determined by the $k$-prefix}\footnote{\sout{Or $k$-suffix in the case of negative cone types.}} \sout{of a shortest word representation of $\gamma$.}

Given a cone type $C$ and $a \in S \cap C$, we can define a cone type 
$$
aC \coloneqq C^+(a\gamma),
$$ 
where $\gamma \in \Gamma$ is such that $C^+(\gamma)  =  C$.

\begin{lem}\label{l.image}
$aC$ is well-defined.
\end{lem}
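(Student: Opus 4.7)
The plan is to prove the lemma by deriving an explicit formula for $C^+(a\gamma)$ that depends only on $a$ and $C = C^+(\gamma)$, not on the particular choice of $\gamma$. Once we have such a characterization, well-definedness of $aC$ follows immediately.

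First I would note the basic consequence of the hypothesis $a \in S \cap C$: since $a \in C^+(\gamma)$, we have $|a\gamma| = |a| + |\gamma| = 1 + |\gamma|$. This will be used repeatedly.

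The core step is to establish the identity
\[
C^+(a\gamma) = \big\{ \eta \in \Gamma \st \eta a \in C \text{ and } |\eta a| = |\eta| + 1 \big\} .
\]
For the forward inclusion, suppose $\eta \in C^+(a\gamma)$, so that $|\eta a \gamma| = |\eta| + |a\gamma| = |\eta| + 1 + |\gamma|$. The triangle inequality $|\eta a \gamma| \le |\eta a| + |\gamma|$ then forces $|\eta a| \ge |\eta|+1$, and since the reverse inequality is automatic we get $|\eta a| = |\eta|+1$; plugging back in yields $|\eta a \gamma| = |\eta a| + |\gamma|$, which means $\eta a \in C^+(\gamma) = C$. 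For the reverse inclusion, if $\eta a \in C$ and $|\eta a| = |\eta|+1$, then $|\eta a \gamma| = |\eta a| + |\gamma| = |\eta| + 1 + |\gamma| = |\eta| + |a\gamma|$, so $\eta \in C^+(a\gamma)$.

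Since the right-hand side of the displayed identity only mentions $a$ and $C$, we conclude that $C^+(a\gamma_1) = C^+(a\gamma_2)$ whenever $C^+(\gamma_1) = C^+(\gamma_2)$ and $a \in S \cap C^+(\gamma_1)$, so $aC$ is well-defined. There is no real obstacle here --- the whole argument is a short manipulation of the definition of cone type combined with the triangle inequality --- so this can be presented as a single display followed by the two-line verification sketched above.
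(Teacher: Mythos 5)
Your proof is correct and is essentially the same argument as the paper's. The paper shows $C^+(a\gamma) \subset C^+(a\gamma')$ directly (with the reverse inclusion following by symmetry), and the two halves of that containment are precisely the two directions of your displayed characterization of $C^+(a\gamma)$; extracting the explicit formula is a clean way to package the same triangle-inequality manipulation.
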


Though the lemma is contained in \cite[p.~147, Lemme~4.3]{CDP}, let us provide a proof for the reader's convenience:

\begin{proof}
Suppose that $C^+(\gamma)  =  C^+(\gamma')  =  C$ and $a \in S \cap C$; we need to prove that $C^+(a\gamma) \subset C^+(a\gamma')$.
Take $\eta \in C^+(a\gamma)$, so $|\eta a \gamma|  =  |\eta| + |a\gamma|  =  |\eta| + 1 + |\gamma|$.
In follows that $|\eta a|  =  |\eta| + 1$ and so $|\eta a \gamma|  =  |\eta a| + |\gamma|$, that is $\eta a \in C$.
In particular $|\eta a \gamma'|  =  |\eta a | + |\gamma'|  =  |\eta| + |a\gamma|$, proving that $\eta \in C^+(a\gamma')$.
\end{proof}

We associate to $(\Gamma,S)$ a labeled graph $\cG$ (in the sense of \S~\ref{ss.sofic})
called the \emph{geodesic automaton} and 
defined as follows:
\begin{itemize}
\item the vertices are the cone types of $\Gamma$; 
\item there is an edge $C_1 \xrightarrow{a} C_2$ from vertex $C_1$ to vertex $C_2$, labeled by a generator $a \in S$, 
iff  $a\in C_1$ and $C_2  =  a C_1$.
\end{itemize}

\begin{remark}\label{r.cone_types2}
Replacing each vertex $C$ by $C^{-1}$ (a negative cone type)
and each edge $C_1 \xrightarrow{a} C_2$ by $C_1^{-1} \xrightarrow{a^{-1}} C_2^{-1}$,
we obtain the graph described in \cite[p.~456]{BH}. 
\end{remark}

Let us explain the relation with geodesics.
Consider a \emph{geodesic segment} $(\gamma_0, \gamma_1, \dots, \gamma_\ell)$
that is, a sequence of elements of $\Gamma$ such that $d(\gamma_n,\gamma_m)  =  |n-m|$,
and assume that $\gamma_0  =  \id$.
Then, there are $a_0$, \dots $a_{\ell-1}$ in a generating set $S$ such that 
$\gamma_n  =  a_0 a_1 \cdots a_{n-1}$.
Note that for each $n$, the following is an edge of the geodesic automaton graph~$\cG$:
$$
C^+(\gamma_n^{-1}) \xrightarrow{a_n^{-1}} C^+(\gamma_{n+1}^{-1}) 
$$
Thus we obtain a (finite) walk on $\cG$ starting from the vertex $C^+(\id)$.
Conversely, for each such walk we may associate a geodesic segment starting at the identity.

Let us define also the \emph{recurrent geodesic automaton}
as the maximal recurrent subgraph $\cG^*$ of $\cG$;
its vertices are called \emph{recurrent cone types}.
Similarly to what was explained above, for each two-sided geodesic on $\Gamma$ passing through the identity element we can associate a bi-infinite walk on $\cG^*$, and vice-versa.  

Using the fact that the geodesic automation is a finite graph, it is simple to obtain the following property:\footnote{Alternatively, one can deduce the lemma from the ``bounded dead-end depth'' property \cite{Bogo}.} 

\begin{lem}\label{l.dense_geodesics}
Let $\Gamma$ be an infinite word-hyperbolic group, with a fixed set of generators.
Let $\Gamma^*$ be the union of all two-sided geodesics passing through the identity element.
Then for some finite $c$, the set $\Gamma^*$ is $c$-dense on $\Gamma$, that is,
for every $\gamma \in \Gamma$ there exists $\eta \in \Gamma^*$ such that $d(\eta,\gamma) \le c$.
\end{lem}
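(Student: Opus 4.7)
The plan is to combine a forward-extension argument based on the finiteness of $\cG$ with visibility and stability of geodesics in the $\delta$-hyperbolic Cayley graph of $\G$. The elementary case (when $\G$ is finite or virtually cyclic) is immediate, since there is then a single bi-infinite geodesic through $\id$ within bounded distance of every element; we therefore assume $\G$ is non-elementary.

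Since $\cG$ is a finite directed graph, there exists a constant $L\in\N$ such that every vertex of $\cG$ either belongs to the recurrent subgraph $\cG^*$---and hence starts an infinite forward walk---or starts only walks of length at most $L$. Given $\g\in\G$, any geodesic $(\id=\g_0,\g_1,\dots,\g_\ell=\g)$ induces a walk $v_\id=V_0,V_1,\dots,V_\ell$ on $\cG$; for $\ell>L$ the vertex $V_{\ell-L}$ must lie in $\cG^*$, since it admits the length-$L$ suffix of our walk. Prolonging the walk from $V_{\ell-L}$ to an infinite forward walk and grafting it onto the initial portion of the geodesic yields an infinite geodesic ray $R^+$ from $\id$ passing within distance $L$ of $\g$. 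Let $\xi_+\in\bord\G$ denote its endpoint.

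Non-elementariness of $\G$ implies that $\bord\G$ is uncountable, so we may choose $\xi_-\in\bord\G\setminus\{\xi_+\}$ such that the Gromov product $(\xi_-\mid\xi_+)_\id$ is bounded by a universal constant: for instance, take $\xi_-$ as the endpoint of another geodesic ray from $\id$ whose first generator differs from both the first generator of $R^+$ and its inverse (possible because $|S|\ge3$), and extend this ray by the same automaton argument. The visibility property of Gromov-hyperbolic spaces then furnishes a bi-infinite geodesic $\sigma$ from $\xi_-$ to $\xi_+$ passing within a universal distance $B$ of $\id$, and thin-triangle stability guarantees that $\sigma$ fellow-travels $R^+$ past a bounded initial segment, hence passes within a universal distance of $\g$ as well. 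Letting $g\in\sigma$ be closest to $\id$ (so $|g|\le B$) and translating on the left, $g^{-1}\sigma$ is a bi-infinite geodesic through $\id$---so contained in $\G^*$---and it still passes within a universal constant $c$ of $\g$ because left translation is an isometry of the Cayley graph. This gives the claimed $c$-density.

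The main difficulty is the middle step: producing a bi-infinite geodesic $\sigma$ simultaneously close to $\id$ and with forward endpoint $\xi_+$. Boundedness of $(\xi_-\mid\xi_+)_\id$ is the key point, and it is guaranteed by non-elementariness of $\G$; the remaining ingredients---visibility and geodesic stability---are standard in Gromov-hyperbolic geometry.
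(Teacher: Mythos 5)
The final step of your argument has a genuine gap. You produce a bi-infinite geodesic $\sigma$ that passes within a universal distance $B$ of $\id$ and within a universal distance of $\gamma$ (via the point $\gamma'\in\sigma$, say $d(\gamma',\gamma)\le D$), and then you translate: with $g\in\sigma$ the point closest to $\id$, you consider $g^{-1}\sigma$ and assert that it ``still passes within a universal constant $c$ of $\gamma$ because left translation is an isometry.'' That inference does not work. Left translation by $g^{-1}$ is indeed an isometry, so it preserves the distance between $\sigma$ and the \emph{translated} point $g^{-1}\gamma$; concretely, $g^{-1}\sigma$ contains $g^{-1}\gamma'$ with $d(g^{-1}\gamma',g^{-1}\gamma)=d(\gamma',\gamma)\le D$. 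But what you need is $d(g^{-1}\gamma',\gamma)$ small, and
\[
d(g^{-1}\gamma',\gamma)\;\ge\; d(g^{-1}\gamma,\gamma)-D \;=\;|\gamma^{-1}g\gamma|-D,
\]
which is the word length of a conjugate of $g$ and is \emph{not} bounded in terms of $|g|$. Geometrically, $\sigma$ heads to $\xi_+$, so $\gamma'$ lies roughly in the direction of $\xi_+$ from $\id$; after translating, $g^{-1}\sigma$ heads to $g^{-1}\xi_+\neq\xi_+$ and its far part points in a genuinely different direction than $\gamma$. So the translated geodesic passes through $\id$ but generally misses $\gamma$ by an amount comparable to $|\gamma|$, and the argument does not yield the desired $\eta\in\Gamma^*$.

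Two smaller points. First, the dichotomy you state for $\cG$ (``every vertex either belongs to $\cG^*$ or starts only walks of length $\le L$'') is false: the vertex $C^+(\id)$ starts walks of every length when $\Gamma$ is infinite, yet it has no incoming edges and hence is not in $\cG^*$. What is true and what your argument actually needs is weaker: a vertex admitting a walk of length $\ge |\cV(\cG)|$ admits an infinite forward walk, which is all you use to build $R^+$. Second, your choice of $\xi_-$ via ``different first generator'' does not by itself bound $(\xi_-\mid\xi_+)_\id$ (the two rays could reconverge, even to the same endpoint); one should instead appeal to compactness and perfectness of $\bord\Gamma$ to get a uniform lower bound on visual diameter. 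These are fixable; the translation step is not. The paper itself gives no proof of this lemma (it says it ``is simple to obtain'' from finiteness of the geodesic automaton, with a footnote pointing to bounded dead-end depth via Bogopolski); the intended route is to \emph{directly} modify the walk of a geodesic from $\id$ to $\gamma$ so that it extends to a bi-infinite walk, thereby producing a bi-infinite geodesic \emph{through $\id$ itself} that shadows the original geodesic. Trying instead to find a bi-infinite geodesic merely \emph{near} $\id$ and then moving it onto $\id$ loses control of the far end, which is exactly where $\gamma$ sits.
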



\subsection{Multicones for dominated representations}\label{ss.multicone_representation}

Let $\Gamma$ be a word-hyperbolic group (with a fixed finite symmetric generating set),
and let $\rho \colon \Gamma \to \GL(d,\R)$ be a representation.

Recall from \S~\ref{ss.sofic} that a multicone of index $p$ is an open subset of $\P(\R^d)$ that contains the projectivization of some $p$-plane and does not intersect the projectivization of some $(d-p)$-plane.

If for each recurrent cone type $C$ it is given a multicone $M(C)$ of index $p$,
then we say that $\{M(C)\}_C$ is a \emph{family of multicones} for $\Gamma$.
We say that this family is \emph{strictly invariant} with respect to $\rho$ if for each edge $C_1 \xrightarrow{g} C_2$ of the geodesic automaton graph, we have:
$$
\rho(g) (M(C_1)) \Subset M(C_2) \, .
$$

\begin{teo}\label{t.multicone_representation}
A representation of a word-hyperbolic group is $p$-dominated if and only if it has a strictly invariant family of multicones of index $p$. 
Moreover, we can always choose a family composed of tame multicones.
\end{teo}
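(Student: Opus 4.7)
\medskip

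\noindent\textbf{Proof proposal.} The plan is to reduce the statement to the sofic criterion of Theorem~\ref{t.multicone_sofic} applied to the sofic linear cocycle associated to the recurrent geodesic automaton $\cG^*$ of $\Gamma$. Concretely, let $T\colon\Lambda\to\Lambda$ denote the sofic shift presented by $\cG^*$ with label alphabet $S$, and for each label $g\in S$ set $A_g\coloneqq\rho(g)\in\GL(d,\R)$; this defines a sofic linear cocycle $(T,A)$. The crucial identification, which follows from the discussion in \S\ref{ss.cone_types}, is that bi-infinite walks on $\cG^*$ correspond bijectively to bi-infinite geodesics in the Cayley graph of $\Gamma$; in particular, an $m$-step product of matrices $A_{\ell_{n+m-1}}\cdots A_{\ell_n}$ along a walk equals $\rho(\eta)$ for some $\eta\in\Gamma$ with $|\eta|=m$ (the element obtained by reading the corresponding geodesic segment).

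For the forward implication, assume $\rho$ is $p$-dominated with constants $C,\lambda>0$. Then the walk-to-geodesic correspondence immediately gives
\[
\frac{\sigma_{p+1}}{\sigma_p}\bigl(A_{\ell_{n+m-1}}\cdots A_{\ell_n}\bigr)\le C e^{-\lambda m}
\]
for every bi-infinite walk and every $m\ge 1$. By Theorem~\ref{t.BG} the sofic cocycle $(T,A)$ admits a dominated splitting of index $p$, and Theorem~\ref{t.multicone_sofic} then produces a strictly invariant family of tame multicones $M(C)$ indexed by the recurrent cone types (the vertices of $\cG^*$). This is exactly a strictly invariant family for $\rho$ in the sense of the statement.

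For the converse, a strictly invariant family $\{M(C)\}_C$ under $\rho$ is, verbatim, a strictly invariant family of multicones for $(T,A)$. Theorem~\ref{t.multicone_sofic} then yields a dominated splitting for $(T,A)$, whence constants $c,\mu>0$ with $\sigma_{p+1}/\sigma_p$ of every $m$-step product along a walk bounded by $c e^{-\mu m}$. By the walk-to-geodesic correspondence, this gives $\sigma_{p+1}(\rho(\eta))/\sigma_p(\rho(\eta))\le c e^{-\mu|\eta|}$ for every $\eta$ lying on a bi-infinite geodesic through the identity, that is, for every $\eta$ in the set $\Gamma^*$ of Lemma~\ref{l.dense_geodesics}.

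The one remaining step--and the only real piece of work beyond invoking Theorem~\ref{t.multicone_sofic}--is propagating the domination from $\Gamma^*$ to all of $\Gamma$. Here I would use Lemma~\ref{l.dense_geodesics}, which furnishes a constant $k$ such that every $\gamma\in\Gamma$ can be written $\gamma=\eta\zeta$ with $\eta\in\Gamma^*$ and $|\zeta|\le k$. Setting $K\coloneqq\max_{g\in S}\|\rho(g)\|$, we have $\|\rho(\zeta)\|,\|\rho(\zeta)^{-1}\|\le K^k$, and the standard Weyl inequalities $\sigma_{p+1}(AB)\le\sigma_{p+1}(A)\|B\|$ and $\sigma_p(AB)\ge\sigma_p(A)/\|B^{-1}\|$ yield
\[
\frac{\sigma_{p+1}(\rho(\gamma))}{\sigma_p(\rho(\gamma))}\le K^{2k}\,\frac{\sigma_{p+1}(\rho(\eta))}{\sigma_p(\rho(\eta))}\le K^{2k}c\,e^{-\mu|\eta|}\le \bigl(K^{2k}c\,e^{\mu k}\bigr)\,e^{-\mu|\gamma|},
\]
establishing $p$-domination of $\rho$. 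The tameness claim is inherited directly from the analogous statement in Theorem~\ref{t.multicone_sofic}. I expect the only delicate bookkeeping to concern the orientation conventions relating the label of an edge of $\cG$ (a generator $a$) to the corresponding matrix product (which, depending on whether one uses positive or negative cone types as in Remark~\ref{r.cone_types1}, may be $\rho(a)$ or $\rho(a^{-1})$); however, by Remark~\ref{rem-pdominatedimpliesd-p} the domination property is symmetric under $\gamma\mapsto\gamma^{-1}$, so no essential difficulty arises.
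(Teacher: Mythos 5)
Your proof is correct and follows essentially the same route as the paper: associate to the recurrent geodesic automaton the sofic linear cocycle $(T,A)$ with $A_g=\rho(g)$, translate $p$-domination of $\rho$ into domination of $(T,A)$ via the walk-to-geodesic dictionary, and invoke Theorem~\ref{t.multicone_sofic}. The only differences are presentational: you spell out the propagation from $\Gamma^*$ to all of $\Gamma$ with Weyl inequalities and Lemma~\ref{l.dense_geodesics} (the paper treats this as immediate from $c$-density), and you flag the inversion/orientation issue directly, whereas the paper resolves it by passing through the $(d-p)$-domination reformulation of Remark~\ref{rem-pdominatedimpliesd-p}; both are routine and your handling is sound.
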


\begin{proof} 
Fix a word-hyperbolic group $\Gamma$.
We can assume that $\Gamma$ is infinite, otherwise the theorem is vacuously true.
Fix a finite symmetric generating set $S$.
Consider the associated recurrent geodesic automaton $\cG^*$,
and the sofic shift $T \colon \Lambda \to \Lambda$ presented by this labeled graph.
Then a sequence $(a_n)$ in $S^\Z$ belongs to $\Lambda$ if and only if 
the sequence $(\gamma_n)_{n\in \Z}$ defined by:
$$
\gamma_n \coloneqq 
\begin{cases}
	a_0^{-1} a_1^{-1} \cdots a_{n-1}^{-1} &\quad\text{if $n>0$,} \\
	\id &\quad\text{if $n = 0$,}\\
	a_{-1} a_{-2} \cdots a_n &\quad\text{if $n<0$,}
\end{cases}
$$
is a geodesic on $\Gamma$.
The union of (the traces of) all such geodesics is a set $\Gamma^* \subset \Gamma$ which by Lemma~\ref{l.dense_geodesics} is $c$-dense in $\Gamma$ for some finite $c$.

Let $\rho \colon \Gamma \to \GL(d,\R)$ be a representation. 
Consider the family of matrices $(\rho(a))_{a\in S}$, and let $(T,A)$ be the induced sofic linear cocyle. Then, for each $x  =  (a_n) \in X$, if $(\gamma_n)$ is the geodesic defined above then 
$A(T^{n-1}x) \cdots A(x)  =  \rho(\gamma_n^{-1})$ for every positive $n$.

By Remark~\ref{rem-pdominatedimpliesd-p}, $\rho$ is $p$-dominated iff it is $(d-p)$-dominated.
Since the set $\Gamma^*$ is $c$-dense, $\rho$ is $(d-p)$-dominated iff:
$$
\exists C>0 \ \exists \lambda>0 \ \forall \gamma \in \Gamma^* \text{ we have }
\frac{\sigma_{p+1}}{\sigma_p} (\rho(\gamma^{-1})) \le C e^{-\lambda|\gamma|} \, .
$$
Note that this condition holds iff the sofic linear cocycle $(T,A)$ has a dominated splitting with a dominating bundle of dimension $p$; this follows from Theorem~\ref{t.BG} and the previous observations.
Also note that a strictly invariant family of multicones for the sofic linear cocycle $(T,A)$
is exactly the same as a strictly invariant family of multicones for the representation $\rho$. Therefore Theorem~\ref{t.multicone_sofic} allows us to conclude.
\end{proof}

Theorem~\ref{t.multicone_representation} also has the following well-known consequence (see \cite{GuichardWienhard,Labourie-AnosovFlows}). 

\begin{cor}\label{cor-openproperty}
Among representations $\Gamma \to \GL(d,\R)$, the $p$-dominated ones form an open set. 
\end{cor}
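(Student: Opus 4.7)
My proof plan is to deduce the corollary directly from Theorem~\ref{t.multicone_representation}, exploiting the finiteness of the geodesic automaton and the openness of the strict invariance condition.

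Let $\rho_0 \colon \Gamma \to \GL(d,\R)$ be a $p$-dominated representation. By Theorem~\ref{t.multicone_representation}, we may fix a family $\{M(C)\}_C$ of \emph{tame} multicones of index $p$ indexed by the (finitely many) recurrent cone types of $\Gamma$, which is strictly invariant under $\rho_0$. Concretely, for each edge $C_1 \xrightarrow{g} C_2$ of the recurrent geodesic automaton (of which there are finitely many, since both the vertex set and the generating set $S$ are finite), we have
\[
\rho_0(g)\bigl(\overline{M(C_1)}\bigr) \subset \mathrm{int}\,M(C_2).
\]

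Since $\overline{M(C_1)}$ is a compact subset of the projective space $\P(\R^d)$ and $\mathrm{int}\,M(C_2)$ is open, and since the action of $\GL(d,\R)$ on $\P(\R^d)$ is continuous, the set of $A \in \GL(d,\R)$ satisfying $A\bigl(\overline{M(C_1)}\bigr) \subset \mathrm{int}\,M(C_2)$ is an open neighborhood of $\rho_0(g)$ in $\GL(d,\R)$. Next I would equip the space of representations $\Gamma \to \GL(d,\R)$ with the topology of pointwise convergence, which, because $S$ generates $\Gamma$, is equivalent to requiring that $\rho(g)$ be close to $\rho_0(g)$ for each of the (finitely many) generators $g \in S$. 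Intersecting the finitely many open conditions -- one for each edge of the recurrent geodesic automaton -- yields an open neighborhood $\mathcal{U}$ of $\rho_0$ in the representation space with the property that for every $\rho \in \mathcal{U}$ and every edge $C_1 \xrightarrow{g} C_2$,
\[
\rho(g)\bigl(M(C_1)\bigr) \Subset M(C_2).
\]

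Thus for every $\rho \in \mathcal{U}$ the same family $\{M(C)\}_C$ is a strictly invariant family of multicones of index $p$. Applying the ``if'' direction of Theorem~\ref{t.multicone_representation} one more time, we conclude that every $\rho \in \mathcal{U}$ is $p$-dominated, which proves openness. There is no real obstacle here: the entire argument rests on the finiteness of the recurrent geodesic automaton (which reduces strict invariance to finitely many open conditions on $\rho(g)$ for $g \in S$) together with the fact that $\Subset$ is a stable relation under small perturbations. The only point worth double-checking is that the topology on the representation space is indeed the one induced by pointwise convergence on the finite generating set, so that the joint openness over the finitely many edges yields an open neighborhood in the representation variety.
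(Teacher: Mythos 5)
Your proof is correct and is the same argument the paper gives, just spelled out: it invokes Theorem~\ref{t.multicone_representation} to obtain a strictly invariant family of multicones for $\rho_0$, observes via compactness of $\overline{M(C_1)}$ and openness of $M(C_2)$ that each of the finitely many conditions $\rho(g)(M(C_1)) \Subset M(C_2)$ is an open constraint on $\rho(g)$, and applies the theorem again to the nearby representations. The paper compresses all this into two sentences, but the reasoning is identical.
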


\begin{proof}
Given a $p$-dominated representation, take a strictly invariant family of multicones.
Then the same family is also strictly invariant under all nearby representations.
\end{proof}

As a complement to Theorem~\ref{t.multicone_representation}, let us explain how to determine the equivariant map $\xi \colon \partial \Gamma \to \Gr_p(\R^d)$ (defined in Section~\ref{s.equiv})
in terms of multicones.

\begin{prop}\label{p.multicone_representation_xi}
Consider a $p$-dominated representation $\rho \colon \Gamma \to \GL(d,\R)$,
and let $\{M(C)\}_C$ be a strictly invariant family of multicones.
Consider any geodesic ray $(\gamma_n)_{n \in \N}$ with $\gamma_0  =  \id$,
and let $x \in \partial \Gamma$ be the associated boundary point.
Let $(P_n)_{n\ge 1}$ be a sequence in $\Gr_p(\R^d)$ such that for each $n$, the projectivization of $P_n$ is contained in the multicone $M(C^+(\gamma_n))$.
Then
\begin{equation}\label{e.xi_multicone}
\xi(x)  =  \lim_{n \to \infty} \rho(\gamma_n^{-1}) P_n \, .
\end{equation} 
Moreover, the speed of convergence can be estimated independently of $x$, $(\gamma_n)$, and $(P_n)$, and is the same for all nearby representations.
\end{prop}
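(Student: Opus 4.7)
The strategy is to deduce the proposition from its counterpart for sofic linear cocycles, Proposition~\ref{p.multicone_sofic_Ecu}, using the translation already set up in the proof of Theorem~\ref{t.multicone_representation}. First, I would extend the given one-sided geodesic ray $(\gamma_n)_{n \ge 0}$ to a bi-infinite geodesic of $\Gamma$ passing through the identity. This is possible whenever $\Gamma$ is infinite (using for instance the finiteness of cone types and Lemma~\ref{l.dense_geodesics}); the elementary case can be handled directly. Via the recurrent geodesic automaton, the resulting bi-infinite geodesic corresponds to a bi-infinite walk $\omega \in \Lambda$ in the sofic shift, and the geodesic ray itself corresponds to a one-sided (past or future) half of this walk.

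Next, I apply Proposition~\ref{p.multicone_sofic_Ecu} to the sofic linear cocycle $(T, A)$ with $A_a = \rho(a)$ constructed in the proof of Theorem~\ref{t.multicone_representation}. That proposition expresses the dominating bundle $E^\cu(\omega)$ as an exponentially convergent limit of products of the form $A_{\ell_{-1}} \cdots A_{\ell_{-n}}(P_{-n})$ with $P_{-n}$ in the multicone attached to the corresponding automaton vertex, uniformly in $\omega$ and stably under small perturbations of the matrices~$A_a$. The identity $A(T^{n-1}\omega) \cdots A(\omega) = \rho(\gamma_n^{-1})$ established in the proof of Theorem~\ref{t.multicone_representation} rewrites this product as $\rho(\gamma_n^{-1})$ acting on $P_n$, and the multicone assignment at automaton vertices matches $M(C^+(\gamma_n))$ after the requisite cone-type bookkeeping; this yields exactly the limit in the statement.

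It remains to identify $E^\cu(\omega)$ with $\xi(x)$. By Theorem~\ref{t.BG}, $E^\cu(\omega) = \lim_{n \to \infty} U_p(\psi^n_{T^{-n}\omega})$, and translating via the above identity this equals $\lim_{n} U_p(\rho(\delta_n))$ for a geodesic ray $(\delta_n)$ representing the boundary endpoint of the walk corresponding to~$x$; by Lemma~\ref{lem-equivariant} this last limit equals $\xi(x)$. The uniform convergence rate and the stability under small deformations of $\rho$ transfer directly from the analogous clauses in Proposition~\ref{p.multicone_sofic_Ecu}, since small perturbations of $\rho$ amount to small perturbations of the finite matrix family $(\rho(a))_{a \in S}$ defining the cocycle, and the parameter space $\Lambda$ is compact.

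The main obstacle will not be dynamical but notational: matching positive versus negative cone types in the automaton, choosing which endpoint of the bi-infinite extension of $(\gamma_n)$ corresponds to~$x$ (so that the sofic-side bundle depending on the past matches $\xi(x)$), and tracking where the inverse appears in $\rho(\gamma_n^{-1})$. Once the correspondence of Theorem~\ref{t.multicone_representation} is unwound explicitly in one chosen convention, this translation becomes mechanical.
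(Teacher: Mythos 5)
Your approach is exactly the paper's: the paper's entire proof is the single sentence ``It suffices to translate Proposition~\ref{p.multicone_sofic_Ecu} to the context of representations,'' and you have spelled out what that translation involves (extending the ray to a bi-infinite geodesic, reading it as a walk in the recurrent geodesic automaton, applying Proposition~\ref{p.multicone_sofic_Ecu} to the induced sofic cocycle, identifying $E^{\mathrm{cu}}$ with $\xi(x)$ via Lemma~\ref{lem-equivariant}, and transferring the uniformity and stability clauses). You also correctly flag the only real subtlety — matching the cone-type/inverse conventions so that the past-dependent bundle $E^{\mathrm{cu}}$ is the one attached to the given ray — which the paper likewise leaves to the reader.
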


\begin{proof}
It suffices to translate Proposition~\ref{p.multicone_sofic_Ecu} to the context of representations.
\end{proof}


\begin{obs}
Recall from Remark~\ref{rem-pgl} that it also makes sense to define $p$-dominated representations into $\PGL(d,\R)$.
All that was said in this subsection applies verbatim to that case,
since $\PGL(d,\R)$ also acts on $\P(\R^d)$.
\end{obs}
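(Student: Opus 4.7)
The plan is to justify that each result of the subsection---Theorem~\ref{t.multicone_representation}, Corollary~\ref{cor-openproperty}, and Proposition~\ref{p.multicone_representation_xi}---carries over to representations $\rho \colon \Gamma \to \PGL(d,\R)$, by checking that every ingredient used in their statements and proofs depends only on the projective action on $\P(\R^d)$.

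First I would observe that all the data involved in the statement of Theorem~\ref{t.multicone_representation} are intrinsically projective: the cone types of $\Gamma$ depend only on the group structure; a multicone of index $p$ is by definition a subset of $\P(\R^d)$; and the strict invariance condition $\rho(g)(M(C_1)) \Subset M(C_2)$ only requires $\rho(g)$ to act on $\P(\R^d)$, which it does via $\PGL(d,\R)$. The notion of $p$-dominated representation into $\PGL(d,\R)$ is well-defined through the ratios $\sigma_{p+1}/\sigma_p$, as already noted in Remark~\ref{rem-pgl}.

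For the actual proof I propose two equivalent routes. The direct route is to observe that Theorem~\ref{t.multicone_sofic} on sofic linear cocycles extends \emph{verbatim} when the matrices $(A_\ell)_{\ell \in \cL}$ are taken in $\PGL(d,\R)$: both the formulation of a dominated splitting (meaningful via the well-defined singular value ratios) and the multicone condition live in $\P(\R^d)$, and the proof sketched in \S~\ref{ss.multicone_sketch}---including its invocation of Theorem~\ref{t.BG} and the adapted-metric construction---uses only projective data. Substituting this projective version into the proof of Theorem~\ref{t.multicone_representation} gives the statement for $\PGL(d,\R)$.

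Alternatively, one can use the lifting device of Remark~\ref{rem-pgl}: the $2$-to-$1$ surjection $f \colon \hat\Gamma \to \Gamma$ is a quasi-isometry, so $\rho$ is $p$-dominated if and only if $\hat\rho \colon \hat\Gamma \to \GL(d,\R)$ is; apply the $\GL$-version of Theorem~\ref{t.multicone_representation} to $\hat\rho$ and push the resulting multicone family down through the projection $\pi \colon \GL(d,\R) \to \PGL(d,\R)$, which is possible because the invariance condition only sees the induced action on $\P(\R^d)$. The minor subtlety here, which is the main obstacle to spell out, is that the cone types of $\hat\Gamma$ and $\Gamma$ are a priori different, so one needs to check that a multicone family indexed by recurrent cone types of $\hat\Gamma$ gives rise to one indexed by recurrent cone types of $\Gamma$; this is handled by assigning to each $C = C^+(\gamma)$ the union (or any fixed choice) of multicones attached to the two preimages of $\gamma$ in $\hat\Gamma$, and using finiteness of the set of cone types together with strict invariance to ensure that the result is still a strictly invariant family of (tame) multicones. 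Once Theorem~\ref{t.multicone_representation} is established, Corollary~\ref{cor-openproperty} follows by the same one-line argument (nearby representations preserve the same family of multicones), and Proposition~\ref{p.multicone_representation_xi} follows from the projective version of Proposition~\ref{p.multicone_sofic_Ecu}, whose proof likewise only uses Lemma~\ref{l.expand} applied on $\P(\R^d)$.
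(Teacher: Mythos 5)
Your first (direct) route is exactly the paper's justification: every object in the subsection --- multicones, the strict invariance condition $\rho(g)(M(C_1)) \Subset M(C_2)$, the ratios $\sigma_{p+1}/\sigma_p$, and the spaces $U_p$ --- is defined purely in terms of the projective action and of quantities invariant under scalar multiplication, so the arguments carry over verbatim (equivalently, pick for each generator a representative in $\GL(d,\R)$ with determinant $\pm 1$; all relevant quantities are independent of that choice, and no global lift of the representation is needed since the sofic cocycle only uses finitely many matrices). Your alternative route via the cover $\hat\Gamma$ is unnecessary and more delicate than you indicate --- geodesics of $\hat\Gamma$ only project to quasi-geodesics of $\Gamma$, so the cone types and geodesic automata of the two groups do not match up edge-by-edge in the way your union construction assumes --- but since the direct route stands on its own, the proposal is fine.
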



\subsection{An example}

Consider the free product $\Gamma \coloneqq (\nicefrac{\Z}{3\Z}) * (\nicefrac{\Z}{2\Z})$ (which is isomorphic to $\PGL(2,\Z)$) with a presentation: 
$$
\Gamma  =  \big\langle a,b \mid a^3  =  b^2  =  \id \big\rangle.
$$
Then there are only $2$ recurrent cone types, namely $C^+(a)$ and $C^+(b)$,
and the recurrent geodesic automaton is:
\begin{center}
	\includegraphics{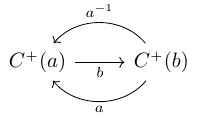}
\end{center}
Fix $\lambda>1$. 
Consider the following pair of matrices in $\SL(2,\R)$:
$$
A \coloneqq D^{-1} R_{\pi/3} D \quad \text{and} \quad
B \coloneqq R_{\pi/2} \, , 
$$
where:
$$
D \coloneqq \begin{pmatrix} \lambda & 0 \\ 0 & \lambda^{-1} \end{pmatrix} 
\quad \text{and} \quad
R_\theta \coloneqq \begin{pmatrix} \cos \theta & -\sin \theta \\ \sin \theta & \cos \theta \end{pmatrix}  
\, .
$$
Since $A^3  =  B^2  =  -\Id$, we can define a representation $\rho \colon \Gamma \to \PSL(2,\R)$ by setting
$\rho(a) \coloneqq A$, $\rho(b) \coloneqq B$.
We claim that if $\lambda$ is sufficiently large (namely, $\lambda>\sqrt[4]{2}$) then this representation is dominated.
Indeed, it is possible to find a strictly invariant family of multicones as in Fig.~\ref{f.modular_multicones}. 

\begin{figure}[!hbt]
\begin{center}  
\begin{tikzpicture}[scale = 1.8]
	\def\angA{22}
	\def\angB{29.74}  
	\def\angC{75.48}
	\def\angD{83.59}

	\fill[fill = gray!30] (0,0)--({cos(\angB)},{sin(\angB)}) node[above]{$I$} arc [start angle = {\angB}, end angle = {-\angB}, radius = 1]--cycle;
	\fill[fill = gray!30] (0,0)--({cos(180+\angB)},{sin(180+\angB)}) arc [start angle = {180+\angB}, end angle = {180-\angB}, radius = 1]--cycle;
	
	\fill[fill = gray!80] (0,0)--({1.2*cos(\angA)},{1.2*sin(\angA)}) arc [start angle = {\angA}, end angle = {-\angA}, radius = 1.2]--cycle;
	\fill[fill = gray!80] (0,0)--({1.2*cos(180+\angA)},{1.2*sin(180+\angA)}) arc [start angle = {180+\angA}, end angle = {180-\angA}, radius = 1.2]--cycle;
	\node[above right] at (1.2,0) {$B(J)$};
	
	\fill[fill = gray!30] (0,0)--({cos(90-\angA)},{sin(90-\angA)}) node[right]{$J$} arc [start angle = {90-\angA}, end angle = {90+\angA}, radius = 1]--cycle;
	\fill[fill = gray!30] (0,0)--({cos(270-\angA)},{sin(270-\angA)}) arc [start angle = {270-\angA}, end angle = {270+\angA}, radius = 1]--cycle;
	
	\fill[fill = gray!80] (0,0)--({1.2*cos(\angC)},{1.2*sin(\angC)}) node[above]{$A(I)$} arc [start angle = {\angC}, end angle = {\angD}, radius = 1.2]--cycle;
	\fill[fill = gray!80] (0,0)--({1.2*cos(180+\angC)},{1.2*sin(180+\angC)}) arc [start angle = {180+\angC}, end angle = {180+\angD}, radius = 1.2]--cycle;

	\fill[fill = gray!80] (0,0)--({1.2*cos(180-\angC)},{1.2*sin(180-\angC)}) node[above]{$A^{-1}(I)$} arc [start angle = {180-\angC}, end angle = {180-\angD}, radius = 1.2]--cycle;
	\fill[fill = gray!80] (0,0)--({1.2*cos(-\angC)},{1.2*sin(-\angC)}) arc [start angle = {-\angC}, end angle = {-\angD}, radius = 1.2]--cycle;

	\draw(-1.3,0)--(1.3,0);
	\draw(0,-1.15)--(0,1.15);
	
\end{tikzpicture}
\end{center}
\caption{A strictly invariant multicone for the representation $\rho \colon (\nicefrac{\Z}{3\Z}) * (\nicefrac{\Z}{2\Z}) \to \PSL(2,\R)$. We have $A(I) \Subset J$, $A^{-1}(I) \Subset J$, $B(J) \Subset I$.} 
\label{f.modular_multicones}
\end{figure}


\section{Analytic variation of limit maps}\label{s.analytic}

The purpose of this section is to give another proof of a theorem from \cite{BCLS}, which establishes that the equivariant limit maps $\xi$, $\eta$ (defined in Section~\ref{s.equiv}) depend analytically on the representation.
This fact is useful to show that some quantities such as entropy vary analytically with respect to the representation, which in turn is important to obtain certain rigidity results (see \cite{BCLS,PotSam}). 

While the original approach of \cite{BCLS} used the formalism of \cite{HPS}, we present here a more direct proof, based on the tools discussed in the previous section.
We remark that an alternative approach using the implicit function theorem and avoiding complexification can be found in \cite{Ruelle}, though the context is different and the results are not exactly the same. 

\medskip

A family of representations $\{\rho_u\}_{u \in D}$ of a word-hyperbolic group $\Gamma$ into $\GL(d,\RR)$ is called \emph{real analytic} if the parameter space is a real analytic manifold,
and for each $\gamma \in \Gamma$, the map $u \in D \mapsto \rho_u(\gamma) \in \GL(d,\R)$ is real analytic. (Of course, it suffices to check the later condition for a set of generators.)

The boundary $\partial \Gamma$ of the group $\Gamma$ admits a distance function within a ``canonical''  H\"older class (\cite[Chapitre~11]{CDP}). For a fixed metric in this class, the geodesic flow defined previously is Lipschitz. The limit maps $\xi_{\rho} \colon \partial \Gamma \to \Gr_p(\RR^d)$ and $\theta_{\rho}\colon \partial \Gamma \to \Gr_{d-p}(\RR^d)$ of a $p$-dominated representations are well known to be H\"older continuous (see Theorem~\ref{teo-holder}). 
One can endow the space $C^\alpha(\partial \Gamma, \Gr_p(\RR^d))$ of $\alpha$-H\"older maps with a Banach manifold structure; so analyticity of a map from an analytic manifold to $C^{\alpha}(\partial \Gamma, \Gr_p(\RR^d))$ makes sense.

\begin{teo}[Theorem 6.1 of \cite{BCLS}]\label{teo.analytic} 
Let $\big\{\rho_u \colon \Gamma \to \GL(d,\RR) \big\}_{u \in D}$ be a real analytic family of representations.
Suppose that $0 \in D$ and $\rho_0$ is $p$-dominated.
Then there exists a neighborhood $D' \en D$ of $0$ such that for every $u \in D'$, the representation $\rho_u$ is $p$-dominated, and moreover $u \mapsto \xi_{\rho_u}$ defines a real-analytic map from $D'$ to $C^\alpha(\partial \Gamma, \Gr_p(\RR^d))$, for some $\alpha>0$.
\end{teo}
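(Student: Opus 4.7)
The plan is to combine the multicone characterization of $\xi_\rho$ (Proposition~\ref{p.multicone_representation_xi}) with a complexification argument to deduce Banach-space-valued analyticity. By Theorem~\ref{t.multicone_representation}, $\rho_0$ admits a strictly invariant tame family of multicones $\{M(C)\}_{C}$ indexed by recurrent cone types of $\Gamma$; since strict invariance is an open condition, the same family remains strictly invariant for every $\rho_u$ with $u$ in some neighborhood $D' \en D$ of $0$, and each such $\rho_u$ is in particular $p$-dominated. Fix once and for all a $p$-plane $P_C$ whose projectivization lies in $M(C)$ for each recurrent cone type $C$. Then, by Proposition~\ref{p.multicone_representation_xi}, for every geodesic ray $(\gamma_n)_{n\ge 0}$ with $\gamma_0 = \id$ and limit $x \in \partial\Gamma$,
$$
\xi_{\rho_u}(x) = \lim_{n \to \infty} \rho_u(\gamma_n^{-1})\,P_{C^+(\gamma_n)},
$$
with exponential convergence, uniformly in $u \in D'$ and $x \in \partial\Gamma$. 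Two consequences are immediate: $u \mapsto \xi_{\rho_u}(x)$ is pointwise real-analytic (as a uniform limit of real-analytic maps), and $\xi_{\rho_u}$ is H\"older continuous on $\partial\Gamma$ with constants uniform in $u$. For the latter, two boundary points at visual distance $\approx e^{-cN}$ admit geodesic rays sharing a common prefix $\gamma_0,\dots,\gamma_N$; since their $N$-th approximations coincide, the uniform tail bound yields $d(\xi_{\rho_u}(x),\xi_{\rho_u}(y)) \le C e^{-\lambda N}$, i.e.\ a uniform H\"older bound of some exponent $\alpha_0 > 0$.

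To upgrade pointwise analyticity to analyticity as a map into $C^\alpha(\partial\Gamma, \Gr_p(\R^d))$, I would complexify. Extend the real-analytic family $\rho_u$ to a holomorphic family $\rho_w \colon \Gamma \to \GL(d,\C)$ for $w$ in a complex neighborhood $\tilde D$ of $0$. Thicken each real multicone $M(C) \en \P(\R^d)$ to an open neighborhood $\tilde M(C) \en \P(\C^d)$. The strictness of the original real invariance together with continuity of $\rho_w$ in $w$ allows one, after shrinking $\tilde D$, to arrange $\rho_w(g)(\tilde M(C_1)) \Subset \tilde M(C_2)$ for every edge $C_1 \xrightarrow{g} C_2$ of the geodesic automaton, uniformly over $w \in \tilde D$. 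The complex analogue of Proposition~\ref{p.multicone_representation_xi} (permitted by Remark~\ref{r.dom_complex} and the sofic-cocycle arguments of Section~\ref{s.multicones}) then produces
$$
\tilde\xi_{\rho_w}(x) = \lim_{n \to \infty} \rho_w(\gamma_n^{-1})\,P_{C^+(\gamma_n)} \in \Gr_p(\C^d),
$$
converging exponentially and uniformly in $w \in \tilde D$ and $x \in \partial\Gamma$, and the uniform H\"older estimate from the previous paragraph survives in the complex setting.

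For each fixed $x$, $w \mapsto \tilde\xi_{\rho_w}(x)$ is holomorphic as a uniform limit of holomorphic maps into $\Gr_p(\C^d)$. Combined with the uniform $C^{\alpha_0}$ bound, the Cauchy integral formula applied in local charts produces Taylor coefficients in $C^{\alpha_0}$ and a convergent Taylor series in the $C^{\alpha_0}$ norm on a smaller complex neighborhood of each $w_0 \in \tilde D$; hence $w \mapsto \tilde\xi_{\rho_w}$ is holomorphic into $C^{\alpha_0}(\partial\Gamma, \Gr_p(\C^d))$. Restricting to real parameters $u \in D'$ yields the asserted real-analyticity, with any exponent $\alpha \le \alpha_0$. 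The principal obstacle is the complexification step: one must construct the thickenings $\tilde M(C)$ and verify that strict invariance holds uniformly on a complex neighborhood $\tilde D$, so that the complex multicone arguments from Section~\ref{s.multicones} yield the quantitative uniform convergence required to lift pointwise holomorphy to Banach-space holomorphy in the H\"older norm.
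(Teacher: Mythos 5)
Your plan reproduces the paper's strategy (openness of domination, pointwise complex-analyticity via complexification and the multicone limit formula, a uniform H\"older bound, and then promotion to Banach-space analyticity), but it takes two detours that each require nontrivial material not set up in the paper and, as written, are not fully justified.

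First, the complex multicone thickening. You propose to thicken each $M(C)\subset\P(\R^d)$ to an open $\tilde M(C)\subset\P(\C^d)$ and then invoke ``the complex analogue of Proposition~\ref{p.multicone_representation_xi}.'' That analogue rests on Theorem~\ref{t.multicone_sofic} and Proposition~\ref{p.multicone_sofic_Ecu}, which are only proved for real cocycles; extending them to $\P(\C^d)$ is not a formality (one would have to redo the cone-field argument of Proposition~\ref{p.conefield} and the graph-transform estimates in the complex Grassmannian). The paper bypasses all of this by identifying $\Gr_p(\C^d)$ with a closed subset of $\Gr_{2p}(\R^{2d})$ via the standard embedding $\iota\colon\GL(d,\C)\hookrightarrow\GL(2d,\R)$: the family $\iota\circ\rho_w$ is $2p$-dominated, the real Corollary~\ref{cor-openproperty} gives unchanged real multicones for $w$ near $0$, and the real Proposition~\ref{p.multicone_representation_xi} then yields uniform convergence of the approximants. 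So the existing real machinery already does the job, and the complex-multicone machinery you need is a genuine gap in your argument unless you build it.

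Second, you re-derive the ``transverse real analyticity implies Banach-space analyticity'' step by hand via Cauchy integrals in local charts. This is the content of \cite[Proposition~A.5.9]{Hubbard} (used in \cite{BCLS}); the paper simply cites it. Your sketch elides the chart-gluing issue: the Taylor coefficients $a_k(x)$ obtained from the Cauchy formula live in a chart that depends on $x$, so passing to a single $C^{\alpha}$-valued coefficient requires a uniform family of charts (or an embedding of $\Gr_p(\C^d)$ into a fixed Banach space). This is exactly what the cited proposition packages up; re-deriving it inline without addressing this point leaves a gap. Relatedly, your H\"older estimate is also slightly too quick: two rays towards nearby boundary points need not share a literal common prefix, only stay $\delta$-close for $\sim N$ steps, so the $N$-th approximations do not coincide; one needs the quantitative H\"older statement for dominated bundles (Corollary~\ref{cor-holder}), which the paper uses instead of a bare multicone argument. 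None of these are wrong in outline, but each would need real work to close, whereas the paper's two shortcuts (the $\GL(2d,\R)$ embedding and the citation of the transverse analyticity criterion) make the proof short and self-contained.
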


Let us provide a proof.
Corollary~\ref{cor-openproperty} ensures that for every $u$ sufficiently close to $0$, the representation $\rho_u$ is also $p$-dominated. 

As in \cite{BCLS} (see also \cite[Proposition~A.5.9]{Hubbard}), it is enough to show \emph{transverse real analyticity}.
More precisely, we need to show that the map $F\colon D' \times \partial \Gamma \to \Gr_p(\RR^d)$ given by $F(u,x) \coloneqq \xi_{\rho_u}(x)$ has the following properties: 
\begin{enumerate}
\item\label{i.rem_1} it is $\alpha$-H\"older continuous;
\item\label{i.rem_2} for every $x\in \partial \Gamma$, the map $F(\cdot, x) \colon D' \to \Gr_p(\RR^d)$ is real analytic.
\end{enumerate}

H\"older continuity is a standard property of dominated splittings (see for example \cite[Section 4.4]{CroPo}), and is independent of the analyticity of the family. For completeness, and since we could not find the specific statement in the literature, we included a sketch of the proof in the appendix: see Corollary \ref{cor-holder}. 
In the case at hand, it follows that property~(\ref{i.rem_1}) above is satisfied,
for some neighborhood $D' \ni 0$ and some  uniform H\"older exponent $\alpha>0$.

So we are left to prove the analyticity property~(\ref{i.rem_2}). 
To proceed further, we consider the complexification of the representations. 
We can assume without loss of generality that $D'$ is a neighborhood of $0$ in some $\RR^k$.
Let $\{g_1, \ldots, g_m\}$ be a finite generating set of $\Gamma$.
For each $i$, the Taylor expansion of $u \mapsto \rho_u(g_i)$ around $0$ converges on a polydisk $\hat{D}_i$ in $\C^k$ centered at $0$; we keep the same symbol for the extended map.
Take a smaller polydisk $D \subset \bigcap_i \hat{D}_i$ also centered at $0$ such that for $u\in D$, the complex matrix $\rho_u(g_i)$ is invertible.
Since these maps are analytic, and every relation of $\Gamma$ is obeyed when $u$ is real, the same happens for all $u \in \hat D$. So we have constructed a complex analytic family of representations $\rho_u \colon \Gamma \to  \GL(d,\mathbb{C})$, where $u$ takes values in a neighborhood $\hat{D}$ of $0$ in $\C^k$.

Recall from Remark~\ref{r.dom_complex} that dominated splittings make sense in the complex case; so do dominated representations, with the exact same definition \eqref{eq:domrep}.
Actually, if $\iota$ denotes the usual homomorphism that embeds $\GL(d,\C)$ into $\GL(2d,\R)$,
then a representation $\rho \colon \Gamma \to \GL(d,\C)$ is $p$-dominated in the complex sense iff $\iota \circ \rho$ is $2p$-dominated in the real sense.
In particular, the openness property of Corollary~\ref{cor-openproperty} also holds in the complex case.
So, reducing $\hat{D}$ if necessary, we assume that each $\rho_u$ is $p$-dominated.

\begin{prop}\label{prop.pointwiseanalytic}
For each $x \in \partial \Gamma$, the map $u \mapsto \xi_{u}(x)$
from $\hat{D}$ to $\Gr_p(\C^d)$ is complex analytic.

\end{prop}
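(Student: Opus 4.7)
The plan is to use the multicone characterization of the limit map from Proposition~\ref{p.multicone_representation_xi} to express $\xi_u(x)$ as a uniformly convergent sequence of complex-analytic functions of $u$, and then invoke the Weierstrass theorem on uniform convergence of holomorphic maps.

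First I would observe that, by Remark~\ref{r.dom_complex}, the multicone characterization (both Theorem~\ref{t.multicone_representation} and Proposition~\ref{p.multicone_representation_xi}) extends verbatim to the complex setting, with multicones now being open subsets of $\P(\C^d)$. Shrinking $\hat D$ if necessary, I can fix a single family $\{M(C)\}_C$ of tame multicones of index $p$ that is strictly invariant under $\rho_u$ for every $u \in \hat D$ simultaneously; this is possible because strict invariance is an open condition on the representation (it involves only finitely many inclusions $\rho_u(g)(M(C_1)) \Subset M(C_2)$, one for each edge of the recurrent geodesic automaton). For each recurrent cone type $C$, I fix once and for all a reference subspace $P(C) \in \Gr_p(\C^d)$ whose projectivization lies in $M(C)$. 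This choice is independent of $u$.

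Next, given $x \in \partial \Gamma$, I fix a geodesic ray $(\gamma_n)_{n\ge 0}$ from the identity representing $x$. For each $n$, let $P_n \coloneqq P(C^+(\gamma_n))$ and define
$$
f_n(u) \coloneqq \rho_u(\gamma_n^{-1}) P_n \in \Gr_p(\C^d) \, .
$$
Since $u \mapsto \rho_u(\gamma_n^{-1})$ is a finite product of the maps $u \mapsto \rho_u(g_i^{\pm 1})$, each of which is holomorphic on $\hat D$ by construction, and since the action of $\GL(d,\C)$ on $\Gr_p(\C^d)$ is algebraic, each $f_n$ is a complex-analytic map from $\hat D$ to $\Gr_p(\C^d)$.

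By the ``moreover'' part of Proposition~\ref{p.multicone_representation_xi}, applied in the complex setting, $f_n(u) \to \xi_u(x)$ as $n \to \infty$, with a rate of convergence that can be bounded uniformly for all representations in a neighborhood of $\rho_0$. Possibly shrinking $\hat D$ one final time, we obtain that $f_n \to \xi_{(\cdot)}(x)$ uniformly on $\hat D$. Since uniform limits of holomorphic maps into a complex manifold are holomorphic (Weierstrass), this shows that $u \mapsto \xi_u(x)$ is complex analytic on $\hat D$, as required.

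The main obstacle will be essentially bookkeeping: checking that the multicone machinery of Section~\ref{s.multicones} and the associated quantitative linear-algebra lemmas of the appendix transfer to the complex case with constants that are uniform across the holomorphic family, so that one really gets uniform convergence of the $f_n$ on a neighborhood of $0$. Once this is in place, the Weierstrass step is immediate and requires no further work.
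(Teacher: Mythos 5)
Your proposal is correct and follows essentially the same route as the paper: fix a geodesic ray to $x$, build a sequence of holomorphic approximants $f_n(u)=\rho_u(\gamma_n^{-1})P_n$ with $P_n$ in the appropriate multicone, invoke the uniform rate in Proposition~\ref{p.multicone_representation_xi} (after shrinking $\hat D$ so that one multicone family works for all $u$ at once, via Corollary~\ref{cor-openproperty}), and conclude by Weierstrass. The only, and entirely cosmetic, difference is your choice of reference subspaces $P_n = P(C^+(\gamma_n))$ fixed once per cone type and manifestly $u$-independent, whereas the paper takes $P_n = \xi_0(\gamma_n^x(x))$ and must then remark separately that these spaces lie in the corresponding multicones; your choice sidesteps that remark and is arguably cleaner, but it does not change the argument.
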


\begin{proof}
It suffices to check analyticity around $u = 0$.
For each $x \in \partial\Gamma$, choose a geodesic ray $(\gamma^x_n)_{n\in \N}$ in $\Gamma$ 
starting at the identity element and converging to $x$.
We consider the sequence of maps from $\hat D$ to $\Gr_p(\mathbb{C}^d)$ defined as follows:
$$
\varphi^x_n(u) \coloneqq \rho_u(\gamma^x_n)^{-1}  (\xi_0 (\gamma^x_n (x))) \, .
$$
Each of these maps is complex analytic, since so is $u \mapsto \rho_u(\gamma^x_n)$.

We claim that for each $x \in \partial \Gamma$, convergence $\varphi^x_n(u) \to \xi_u(x)$ holds uniformly in a neighborhood of $0$.
Since $\Gr_p(\mathbb{C}^d)$ may be considered as a closed subset of 
$\Gr_{2p}(\R^{2d})$, it is sufficient to check convergence in the latter set.
This convergence follows from Proposition~\ref{p.multicone_representation_xi}  and the fact that for $u$ in a small neighborhood $V$ of $0$, the representations are still dominated, and with the same multicones (see Corollary~\ref{cor-openproperty}). Note that $\xi_0$ always belongs to the corresponding multicone because the multicones remain unchanged.

Being a uniform limit of complex-analytic maps, the map $u \mapsto \xi_u(x)$ is complex analytic on $V$ (see for example \cite[Corollary 2.2.4]{Hormander}).
\end{proof}

Restricting to the real parameters, the proposition yields the property~(\ref{i.rem_2}) that we were left to check.
So the proof of Theorem~\ref{teo.analytic} is complete.


\section{Geometric consequences of Theorem \ref{t.BG}: A Morse Lemma for \texorpdfstring{$\PSL(d,\R)$}{PSL(d,R)}'s symmetric space} \label{s.morse}

In this section we explain why Theorem~\ref{t.BG} (and more precisely Proposition \ref{p.BG_sequences}) has a deep geometric meaning for the symmetric space of $\PSL(d,\R)$. This is a version of the Morse Lemma recently proved by \cite{KLP2}. Because of this application, one is tempted to call Theorem~\ref{t.BG} a \emph{twisted Morse Lemma}\footnote{Lenz \cite{Lenz}, who previously obtained a weaker version of Theorem \ref{t.BG} for $\SL(2,\R)$, had already noted that his result was related to the classical Morse Lemma for the hyperbolic plane.}.

The exposition is purposely pedestrian for two reasons: (1) it is intended for the reader unfamiliar with symmetric spaces; (2) it mimics, in the case of $\PSL(d,\R)$, the general structure theory of semi-simple Lie groups, in order to ease the way to Section \ref{s.general}.

The reader familiar with semi-simple Lie groups should jump to subsection \ref{angyGromov}, or even Section \ref{s.general}, for a proof of the Morse Lemma due to \cite{KLP2}, for symmetric spaces of non-compact type, using dominated splittings. Specific references for subsections \ref{Cartan}--\ref{a+} are, for example, \cite{Eberlein}, \cite{bordeF}, \cite{Helgason}, \cite {lang}. 

This section is independent of sections \ref{s.dominationimplieshyp}--\ref{s.analytic}.

\subsection{A Cartan subalgebra}\label{Cartan}

Fix an inner product $\<\mathord{\cdot},\mathord{\cdot}\>$ on $\R^d$ and denote by $o$ its homothety class, i.e.\ $\<\mathord{\cdot},\mathord{\cdot}\>$ up to positive scalars. One has then the adjoint involution $T\mapsto T^\texttt{t}$ defined by $\<Tv,w\> = \<v,T^\texttt{t}w\>$ (note that ${}^\texttt{t}$ only depends on $o$). This involution splits the vector space $\frak{sl}(d,\R)$ of traceless $d\times d$ matrices as 
$$
\frak{sl}(d,\R) = \frak p^o\oplus\frak k^o
$$
where 
$$
\frak p^o = \{T\in\frak{sl}(d,\R):T^\texttt{t} = T\}\textrm{ and }\frak k^o = \{T\in\frak{sl}(d,\R):T^\texttt{t} = -T\}.
$$

The subspace $\frak k^o$ is a Lie algebra so we can consider its associated Lie group $K^o = \exp\frak k^o$, consisting on (the projectivisation of) determinant 1 matrices preserving the class $o$. The subspace $\frak p^o$, of traceless matrices diagonalizable on a $o$-orthogonal basis, is not a Lie algebra. 

Fix then a $o$-orthogonal set of $d$ lines $\cal E$ and denote by $\frak a\subset\frak p^o$ those matrices diagonalizable in the chosen set $\cal E$. This is an abelian algebra, called a \emph{Cartan subalgebra} of $\frak{sl}(d,\R);$ its associated Lie group $\exp \frak a$ consists on (the projectivisation of) determinant $1$ matrices diagonalizable on $\cal E$ with positive eigenvalues. For $a\in\frak a$ and $u\in\cE$, we will denote by $\lambda_u(a)$ the eigenvalue for $a$ associated to the eigenline $u$. Note that $\lambda_u$ is linear on $a$ and hence an element of the dual space $\frak a^*$ of $\frak a$.

\subsection{The action of \texorpdfstring{$\frak a$}{a} on \texorpdfstring{$\frak{sl}(d,\R)$}{sl(d,R)}}

The action of $\frak a$ on $\frak{sl}(d,\R)$ given by $(a,T)\mapsto [a,T] = aT-Ta$ is also diagonalizable. Indeed, the set of (projective) traceless transformations $\{\epsilon_{uv},\phi_{uv} \st u\neq v\in\cE\}$ defined by 
$$
\epsilon_{uv}(v) = u\quad\textrm{and}\quad\epsilon_{uv}|\cE-\{v\} = 0,
$$
and 
$$
\phi_{uv}|u = t\id, \quad \phi_{uv}|v = -t\id, \quad\textrm{and}\quad \phi_{uv}|\cE-\{u,v\} = 0,
$$
contains a linearly independent set\footnote{Redundancy only appears in the set $\{\phi_{uv} \st u\neq v\in\cE\}$.} of eigenlines of $[a,\mathord{\cdot}]$, specifically $[a,\phi_{uv}] = 0$ and 
$$
[a,\epsilon_{uv}] = \alpha_{uv}(a)\epsilon_{uv} = (\lambda_u(a)-\lambda_v(a))\epsilon_{uv}.
$$
The set of functionals 
$$
\E = \{\alpha_{uv}\in\frak a^*:u\neq v\in\cE\}
$$
is called a \emph{root system} (or simply the \emph{roots}) of $\frak a$. For $\alpha\in\E\cup\{0\}$, one usually denotes by $\frak {sl}(d,\R)_\alpha$ the eigenspace associated to $\alpha$, that is,
$$
\frak {sl}(d,\R)_\alpha = \{T\in\frak{sl}(d,\R) \st [a,T] = \alpha(a)T,\,\forall a\in\frak a\},
$$
and one has\footnote{The fact that $\frak {sl}(d,\R)_0 = \frak a$ is particular of split real algebras, such as $\frak{sl}(d,\R)$.}  
\begin{equation}\label{descomposicion}
\frak{sl}(d,\R) = \bigoplus_{\alpha\in\E\cup\{0\}}\frak {sl}(d,\R)_\alpha = \frak a\oplus\bigoplus_{\alpha\in\E}\frak {sl}(d,\R)_\alpha.
\end{equation}

\subsection{Expansion/contraction}\label{ordenE}

The closure of a connected component of 
$$
\frak a -\bigcup_{\alpha\in\E} \ker\alpha
$$
is called \emph{a closed Weyl chamber}. Fix a closed Weyl chamber and denote it by $\frak a^+;$ this is not a canonical choice: it is equivalent to choosing an order on the set $\cal E$. Indeed, consider the subset of \emph{positive roots} defined by $\frak a^+$: 
$$
\E^+ = \{\alpha\in\E:\alpha|\frak a^+\geq0\},
$$
then one can set $u> v$ if $\alpha_{uv}\in\E^+$.\footnote{In the case of $\PSL(d,\R)$ one usually applies the inverse procedure: `let $\{e_1,\ldots, e_d\}$ be the canonical basis of $\R^d$ and let $\frak a^+$ be the set of (determinant one) diagonal matrices with decreasing eigenvalues'.}

Let $\frak n^+$ be the Lie algebra defined by 
$$
\frak n^+ = \bigoplus_{\alpha\in\E^+}\frak {sl}(d,\R)_\alpha = \bigoplus_{u> v}\epsilon_{uv}.
$$
By definition, $\frak a\oplus\frak n^+$ is the subspace of $\frak{sl}(d,\R)$ which is non-expanded by $\Ad\exp a$, for $a\in \frak a^+$.

\subsection{Simple roots} Observe that the order on $\cE$ defined in subsection \ref{ordenE} is a total order; indeed, the kernel of every $\alpha\in\E$ has empty intersection with the interior of $\frak a^+$, so given distinct $u,v\in\cE$, either $\alpha_{uv}\in\E^+$ or $\alpha_{vu} = -\alpha_{uv}\in\E^+$.

Consider pairwise distinct $u,v,w\in\cE$ such that $u>v>w$. Note that if $a\in\frak a^+\cap\ker\alpha_{uw}$ necessarily 
$$
a\in\ker\alpha_{uv}\cap\ker\alpha_{vw}.
$$
A positive root $\alpha$ such that $\ker\alpha \cap\frak a^+$ has maximal co-dimension is called a \emph{simple root} associated to $\frak a^+$. This corresponds to choosing two successive elements of $\cE$. The set of simple roots is denoted by $\Pi$. Note that this is a basis of $\frak a^*$.

From now on we will denote by $\cE = \{u_1,\ldots,u_d\}$ with $u_p> u_{p+1}$. Then $\frak n^+$ can be interpreted as the space of upper triangular matrices on $\cE$ (with 0's in the diagonal), and denoting by $a_i = \lambda_{u_i}(a)$ one has 
$$
\frak a^+ = \{a\in \frak a:a_1\geq\cdots\geq a_d\}.
$$
We will use $\lambda_{u_i}$ to introduce coordinates in $\frak a:$ if $(a_1,\ldots,a_d)\in\R^d$ are such that $a_1+\cdots+ a_d = 0$ then $(a_1,\ldots,a_d)$ will denote the element $a\in\frak a$ such that $\lambda_{u_i}(a) = a_i$. Finally, given $p\in\{1,\ldots,d-1\}$ we will denote by $\alpha_p$ the simple root 
$$
\alpha_p(a) = \alpha_{u_pu_{p+1}}(a) = a_p-a_{p+1}.
$$

\subsection{Flags}
 
Denote by $N = \exp\frak n^+$ and by $M$ the centralizer of $\exp \frak a$ in $K^o$. The group $M$ consists of (the projectivisation of) diagonal matrices w.r.t.\ $\cE$ with eigenvalues $1$'s and $-1$'s.

The group $P = M\exp \frak a\,N$ is called a Borel subgroup of $\PSL(d,\R)$ and $N$ is called its unipotent radical.

Recall that a \emph{complete flag} on $\R^d$ is a collection of subspaces $E = \{E_p\}_1^{d-1}$ such that $E_p\subset E_{p+1}$ and $\dim E_p = p$. The spaces of complete flags is denoted by $\scr F$. Observe that $\PSL(d,\R)$ acts transitively (i.e.\ has only one orbit) on $\scr F$ and that the group $P$ is the stabilizer of 
$$
\{u_1\oplus\cdots\oplus u_p\}_{p = 1}^{d-1};
$$
thus we obtain an equivariant identification $\scr F = \PSL(d,\R)/P$.

Two complete flags $E$ and $F$ are in \emph{general position} if for all $p = 1,\ldots,d-1$ one has 
$$
E_p\cap F_{d-p} = \{0\}.
$$
Denote by $\posgen$ the space of pairs of flags in general position.

The same procedure applied to the Weyl chamber $-\frak a^+$, provides the group $\wk P$ that stabilizes the complete flag 
$$
\{u_d\oplus\cdots\oplus u_{d-p+1}\}_{p = 1}^{d-1}.
$$
Observe that the flags $\{u_1\oplus\cdots\oplus u_p\}_{p = 1}^{d-1}$ and $\{u_d\oplus\cdots\oplus u_{d-p+1}\}_{p = 1}^{d-1}$ are in general position and that the stabilizer in $\PSL(d,\R)$ of the pair is the group $M\exp\frak a = P\cap\wk P$.

\subsection{Flags and singular value decomposition}\label{procedure}

If $\<\mathord{\cdot},\mathord{\cdot}\>\in o$ is an inner product with induced norm $\|\mathord{\cdot}\|$ on $\R^d$, note that the operator norm of $g\in\GL(d,\R)$, 
$$
\sigma_1^o(g) = \|g\|_o = \sup\left\{\frac{\|gv\|}{\|v\|}:v\in\R^d-\{0\}\right\},
$$
only depends on the homothety class $o.$ 
The same holds for the other singular values, defined in subsection \ref{ss.dom_sing}. Throughout this section, the choice of the class $o$ is important, so we will stress the fact that the singular values depend on $o$ by denoting them as $\sigma_i^o(g)$.

The singular value decomposition provides a map $a:\PSL(d,\R)\to \frak a^+$, called the \emph{Cartan projection}, such that for every $g\in \PSL(d,\R)$ there exist $k_g,l_g\in K^o$ such that 
$$
g = k_g\exp(a(g))l_g.
$$
More precisely, one has 
$$
a(g) = (\log\sigma_1^o(g),\ldots,\log\sigma_d^o(g)).
$$

Recall from Section \ref{ss.dom_sing} that $g$ has a gap of index $p$ if $\sigma_p^o(g)>\sigma_{p+1}^o(g)$. If this is the case then 
\begin{equation}\label{UyK}U^o_p(g) = k_g (u_1\oplus\cdots\oplus u_p),
\end{equation} (note again the dependence on $o$). 

Given $\alpha_p\in\Pi$ denote by $K^o(\{\alpha_p\})$ the stabilizer in $K^o$ of the vector space $u_1\oplus\cdots \oplus u_p$. Moreover, given a subset $\t\subset \Pi$, denote by 
$$
K^o(\t) = \bigcap_{\alpha_p\in\t}K^o(\{\alpha_p\}).
$$

If for some $p\in\{1,\ldots,d\}$ and $g\in\PSL(d,\R)$ one has $a_1(g) = a_p(g)>a_{p+1}(g)$, then any element of $k_gK^o(\{\alpha_p\})$ can be chosen in a Cartan decomposition of $g.$ If all the gaps of $g$ are indexed on a subset $\t\en\Pi$, then $k_g$ is only defined modulo $K^o(\t)$ and one has the \emph{partial flag} 
$$
U^o(g) = \{U_p^o(g):\alpha_p\in\t\}.
$$

Note that the Cartan projection of $g^{-1}$ is simply $a(g^{-1}) = (-a_d(g),\ldots,-a_1(g))$. The linear transformation $\ii:\frak a\to \frak a$ defined by 
$$
\ii(a_1,\ldots,a_d) = (-a_d,\ldots,-a_1)
$$
is called \emph{the opposition involution}. If $g$ has gaps indexed by $\t$ then $g^{-1}$ has gaps indexed by $\ii\t = \{\alpha\circ\ii:\alpha\in\t\}$. Denote by $S^o(g) = U^o(g^{-1})$.

\subsection{The symmetric space}

Recall that fixing a class $o$ defines a splitting 
$$
\frak{sl}(d,\R) = \frak p^o\oplus\frak k^o.
$$
This splitting is orthogonal with respect to the \emph{Killing form}, the symmetric bilinear form $\kappa$ on $\frak{sl}(d,\R)$ defined by 
$$
\kappa(A,B) = 2d\traza(AB).
$$
This linear form is related to adjoint involution ${}^\texttt{t}$ in the following sense: the linear form $\kappa(\cdot,\cdot^{\texttt{t}})$ is positive definite.

Since $\frak p^o$ consists on fixed point for ${}^{\texttt{t}}$, the restriction of $\kappa$ to $\frak p^o$, denoted by $(\mathord{\cdot},\mathord{\cdot})_o:\frak p^o\times\frak p^o\to\R$, is positive definite. Explicitly, if $v\in\frak p^o$ then $v$ is diagonalizable and 
$$
|v|_o^2 \coloneqq (v,v)_o
$$
equals the sum of squared eigenvalues of $v$.

The space 
$$
X_d = \{\textrm{inner products on $\R^d$}\}/\R_+
$$
is a contractible $\PSL(d,\R)$-homogenous space, the action being given by 
$$
g\cdot\<\mathord{\cdot},\mathord{\cdot}\> = \langle \mathord{\cdot}, \mathord{\cdot} \rangle' \quad \text{where } \langle v, v \rangle' = \langle g^{-1} v, g^{-1} w \rangle.
$$
The stabilizer of $o$ is the group $K^o$ and thus the orbit map $(g,o)\mapsto g\cdot o$ identifies the tangent space ${\sf{T}}_o X_d$ with the vector space $\frak p^o$. A direct computation shows that the Riemannian metric $o\mapsto (\mathord{\cdot},\mathord{\cdot})_o$ is $\PSL(d,\R)$-invariant. 

The space $(X_d,(\mathord{\cdot},\mathord{\cdot})_o)$ is, by definition, \emph{the symmetric space} of $\PSL(d,\R)$.

\subsection{Maximal flats} A direct computation shows that the orbit $\exp\frak a\cdot o\subset X_d$ is isometric to $(\frak a,(\mathord{\cdot},\mathord{\cdot})_o)$. Moreover, one can show that $\exp \frak a \cdot o$ is a \emph{maximal totally geodesic flat} (flat as in ``isometric to a Euclidean space'', maximal with respect to dimension; see for example \cite[Section XII.3]{lang}). 
For every $g\in\PSL(d,\R)$ one has 
\begin{equation}\label{distancia}d_{X_d}(o,g\cdot o) = d_{X_d}(o,\exp a(g)\cdot o) = |a(g)|_o = \sqrt{\sum_i (\log \sigma_i^o(g))^2},
\end{equation} where $d_{X_d}$ is the distance on $X_d$ induced by the Riemannian metric $(\mathord{\cdot},\mathord{\cdot})_o$.

The translated orbit $g\exp\frak a\cdot o$ is again a maximal totally geodesic flat (through $g\cdot o$) and hence, all geodesics of $X_d$ are of the form 
$$
t\mapsto g\exp (ta)\cdot o
$$
for a given $g\in\PSL(d,\R)$ and $a \in\frak a^+$.

In other words, a \emph{maximal flat} in $X_d$ consists on fixing a set $\cal L$ of $d$ lines that span $\R^d$ and considering the space of inner products, up to homothety, that make $\cal L$ an orthogonal set.

The following lemma is simple but extremely useful for estimations:

\begin{lem}\label{estimaciones} 
Consider $\varphi\in\frak a^*$ such that $\varphi|\frak a^+-\{0\}>0$. Then there exists $c>1$ such that for all $g\in \PSL(d,\R)$ one has 
$$
\frac 1c\varphi(a(g)) \leq d_{X_d}(o,g\cdot o)\leq c\varphi(a(g)).
$$
\end{lem}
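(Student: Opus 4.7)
The plan is to reduce the lemma to a routine compactness argument on the closed Weyl chamber $\frak a^+$, once the distance formula~\eqref{distancia} has been invoked.

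First, I would recall that, by~\eqref{distancia}, $d_{X_d}(o,g\cdot o)=|a(g)|_o$ for every $g\in\PSL(d,\R)$, and that the Cartan projection always satisfies $a(g)\in\frak a^+$. So the lemma reduces to producing a constant $c>1$ with
$$
\frac{1}{c}\,\varphi(a)\le |a|_o\le c\,\varphi(a)\qquad\text{for every }a\in\frak a^+.
$$

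Next, I would examine the ``unit slice'' $\Sigma:=\{a\in\frak a^+:|a|_o=1\}$. This is a compact subset of $\frak a$, being the intersection of the closed cone $\frak a^+$ with the unit sphere of the Euclidean space $(\frak a,|\cdot|_o)$. The functional $\varphi$ is linear (hence continuous) on $\frak a$ and, by hypothesis, strictly positive on $\frak a^+\setminus\{0\}$; in particular it is strictly positive on $\Sigma$. By compactness, $\varphi$ attains a positive minimum $m>0$ and a finite maximum $M<\infty$ on $\Sigma$.

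Finally, I would extend these bounds by homogeneity. Both $a\mapsto |a|_o$ and $a\mapsto \varphi(a)$ are positively homogeneous of degree one, so the inequality $m\le \varphi(a)\le M$ on $\Sigma$ upgrades by rescaling to $m\,|a|_o\le \varphi(a)\le M\,|a|_o$ for every nonzero $a\in\frak a^+$ (and trivially for $a=0$). Taking $c:=\max(m^{-1},M,2)$ and specializing to $a=a(g)$ yields the claim. There is no real obstacle here: the argument is a soft, one-line compactness observation, made possible only by the strict positivity assumption $\varphi|_{\frak a^+\setminus\{0\}}>0$ (which prevents $m$ from being $0$) and by the finite-dimensionality of $\frak a$ (which makes $\Sigma$ compact).
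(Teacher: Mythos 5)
Your proof is correct and essentially the same as the paper's: both reduce via formula~\eqref{distancia} to comparing $|\cdot|_o$ with $\varphi$ on $\frak a^+$, and both use compactness of the unit sphere in $\frak a^+$ together with homogeneity (the paper phrases this as boundedness of the scale-invariant ratio $a\mapsto |a|_o/\varphi(a)$ on $\frak a^+\smallsetminus\{0\}$).
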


\begin{proof} 
Since $\frak a^+$ is closed and $\ker\varphi\cap\frak a^+ = \{0\}$ the function 
$$
a\mapsto\frac{|a|_o}{\varphi(a)}
$$
is invariant under multiplication by scalars and bounded on $\frak a^+-\{0\}$. Equation (\ref{distancia}) completes the proof.
\end{proof}

For example, $\log\sigma_1^o(g)$ and $-\log\sigma_d^o(g)$ are comparable to $d_{X_d}(o,g\cdot o)$.

\subsection{The Furstenberg boundary and parallel sets}\label{F-Parallel}

A \emph{parametrized flat} is a function $\p:\frak a\to X_d$ of the form 
$$
\p(a) = g\exp a\cdot o
$$
for some $g\in\PSL(d,\R)$. A maximal flat is thus a subset of the form $\p(\frak a)\subset X_d$ for some parametrized flat $\p$.

Observe that $\PSL(d,\R)$ acts transitively on the set of parametrized flats and that the stabilizer of $\p_0:a\mapsto \exp a\cdot o$ is the group $M$. We will hence identify the space of parametrized flats with $\PSL(d,\R)/M$.

Two parametrized flats $\p,{\sf g}$ are \emph{equivalent} if the function $\frak a\to\R$ defined by 
$$
a\mapsto d_{X_d}(\p(a),{\sf g}(a))
$$
is bounded on $\frak a^+$.

The \emph{Furstenberg boundary} of $X_d$ is the space of equivalence classes of pa\-ra\-me\-tri\-zed flats. Note that, \emph{by definition} of $N = \exp \frak n^+$ one has that the distance function 
$$
a\mapsto d_{X_d}(n\exp a\cdot o, \exp a\cdot o)
$$
is bounded on $a\in\frak a^+$ only if $n\in M\exp\frak a\,N = P$\footnote{The $ij$ entry on $\cE$ of $\exp(-ta) n \exp( ta)$ is $\exp(t(a_j-a_i))n_{ij}$. In order to have this entry bounded for all $t>0$ one must have $n_{ij} = 0$ for all $j<i$.}. Thus, the equivalence class of the flat $\p_0$ is $P\cdot\p_0$. Hence, the Furstenberg boundary is $\PSL(d,\R)$-equivariantly identified with the space of complete flags $\scr F = \PSL(d,\R)/P$. 

Given a parametrized flat $\p$ denote by $\zz(\p)\in\scr F$ its equivalence class in the Furstenberg boundary. Also, denote by $\wk\zz(\p)\in\scr F$ the class of the parametrized flat\footnote{This is still a parametrized flat.} 
$$
a\mapsto \p(-a).
$$

This last identification can be seen directly: a parametrized flat $\p$ consists in fixing an \emph{ordered}\footnote{Recall $\frak a^+$ is fixed beforehand.} set $\{\ell_1,\ldots,\ell_d\}$ of $d$ lines that span $\R^d$ and considering all inner products (up to homothety) that make this set an orthogonal set, and the choice of one of these inner products. The associated point ``at infinity'' in the Furstenberg boundary of this parametrized flat is the complete flag 
$$
\zz(\p)_p = \ell_1\oplus\cdots\oplus\ell_p.
$$
Moreover, $\wk\zz(\p)_p = \ell_d\oplus\cdots\oplus\ell_{d-p+1}$.

One easily concludes the following properties:
\begin{itemize}
\item[-] Given $x\in X_d$ and a complete flag $F$ there exists a unique maximal flat $\p(\frak a)$ containing $x$ such that $\zz(\p) = F$: apply the Gram-Schmidt process to flag $F$ and any inner product in the class $x$.
\item[-] Given two flags in general position $(E,F)\in\posgen$ there exists a unique maximal flat $\p(\frak a)$ such that $\zz(\p) = E$ and $\wk\zz(\p) = F:$ it suffices to consider the ordered set $\ell_p = E_p\cap F_{d-p+1}$. 
\end{itemize}
Recalling that $M \exp \frak a = P \cap \wk{P}$, observe that the maps $\wk\zz$ and $\zz$ are exactly the canonical quotient projections 
$$
(\wk\zz,\zz):\PSL(d,\R)/M\to \posgen = \PSL(d,\R)/M\exp \frak a.
$$
Given subsets $\t'\en\t\en\Pi$, denote by $\scr F_\t $ the space of \emph{partial flags of type $\t$} and given $E\in\scr F_\t$ denote by ${E}^{\t'}\in\scr F_{\t'}$ the partial flag of type $\t'$ consisting on forgetting the irrelevant subspaces of $E$. Denote by $\zz_\t(\p) = \zz(\p)^\t$ and by $\wk\zz_\t = \zz_{\ii\t}$.

Given a pair of partial flags in general position $E\in\scr F_\t$ and $F\in\scr F_{\ii\t}$ and a point $x\in X_d$, we define: 
\begin{itemize}
\item[-] The \emph{Weyl cone} $V(x,E)$ determined by $x$ and $E$ is 
$$
\bigcup_\p \p(\frak a^+),
$$
where the union is indexed on all parametrized flats $\p$ with $\p(0) = x$ and $\zz_\theta(\p) = E$. 
\item[-] The \emph{parallel set} $P(F,E)$ determined by $F$ and $E$ is 
$$
\bigcup_\p \p(\frak a),
$$
where the union is indexed on all parametrized flats $\p$ with $\wk\zz_\t(\p) = F$ and $\zz_\t(\p) = E$.
\end{itemize}

\subsection{Parametrized flats through $o$ and $g\cdot o$}\label{ogo} Consider $g\in\PSL(d,\R)$ and $g = k_g\exp a(g)l_g$ a Cartan decomposition of $g$. Observe that the set of $d$ lines 
$$
k_g\cal E = \{k_g u_1,\ldots,k_g u_d\}
$$
is simultaneously $o$-orthogonal and $g\cdot o$-orthogonal. The set of classes of inner products that make this set orthogonal is hence a maximal flat through $o$ and $g\cdot o$.

If $g$ has gaps of certain indices, indexed by $\theta\subset\Pi$, then every element of $k_gK_o(\t)$ can be chosen in a Cartan decomposition of $g$. Thus, the set of maximal flats through $o$ and $g\cdot o$ is the translated $K_o(\theta)$-orbit $k_gK_o(\theta)\cal E$. 
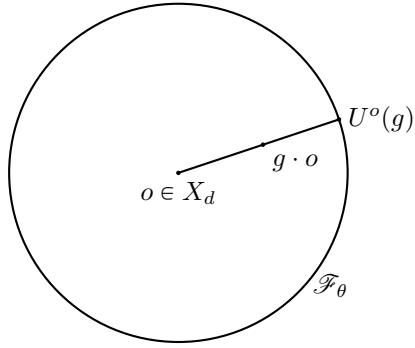
\begin{figure}[hbt]
	\centering
	\begin{tikzpicture}[scale = 0.75]
	\draw [thick] circle [radius = 3];
	\draw [fill] circle [radius = 0.03];	
	\draw [fill] (1.5,0.5) circle [radius = 0.03];
	\node [below] at (0,0) {$o\in X_d$};
	\node [below right] at (1.5,0.5) {$g\cdot o$};
	\draw [thick] (0,0) -- (2.846,0.948);
	\node [right] at (2.846,0.948) {$U^o(g)$};	
	\draw [fill] (2.846,0.948) circle [radius = 0.03];
	\node at (2.7,-2) {$\scr F_\t$};	
	\end{tikzpicture}
	\caption{All flats through $o$ and $g\cdot o$ have $U^o(g)$ as a partial flag at infinity.}\label{figure:U(g)}
\end{figure}

All parametrized flats $\p$ with $\p(0) = o$ such that $g\cdot o\in\p(\frak a^+)$, have the flag $U^o(g)$ as a partial subflag of their corresponding flag at infinity $\zz(\p)$ (recall equation (\ref{UyK})), see Fig.~\ref{figure:U(g)}.

\subsection{$\frak a^+$-valued distance}\label{a+} 

Recall that $X_d$ is $\PSL(d,\R)$-homogeneous and consider the map $\aa:X_d\times X_d\to\frak a^+$ defined by 
$$
\aa(g\cdot o,h\cdot o) = a(g^{-1}h).
$$
Note that $\aa$ is $\PSL(d,\R)$-invariant for the diagonal action of $\PSL(d,\R)$ on $X_d\times X_d$, that 
\begin{equation}\label{distvect}d_{X_d}(x,y) = |\aa(x,y)|_o,
\end{equation} (this is due to equation (\ref{distancia})) and that $\ii(\aa(x,y)) = \aa(y,x)$.

Consider the subset of simple roots defined by 
$$
\theta(x,y) = \{\alpha\in\Pi:\alpha(\aa(x,y))\neq0\}
$$
and consider the partial flag 
$$
U(x,y) = \{gU^o_\alpha(g^{-1}h)\}_{\alpha\in\theta(x,y)},
$$
where $(x,y) = (g\cdot o,h\cdot o)$. Given $\theta\subset\theta(x,y)$, let the \emph{Weyl cone of type $\t$ from $x$ to $y$} be denoted by $V_\t(x,y)$ and defined by 
$$
V_\t(x,y) = \bigcup_\p \p(\frak a^+),
$$
where the union is indexed on all parametrized flats $\p$ with $\p(0) = x$ and $\zz_\t(\p) = U(x,y)^\t$. Finally, the \emph{diamond of type $\t$} between $x$ and $y$ is the subset 
$$
\dd_\t(x,y) = V_\t(x,y)\cap V_{\ii\t}(y,x).
$$
This diamond is contained in the parallel set 
$$
P(U(x,y)^\t,S(x,y)^{\ii\t}),
$$
where $S(x,y) = hS^o(g^{-1}h)$.

For example, consider $x = o$. Any $y \in X_d$ can be written as $y = g \exp a \cdot o$ with $a = \aa(o,y) \in \mathfrak{a}^+$ and $g \cdot o = o$. If $a\in\inte\frak a^+$ then $\t(o,y) = \Pi$ and 
\begin{align*}
V_{\Pi}(o, y) & = \{ g \exp v \cdot o \st v \in \mathfrak{a}^+ \}, \\
\dd_{\Pi}(o, y) & = \{ g \exp v \cdot o \st v \in \mathfrak{a}^+ \cap (a - \mathfrak{a}^+) \}.
\end{align*}

\subsection{Angles and distances to parallel sets}\label{productoGromov}\label{angyGromov} 
The purpose of this subsection and the next one is to relate the distance from a given point $o$ to a parallel set $P(E,F)$, for two partial flags in general position, with the angle between $E$ and $F$ for an inner product in the class $o$ (Proposition \ref{cjtoparalelogeneral}). 

Fix an inner product $\<\mathord{\cdot},\mathord{\cdot}\>\in o$ and denote by $\|\mathord{\cdot} \|$ the induced norm on $\R^d$. The $o$-\emph{angle} between non-zero vectors
$v$, $w\in\R^d$ is defined as the unique number $\angle_o(v,w)$ in $[0,\pi]$
whose cosine is $\langle v,w \rangle / (\|v\| \, \|w\|)$.
If $E$, $F \subset \R^d$ are nonzero subspaces then we define their $o$-\emph{angle} as:

\begin{equation}\label{e.angle}
\angle_o(E,F) \coloneqq \min_{v \in E^\times} \min_{w \in F^\times} \angle_o(v,w) \, ,
\end{equation}
where $E^\times \coloneqq E - \{0\}$.
We also write $\angle_o (v, F)$ instead of $\angle_o(\R v, F)$, if $v$ is a nonzero vector. Observe that $\angle_o(\mathord{\cdot},\mathord{\cdot})$ is independent on $\<\mathord{\cdot},\mathord{\cdot}\>\in o$.

We haven't found a precise reference for the following proposition, we will hence provide a proof. See Fig.~\ref{figure:paralelo}.

\begin{prop}\label{cjtoparalelogeneral} Given $\t\en\Pi$ there exist $c>1$ and $c'>0$, only depending on $\t$ and the group $\PSL(d,\R)$, such that if $(E,F)\in\posgen_\t$, then 
$$
\frac{-1}c\log\sin\min_{\alpha_p\in\t}\angle_o(E_p,F_{d-p}) \leq d_{X_d}(o,P(F,E)) \leq c'-c\log\sin\min_{\alpha_p\in\t}\angle_o(E_p,F_{d-p}).
$$

\end{prop}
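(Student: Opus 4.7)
Following the natural split of the claim into two inequalities, I would prove them separately: reducing the lower bound to the singleton-type case $\t = \{\alpha_p\}$ by monotonicity, and proving the upper bound by an explicit construction using oblique projections. For the lower bound, since requiring $\zz_\t(\p) = E$ and $\wk\zz_\t(\p) = F$ is stronger as $\t$ grows, one has $P(F,E) \subset P(F_{d-p}, E_p)$ for each $\alpha_p \in \t$, so it suffices to establish $d_{X_d}(o, P(F_{d-p}, E_p)) \ge c^{-1} (-\log \sin \angle_o(E_p, F_{d-p}))$ for each $\alpha_p \in \t$ and then take the maximum over $\alpha_p \in \t$, which produces the minimum-angle expression of the statement. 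Given any $h\cdot o \in P(F_{d-p}, E_p)$, the subspaces $h^{-1}(E_p)$ and $h^{-1}(F_{d-p})$ are $o$-orthogonal and complementary. For orthonormal $v\in h^{-1}(E_p)$, $w\in h^{-1}(F_{d-p})$, one has $\|v\wedge w\|_o = 1$ and
$$\sin\angle_o(hv, hw) \,=\, \frac{\|(\Wedge^2 h)(v\wedge w)\|_o}{\|hv\|_o\,\|hw\|_o} \,\ge\, \frac{\sigma^o_{d-1}(h)\,\sigma^o_d(h)}{\sigma^o_1(h)^2}.$$
Varying $v,w$ so that $hv,hw$ cover all directions in $E_p$ and $F_{d-p}$, we get $\sin \angle_o(E_p, F_{d-p}) \ge \sigma^o_{d-1}(h)\sigma^o_d(h)/\sigma^o_1(h)^2$. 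Taking $-\log$ and applying Lemma~\ref{estimaciones} to the functional $\varphi(a) \coloneqq 2a_1 - a_{d-1} - a_d$, which is readily seen to be positive on $\frak a^+ \setminus \{0\}$, gives the required lower bound.

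For the upper bound, write $\t = \{\alpha_{p_1}<\cdots<\alpha_{p_k}\}$ and set $p_0 \coloneqq 0$, $p_{k+1} \coloneqq d$. A dimension count using general position shows that the subspaces $V_i \coloneqq E_{p_i} \cap F_{d-p_{i-1}}$ have $\dim V_i = p_i - p_{i-1}$ and give a direct sum decomposition $\R^d = V_1 \oplus \cdots \oplus V_{k+1}$ satisfying $E_{p_i} = V_1 \oplus \cdots \oplus V_i$ and $F_{d-p_i} = V_{i+1} \oplus \cdots \oplus V_{k+1}$. From the parametrized-flat description of parallel sets, $P(F,E)$ then coincides with the set of inner product classes for which the $V_i$'s are pairwise orthogonal. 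Choose an $o$-orthonormal basis $(e_j)_{j=1}^d$ adapted to $E$, in the sense that $e_1,\ldots,e_{p_i}$ spans $E_{p_i}$ for every $i$. For $j\in(p_{i-1},p_i]$ with $i\ge 2$, let $v_j$ be the image of $e_j$ under the oblique projection $\R^d \to F_{d-p_{i-1}}$ along $E_{p_{i-1}}$ (well defined thanks to general position), and set $v_j \coloneqq e_j$ for $j\le p_1$; since $e_j \in E_{p_i}$, one checks $v_j \in V_i$ in each case. The linear map $T$ defined by $T(e_j) \coloneqq v_j$ is upper triangular with $1$'s on the diagonal in the basis $(e_j)$, so $T \in \SL(d,\R)$, and $x \coloneqq T\cdot o$ makes $\{v_j\}$ orthonormal and thus the $V_i$'s $x$-orthogonal, so $x \in P(F,E)$.

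It remains to estimate $d_{X_d}(o, x) = |a(T)|_o$. The operator norm of the oblique projection onto $E_{p_{i-1}}$ along $F_{d-p_{i-1}}$ equals $1/\sin\angle_o(E_{p_{i-1}}, F_{d-p_{i-1}})$, and since $\alpha_{p_{i-1}} \in \t$ for every $i \ge 2$, this is bounded by $1/\sin\min_{\alpha_p \in \t}\angle_o(E_p, F_{d-p})$; hence each column of $T - \id$ has $o$-norm bounded by that quantity. Writing $T = \id + N$, the matrix $N$ is strictly upper triangular and thus nilpotent with $N^d = 0$, so $T^{-1} = \sum_{m=0}^{d-1}(-N)^m$, and both $\|T\|_o$ and $\|T^{-1}\|_o$ are bounded by a power of $1/\sin\min\angle_o$ of degree at most $d$. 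Applying Lemma~\ref{estimaciones} to the functional $a \mapsto a_1 - a_d$ then yields $d_{X_d}(o,x) \lesssim -\log \sin\min_{\alpha_p\in\t}\angle_o(E_p,F_{d-p}) + C$, as required. The main technical subtlety lies in the nilpotency-based control of $\|T^{-1}\|$, which contributes an extra dimensional factor to the constant $c$ but preserves the logarithmic form of the estimate.
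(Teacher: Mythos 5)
Your proof is correct and follows the same broad strategy as the paper's: lower-bound the distance via a singular-value inequality valid for arbitrary elements of the parallel set, upper-bound it by exhibiting an explicit element of $P(F,E)$ at controlled distance from $o$, and in both directions reduce to Lemma~\ref{estimaciones}. The technical implementations diverge at two points worth recording. For the lower bound you first pass to the singleton parallel sets $P(F_{d-p},E_p)\supset P(F,E)$ and estimate via $\Wedge^2$, which produces the functional $2a_1 - a_{d-1} - a_d$; the paper works directly with $G_\cE(E,F)$ and a Pythagoras-type estimate giving $a_1 - a_d$. Both functionals are positive on $\frak a^+\setminus\{0\}$ and so both feed correctly into Lemma~\ref{estimaciones}. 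For the upper bound you build a unipotent upper-triangular $T$ via oblique projections and control $\|T^{-1}\|$ by nilpotency, applying Lemma~\ref{estimaciones} with $a_1-a_d$; the paper instead defines $g$ by prescribing $g^{-1}|_{H_i}$ to be the \emph{orthogonal} projection onto $H_i^0$, which only requires a bound on $\|g\|$ and lets one use the functional $a_1$ alone. Your construction is slightly more laborious but has the merit of making membership in $\SL(d,\R)$ manifest from the triangular shape, whereas the paper's $g$ must be renormalized to fix the determinant. Either way the decomposition $\R^d = \bigoplus_i V_i$ with $V_i = E_{p_i}\cap F_{d-p_{i-1}}$ and the identification of $P(F,E)$ with the homothety classes orthogonalizing the $V_i$ are the structural heart of the matter, and appear in both arguments. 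Nothing needs to be added to your proposal.
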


\begin{figure}[hbt]
	\centering
	\begin{tikzpicture}[scale = 0.75]
	\draw [thick] circle [radius = 3];
	\draw [fill] circle [radius = 0.03];	
	\node [below] at (0,0) {$o$};
	\node [above] at (0,3) {$E\in\scr F_\t$};
	\draw [fill] (0,3) circle [radius = 0.03];
	\node [right] at (2.846,0.948) {$F\in\scr F_{\ii\t}$};	
	\draw [fill] (2.846,0.948) circle [radius = 0.03];	
	\draw [thick] (2.846,0.948) to [out = 197,in = 270] (0,3);
	\draw [thick] (0,0) -- (0.89,1.37);
	\draw [fill] (0.882,1.35) circle [radius = 0.03];
	\node [right] at (3,-2) {${\displaystyle d_{X_d}(o,P(F,E)) \asymp -\log\min_{\alpha_p\in\t}\{\sin\angle_o(E_p,F_{d-p})\}}$};
	\draw [->] (3,-2) to [out = 100,in = -10] (0.45,0.60);
	\draw [->] (3.5,3) to [out = 180,in = 90] (1.5,1.2);
	\node [right] at (3.5,3) {$P(F,E)$};
	\end{tikzpicture}
	\caption{The statement of Proposition \ref{cjtoparalelogeneral}}\label{figure:paralelo}
\end{figure}
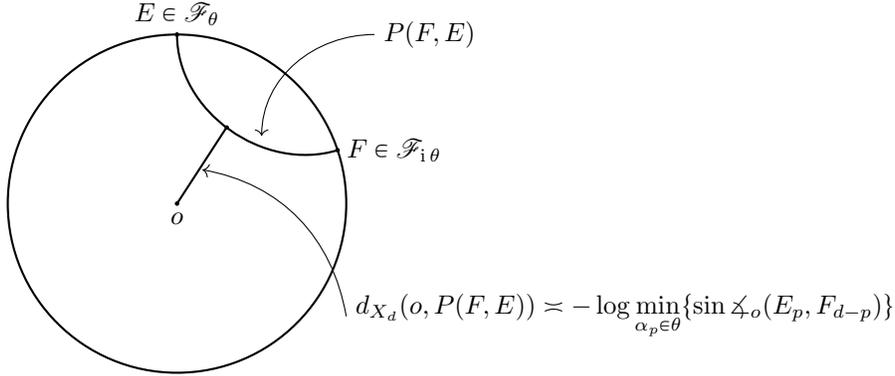

\begin{proof}
Without loss of generality, we may assume that for all $\alpha_p\in\t$ one has 
$$
F_{d-p} = u_{d-p}\oplus\cdots\oplus u_d.
$$ Denote by $G_\cE(E,F)$ the set of elements $g\in \PSL(d,\R)$ such that for all $\alpha_p\in\t$ one has $g(u_1\oplus\cdots\oplus u_p) = E_p$ and $g(F_{d-p}) = F_{d-p}$. Then, the parallel set 
$$
P(F,E) = \{g\cdot o:g\in G_{\cE}(E,F)\}.
$$

Let us prove the first inequality. Consider arbitrary $p$ with $\alpha_p \in \theta$ and arbitrary nonzero vectors $v \in E_p$ and $w \in F_{d-p}$  
Let $\tilde{w}$ be the orthogonal projection of $v$ on the line spanned by $w$. 
Consider arbitrary $g \in G_{\cE}(E,F)$.
Then $g^{-1} v$ and $g^{-1} w$ are orthogonal, which allows us to estimate:
$$
\frac{1}{\sin \angle_o(v,w)}
=   \frac{\|\tilde w\|}{\|v - \tilde{w}\|}
\le \frac{\sigma_1^o(g) \|g^{-1}\tilde w\|}{\sigma_d^o(g) \|g^{-1}v - g^{-1}\tilde{w}\|} 
\le \frac{\sigma_1^o(g)}{\sigma_d^o(g)} \, .
$$
By Lemma~\ref{estimaciones}, the logarithm of the right-hand side is comparable to $d_{X_d}(o,g \cdot o)$. 
So we obtain the first inequality in the proposition.

Next, let us prove first the second inequality.
Write the set $\{p \st \alpha_p \in \t\} \cup \{0,d\}$ as $\{0 = p_0 < p_1<p_2<\dots<p_k = d\}$. Denote by 
$$
H_i^0 = u_{p_{i-1}}\oplus\cdots\oplus u_{p_i}.
$$
Denote by $H_i = E_{p_i}\cap F_{d-p_{i+1}}$. For all $i\in\{1,\ldots,k\}$ one has 
$$
\dim\,  H_1^0\oplus\cdots\oplus H_i^0 = \dim\, H_1\oplus\cdots\oplus H_i.
$$

If for $g\in\PSL(d,\R)$ one has $g(H^0_i) = H_i$ then $g\in G_{\cE}(E,F)$ and hence $g\cdot o\in P(F,E)$. We will define a suited such $g$ and estimate its operator norm for the class~$o$.

Consider $g$ as the unique element of $\PSL(d,\R)$ such that, for each $i = 1,\dots,k$,
the restriction of $g^{-1}$ to $H_i$ coincides with the orthogonal projection on $H^0_i$. 

We proceed to estimate $\|g\|$. Given $v \in \R^d$, write it as $v = v_1 + \cdots + v_k$ with $v_i \in H_i^0$. Then 
$$
\|g v\| \le \sum_{i = 1}^k \|g v_i\| = \sum_{i = 1}^k \frac{\|v_i\|}{\sin \angle_o (g(v_i), H_1^0 \oplus \cdots \oplus H_{i-1}^0)} \, .
$$
For each $i$, orthogonality yields $\|v_i\| \le \|v\|$;
moreover, \begin{multline*} \angle_o (g(v_i), H_1^0 \oplus \cdots \oplus H_{i-1}^0) \\ \ge \angle_o (H_i \oplus \cdots \oplus H_d, H_1^0 \oplus \cdots \oplus H_{i-1}^0) = \angle_o(E_{p_i},F_{d-p_i}) \\ \ge 
\min_{\alpha_p\in\t}\angle_o (E_p,F_{d-p}) \, .\end{multline*} Therefore we obtain 
$$
\|g \| \le \frac{d}{\displaystyle \sin\min_{\alpha_p\in\t} \angle_o(E_p,F_{d-p})} \, .
$$
Taking $\log$, Lemma \ref{estimaciones} yields the second inequality. 
\end{proof}

\subsection{Regular quasi-geodesics and the Morse Lemma of Kapovich--Leeb--Porti} Let $I\en\Z$ be an interval and let $\mu,c$ be positive numbers. A \emph{$(\mu,c)$-quasi-geodesic} is a map $x:I\to X_d$ (also denoted by $\{x_n\}_{n\in I}$) such that for all $n,m\in I$ one has 
$$
\frac 1\mu|n-m|-c\leq d_{X_d}(x_n,x_m)\leq \mu|n-m|+c.
$$

Let $\scr C\en\frak a^+$ be a closed cone. Following \cite{KLP2} we will say that a quasi-geodesic segment $\{x_n\}$ is $\scr C$-\emph{regular} if for all $n<m\in I$ one has $\aa(x_n,x_m)\in \scr C$. 
Denote by 
$$
\t_\scr C = \{\alpha\in\Pi:\ker\alpha\cap\scr C = \{0\}\}.
$$
We state the following version of the Morse Lemma \cite[Theorem 1.3]{KLP2}, specialized to the symmetric space of $\PSL(d,\R)$:


\begin{teo}[Kapovich--Leeb--Porti \cite{KLP2}]\label{teo:MorseSLd} Let $\mu$, $c$ be positive numbers and $\scr C\en\frak a^+$ a closed cone. Then there exists $C>0$ such that if $\{x_n\}_{n\in I}$ is a $\scr C$-regular $(\mu,c)$-quasi-geodesic segment, then \begin{itemize}\item[-] If $I$ is finite then $\{x_n\}$ is at distance at most $C$ from the diamond 
$$
\dd_{\t_\scr C}(x_{\min I},x_{\max I}).
$$
\item[-] If $I = \N$ then there exists $F\in\scr F_{\t_{\scr C}}$ such that $\{x_n\}$ is contained in a $C$-neighborhood from the Weyl cone $V(x_{\min I},F)$. \item[-] If $I = \Z$ then there exists $(E,F)\in\posgen_{\t_{\scr C}}$ such that $\{x_n\}$ is contained in a $C$-neighborhood from the union $V(z,E)\cup V(z,F)$ for some $z\in P(E,F)$ at uniform distance from $\{x_n\}$. \end{itemize}
\end{teo}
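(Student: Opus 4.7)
The plan is to reduce the statement to the dominated-sequence machinery of Section~\ref{s.DS}, by reading $\scr C$-regularity as a uniform gap condition on the Cartan projections of sub-products along the quasi-geodesic. Write $x_n = g_n \cdot o$ with $g_0 = \id$, set $A_n \coloneqq g_n^{-1} g_{n+1}$, and observe that $A_n A_{n+1} \cdots A_{m-1} = g_n^{-1} g_m$ telescopes for $m > n$. The bound $d_{X_d}(x_n, x_{n+1}) \le \mu + c$ puts $(A_n)$ in some compact set $\cD(K)$. Since $\sigma_{p+1}/\sigma_p(g_n^{-1} g_m) = e^{-\alpha_p(\aa(x_n, x_m))}$ and, by definition of $\t_\scr C$, each $\alpha_p \in \t_\scr C$ is strictly positive on $\scr C \setminus \{0\}$, combining the $\scr C$-regularity with the quasi-geodesic lower bound $|\aa(x_n,x_m)|_o \ge (m-n)/\mu - c$ yields constants $\mu_p, c_p > 0$ such that
\[
\frac{\sigma_{p+1}}{\sigma_p}\bigl(A_n A_{n+1} \cdots A_{m-1}\bigr) \le c_p \, e^{-\mu_p(m-n)} \quad \text{for every } \alpha_p \in \t_\scr C.
\]
After reversing indices to match the decreasing-product convention of \S\ref{ss.dom_sequences}, the resulting sequence lies in $\cD(K, p, \mu_p, c_p, I)$ for each such $p$.

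For $I = \Z$, Proposition~\ref{p.BG_sequences} then yields, for each $\alpha_p \in \t_\scr C$, transverse subspaces $E_p \in \Gr_p(\R^d)$ and $F_{d-p} \in \Gr_{d-p}(\R^d)$, obtained as uniform limits of $U^o_p$ and $S^o_{d-p}$ of the past and future products; organising these over $p$ gives a pair of partial flags $(E,F) \in \posgen_{\t_\scr C}$. For $I = \N$ only the forward flag $F$ survives, and for finite $I$ the flags are just $E \coloneqq U(x_{\min I}, x_{\max I})^{\t_\scr C}$ and $F \coloneqq S(x_{\min I}, x_{\max I})^{\ii \t_\scr C}$. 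The distance from each $x_k$ to the parallel set $P(F,E)$ is then controlled via $\PSL(d,\R)$-equivariance and Proposition~\ref{cjtoparalelogeneral}:
\[
d_{X_d}\bigl(x_k, P(F,E)\bigr) = d_{X_d}\bigl(o, P(g_k^{-1} F, g_k^{-1} E)\bigr),
\]
which by Proposition~\ref{cjtoparalelogeneral} is controlled by $-\log \sin\min_{\alpha_p \in \t_\scr C} \angle_o(g_k^{-1} E_p, g_k^{-1} F_{d-p})$. After the left-translation by $g_k^{-1}$, the subspaces $g_k^{-1} E_p$ and $g_k^{-1} F_{d-p}$ coincide, up to a uniformly controlled perturbation supplied by Lemmas~\ref{l.nochangeright} and \ref{l.domination_implies_slow_change}, with $U_p$ and $S_{d-p}$ of the past and future sub-products of the cocycle shifted so that $k$ becomes the new origin. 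Lemma~\ref{l.seq_splitting} then supplies the required uniform bound $\angle_o \ge \delta > 0$, yielding $d_{X_d}(x_k, P(F,E)) \le C$ for a constant $C = C(\mu, c, \scr C)$.

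In the finite case, this uniform neighborhood bound, combined with the fact that the endpoints $x_{\min I}, x_{\max I}$ sit inside (or within bounded distance of) the corresponding diamond $\dd_{\t_\scr C}(x_{\min I}, x_{\max I})$, gives the first conclusion. For $I = \N$ and $I = \Z$, let $z$ be the nearest-point projection of $x_{\min I}$ (resp.\ $x_0$) to $P(F,E)$; the orthogonal projections of the $x_n$ onto $P(F,E)$ then form a $\scr C$-regular (sub)trajectory in the parallel set up to bounded error, and by the product structure of $P(F,E)$ as a flat direction along the relevant sub-$\frak a$ together with a lower-rank symmetric factor, this projected trajectory must remain in a bounded neighborhood of the Weyl cone $V(z, F)$ (and of $V(z, E)$ on the backward side, in the bi-infinite case).

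The main obstacle will be the angle identification in the middle paragraph: one has to carefully track how the limit flags $E, F$ transform under the left-action by $g_k^{-1}$ and identify the result with the $o$-angle between the past-$U_p$ and future-$S_{d-p}$ spaces of the shifted cocycle, so that Lemma~\ref{l.seq_splitting} applies. This is essentially the same linear-algebraic bookkeeping that underlies the transversality Lemma~\ref{l.transv} in Section~\ref{s.dominationimplieshyp}. The subsidiary mismatch between the increasing-product ordering naturally arising from telescoping the $A_n$'s and the decreasing-product convention of Section~\ref{s.DS} is harmless, being resolved by reversing the sequence.
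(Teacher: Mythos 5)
Your reduction is exactly the paper's: translate $\scr C$-regularity into membership of the increment cocycle in $\cD(K,p,\mu_p,c_p,I)$ for each $p$ with $\alpha_p\in\t_{\scr C}$, apply Lemmas~\ref{l.seq_splitting} and~\ref{l.seq_convergence} to obtain a uniform $o$-angle bound between past-$U_p$ and future-$S_{d-p}$ flags, and combine Proposition~\ref{cjtoparalelogeneral} with $\PSL(d,\R)$-equivariance to bound $d_{X_d}(x_k, P(F,E))$ uniformly in $k$. The paper sets $g_n := h_{n+1}^{-1}h_n$, the inverses of your $A_n$, so the decreasing-product convention of Section~\ref{s.DS} applies without reversal; and for $I=\Z$ it identifies $h_k^{-1}F = E^{\cu}(\{g_{i-k}\}_{i\in\Z})$ \emph{exactly} via shift-equivariance of the dominated splitting, which is cleaner than the approximate route you sketch through Lemmas~\ref{l.nochangeright} and~\ref{l.domination_implies_slow_change} and avoids extra error terms. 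The one genuine divergence is the final step, from ``$C$-close to $P(F,E)$'' to ``$C$-close to the diamond or Weyl cone'': you propose to project the trajectory onto $P(F,E)$ and argue from its product structure, whereas the paper directly controls the flag distances $d_{x_k}\bigl(U(x_k,x_m)^{\t_{\scr C}}, U(o,x_m)^{\t_{\scr C}}\bigr)$ via Lemmas~\ref{l.domination_implies_slow_change}, \ref{l.dominationattractor}, and~\ref{l.Rafael} and passes to the limit. Your route is plausible but would require an additional argument that the orthogonal projection preserves $\scr C$-regularity up to bounded error; the paper's flag comparison sidesteps this and stays entirely within the appendix's linear-algebraic estimates.
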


\begin{proof} We can assume that $0\in I$ and that $x_0 = o$. Consider then a sequence $\{h_n\}_{n\in I}\en\PSL(d,\R)$ such that $h_n\cdot o = x_n$. Since $\{x_n\}$ is a quasi-geodesic, equation \ref{distvect} implies 
$$
|a(h_{n+1}^{-1}h_n)|_o = |\aa(h_n\cdot o,h_{n+1}\cdot o)|_o = d_{X_d}(x_n,x_{n+1})\leq \mu+c.
$$
If we denote by $g_n = h_{n+1}^{-1}h_n$, then the last equation implies that $\{g_n\}$ lies in a compact subset of $\PGL(d,\R)$. Moreover, if $m\geq n$ then 
$$
a(g_m\cdots g_n) = a(h_{m+1}^{-1}h_n) = \aa(x_{m+1},x_n)\,\in \ii \scr C.
$$

One has the following:\begin{itemize}\item[1.] The sequence $\{g_n\}_{n\in I}$ is $\ii\alpha$-dominated for all $\alpha\in\t_{\scr C}$: indeed, since $\scr C$ is closed and does not intersect $\ker\alpha-\{0\}$, one has $\ii\scr C\cap\ker\ii\alpha = \{0\}$ and hence there exists $\delta>0$ such that for all $a\in\,\ii \scr C-\{0\}$ and $\alpha\in\t_{\scr C}$ one has 
$$
\ii \alpha(a)> \delta | a|_o.
$$
Thus, one concludes that 
$$
\alpha(a(g_m\cdots g_n)) = \alpha(\aa(x_{m+1},x_n))> \delta |a(x_{m+1},x_n)|_o>(\delta/\mu)|n-m|-\delta c.
$$
This is to say, the sequence $\{g_n\}_{n\in I}$ belongs to the space 
$$
\cD(\mu+c,d-p,\,\delta/\mu,e^{-c\delta},I)
$$
for all $p$ such that $\alpha_p\in\t_{\scr C}$, using the operator norm $\|\,\|_o$ associated to $o$.
\item[2.] Subsection \ref{ogo} implies that the (partial) flag at infinity associated to the Weyl cone $V_{\t_\scr C}(o,x_{m+1})$ is: \begin{itemize}\item[-] $U(o,x_{m+1})^{\t_\scr C} = $  
$$
\{U^o_\alpha(h_{m+1})\}_{\alpha\in\t_{\scr C}} = \{U^o_\alpha(g_0^{-1}\cdots g_m^{-1})\}_{\alpha\in\t_{\scr C}} = \{S^o_{\ii\alpha}(g_m\dots g_0)\}_{\alpha\in\t_\scr C}
$$
for $m\geq0$, \item[-] $U(o,x_{-m})^{\ii\t_\scr C} = \{U^o_{\ii\alpha}(h_{-m})\}_{\alpha\in\t_{\scr C}} = \{U^o_{\ii\alpha}(g_0\cdots g_{-m})\}_{\alpha\in\t_{\scr C}}$ for $m\geq0$. \end{itemize}\end{itemize}

Consider $\ell_1$ given simultaneously by Lemmas \ref{l.seq_splitting} and \ref{l.seq_convergence} for all $p$ such that $\alpha_p\in\t_{\scr C}$ and constants $\mu+c,\, \delta/\mu$, and $e^{-c\delta}$. Assume $[-\ell_1,\ell_1]\en I$ (i.e.\ $I$ is long enough). Item~2 and Lemma \ref{l.seq_splitting} imply the existence of $\delta_0$ such that for all $m$ and $-n$ in $I$ with $m,n\geq \ell_1$ one has 
$$
\angle_o(U(o,x_m)^{\t_\scr C},U(o,x_{-n})^{\ii\t_\scr C})>\delta_0.
$$
Moreover, by Lemma \ref{l.seq_convergence} one has $\angle_o(U(o,x_{\ell_1})^{\t_\scr C},U(o,x_m)^{\t_\scr C})<\eps$ and the same occurs with $U(o,x_{-\ell_1})^{\ii\t_\scr C}$ and $U(o,x_{-n})^{\ii\t_\scr C}.$

Proposition \ref{cjtoparalelogeneral} implies thus that for all $m,-n\in I$ with $n,m\geq\ell_1$ the distance between $o$ and the parallel set $P(U(o,x_{m})^{\t_\scr C},U(o,x_{-n})^{\ii \t_\scr C})$ is bounded above by a number $C$, depending on $\mu,c$, the cone $\scr C$ and \emph{a priori} the point $o$, but independent of the quasi-geodesic through $o.$

Since $X_d$ is $\PSL(d,\R)$-homogeneous and this action is by isometries, one concludes that for any $k$ such that $[k-\ell_1,k+\ell_1]\en I$, one has \begin{equation}\label{menorC}d_{X_d}(x_k,P(U(x_k,x_{m})^{\t_\scr C},U(x_k,x_{-n})^{\ii \t_\scr C}))<C,\end{equation}  provided $m\geq k+\ell_1$ and $-n\leq k-\ell_1$.

\begin{figure}[hbt]
	\centering
	\begin{tikzpicture}[scale = .75]
	\draw [thick] circle [radius = 3];
	\draw [fill] circle [radius = 0.03];	
	\node [left] at (0,0) {$x_0 = o$};
	\node [above right] at (0.5,2.958) {$U(o,x_m) = S^o(g_{m-1}\cdots g_0)$}; 
	\draw [fill] (0.5,2.958) circle [radius = 0.03]; 
	\draw [thick] (2,-2.236) to [out = 130,in = 260] (0.5,2.958); 
	\draw [thick] (0,0) -- (0.5,2.958);
	\draw [fill] (0.35,2.1) circle [radius = 0.03]; 
	\node[left] at (0.35,2.1) {$x_m$};
	\draw [fill] (0.55,2.75) circle [radius = 0.02]; 
	\draw [fill] (0,1.7) circle [radius = 0.03]; 
	\node[left] at (0,1.7) {$x_{m-1}$}; 
	\draw [fill] (-0.2,1.3) circle [radius = 0.02];
	\node [right] at (2,-2.236) {$U(o,x_{-n}) = U^o(g_0\cdots g_{-n})$};	
	\draw [fill] (2,-2.236) circle [radius = 0.03];	
	\draw [thick] (0,0) -- (2,-2.236);
	\draw [fill] (1.7,-2) circle [radius = 0.02]; 
	\draw [fill] (0.55,2.75) circle [radius = 0.03]; 
	\draw [fill] (1.2,-1.341) circle [radius = 0.03];
	\node[left] at (1.2,-1.341) {$x_{-n}$}; 
	\draw (0,0) -- (.6,.18); 
	\node [right] at (3.5,0) {$\leq c'-c\log\min\{\sin\angle_o(U^o(g_0\cdots g_{-n}),S^o(g_{m-1}\cdots g_0))\}$};
	\draw [->] (3.5,0) to [out = 140,in = 60] (.3,.18);
	\end{tikzpicture}
	\caption{The flag at infinity associated to the Weyl cone $V_{\t_\scr C}(o,x_m)$ (resp.\ $V_{\ii\t_\scr C}(o,x_m)$) corresponds to the $S^o$ flag of a dominated sequence for $m\geq0$, resp.\ to the $U^o$ flag when $m\leq0$.}\label{Morse}
\end{figure}
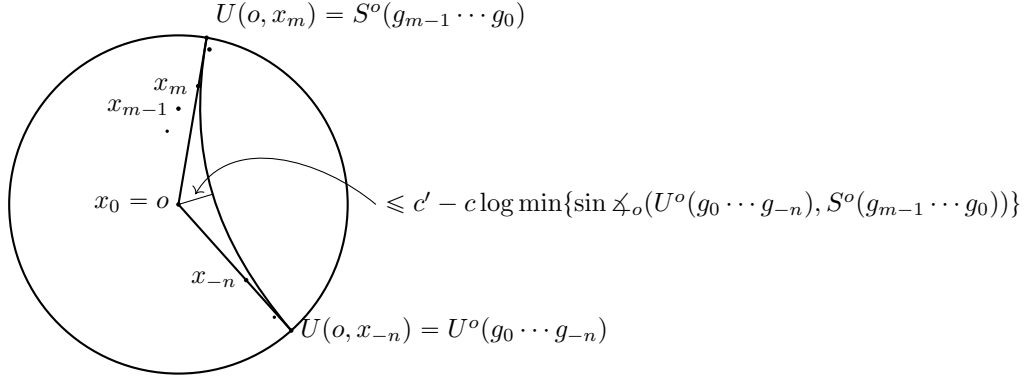

We now split between the three cases of the statement, in increasing difficulty. 

{\bf If $I=\Z,$} denote by $$F=E^{cu}(\{g_i\})=\lim_{n\to\infty} U(o,x_{-n})^{\ii\t_\scr C}$$ and by $$E=E^{cs}(\{g_i\})=\lim_{m\to\infty} U(o,x_{m})^{\t_\scr C}.$$ Recall that $E^{cu}\oplus E^{cs}$ is a dominated splitting for the cocycle $\vartheta$ (recall subsection \ref{ss.dom_sequences}), in particular it is equivariant. Hence, if $k>0$ then $$h_k^{-1} F=g_k\cdots g_0 E^{cu}(\{g_i\}_{i\in \Z})=E^{cu}(\{g_{i-k}\}_{i\in\Z})$$ and $h_k^{-1} E=g_k\cdots g_0 E^{cs}(\{g_{i-k}\}_{i\in\Z}).$ Consequently, the angle $$\angle_o(h_k^{-1} F,h_k^{-1} E)>\delta.$$ Thus $$d(h_k\cdot o,P(E,F))=d(o,P(h_k^{-1} F,h_k^{-1} E))<C.$$ The same argument works for $k<0$ and we have thus shown that $\{x_i\}_{i\in\Z}$ is at Hausdorff distance at most $C$ from the parallel set $P(F,E).$

{\bf If $I=\N$.} Let us assume for simplicity that $I=[-\ell_1,\infty)\cap\Z.$ The flag $U(o,x_m)^{\t_\scr C}$ is convergent as $m\to\infty$, to a flag $E\in\scr F_{\t_\scr C}.$ We have to show that, for all $k>0$ the $x_k$ angle $$\angle_{x_k}(U(x_k,o)^{\ii\t_\scr C},E)$$ is bounded below by a constant independent of $k.$

As before, we know that if $k\geq\ell_1 $ and $m\geq k+\ell_1$ then $$\angle_{x_k}(U(x_k,o)^{\ii\t_\scr C},U(x_k,x_m)^{\t_\scr C})>\delta.$$ For a homothety class $x\in X_d$ denote by $d_{x}$ the distance on $\scr F_{\t_\scr C}$ induced by an inner product in $x.$ We will show that $d_{x_k}\big(U(x_k,x_m)^{\t_\scr C},U(o,x_m)^{\t_\scr C}\big)$ goes to zero as $m\to\infty,$ and thus the angle $\angle_{x_k}( U(x_k,o)^{\ii\t_\scr C}, E)$ will be bounded below uniformly.

Recall that, by definition $U(x_k,x_m)^{\t_\scr C}=h_k U^o(h_k^{-1}h_m)^{\t_\scr C}$ hence \begin{multline*}d_{x_k}(U(x_k,x_m)^{\t_\scr C},U(o,x_m)^{\t_\scr C})=d_{h_k\cdot o}(h_k U^o(h_k^{-1}h_m)^{\t_\scr C},U^o(h_m)^{\t_\scr C})\\ =d_o(U^o(h_k^{-1}h_m)^{\t_\scr C},h_k^{-1}U^o(h_m)^{\t_\scr C}).\end{multline*} Since $k$ is fixed, Lemma \ref{l.domination_implies_slow_change}  implies that $d_o(U^o(h_k^{-1}h_m)^{\t_\scr C},h_k^{-1}U^o(h_m)^{\t_\scr C})\to0$ as $m\to\infty.$

Let us now show that given $\eps>0$ there exists $L\in\N$ (depending on the quasi-geodesic constants and $\scr C$) such that if $k\geq L$ one has $$d_{x_k}\big(U(x_k,x_{-\ell_1})^{\ii\t_\scr C},U(o,x_{-\ell_1})^{\ii\t_\scr C}\big)<\eps,$$ this will conclude the proof in this case since every $\{x_i\}_{i\in\N}$ will be at bounded distance from a Weyl cone pointing to $E$ in the parallel set $P(U(o,x_{-\ell_1})^{\ii\t_\scr C},E).$

One has \begin{multline*}d_{x_k}\big(U(x_k,x_{-\ell_1})^{\ii\t_\scr C},U(o,x_{-\ell_1})^{\ii\t_\scr C}\big)=d_{h_k\cdot o}\big(h_k U^o(h_k^{-1}h_{-\ell_1})^{\ii\t_\scr C},U^o(h_{-\ell_1})^{\ii\t_\scr C}\big)\\ =d_o\big(U^o(h_k^{-1}h_{-\ell_1})^{\ii\t_\scr C},h_k^{-1}U^o(h_{-\ell_1})^{\ii\t_\scr C}\big).\end{multline*} Since $$\angle_o(U^o(h_{-\ell_1})^{\ii\t_\scr C},S^o(h_k^{-1})^{\ii\t_\scr C})=\angle_o(U^o(h_{-\ell_1})^{\ii\t_\scr C},U^o(h_k)^{\t_\scr C})>\delta,$$ Lemmas~\ref{l.dominationattractor} and~\ref{l.Rafael} imply the desired conclusion.

{\bf If $I$ is finite,} notice that in the last paragraph we proved the following: given $\eps>0$ there exists $L\in\N$ such that for all $k\in I$ with $k-L\geq\min I$ one has $$d_{x_k}\big(U(x_k,x_{\min I})^{\ii\t_\scr C},U(x_{\min I +L},x_{\min I})^{\ii\t_\scr C}\big)<\eps.$$ Analogously, for every $k\in I$ with $k+L\leq\max I$ one has $$d_{x_k}\big(U(x_k,x_{\max I})^{\t_\scr C},U(x_{\max I -L},x_{\max I})^{\t_\scr C}\big)<\eps.$$

Hence, for every $k $ with $\min I+L<k<\max I-L$ one has \begin{multline*}\angle_{x_k}\big(U(x_{\min I +L},x_{\min I})^{\ii\t_\scr C},U(x_{\max I -L},x_{\max I})^{\t_\scr C}\big)\\ \geq \angle_{x_k}\big(U(x_k,x_{\min I})^{\ii\t_\scr C},U(x_k,x_{\max I})^{\t_\scr C}\big)-2\eps>\delta-2\eps.\end{multline*} Choosing $\eps$ such that $\delta-2\eps>0$ completes the proof.\end{proof}


\section{When the target group is a semi-simple Lie group}\label{s.general}

The purpose of this section is to extend the main results in the previous sections to the situation where the target group is a real-algebraic semi-simple Lie group without compact factors. 

We will begin by recalling the general structure theory of these groups, needed to define concepts such as domination, Anosov representation, regular quasi-geodesic \dots  
This basic structure theory can be found in \cite{James}, \cite{Eberlein}, \cite{Helgason}.

We will then explain how the representation theory of these groups is used to reduce the general case to the $\PSL(d,\R)$ case, for a well chosen $d$. Section \ref{s.morse} mimics, for $\PSL(d,\R)$, the general structure presented here.

The first main goal is subsection \ref{generalcase?}, where Theorem \ref{teo:dominationimplieshyp} and Proposition \ref{p.DominatedImpliesAnosov} are extended to the general setting. This general case is reduced to the actual statement of \ref{teo:dominationimplieshyp} and \ref{p.DominatedImpliesAnosov} using Tits representations.

The remainder of the section is devoted to a new proof of the Morse Lemma of \cite{KLP2} for symmetric spaces of non-compact type. In contrast with subsection \ref{generalcase?}, a simple reduction to the $\PSL(d,\R)$-case is not sufficient, one needs to have a finer control of distances to parallel sets when embedding symmetric spaces. This is achieved in Corollary \ref{encaje1}.

If $G$ is a Lie group with Lie algebra $\frak g$, the \emph{Killing form} of $\frak g$ is the symmetric bilinear form defined by 
$$
\kappa(v,w) = \traza(\ad_v\ad_w).
$$
The group $G$ is \emph{semi-simple} if $\kappa$ is non-degenerate. 

We will assume from now on that $G$ is a semi-simple, real-algebraic (i.e.\ defined by polynomial equations with real coefficients) Lie group, without compact factors (i.e. there is no normal subgroup $H$ of $G$ such that $G/H$ is compact).

\subsection{Root system}

A \emph{Cartan involution} of $\frak g$ is a Lie algebra morphism $o:\frak g\to\frak g$ with $o^2 = 1$ and such that the bilinear form $(v,w)\mapsto-\kappa(v,o(w))$ is positive definite. The fixed point set 
$$
\frak k^{o} = \{v\in\frak g: o v = v\}
$$
is the Lie algebra of a maximal compact subgroup $K^o$. Consider $\frak p^o = \{v\in\frak g: o v = -v\}$ and note that 
$$
\frak g = \frak k^o\oplus\frak p^o.
$$

A computation shows that $[\frak p^o,\frak p^o]\en \frak k^o$ and hence any subalgebra of $\frak p^o$ is necessarily abelian. Let $\frak a\subset\frak p^o$ be a maximal abelian subalgebra.

Denote by $\E$ the set of \emph{restricted roots} of $\frak a$ on $\frak g$. By definition, 
$$
\E = \{\alpha\in\frak a^*-\{0\}:\frak g_\alpha\neq 0\}
$$
where 
$$
\frak g_\alpha = \{w\in\frak g:[a,w] = \alpha(a)w\ \forall a\in\frak a\}.
$$
The closure of a connected component of 
$$
\frak a -\bigcup_{\alpha\in\E} \ker\alpha
$$
is called \emph{a closed Weyl chamber}. Fix a closed Weyl chamber $\frak a^+$ and let $\E^+ = \{\alpha\in\E:\alpha|\frak a^+\geq0\}$ be the set of \emph{positive roots} associated to $\frak a^+$. The set $\E^+$ contains a subset $\Pi$ that verifies \begin{itemize}\item[-] $\Pi$ is a basis of $\frak a$ as a vector space, \item[-] every element of $\E^+$ has non-negative coefficients in the basis $\Pi$.\end{itemize} The set $\Pi$ is called the \emph{set of simple (restricted) roots} determined by $\E^+$, the sets $\ker\alpha\cap\frak a^+$ for $\alpha\in\Pi$, are the \emph{walls} of the chamber $\frak a^+$. 

The \emph{Weyl group} $W$ of $\E$ is defined as the group generated by the orthogonal reflections on the subspaces $\{\ker\alpha:\alpha\in\E\}$. 

The reflections associated to elements of $\Pi$ span $W$. With respect to the word-length on this generating set, there exists a unique longest element in $W$, denoted by $u_0:\frak a\to \frak a$. This is the unique element in $W$ that sends $\frak a^+$ to $-\frak a^+$. The \emph{opposition involution} $\ii:\frak a\to\frak a$ is defined by $\ii = -u_0$. If we denote by $(\mathord{\cdot},\mathord{\cdot})$ the bilinear form on $\frak a^*$ dual to the Killing form, define 
$$
\<\chi,\psi\> = \frac{2(\chi,\psi) }{(\psi,\psi)}
$$
for $\chi,\psi\in\frak a^*,$ and let $\{\om_\alpha\}_{\alpha\in\Pi}$ be the \emph{dual basis} of $\Pi$, i.e.\ $\<\om_\alpha,\beta\> = \delta_{\alpha\beta}$. The linear form $\om_\alpha$ is \emph{the fundamental weight} associated to $\alpha$. Note that for every $\chi\in\frak a^*$ one has 
\begin{equation}\label{coeficientes}\chi = \sum_{\alpha\in\Pi}\<\chi,\alpha\>\,\om_\alpha.
\end{equation}

Denote by $a = a_G:G\to \frak a^+$ the \emph{Cartan projection} of $G$. By definition, for every $g\in G$ one has $g\in K^o\exp a(g)\,K^o$ and $a(g^{-1}) = \ii a(g)$.

\subsection{Parabolic subgroups}\label{parabolic.groups}
Denote by $M$ the centralizer of $\exp\frak a$ in $K$ and by $N = \exp \frak n^+$ where $\frak n^+ = \bigoplus_{\alpha\in\E^+}\frak g_\alpha$. The group $P_\Pi = M\exp\frak a\,N$ is called a \emph{minimal parabolic subgroup} and its Lie algebra is $\frak p_\Pi = \frak g_0\oplus\frak n^+$. A \emph{parabolic subgroup} of $G$ is a subgroup that contains a conjugate of $P_\Pi$. Two parabolic subgroups are \emph{opposite} if their intersection is a reductive group.\footnote{Recall that a Lie group is \emph{reductive} if its Lie algebra splits as a semi-simple algebra and an abelian algebra.}

To each subset $\t$ of $\Pi$ one associates two opposite parabolic subgroups of $G$, $P_\t$ and $\wk{P_\t}$, whose Lie algebras are, by definition, 
$$
\frak p_\t = \frak g_0 \oplus\bigoplus_{\alpha\in\E^+}\frak g_\alpha\oplus \bigoplus_{\alpha\in \<\Pi-\t\>}\frak g_{-\alpha},
$$
and 
$$
\wk{\frak p}_\t = \frak g_0 \oplus\bigoplus_{\alpha\in\E^+}\frak g_{-\alpha}\oplus \bigoplus_{\alpha\in \<\Pi-\t\>}\frak g_{\alpha},
$$
where $\<\t\>$ is the set of positive roots generated by $\t$. Every pair of opposite parabolic subgroups of $G$ is conjugate to $(P_\t,\wk{P}_\t)$ for a unique $\t$, and every opposite parabolic subgroup of $P_\t$ is conjugate to $P_{\ii\t}:$ the parabolic group associated to 
$$
\ii\t = \{\alpha\circ\ii:\alpha\in\t\}.
$$

The quotient space $\scr F = \scr F_\Pi = G/P_\Pi$ is called \emph{the flag space} of $G$ and if $\t\subset\Pi$ then $\scr F_\t = G/P_\t$ is called \emph{a partial flag space} of $G$. Note that if $\t\en\t'\en\Pi$ one has $P_{\t'}\en P_\t$ and there is hence a canonical projection $\scr F_{\t'}\to \scr F_\t$, denoted by $x\mapsto x^\t.$

Finally, denote by $\posgen_\t\subset \scr F_\t\times\scr F_{\ii\t}$ the space of pairs of opposite parabolic subgroups (of type $\t$), this is the unique open $G$-orbit on $\scr F_\t\times\scr F_{\ii\t}.$
\subsection{Representations of $G$}\label{representaciones}

Let $\L:G\to\PSL(V)$ be a finite dimensional rational\footnote{i.e.\ a rational map between algebraic varieties.} irreducible representation and denote by $\phi_\L:\frak g\to\frak{sl}(V)$ the Lie algebra homomorphism associated to $\L$. Then $\chi\in\frak a^*$ is a \emph{restricted weight} of $\Lambda$ if the vector space 
$$
V_\chi = \{v\in V:\phi_\L(a) v = \chi(a) v\ \forall a\in\frak a\}
$$
is non zero. Theorem 7.2 of Tits \cite{Tits} states that the set of weights has a unique maximal element with respect to the order $\chi\geq\psi$ if $\chi-\psi$ is positive on $\frak a^+$. This is called \emph{the highest weight} of $\L$ and denoted by $\chi_\L$.

Note that if $\chi$ is a restricted weight and $v\in V_\chi$ then, for $n\in\frak g_\alpha$ with $\alpha\in\E$ one has that $\phi_\L(n)v$ is an eigenvector of $\phi_\L(\frak a)$ of eigenvalue $\chi+\alpha$, \footnote{Indeed, this follows from $\phi_\L([a,n])v = \alpha(a)\phi_\L(n)v$.} unless 
$\phi_\L(n)v = 0$. Since for all $\beta\in\E^+$ one has $\chi_\L+\beta\geq\chi_\L$ and $\chi_\L$ is maximal, one concludes that $\chi_\L+\beta$ is not a weight, i.e.\ for all $n\in\frak g_\beta$ and $v\in V_{\chi_\L}$ one has $\phi_\L(n)v = 0$.

Let $\t_\L\en\Pi$ be the set of simple roots $\alpha$ such that $\chi_\L-\alpha$ is still a weight of $\L$. 

\begin{obs}\label{igual0} The subset $\t_\L$ is the smallest subset of simple roots such that the following holds. Consider $\alpha\in\E^+$, $n\in\frak g_{-\alpha}$ and $v\in V_{\chi_\L}$, then $\phi_\L(n)v = 0$ if and only if $\alpha\in\<\Pi-\t_\L\>$. Equivalently, the smallest parabolic subgroup $P$ of $G$ stabilizing $V_{\chi_\L}$ is of type $\t_\L$.

\end{obs}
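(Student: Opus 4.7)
The plan is to identify the algebraic stabilizer of $V_{\chi_\L}$ inside $G$ as a parabolic subgroup and read off from this both the equivalence stated in the remark and the identification with $P_{\t_\L}$.

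First I would establish the simple-root case: for $\beta \in \Pi$, one has $V_{\chi_\L - \beta} = \phi_\L(\frak g_{-\beta}) V_{\chi_\L}$. This follows from irreducibility combined with the PBW decomposition $U(\frak g) = U(\frak n^-) U(\frak g_0) U(\frak n^+)$: since $\frak n^+$ annihilates $V_{\chi_\L}$ (by maximality of $\chi_\L$) and $\frak g_0$ preserves each weight space, any vector in $V_{\chi_\L - \beta}$ arises by applying a weight-$(-\beta)$ element of $U(\frak n^-)$ to $V_{\chi_\L}$; but $-\beta$ admits no nontrivial decomposition as a sum of negative roots, so such an element already lies in $\frak g_{-\beta}$. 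In particular, $\t_\L$ coincides with $\{\beta \in \Pi \st \phi_\L(\frak g_{-\beta}) V_{\chi_\L} \neq 0\}$.

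Next, consider the algebraic subgroup $H = \mathrm{Stab}_G(V_{\chi_\L}) = \{g \in G \st g V_{\chi_\L} \subseteq V_{\chi_\L}\}$. Its Lie algebra contains $\frak g_0$ (which normalizes every weight space) and $\frak n^+$ (since $\phi_\L(\frak g_\alpha) V_{\chi_\L} \subseteq V_{\chi_\L + \alpha} = 0$ for $\alpha \in \E^+$); for $\alpha \in \E^+$, it contains $\frak g_{-\alpha}$ precisely when $\phi_\L(\frak g_{-\alpha}) V_{\chi_\L} \subseteq V_{\chi_\L} \cap V_{\chi_\L - \alpha} = 0$. Thus $H$ contains the minimal parabolic $P_\Pi$, and being a Zariski-closed subgroup of $G$ containing a Borel, it must itself be parabolic, hence of the form $P_\t$ for a unique $\t \subseteq \Pi$. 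Comparing its Lie algebra with the explicit description $\frak p_\t = \frak g_0 \oplus \frak n^+ \oplus \bigoplus_{\alpha \in \<\Pi - \t\>} \frak g_{-\alpha}$ yields
\[
\{\alpha \in \E^+ \st \phi_\L(\frak g_{-\alpha}) V_{\chi_\L} = 0\} = \<\Pi - \t\> \cap \E^+;
\]
intersecting with $\Pi$ and invoking the first paragraph gives $\Pi \setminus \t = \Pi \setminus \t_\L$, so $\t = \t_\L$.

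This simultaneously proves both directions of the equivalence and identifies $\mathrm{Stab}_G(V_{\chi_\L}) = P_{\t_\L}$. Since any parabolic preserving $V_{\chi_\L}$ is contained in this stabilizer, $P_{\t_\L}$ is the unique largest parabolic preserving $V_{\chi_\L}$, equivalently the one whose coset space $G/P$ has smallest dimension, which is the sense of \emph{smallest} intended in the remark. Minimality of $\t_\L$ itself is then automatic: for any $\t' \subsetneq \t_\L$, each $\beta \in \t_\L \setminus \t'$ belongs to $\<\Pi - \t'\>$ yet satisfies $\phi_\L(\frak g_{-\beta}) V_{\chi_\L} \neq 0$, so the equivalence with $\t'$ in place of $\t_\L$ would fail. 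The only nonelementary input is the standard structural result from the theory of real algebraic groups that any Zariski-closed subgroup containing a minimal parabolic is itself parabolic; everything else is routine root-space bookkeeping.
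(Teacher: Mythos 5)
The paper states this as a Remark with no proof attached, so you are supplying an argument rather than replicating one. Your stabilizer computation is correct and is likely close to what was intended: the Lie algebra $\frak h$ of $\mathrm{Stab}_G(V_{\chi_\L})$ is $\ad\frak a$-stable, contains $\frak p_\Pi=\frak g_0\oplus\frak n^+$, contains $\frak g_{-\alpha}$ iff $\phi_\L(\frak g_{-\alpha})V_{\chi_\L}=0$, and by Borel--Tits must be $\frak p_\t$ for some $\t\subseteq\Pi$; the PBW identity $V_{\chi_\L-\beta}=\phi_\L(\frak g_{-\beta})V_{\chi_\L}$ for simple $\beta$ then forces $\t=\t_\L$. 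Your reading of ``smallest'' is also fair.

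There is, however, a gap between what you prove and what the Remark (and its use in \S 7.12) actually asserts. Your argument yields the \emph{subspace} statement: for $\alpha\in\E^+\smallsetminus\<\Pi-\t_\L\>$ and $0\neq n\in\frak g_{-\alpha}$ one has $\phi_\L(n)V_{\chi_\L}\neq0$, since $\frak g_{-\alpha}\cap\frak h=0$. But the Remark, read literally --- and exactly as invoked in the lemma of \S 7.12 (``Remark~\ref{igual0} implies that if $v\in V_{\chi_\L}$ then $\phi_\L(n)v\neq0$'', applied to a single fixed $v$ and several distinct $n_{\gamma_i}$) --- is the \emph{pointwise} statement: $\phi_\L(n)v\neq0$ for every nonzero $n\in\frak g_{-\alpha}$ and every nonzero $v\in V_{\chi_\L}$. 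These differ once $\dim V_{\chi_\L}>1$, which occurs whenever $\frak g$ is not split: knowing that $\phi_\L(n)|_{V_{\chi_\L}}$ is nonzero does not make it injective. The missing ingredient is the standard $\frak{sl}_2$ step. Given $0\neq n\in\frak g_{-\alpha}$, use the Cartan involution to complete it to an $\frak{sl}_2$-triple $(e,h,n)$ with $e\in\frak g_\alpha$ and $h\in\frak a$ the coroot of $\alpha$. Since $\phi_\L(e)V_{\chi_\L}=0$ one gets $\phi_\L(e)\phi_\L(n)v=\phi_\L(h)v=\<\chi_\L,\alpha\>\,v$ for all $v\in V_{\chi_\L}$, and $\<\chi_\L,\alpha\>>0$: writing $\alpha=\sum_{\beta\in\Pi}k_\beta\beta$, dominance gives $(\chi_\L,\beta)\geq0$ with strict inequality precisely for $\beta\in\t_\L$, and $k_\gamma>0$ for some $\gamma\in\t_\L$ since $\alpha\notin\<\Pi-\t_\L\>$. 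Hence $\phi_\L(n)v\neq0$ for every nonzero $v$. Adding this short argument closes the gap and matches what the paper actually uses.
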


\begin{obs}\label{obs.modulo.irreducible}Since $V$ is irreducible, the $\phi_\L(\frak a)$-eigenspaces 
$$
\big(\prod_{\alpha\in\Pi}\phi_\L(n_\alpha^{i_\alpha})\big)V_{\chi_\L}
$$
span $V$ and thus any other weight of $\L$ is of the form 
\begin{equation}\label{pesos}
\chi_\L-\sum_{\alpha\in\Pi}k_\alpha \alpha,
\end{equation} 
where $k_\alpha\geq 0$ and $\sum_{\alpha\in\t_\L}k_\alpha\neq0$ (i.e.\ the numbers $k_\alpha$, for $\alpha\in\t_\L$, do not simultaneously vanish). 
\end{obs}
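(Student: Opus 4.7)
The plan is to invoke the standard machinery of highest-weight modules for semi-simple Lie algebras. Since $\L$ is irreducible, $V$ is generated as a $\frak g$-module by the highest weight space $V_{\chi_\L}$; equivalently, $V = U(\frak g)\cdot V_{\chi_\L}$, where $U(\cdot)$ denotes the universal enveloping algebra. Using the triangular decomposition $U(\frak g) = U(\frak n^-)\cdot U(\frak g_0)\cdot U(\frak n^+)$ together with the facts that $\frak n^+$ annihilates $V_{\chi_\L}$ (as $\chi_\L$ is the highest weight) and $\frak g_0$ preserves $V_{\chi_\L}$, I would conclude that $V = U(\frak n^-)\cdot V_{\chi_\L}$.

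Next, $\frak n^-$ is generated as a Lie algebra by the simple negative root spaces $\frak g_{-\alpha}$, $\alpha\in\Pi$, so the Poincar\'e--Birkhoff--Witt theorem ensures that $U(\frak n^-)$ is linearly spanned by ordered monomials in these vectors. Applying such a monomial of multi-degree $(i_\alpha)_{\alpha\in\Pi}$ to $V_{\chi_\L}$ lands in the $\phi_\L(\frak a)$-weight space of weight $\chi_\L-\sum_{\alpha}i_\alpha\,\alpha$. This simultaneously shows that the displayed eigenspaces span $V$ and that every weight of $\L$ has the form $\chi_\L-\sum_{\alpha}k_\alpha\,\alpha$ with $k_\alpha\geq 0$.

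The only delicate assertion is that $\sum_{\alpha\in\t_\L}k_\alpha\neq 0$ whenever the weight differs from $\chi_\L$. To establish it, I would introduce the semi-simple subalgebra $\frak l^{ss}\subset\frak g$ generated by $\frak g_{\pm\alpha}$ for $\alpha\in\Pi-\t_\L$. A vector $v\in V_{\chi_\L}$ is annihilated by $\phi_\L(\frak g_\alpha)$ for every $\alpha\in\E^+$ (because $\chi_\L$ is highest) and, by Remark~\ref{igual0}, by $\phi_\L(\frak g_{-\alpha})$ for every $\alpha\in\<\Pi-\t_\L\>$; together these span all of $\frak l^{ss}$, so $\phi_\L(\frak l^{ss})\,v=0$. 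Since $\phi_\L$ is a Lie algebra morphism this forces $\phi_\L(U(\frak l^{ss}))\cdot v = \R\,v\subset V_{\chi_\L}$. Therefore any monomial $\phi_\L(n_{\alpha_1})\cdots\phi_\L(n_{\alpha_k})\,v$ all of whose indices $\alpha_i$ lie in $\Pi-\t_\L$ belongs both to $V_{\chi_\L}$ and to $V_{\chi_\L-\sum\alpha_i}$; for $k\geq 1$ these weight spaces are disjoint, so the monomial vanishes.

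The argument is essentially routine once the Levi-annihilation observation in the last paragraph is in place. The main substantive input is Remark~\ref{igual0}, which furnishes annihilation not only for simple negative roots but for every $\alpha\in\<\Pi-\t_\L\>$---exactly what is needed for $\frak l^{ss}$ to kill $v$. No further obstacle is anticipated.
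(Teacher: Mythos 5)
Your overall strategy is the natural one and, as far as I can tell, the intended one (the paper states this remark without proof, taking it as standard highest-weight theory): use irreducibility to get $V = U(\frak n^-)\cdot V_{\chi_\L}$, observe that $\frak n^-$ is generated by the simple negative root spaces, and use Remark~\ref{igual0} to kill monomials whose letters all lie in $\Pi-\t_\L$. Two imprecisions are worth flagging.

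The appeal to Poincar\'e--Birkhoff--Witt in the second step is misplaced. PBW produces a basis of $U(\frak n^-)$ out of ordered monomials in a \emph{basis} of $\frak n^-$, and the simple negative root vectors are only a generating set, not a basis of $\frak n^-$ (for $d\ge3$ they miss all non-simple negative root spaces). What you actually need is that $\frak n^-$ being Lie-generated by $\bigcup_{\alpha\in\Pi}\frak g_{-\alpha}$ implies $U(\frak n^-)$ is generated as an associative algebra by the same elements, so it is spanned by \emph{arbitrary} words in them; PBW is irrelevant at this stage (it is used, legitimately, for the triangular decomposition $U(\frak g)=U(\frak n^-)U(\frak g_0)U(\frak n^+)$).

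The phrase ``together these span all of $\frak l^{ss}$'' is also literally false: the Cartan part of $\frak l^{ss}$, spanned by the coroots $H_\alpha$ for $\alpha\in\Pi-\t_\L$, is not contained in $\bigoplus_{\alpha\in\E^+}\frak g_\alpha\oplus\bigoplus_{\alpha\in\<\Pi-\t_\L\>}\frak g_{-\alpha}$. Your conclusion $\phi_\L(\frak l^{ss})V_{\chi_\L}=0$ is nevertheless correct, because the annihilator of $V_{\chi_\L}$ in $\frak g$ is a Lie subalgebra (if $X$ and $Y$ annihilate a subspace, so does $[X,Y]$) and it contains the generators $\frak g_{\pm\alpha}$, $\alpha\in\Pi-\t_\L$. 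But in fact the whole $\frak l^{ss}$ detour, and the full strength of Remark~\ref{igual0}, are unnecessary here: a monomial $\phi_\L(n_{\beta_1})\cdots\phi_\L(n_{\beta_k})v$ with every $\beta_i\in\Pi-\t_\L$ vanishes at once because the innermost factor $\phi_\L(n_{\beta_k})$ already kills $v\in V_{\chi_\L}$, by the very definition of $\t_\L$; combined with the spanning statement and the linear independence of simple roots, this forces the weight space of any $\chi_\L-\sum_\alpha k_\alpha\alpha$ with $\sum_{\alpha\in\t_\L}k_\alpha=0$ and $\sum_\alpha k_\alpha>0$ to vanish, which is \eqref{pesos}.
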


Consider an inner product on $V$ invariant under $\L K^o$ such that $\L \exp\frak a$ is symmetric, and denote by $\L o$ its homothety class. For the Euclidean norm $\|\ \|$ induced by this scalar product, one has 
\begin{equation}\label{eq:normayrep}\log\|\L g\| = \chi_\L(a(g)).
\end{equation} If $g = k(\exp a(g)) l$ with $k,l\in K^o$, then for all $v\in l^{-1}V_{\chi_\L}$ one has 
$$
\|\L g(v)\| = \|\L g\|\|v\|.
$$

If we denote by $\ell = \dim V_{\chi_\L}$ then Remark \ref{obs.modulo.irreducible} implies that for all $g\in G$ one has 
\begin{equation}\label{alpha}\alpha_\ell(a(\L g)) = a_\ell(\L g)-a_{\ell+1}(\L g) = \beta(a(g))
\end{equation} for some $\beta\in\t_\L$ (depending on $g$).

Denote by $W_{\chi_\L}$ the $\L(\exp\frak a)$-invariant complement of $V_{\chi_\L}$. The space $W_{\chi_\L}$ is $\L o$-orthogonal to $V_{\chi_\L}$. Its stabilizer (in $G$) is opposite to $P$, and hence conjugated to $P_{\ii\t_\L}$. Thus, one has a map of flag spaces 
\begin{equation}\label{maps}(\xi_\L,\xi^*_\L):\scr F_{\t_\L}^{(2)}\to \scr F_{\{\alpha_p\}}^{(2)}.
\end{equation} This is a proper embedding which is a homeomorphism onto its image.

\subsection{A set of representations defined by Tits}

One has the following proposition by Tits (see also Humphreys \cite[Chapter XI]{LAG}).

\begin{prop}[Tits \cite{Tits}]\label{prop:titss} For each $\alpha\in\Pi$ there exists a finite dimensional rational irreducible representation $\L_\alpha:G\to\PSL(V_\alpha)$, such that $\chi_{\L_\alpha}$ is an integer multiple of the fundamental weight $\om_\alpha$ and $\dim V_{\chi_{\L_\alpha}} = 1.$\end{prop}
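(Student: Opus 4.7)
The plan is to reduce the statement to the classical highest-weight classification of irreducible representations of complex semisimple Lie algebras, and then descend to the real algebraic group $G$. I would begin by complexifying: let $\frak g_\C$ be the complexification of $\frak g$, and choose a Cartan subalgebra $\frak h_\C$ of $\frak g_\C$ containing $\frak a_\C$ by adjoining a maximal abelian subalgebra $\frak t\en\frak k^o$ that centralizes $\frak a$. Every complex root restricts under the map $r\colon\frak h_\C^*\to\frak a^*$ either to an element of $\E$ or to $0$ (the latter being the ``compact'' roots supported on $\frak t_\C$); restricted simple roots in $\Pi$ lift to subsets of complex simple roots in a manner encoded by the Satake diagram of $G$.

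Next, for each $\alpha\in\Pi$, I would lift $\om_\alpha$ to a dominant integral weight $\tilde\om_\alpha$ of $\frak h_\C$ by extending it by $0$ on the compact Cartan part $\frak t_\C$. Cartan--Weyl theory then produces, for every $n\ge 1$, an irreducible complex $\frak g_\C$-module $W_n$ with highest weight $n\tilde\om_\alpha$, whose restriction to $\frak a$ equals $n\om_\alpha$. The descent step is to choose an integer $n_\alpha$ so that $W_{n_\alpha}$ (possibly after tensoring or summing with its Galois conjugates) is defined over $\R$, and therefore yields, via Chevalley's theorem on algebraic representations, a rational representation of the real algebraic group $G$. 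Tits's analysis of the action of the Satake involution on dominant weights produces such an $n_\alpha$; see \cite{Tits}. After this step, $\chi_{\L_\alpha}=n_\alpha\om_\alpha$ as required.

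For the one-dimensionality of $V_{\chi_{\L_\alpha}}$, I would argue as follows. By definition,
$$
V_{\chi_{\L_\alpha}} \; = \; \bigoplus_{\mu\,:\,\mu|_{\frak a}=n_\alpha\om_\alpha} W_{n_\alpha,\mu},
$$
where $W_{n_\alpha,\mu}$ denotes the complex $\frak h_\C$-weight space inside $W_{n_\alpha}$. Every weight has the form $\mu=n_\alpha\tilde\om_\alpha-\sum_\beta k_\beta\beta$ with $k_\beta\ge 0$ over complex simple roots $\beta$; since complex positive roots restrict either to elements of $\E^+$ or to $0$, the condition $\mu|_{\frak a}=n_\alpha\om_\alpha$ forces only ``compact'' complex simple roots ($\beta|_{\frak a}=0$) to contribute. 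The space $V_{\chi_{\L_\alpha}}$ is then a module over the centralizer $\frak m_\C$ of $\frak a_\C$ in $\frak g_\C$, and it contains the highest-weight vector $v$ of $W_{n_\alpha}$. Because $\tilde\om_\alpha$ vanishes on $\frak t_\C$, the $\frak m_\C$-highest weight of the submodule generated by $v$ is zero, so this submodule is the trivial one-dimensional $\frak m_\C$-module. A careful analysis of the $\frak m_\C$-isotypic decomposition of $W_{n_\alpha}$---which forms the technical content of Tits's argument---shows that no other $\frak m_\C$-irreducible factor of $V_{\chi_{\L_\alpha}}$ can occur, yielding $\dim V_{\chi_{\L_\alpha}}=1$.

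The main obstacle is the descent from $\C$ to $\R$: a $\frak g_\C$-module with highest weight $n\tilde\om_\alpha$ need not be defined over $\R$ for the given real form $G$, and one must track the action of the Satake involution on dominant weights and choose $n_\alpha$ so that the resulting representation (or a suitable combination of its Galois conjugates) is $G$-rational. A uniform treatment across all real forms requires a case-by-case analysis via Satake diagrams and forms the technical heart of Tits's original construction in \cite{Tits}.
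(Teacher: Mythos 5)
The paper does not prove this proposition: it is stated as a citation to Tits's article (with a further pointer to Humphreys~\cite[Ch.~XI]{LAG}) and used as a black box, so there is no in-paper argument to compare yours against. Your sketch is a reasonable reconstruction of the shape of Tits's construction (complexify, lift to a highest weight, descend via the Satake data, and analyze the highest restricted-weight space as an $\frak m_\C$-module), and you correctly flag the two technical hearts — rationality of the descent and the one-dimensionality of $V_{\chi_{\L_\alpha}}$ — as the content of Tits's paper.

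Two places are imprecise enough to be worth noting. First, the ``extend $\om_\alpha$ by zero on $\frak t_\C$'' prescription need not give an integral weight of $\frak g_\C$, so ``for every $n\ge 1$'' should read ``for suitable $n$''; the role of the multiplier $n_\alpha$ is already to achieve integrality and dominance of the lift, not merely to force descent over $\R$, and both aspects enter through the Satake diagram from the start rather than only at the Galois-conjugate step. Second, for the one-dimensionality it is not enough to observe that the highest-weight vector $v$ generates a trivial $\frak m_\C$-submodule: one must exclude other $\frak m_\C$-highest-weight vectors in $V_{\chi_{\L_\alpha}}$. The missing observation is that any such vector would have $\frak g_\C$-weight $n_\alpha\tilde\om_\alpha + \delta$ with $\delta|_{\frak a} = 0$, $\delta$ both $\frak m_\C$-dominant and a nonpositive integer combination of compact simple roots, which forces $\delta = 0$; since the highest weight of $W_{n_\alpha}$ has multiplicity one, this pins down $V_{\chi_{\L_\alpha}}$. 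You defer this correctly to Tits, but as written a reader might believe the trivial-submodule remark alone closes the argument.
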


Such a set of representations is not necessarily unique but will be fixed from now on, also we will say that $\L_\alpha$ is the \emph{Tits representation} of $G$ associated to $\alpha$. Observe that equation (\ref{alpha}) implies that for all $g\in G$ one has  
\begin{equation}\label{eq:relaciontits}
a_1(\L_\alpha g)-a_2(\L_\alpha g) = \alpha(a(g)).
\end{equation}

\subsection{Dominated representations: reduction to the \texorpdfstring{$\GL(d,\R)$}{GL(d,R)} case}\label{generalcase?} A representation $\rho:\G\to G$ is $\t$\emph{-dominated} if there exist positive constants $\mu$ and $c$ such that for all $\alpha\in\t$ and $\g\in\G$ one has 
$$
\alpha(a(\rho\g))\geq\mu|\g|-c.
$$

Assume that $\rho$ is a $\t$-dominated representation. Then for all $\alpha\in\t$, the representation $\L_\alpha\rho:\G\to\PSL(V_\alpha)$ is 1-dominated in the sense of Subsection~\ref{ss.dominatedrep}. Indeed, equation (\ref{eq:relaciontits}) implies that 
$$
\log\sigma_1(\L_\alpha\rho(\g))-\log\sigma_2(\L_\alpha\rho(\g)) = \alpha(a(\rho\g))\geq\mu|\g|-c,
$$
or equivalently 
$$
\frac{\sigma_2(\L_\alpha\rho(\g))}{\sigma_1(\L_\alpha\rho(\g))}\leq e^c e^{-\mu|\g|}.
$$
Thus, Theorem \ref{teo:dominationimplieshyp} implies that $\G$ is word-hyperbolic. Moreover, Proposition \ref{p.DominatedImpliesAnosov}, together with Guichard-Wienhard \cite[Proposition 4.3]{GuichardWienhard}, imply that $\rho$ is $P_\t$-Anosov\footnote{See \cite{Labourie-AnosovFlows} or \cite{GuichardWienhard} for a precise definition.}.

Thus, Theorem \ref{teo:dominationimplieshyp}, Proposition \ref{p.DominatedImpliesAnosov} together with \cite[Proposition 4.3]{GuichardWienhard} and Tits representations \ref{prop:titss} prove the following.

\begin{teo} Let $\rho:\G\to G$ be a $\t$-dominated representation, then $\G$ is word- hyperbolic and $\rho$ is $P_\t$-Anosov.
\end{teo}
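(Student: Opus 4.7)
The plan is to reduce to the already-established $\GL(d,\R)$ case via composition with the Tits representations of Proposition~\ref{prop:titss}. I would start by fixing an arbitrary simple root $\alpha \in \t$ and considering the composition $\L_\alpha \circ \rho \colon \G \to \PSL(V_\alpha)$. The defining property \eqref{eq:relaciontits} of the Tits representations translates the $\t$-domination hypothesis on $\rho$ directly into the singular-value gap estimate
\[
\frac{\sigma_2(\L_\alpha \rho(\g))}{\sigma_1(\L_\alpha \rho(\g))} \le e^c e^{-\mu|\g|}\, ,
\]
exhibiting $\L_\alpha \rho$ as a $1$-dominated representation into $\PSL(\dim V_\alpha, \R)$, in the sense of Subsection~\ref{ss.dominatedrep} and Remark~\ref{rem-pgl}.

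For the word-hyperbolicity of $\G$, I would then apply Theorem~\ref{teo:dominationimplieshyp} to $\L_\alpha \rho$ for a single choice of $\alpha \in \t$. Since $\t$ is nonempty (the domination condition is asserted for at least one simple root), this immediately yields the first conclusion, and no compatibility across different $\alpha$'s is needed at this stage.

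For the $P_\t$-Anosov conclusion, I would invoke Proposition~\ref{p.DominatedImpliesAnosov} separately for each $\alpha \in \t$ to obtain that every $\L_\alpha \rho$ is a projective Anosov representation, with equivariant transverse continuous limit maps $\xi_\alpha \colon \partial \G \to \P(V_\alpha)$ and $\theta_\alpha \colon \partial \G \to \Gr_{\dim V_\alpha -1}(V_\alpha)$. To assemble these individual pieces into an honest $P_\t$-Anosov structure for $\rho$ itself, I would appeal to the converse-reduction principle proved in \cite[Proposition~4.3]{GuichardWienhard}, which asserts that a representation $\rho \colon \G \to G$ is $P_\t$-Anosov iff $\L_\alpha \rho$ is projective Anosov for every $\alpha \in \t$; the geometric content is carried by the embeddings \eqref{maps} of $\scr F_\t$ into the relevant products via the highest-weight lines $V_{\chi_{\L_\alpha}}$.

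The only non-mechanical ingredient in this chain is the last reverse-reduction step: one must check that the individual limit maps $\xi_\alpha$, $\theta_\alpha$ coming from distinct Tits representations are compatible, in the sense that they arise as the coordinates of a single equivariant map $\partial \G \to \scr F_\t$ into the partial flag variety, and that the resulting invariant splitting is dominated for the geodesic flow of $\G$. I expect this compatibility to be the main obstacle, but since it is precisely the statement of the cited \cite[Proposition~4.3]{GuichardWienhard}, the overall proof becomes a direct concatenation of Proposition~\ref{prop:titss}, Theorem~\ref{teo:dominationimplieshyp}, Proposition~\ref{p.DominatedImpliesAnosov}, and that external result.
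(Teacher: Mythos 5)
Your proposal is correct and follows exactly the paper's argument: reduce via Tits representations (Proposition~\ref{prop:titss}) and equation~\eqref{eq:relaciontits} to $1$-dominated representations into $\PSL(V_\alpha)$, apply Theorem~\ref{teo:dominationimplieshyp} for word-hyperbolicity and Proposition~\ref{p.DominatedImpliesAnosov} for projective Anosov-ness of each $\L_\alpha\rho$, and then invoke \cite[Proposition~4.3]{GuichardWienhard} to assemble these into the $P_\t$-Anosov conclusion. Your remark that a single $\alpha\in\t$ suffices for word-hyperbolicity is a small clarification, but otherwise the chain of citations and the identification of where the compatibility issue is absorbed match the paper precisely.
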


\subsection{A Pl\"ucker representation}\label{plucker} Given $\t\subset \Pi$ one can construct a rational irreducible representation of $G$ such that $P_\t$ is the stabilizer of a line, and hence $\wk P_\t$ will be the stabilizer of a hyperplane.\footnote{There are actually infinitely many such representations.} More precisely, one has the following result from representation theory (see Guichard-Wienhard \cite[\S 4]{GuichardWienhard}).

\begin{prop}\label{prop:plucker} Given $\t\en\Pi$, there exists a finite dimensional rational irreducible representation $\plucker:G\to \PSL(V)$ and $\ell\in\P(V)$ such that 
$$
P_\t = \{g\in G:\plucker g(\ell) = \ell\}.
$$

\end{prop}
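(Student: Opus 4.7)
My plan is to realize $\plucker$ as the ``Cartan component'' of a tensor product of the Tits representations $\{\L_\alpha\}_{\alpha\in\t}$ supplied by Proposition~\ref{prop:titss}, with $\ell$ the resulting highest-weight line. Concretely, for each $\alpha\in\t$, Proposition~\ref{prop:titss} yields a Tits representation $\L_\alpha:G\to\PSL(V_\alpha)$ of highest weight $k_\alpha\om_\alpha$ (with $k_\alpha$ a positive integer) and one-dimensional highest-weight space spanned by some $v_\alpha$. After lifting to linear representations $\widetilde\L_\alpha:G\to\GL(V_\alpha)$ (passing to a finite cover of $G$ if necessary, which is harmless since we will ultimately descend to $\PSL$), I form the tensor product $\widetilde\L_0 := \bigotimes_{\alpha\in\t}\widetilde\L_\alpha$ on $W := \bigotimes_{\alpha\in\t} V_\alpha$, a rational representation in which $v := \bigotimes_\alpha v_\alpha$ is a weight vector of weight $\chi := \sum_{\alpha\in\t}k_\alpha\om_\alpha$. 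Since the weights of a tensor product are sums of weights of the factors, $\chi$ is the unique maximal weight of $W$ and its weight space is the line $\<v\>$. I then take $V\en W$ to be the smallest $G$-invariant subspace containing $v$: a standard highest-weight argument shows $V$ is irreducible, since any nonzero $G$-invariant subspace of $V$ must contain a highest-weight vector, which can only be a scalar multiple of $v$. Define $\plucker:G\to\PSL(V)$ to be the projectivization of $\widetilde\L_0|_V$, and let $\ell := [v]\in\P(V)$.

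To identify the stabilizer of $\ell$, I apply Remark~\ref{igual0} to $\plucker$: the smallest parabolic of $G$ stabilizing $V_\chi = \<v\>$ is $P_{\t_\plucker}$, where $\t_\plucker = \{\beta\in\Pi : \chi-\beta \text{ is a weight of }V\}$. A root vector $n_{-\beta}\in\frak g_{-\beta}$ acts on $v$ by the Leibniz rule as $n_{-\beta}\cdot v = \sum_i v_{\alpha_1}\otimes\cdots\otimes(n_{-\beta}\cdot v_{\alpha_i})\otimes\cdots\otimes v_{\alpha_r}$; by Remark~\ref{igual0} applied to each Tits representation $\L_{\alpha_i}$ (for which $\t_{\L_{\alpha_i}} = \{\alpha_i\}$), the factor $n_{-\beta}\cdot v_{\alpha_i}$ is nonzero precisely when $\beta = \alpha_i$. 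Hence $n_{-\beta}\cdot v \neq 0$ iff $\beta\in\t$, so $\t_\plucker = \t$ and the stabilizer of $\ell$ is $P_\t$, as required.

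The only non-routine ingredient is the ``Cartan component'' step -- that $V$ is irreducible with the claimed highest weight -- which is classical highest-weight theory and poses no real obstacle; the mild care needed in lifting the Tits representations to linear representations before tensoring, and descending back to $\PSL(V)$ after extracting the irreducible component, is immaterial since the final object acts on $\P(V)$ and the stabilizer computation takes place at the Lie-algebra level.
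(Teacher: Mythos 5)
Your proof is correct, but it takes a genuinely different route from the paper's. The paper constructs $\plucker$ from a single exterior power of the adjoint representation: setting $k = \dim\frak p_\t$, the composition $\L^k\Ad:G\to\PSL(\L^k\frak g)$ already has $P_\t$ as the stabilizer of the line $\ell = \L^k\frak p_\t$, and one cuts down to the $G$-subrepresentation generated by $\ell$ to secure irreducibility. You instead tensor together the Tits representations $\{\L_\alpha\}_{\alpha\in\t}$ from Proposition~\ref{prop:titss} and take the Cartan component through $v=\bigotimes_\alpha v_\alpha$. Both are classical devices and both appear in Guichard--Wienhard; what each buys is different. The paper's construction is terser and does not lean on Proposition~\ref{prop:titss} at all. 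Yours gives explicit control of the highest weight, namely $\chi=\sum_{\alpha\in\t}k_\alpha\om_\alpha$, whose pairing $\<\chi,\beta\>$ with a simple root $\beta$ is positive precisely when $\beta\in\t$; this makes the identification $\t_\plucker=\t$ via Remark~\ref{igual0} and the Leibniz rule very transparent, and it also shows directly that $\chi$ is the unique maximal weight of the tensor product with a one-dimensional weight space. One small precision worth making explicit: since restricted root spaces $\frak g_{-\beta}$ can have dimension greater than one, the statement ``$n_{-\beta}\cdot v_{\alpha_i}\neq 0$ precisely when $\beta=\alpha_i$'' should be read as asserting that \emph{some} $n_{-\beta}\in\frak g_{-\beta}$ acts nontrivially on $v_{\alpha_i}$ iff $\beta=\alpha_i$ (which is exactly what Remark~\ref{igual0} gives for the Tits representations); the Leibniz-rule terms for distinct $i$ are linearly independent because they differ in which tensor slot leaves the highest-weight line, so the conclusion $n_{-\beta}\cdot v\neq 0$ iff $\beta\in\t$, hence $\t_\plucker=\t$, goes through. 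The lifting/descent caveat you mention is handled as you indicate by Schur's lemma once $V$ is irreducible.
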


Such a representation can be defined as follows: if we denote by $k = \dim\frak p_\t$ then the composition $\L^k\Ad:G\to\PSL(\L^k\frak g)$ verifies the desired conditions, except (maybe) irreducibility, this is fixed by considering the vector space $V$ spanned by the $G$-orbit of the line $\ell = \L^k\frak p_\t$, 
$$
V = \<\L^k\Ad G\cdot \ell\>,
$$
and considering the restriction of $\L^k\Ad G$ to $V$.

A representation verifying the statement of Proposition \ref{prop:plucker} will be called a \emph{Pl\"ucker representation} of $G$ associated to $\t$. For such a representation one has a continuous equivariant map $(\xi,\xi^*):\posgen_\t\to\P^{(2)}(V)$ which is a homeomorphism onto its image.

\subsection{The symmetric space}

The \emph{symmetric space of $G$} is the space of Cartan involutions on $\frak g$, and is denoted by $X$. This a $G$-homogeneous space, and the stabilizer of $o\in X$ is the compact group $K^o$, whose lie algebra is $\frak k^o$. The tangent space ${\sf{T}}_o X$ is hence identified with $\frak p^o$. The $G$-invariant Riemannian metric on $X$ is the restriction of the Killing form $\kappa$ to $\frak p^o\times\frak p^o$.

If $d_X$ is the distance on $X$ induced by $\kappa|\frak p^o\times\frak p^o$, then the Euclidean norm $\|\ \|_o$ induced on $\frak a$ is invariant under the Weyl group, and for all $a\in\frak a$ one has $d_X(o,(\exp a)\cdot o) = \|a\|_o$. The Cartan decomposition of $G$ implies hence that for all $g\in G$ one has 
$$
d_X(o,g\cdot o) = \|a(g)\|_o.
$$

Consider the map $\aa:X\times X\to\frak a^+$ defined by $\aa(g\cdot o,h\cdot o) = a(g^{-1}h)$. Note that $\aa$ is $G$-invariant for the diagonal action of $G$ on $X\times X$, that 
\begin{equation}\label{distvectG}d_X(p,q) = \|\aa(p,q)\|_o
\end{equation} and that $\ii(\aa(p,q)) = \aa(q,p)$.

\subsection{Flats}

A \emph{parametrized flat} is a map $\p:\frak a\to X$ of the form $\p(v) = g\exp(v)\cdot o$ for some $g\in G$. Observe that $G$ acts transitively on the set of parametrized flats and that the stabilizer of $\p_0:v\mapsto \exp(v)\cdot o$ is the group $M$ of elements in $K$ commuting with $\exp(\frak a)$. We will hence identify the space of parametrized flats with $G/M$.

Two parametrized flats $\p,{\sf g}$ are \emph{equivalent} if the function $\frak a\to\R$, defined by 
$$
v\mapsto d_X(\p(v),{\sf g}(v)),
$$
is bounded on $\frak a^+$. The \emph{Furstenberg boundary} of $X$ is the space of equivalence classes of parametrized flats. It is a standard fact that this space is $G$-equivariantly identified with $\scr F = G/P_\Pi$, thus the Furstenberg boundary will also be denoted by $\scr F$. Denote by 
$$
\zz:\{\textrm{parametrized flats}\}\to\scr F
$$
the canonical projection and by $\wk\zz(\p) = \zz(\p\circ\ii)$. The pair $(\wk\zz(\p),\zz(\p))$ belongs to $\posgen$.

The following proposition is standard.

\begin{prop}[see {\cite[Chapter III]{bordeF}}]\item\begin{enumerate}\item A pair $(p,x)\in X\times\scr F$ determines a unique parametrized flat $\p$ such that $\p(0) = p$ and $\zz(\p) = x$. \item A point $(x,y)\in\posgen$ determines a unique maximal flat $\p_{xy}(\frak a)$ such that $\wk\zz(\p_{xy}) = x$ and $\zz(\p_{xy}) = y$. \end{enumerate}
\end{prop}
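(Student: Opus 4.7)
The plan is to exploit $G$-equivariance to reduce both parts to canonical basepoints, and then unwind the identifications $X = G/K^o$, $\scr F = G/P_\Pi$, and $\{\text{parametrized flats}\} = G/M$ using the Iwasawa decomposition and the fact that $P_\Pi \cap \wk P_\Pi = M\exp\frak a$.

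For part (1), I would first observe that since the set of parametrized flats is identified with $G/M$ (stabilizer of $\p_0$ being $M$), the datum of a parametrized flat $\p$ with $\p(0) = p$ is equivalent to choosing a coset $gM$ with $g \cdot o = p$ and $\zz(\p) = gP_\Pi$. Acting by $G$, reduce to $p = o$; then $g \in K^o$, and the question becomes: given $x \in \scr F = G/P_\Pi$, is there a unique $k \in K^o/M$ with $kP_\Pi = x$? This is exactly the content of the Iwasawa decomposition $G = K^o P_\Pi$ together with $K^o \cap P_\Pi = M$: the map $K^o/M \to G/P_\Pi$ is a bijection.

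For part (2), I would again identify parametrized flats with $G/M$ and note that the condition $(\wk\zz(\p),\zz(\p)) = (x,y)$ amounts to $g\wk P_\Pi = x$, $gP_\Pi = y$. By $G$-equivariance and transitivity on $\posgen$, reduce to the canonical pair $(x,y) = (\wk P_\Pi, P_\Pi)$, whose joint stabilizer is $P_\Pi \cap \wk P_\Pi = M\exp\frak a$. The set of admissible parametrized flats is therefore $(M\exp\frak a)/M \cong \exp\frak a$. It remains to check that all these parametrized flats have the \emph{same} image as subsets of $X$: writing a representative as $g = m\exp(a_0)$ with $m \in M$ and $a_0 \in \frak a$, and using that $m$ commutes with $\exp\frak a$ and fixes $o$, one obtains
\[
\p(v) = m \exp(a_0)\exp(v)\cdot o = \exp(a_0 + v)\cdot o,
\]
so the image $\p(\frak a)$ coincides with $\exp(\frak a)\cdot o$ regardless of the choice in $M\exp\frak a$. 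Translating back by a group element sending $(\wk P_\Pi, P_\Pi)$ to $(x,y)$ yields the unique maximal flat $\p_{xy}(\frak a)$.

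I do not expect any serious obstacle: both statements are formal consequences of structural facts already recalled in Sections \ref{parabolic.groups} and earlier. The only point requiring minor care is checking that although there is a $\frak a$-worth of parametrizations in part (2), they all trace out the same subset of $X$, which is why uniqueness is asserted only for the maximal flat $\p_{xy}(\frak a)$ as a set (and not for the parametrization).
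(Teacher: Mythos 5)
The paper states this proposition as standard and cites \cite[Chapter~III]{bordeF} without giving a proof, so there is no argument in the paper to compare yours against. Your proof is correct and is the expected structural one. Part~(1): after translating $p$ to $o$, existence and uniqueness amount to the bijectivity of $K^o/M\to G/P_\Pi$, i.e.\ the Iwasawa decomposition $G=K^o A N\subset K^o P_\Pi$ together with $K^o\cap P_\Pi=M$, exactly as you say. Part~(2): the fiber of $(\wk\zz,\zz)\colon G/M\to G/(M\exp\frak a)=\posgen$ over the base pair is $(M\exp\frak a)/M$, and since $M$ fixes $o$ and normalizes $\exp\frak a$, every representative traces out the same maximal flat $\exp(\frak a)\cdot o$; your remark that uniqueness is of the flat, not the parametrization, is precisely the right point. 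The one step you invoke without comment is the identification $\wk\zz(\p)=g\wk P_\Pi$ when $\p(a)=g\exp(a)\cdot o$, equivalently that $(\wk\zz,\zz)$ is the canonical projection $G/M\to G/(M\exp\frak a)$; this is the computation symmetric to the one the paper sketches for $\zz$ (boundedness of $a\mapsto d_X(n\exp(a)\cdot o,\exp(a)\cdot o)$ on $-\frak a^+$ forces $n\in\wk P_\Pi$), and is worth making explicit. With that supplied, the argument is complete.
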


Given a subset of simple roots $\t\en\Pi$ and a pair of partial flags in general position $(x,y)\in\scr F_\t^{(2)}$ 
the \emph{parallel set} $P(x,y)$ from $x$ to $y$ is 
$$
\bigcup_\p \p(\frak a),
$$
where the union is indexed on all parametrized flats $\p$ with $\wk\zz(\p)^{\t} = x$ and $\zz(\p)^{\ii\t} = y$.

\subsection{Representations and distances to parallel sets} The purpose of this Subsection~are the following proposition and corollary. Only the statement of Corollary \ref{encaje1}  will be needed in the sequel. If $\L:G\to\PSL(V)$ is a finite dimensional rational irreducible representation, denote by $\L:X\to X_V$ the induced map between symmetric spaces and $D\L:{\sf{T}}X\to{\sf{T}}X_V$ be its differential mapping. The map $D_p\L$ is $\phi_\L|\frak p^p:\frak p^p\to\frak p^{\L p}$.

\begin{prop}\label{angulosTX} Let $\L:G\to\PSL(V)$ be a finite dimensional rational irreducible representation. Then, there exists a constant $\delta>0$ such that if $(x,y)\in\posgen_{\t_\L}$ and $p\in P(x,y)$ then 
$$
\angle \big(D_p\L\big(({\sf{T}}_pP(x,y))^\perp\big),{\sf{T}}_{\L p} P(\xi_\L x,\xi^*_\L y)\big)>\delta,
$$
where $\angle$ denotes the angle on ${\sf{T}}_{\L p}X_V$.
\end{prop}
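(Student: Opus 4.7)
The plan combines $G$-equivariance with compactness to reduce the statement to a single standard configuration, after which a weight-space analysis settles the remaining algebraic claim. Since $\L$ intertwines the $G$-actions on $X$ and $X_V$, and both parallel sets $P(x,y)$ and $P(\xi_\L x,\xi^*_\L y)$ are constructed equivariantly, the angle appearing in the statement is a $G$-invariant function of the triple $(x,y,p)$ with $p\in P(x,y)$. Hence, applying an element $g\in G$ taking $p$ to the basepoint $o$ and replacing $(x,y)$ by $(g\cdot x,g\cdot y)$, we may assume $p=o$.

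Once $p=o$, the set $\{(x,y)\in\posgen_{\t_\L}\st o\in P(x,y)\}$ is a single $K^o$-orbit of a standard pair $(x_0,y_0)$: indeed $G=K^o L_{\t_\L} K^o$ (since $L_{\t_\L}\supseteq\exp\frak a$), and $L_{\t_\L}$ is the stabilizer of $(x_0,y_0)$. Since $K^o$ is compact, the angle is a continuous $K^o$-invariant function on a compact set, so it suffices to prove strict positivity at the standard pair $(x_0,y_0)$ alone; the uniform lower bound $\delta$ then follows by compactness.

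For the standard pair, $P(x_0,y_0)=L_{\t_\L}\cdot o$, so ${\sf{T}}_o P(x_0,y_0)=\frak l_{\t_\L}\cap\frak p^o$ and
$$
({\sf{T}}_o P(x_0,y_0))^\perp=\bigoplus_{\alpha\in\E^+\setminus\<\Pi-\t_\L\>}\frak p_\alpha
$$
inside $\frak p^o\cong{\sf{T}}_o X$, where $\frak p_\alpha=(\frak g_\alpha\oplus\frak g_{-\alpha})\cap\frak p^o$. On the target side, ${\sf{T}}_{\L o}P(\xi_\L x_0,\xi^*_\L y_0)$ consists of the $\L o$-symmetric operators on $V$ preserving the splitting $V=V_{\chi_\L}\oplus W_{\chi_\L}$, i.e.\ the block-diagonal symmetric operators. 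The desired inequality thus reduces to the transversality claim that $\phi_\L=D_o\L$ sends $\bigoplus_{\alpha\notin\<\Pi-\t_\L\>}\frak p_\alpha$ with trivial intersection with the block-diagonal symmetric operators.

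For this transversality, fix $n=\sum_\alpha(X_\alpha-oX_\alpha)$ with $X_\alpha\in\frak g_\alpha$ and $\alpha\in\E^+\setminus\<\Pi-\t_\L\>$, and compute the action on $V_{\chi_\L}$: maximality of $\chi_\L$ gives $\phi_\L(X_\alpha)V_{\chi_\L}\subseteq V_{\chi_\L+\alpha}=0$, while $\phi_\L(oX_\alpha)V_{\chi_\L}\subseteq V_{\chi_\L-\alpha}\subseteq W_{\chi_\L}$. Since the weight spaces $V_{\chi_\L-\alpha}$ for distinct $\alpha$ are $\L o$-orthogonal, the off-diagonal block contributions from different $\alpha$ cannot cancel. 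The main obstacle will be showing that $\phi_\L(oX_\alpha)|_{V_{\chi_\L}}\neq 0$ for every nonzero $X_\alpha\in\frak g_\alpha$; Remark \ref{igual0} ensures that \emph{some} element of $\frak g_{-\alpha}$ acts nontrivially on $V_{\chi_\L}$, but upgrading this to injectivity of the restriction map $\frak g_{-\alpha}\to\mathrm{Hom}(V_{\chi_\L},V_{\chi_\L-\alpha})$ will require a more careful use of the irreducibility of $\L$ together with the structural description in Remark \ref{obs.modulo.irreducible}.
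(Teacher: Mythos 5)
Your plan tracks the paper's proof almost step for step: reduce by $G$-equivariance to the basepoint $o$, identify the standard configuration, describe the tangent/normal spaces via the restricted root space decomposition, and reduce the transversality to a weight-space computation. Two small differences: the paper reduces directly to showing $\phi_\L(\frak l_{\t_\L}^\perp)\cap\frak l' = \{0\}$ (working in all of $\frak g$, not just the $\frak p^o$-part, and getting the symmetric $\frak g_\alpha$-direction for free via the Cartan involution), and the paper obtains the uniform $\delta$ by noting the angle is literally \emph{constant} under the $L(x,y)$- and $G$-actions, rather than by a compactness argument — both are minor.

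The one place you declare a gap is not actually a gap, and this is where you should look again. You write that Remark~\ref{igual0} only gives that \emph{some} element of $\frak g_{-\alpha}$ acts nontrivially on $V_{\chi_\L}$, and that upgrading to injectivity of $\frak g_{-\alpha}\to\mathrm{Hom}(V_{\chi_\L},V_{\chi_\L-\alpha})$ ``will require a more careful use of the irreducibility.'' In fact the injectivity is immediate from the stabilizer characterization in Remark~\ref{igual0}: the kernel of $n\mapsto\phi_\L(n)|_{V_{\chi_\L}}$ is a subspace of $\frak g_{-\alpha}$ consisting of elements whose image stabilizes $V_{\chi_\L}$ (it sends $V_{\chi_\L}$ to $\{0\}\subset V_{\chi_\L}$), hence is contained in $\frak g_{-\alpha}\cap\frak p_{\t_\L}$, which is $\{0\}$ precisely when $\alpha\in\E^+\smallsetminus\<\Pi-\t_\L\>$. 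Indeed, the first sentence of Remark~\ref{igual0} states an \emph{iff}, which is exactly what the paper's lemma invokes (``Remark~\ref{igual0} implies that if $v\in V_{\chi_\L}$ then $\phi_\L(n)v\neq0$''), and the different-$\alpha$ contributions land in linearly independent weight spaces $V_{\chi_\L-\alpha}$, as you note. So with this observation your proof is complete and agrees with the paper's.
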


\begin{cor}\label{encaje1} Let $\L:G\to\PSL(V)$ be an injective finite dimensional irreducible representation. Then there exists $c>0$ such that if $o\in X$ and $(x,y)\in\posgen_{\t_\L}$, then 
$$
\frac 1cd_{X_V}(\L o,P(\xi_\L x,\xi^*_\L y))\leq d_X(o,P(x,y))\leq cd_{X_V}(\L o,P(\xi_\L x,\xi^*_\L y)).
$$

\end{cor}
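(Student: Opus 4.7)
The plan is to derive the two inequalities separately, exploiting that an injective morphism $\L\colon G \to \PSL(V)$ between semi-simple Lie groups induces a totally geodesic, bi-Lipschitz embedding $\L\colon X \to X_V$ of the associated symmetric spaces. Total geodesicity is a standard fact for maps between symmetric spaces coming from Lie group morphisms; for the bi-Lipschitz property, injectivity of $\L$ at the Lie-algebra level makes $d_o\L\colon\frak p^o\to\frak p^{\L o}$ injective, hence bi-Lipschitz at $o$, and then $K^o$-invariance combined with $G$-equivariance propagates constants $c_1,c_2>0$ such that $c_1 d_X(p,q) \leq d_{X_V}(\L p,\L q) \leq c_2 d_X(p,q)$ for all $p,q\in X$. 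Equivariance of the maps $\xi_\L,\xi^*_\L$ from \eqref{maps} further gives $\L(P(x,y))\subset P(\xi_\L x,\xi^*_\L y)$, since $\L$ carries a parametrised flat with endpoints $(x,y)\in\posgen_{\t_\L}$ to a flat with endpoints $(\xi_\L x,\xi^*_\L y)$.

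The left-hand inequality (upper bound on $d_{X_V}$) is then immediate: letting $p\in P(x,y)$ realise $d \coloneqq d_X(o,P(x,y))$, the image $\L p$ lies in $P(\xi_\L x,\xi^*_\L y)$, so
\[
d_{X_V}(\L o, P(\xi_\L x,\xi^*_\L y)) \;\leq\; d_{X_V}(\L o,\L p) \;\leq\; c_2\, d.
\]
For the right-hand inequality, let $\gamma\colon[0,d]\to X$ be the unit-speed geodesic from the same $p$ to $o$. Minimality of $p$ forces $\dot\gamma(0)\in(T_pP(x,y))^\perp$; total geodesicity makes $\L\circ\gamma$ a geodesic of $X_V$ starting at $\L p\in P(\xi_\L x,\xi^*_\L y)$ with length $L\geq c_1 d$. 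Proposition \ref{angulosTX} then bounds the angle of its initial velocity $D_p\L(\dot\gamma(0))$ with $T_{\L p}P(\xi_\L x,\xi^*_\L y)$ below by $\delta>0$. Since $X_V$ has non-positive sectional curvature and $P(\xi_\L x,\xi^*_\L y)$ is totally geodesic, the convex function $t\mapsto d_{X_V}\bigl((\L\circ\gamma)(t), P(\xi_\L x,\xi^*_\L y)\bigr)$ has right-derivative at $0$ equal to $\sin\theta_0\geq\sin\delta$, where $\theta_0$ is the initial angle; evaluating at $t=L$ yields
\[
d_{X_V}(\L o, P(\xi_\L x,\xi^*_\L y)) \;\geq\; L\sin\delta \;\geq\; c_1\sin\delta\cdot d.
\]
Setting $c = \max\bigl(c_2,(c_1\sin\delta)^{-1}\bigr)$ concludes.

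The main obstacle is justifying the symmetric-space preliminaries invoked above, namely that $\L\colon X\to X_V$ is a totally geodesic bi-Lipschitz embedding, and the CAT($0$) fact that the distance to a totally geodesic subspace is a convex function along any emanating geodesic with initial right-derivative equal to the sine of the starting angle. Both are classical results on symmetric spaces of non-compact type and do not depend on the cocycle-theoretic techniques developed earlier in the paper; combined with Proposition \ref{angulosTX}, they immediately yield the stated comparison.
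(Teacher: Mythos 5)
Your proof is correct and follows essentially the same route as the paper's: both single out the nearest point $p\in P(x,y)$ to $o$, observe that $\dot\sigma(0)\perp T_pP(x,y)$, push the geodesic forward by the totally geodesic map $\L$, invoke Proposition~\ref{angulosTX} for the uniform angle bound $\delta$, and finish by a comparison argument in the nonpositively curved space $X_V$. The only presentational difference is in that last step: the paper's terse ``CAT(0) argument'' works with the geodesic triangle $\{\L o,\L p,q\}$ (right angle at the foot $q$, angle $\geq\delta$ at $\L p$, then law of sines in the Euclidean comparison triangle), whereas you avoid introducing $q$ and instead use convexity of $t\mapsto d_{X_V}(\eta(t),P(\xi_\L x,\xi^*_\L y))$ together with the first-variation identity $f'(0^+)=\sin\theta_0$; both yield exactly the bound $d_{X_V}(\L o,P(\xi_\L x,\xi^*_\L y))\geq c_1\sin\delta\cdot d_X(o,P(x,y))$. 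You are also somewhat more explicit than the paper about the two ingredients the paper leaves implicit, namely the bi-Lipschitz property of $\L:X\to X_V$ (which does require injectivity of $\L$) and the containment $\L(P(x,y))\subset P(\xi_\L x,\xi^*_\L y)$ needed to place $\L p$ in the target parallel set.
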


Let us show how Proposition \ref{angulosTX} implies the corollary.

\begin{proof} Since $G$ has no compact factors $X$ is non-positively curved. Hence, since $P(x,y)$ is totally geodesic, the distance from $o$ to $P(x,y)$ is attained at a unique point $p\in P(x,y)$. Moreover, the geodesic segment $\sigma:I\to X$, from $p$ to $o$ is orthogonal to $P(x,y)$ at $p$, i.e.\ $\dot\sigma(0)\in {\sf{T}}_pP(x,y)^\perp$. Similarly, the distance from $\L o$ to $P(\xi_\L x,\xi_\L^* y)$ is also attained at a unique point $q$ and the geodesic segment from $q$ to $\L o$ is perpendicular to $P(\xi_\L x,\xi^*_\L y)$.

Since $\L X$ is totally geodesic in $X_V$, we can estimate the angle at the vertex $\L p$ of the geodesic triangle $\{\L o,\L p, q\}$. Indeed, Proposition \ref{angulosTX} implies that this angle is bounded below by $\delta>0$, independently of $o$ and $(x,y)$. Since the angle at the vertex $q$ is $\pi/2$, a CAT(0) argument completes the proof. 
\end{proof}

The remainder of the subsection is devoted to the proof of Proposition \ref{angulosTX}.

\subsubsection*{Proof} Let $L(x,y)$ be the stabilizer in $G$ of $(x,y)\in\posgen_{\t_\L}$ and let $\frak l$ be its Lie algebra. Moreover, let $L(\xi_\L x,\xi^*_\L y)$ be the stabilizer in $\PSL(V)$ of $(\xi_\L x,\xi^*_\L y)$ and $\frak l'\en\frak{sl}(V)$ its Lie algebra.

If $\frak l^\perp$ denotes the orthogonal of $\frak l$ with respect to the Killing form $\kappa$, then we will show that 
\begin{equation}\label{vacio}\phi_\L(\frak l^\perp)\cap\frak l' = \{0\}.
\end{equation} This will imply the proposition since if $p\in P(x,y)$ then ${\sf{T}}_pP(x,y) = \frak p^p\cap\frak l$, its orthogonal in ${\sf{T}}_p X$ is hence $\frak p^p \cap\frak l^{\perp}$ and given $q\in P(x,y)$ there exists $g\in L(x,y)$ such that $gp = q$, thus the angle 
$$
\angle(\frak p^{\L p}\cap\phi_\L(\frak l^\perp),\frak p^{\L p}\cap \frak l')\geq\angle(\phi_\L(\frak p^p\cap\frak l^\perp),\frak p^{\L p}\cap \frak l')
$$
is independent of $p$ in $P(x,y)$.

We will hence show that equation (\ref{vacio}) holds. Note that, by homogeneity, we can assume that the stabilizer of $x$ is $P_{\t_\L}$ and that the stabilizer of $y$ is $\wk P_{\t_\L}$. The parallel set $P(x,y)$ is hence the orbit $L_{\t_\L}\cdot o$, where $L_{\t_\L}$ is the Levi group $P_\t\cap\wk P_\t$.

An explicit computation shows that for $\alpha,\beta\in\E$, the eigenspaces $\frak g_\alpha$ and $\frak g_\beta$ (recall Subsection~\ref{Cartan}) are orthogonal with respect to the Killing form $\kappa$, whenever $\alpha\neq-\beta$. If we denote by $\frak g_{\{\alpha,-\alpha\}} = \frak g_\alpha\oplus\frak g_{-\alpha}$ then the decomposition 
$$
\frak g = \frak g_0\oplus\bigoplus_{\alpha\in\E^+}\frak g_{\{\alpha,-\alpha\}}
$$
is orthogonal with respect to $\kappa$.

Note that the Cartan involution $o$ sends $\frak g_\alpha$ to $\frak g_{-\alpha}$ and hence $\frak g_{\{\alpha,-\alpha\}}$ is $o$-invariant. Since $-\kappa(\cdot,o(\cdot))$ is positive definite and $\kappa|\frak g_\alpha = 0$ one concludes that $\frak p^o\cap\frak g_{\{\alpha,-\alpha\}}\neq\{0\}$. One finds then a orthogonal decomposition of the tangent space to $X$ at $o$,
\begin{equation}\label{eq:ToXd} {\sf{T}}_oX = \frak p^o = \frak a\oplus\bigoplus_{\alpha\in\E^+}\frak p^o\cap\frak g_{\{\alpha,-\alpha\}}.
\end{equation}

The Lie algebra of $L_{\t_\L}$ is 
$$
\frak l_{\t_\L} = \frak g_0\oplus\bigoplus_{\alpha\in\<\Pi-\t_\L\>}\frak g_{\{\alpha,-\alpha\}}
$$
and hence the decomposition 
$$
\frak g = \frak l_{\t_\L}\oplus\bigoplus_{\alpha\in\E^+-\<\Pi-\t_\L\>}\frak g_{\{\alpha,-\alpha\}}
$$
is orthogonal with respect to $\kappa$. This is to say 
\begin{equation}\label{orto}\frak l_{\t_\L}^{\perp} = \bigoplus_{\alpha\in\E^+-\<\Pi-\t_\L\>}\frak g_{\{\alpha,-\alpha\}}.
\end{equation}

Denote by $\ell = \dim V_{\chi_\L}$ and note that $\frak l'$ is the Lie algebra of the stabilizer of $(V_{\chi_\L},W_{\chi_\L})\in\posgen_{\{\alpha_\ell\}}$. Choose a Cartan algebra $\frak a_{\cE}$ of $\frak{sl}(V)$ and a Weyl chamber $\frak a_{\cE}^+$ such that\footnote{It suffices to choose a $\L o$-orthogonal set of $\dim V$ lines $\cE$ such that $\cE\subset V_{\chi_\L}\cup W_{\chi_\L}$.} (recall section \ref{parabolic.groups}): \begin{itemize}\item[-] the Lie algebra $\frak p_{\{\alpha_\ell\}}$ is the Lie algebra of the parabolic group stabilizing $V_{\chi_\L},$\item[-] the Lie algebra $\wk{\frak p}_{\{\alpha_\ell\}}$ is the Lie algebra of the parabolic group stabilizing $W_{\chi_\L}$. \end{itemize}

The Lie algebra $\frak l'$ is thus $\frak p_{\{\alpha_\ell\}}\cap\wk{\frak p}_{\{\alpha_\ell\}}$. In order to show that $\phi_\L(\frak l_{\t_\L}^{\perp})\cap\frak l' = \{0\}$ the following lemma is sufficient.

\begin{lem} 
The subspace ${\displaystyle \phi_\L \left( \bigoplus_{\alpha\in\E^+-\<\Pi-\t_\L\>}\frak g_{-\alpha} \right)}$ has trivial intersection with $\frak p_{\{\alpha_\ell\}}$.
\end{lem}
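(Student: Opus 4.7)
The plan is to exploit the weight space decomposition $V = \bigoplus_\chi V_\chi$ under $\phi_\L(\frak a)$, together with the characterization of $\frak p_{\{\alpha_\ell\}}$ as the stabilizer of $V_{\chi_\L}$ in $\frak{sl}(V)$. Recall from Remark~\ref{igual0} that the stabilizer of $V_{\chi_\L}$ in $\frak g$ is precisely $\frak p_{\t_\L}$; since $\frak p_{\t_\L}$ is a sum of root spaces, its intersection with $\frak g_{-\alpha}$ (for $\alpha\in\E^+$) equals $\frak g_{-\alpha}$ when $\alpha\in\<\Pi-\t_\L\>$ and equals $\{0\}$ otherwise.

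Take then $x=\phi_\L(n)$ in the intersection, write $n=\sum_\alpha n_\alpha$ with $n_\alpha\in\frak g_{-\alpha}$ and $\alpha\in\E^+-\<\Pi-\t_\L\>$. The condition $x\in\frak p_{\{\alpha_\ell\}}$ asserts that $\phi_\L(n)$ preserves $V_{\chi_\L}$. Since $n_\alpha$ has $\frak a$-weight $-\alpha$, the map $\phi_\L(n_\alpha)$ sends $V_{\chi_\L}$ into $V_{\chi_\L-\alpha}$; because $\alpha>0$, we have $\chi_\L-\alpha\neq \chi_\L$, so $V_{\chi_\L-\alpha}$ and $V_{\chi_\L}$ occupy disjoint summands of the weight decomposition. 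Therefore $\phi_\L(n)(V_{\chi_\L})\subset V_{\chi_\L}\cap\bigoplus_\alpha V_{\chi_\L-\alpha}=\{0\}$, and by the directness of the sum each individual piece $\phi_\L(n_\alpha)(V_{\chi_\L})$ vanishes.

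To finish, one promotes $\phi_\L(n_\alpha)|_{V_{\chi_\L}}=0$ to $n_\alpha=0$. But $\phi_\L(n_\alpha)(V_{\chi_\L})=0$ means exactly that $n_\alpha$ lies in the stabilizer of $V_{\chi_\L}$, i.e.\ in $\frak p_{\t_\L}$; combined with $n_\alpha\in\frak g_{-\alpha}$ and $\alpha\notin\<\Pi-\t_\L\>$, the first paragraph forces $n_\alpha=0$. Hence $n=0$ and $x=0$, proving triviality of the intersection. The only potential obstacle is making sure that Remark~\ref{igual0} really does characterize \emph{all} $n\in\frak g_{-\alpha}$ annihilating $V_{\chi_\L}$ (and not just the statement that some such $n$ exists); this is why we rely on the equivalent reformulation in that remark, namely that $\frak p_{\t_\L}$ is the \emph{full} stabilizer of $V_{\chi_\L}$, rather than on the pointwise statement alone.
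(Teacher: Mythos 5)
Your proof is correct and rests on the same two ingredients as the paper's: the weight-space decomposition of $V$ under $\phi_\L(\frak a)$, and the content of Remark~\ref{igual0} that the negative root spaces $\frak g_{-\alpha}$ with $\alpha\notin\<\Pi-\t_\L\>$ act nontrivially on the highest-weight line. The only difference is organizational: you argue contrapositively and route the final step $\phi_\L(n_\alpha)(V_{\chi_\L})=0 \Rightarrow n_\alpha=0$ through the identification of $\frak p_{\t_\L}$ with the full Lie-algebra stabilizer of $V_{\chi_\L}$, whereas the paper invokes the nonvanishing of each $\phi_\L(n_{\gamma_i})v$ directly and observes that the resulting nonzero sum lies in $\bigoplus_i V_{\chi_\L-\gamma_i}$, hence outside $V_{\chi_\L}$.
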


\begin{proof} Recall that 
$$
\frak p_{\{\alpha_p\}} = \frak a_{\cE}\oplus\bigoplus_{\beta\in\E_V^+}\frak{sl}(V)_\beta\oplus\bigoplus_{\beta\in\<\Pi_V-\{\alpha_l\}\>}\frak{sl}(V)_{-\beta},
$$
where $\E_V^+$ and $\Pi_V$ are the set of positive, resp.\ simple, roots associated to the choice of $\frak a_{\cE}^+$.

Consider then $\alpha\in\E^+-\<\Pi-\t_\L\>$ and $n\in\frak g_{-\alpha}$. Remark \ref{igual0} implies that if $v\in V_{\chi_\L}$ then $\phi_\L(n)v\neq0$, Hence, 
$$
\phi_\L(n)\notin\frak a_{\cE}\oplus\bigoplus_{\beta\in\E_V^+}\frak{sl}(V)_\beta\oplus\bigoplus_{\beta\in\<\Pi_V-\{\alpha_l\}\>}\frak{sl}(V)_{-\beta}.
$$

Let $\{\gamma_i\}_1^k\en\E^+-\<\Pi-\t_\L\>$ be pairwise distinct and consider $n_{\gamma_i}\in\frak g_{-\gamma_i}-\{0\}$. Then $\phi_\L(n_{\gamma_i})v$ is a non-zero eigenvector of $\frak a$ of eigenvalue $\chi-\gamma_i$, hence 
$$
\phi_\L(n_{\gamma_1}+\cdots+ n_{\gamma_k})v = \phi_\L(n_{\gamma_1})v+\cdots+\phi_\L(n_{\gamma_k})v\neq0.
$$
This proves the lemma.
\end{proof}

As an example, consider the irreducible representation (unique up to conjugation) $\L:\PSL(2,\R)\to\PSL(3,\R)$. Explicitly $\L$ is the action of $\PSL(2,\R)$ on the symmetric power $\mathbf{S}^2(\R^2)$, which is a 3-dimensional space spanned by 
$$
\{e_1\otimes e_1,e_1\otimes e_2,e_2\otimes e_2\},
$$
where $e_1 = (1,0)$ and $e_2 = (0,1)$. 

The group $L(\< e_1\>,\< e_2\>)$ has Lie algebra $\frak l = \frak a = \<\left(\begin{smallmatrix} 1 & 0 \\ 0 & -1\end{smallmatrix}\right)\>$, its orthogonal with respect to the Killing form of $\PSL(2,\R)$ is 
$$
\frak a^\perp = \<\left(\begin{smallmatrix}0 & 1\\ 0 & 0\end{smallmatrix}\right),\left(\begin{smallmatrix}0 & 0\\ 1 & 0\end{smallmatrix}\right)\>.
$$

The group $L(\<e_1\otimes e_1\>,\< e_1\otimes e_2, e_2\otimes e_2\>)$ has Lie algebra 
$$
\frak l' = \left\<\left(\begin{smallmatrix} 1 & 0 & 0\\ 0 & -1 & 0 \\ 0& 0 & 0\end{smallmatrix}\right),\left(\begin{smallmatrix} 1 & 0 & 0\\ 0& 0 & 0\\ 0 & 0& -1 \end{smallmatrix}\right),\left(\begin{smallmatrix} 0 & 0 & 0\\ 0 & 1 & 0 \\ 0& 0 & -1\end{smallmatrix}\right),\left(\begin{smallmatrix} 0 & 0 & 0\\ 0 & 0& 1 \\ 0& 0 & 0\end{smallmatrix}\right), \left(\begin{smallmatrix} 0 & 0 & 0\\ 0 & 0 & 0 \\ 0& 1 & 0\end{smallmatrix}\right)\right\>.
$$
The orthogonal complement of $\frak l'$ with respect to the Killing form of $\PSL(3,\R)$ is 
$$
\frak l'^\perp = \left\<\left(\begin{smallmatrix} 0 & 1 & 0\\ 0 & 0 & 0 \\ 0& 0 & 0\end{smallmatrix}\right),\left(\begin{smallmatrix} 0 & 0 & 1\\ 0 & 0 & 0 \\ 0& 0 & 0\end{smallmatrix}\right),\left(\begin{smallmatrix} 0 & 0 & 0\\ 1 & 0 & 0 \\ 0& 0 & 0\end{smallmatrix}\right),\left(\begin{smallmatrix} 0 & 0 & 0\\ 0 & 0 & 0 \\ 1 & 0 & 0\end{smallmatrix}\right)\right\>.
$$

An explicit computation shows that $\L\left(\begin{smallmatrix} 1 & t \\ 0 & 1\end{smallmatrix}\right) = \left(\begin{smallmatrix} 1 & t & t^2 \\ 0 & 1 & t \\ 0 & 0 & 1\end{smallmatrix}\right)$, hence 
$$
\phi_\L\left(\begin{smallmatrix} 0 & 1 \\ 0 & 0\end{smallmatrix}\right) = \left(\begin{smallmatrix} 0 & 1 & 0 \\ 0 & 0 & 1 \\ 0 & 0 & 0\end{smallmatrix}\right)
$$
which does not 
belong to $\frak l'^\perp$. Nevertheless, it does not belong to $\frak l'$ either, which gives the definite angle of Proposition \ref{angulosTX}.

\subsection{Gromov product and representations} Consider $\t\en\Pi$ and denote by 
$$
\frak a_\t = \bigcap_{\alpha\in\Pi-\t}\ker\alpha
$$
the Lie algebra of the center of the reductive group $L_\t = P_\t\cap\wk P_\t$. Recall from \cite{Sambarino-Orbitalcounting} that the \emph{Gromov product}\footnote{This is the negative of the one defined in \cite{Sambarino-Orbitalcounting}.} based on $o\in X$ is the map 
$$
(\mathord{\cdot}|\mathord{\cdot})_o:\posgen_\t\to\frak a_\t
$$
defined by the unique vector $(x|y)_o\in\frak a_\t$ such that 
$$
\om_\alpha((x|y)_o) = -\log\sin\angle_{\L_\alpha o}(\xi_{\L_\alpha}x,\xi^*_{\L_\alpha}y) = -\log\frac{|\varphi(v)|}{\|\varphi\|_{\L_\alpha o}\|v\|_{\L_\alpha o}}
$$
for all $\alpha\in\t$, where $\om_\alpha$ is the fundamental weight of $\L_\alpha$, $v\in\xi_{\L_\alpha}x-\{0\}$ and $\ker\varphi = \xi_{\L_\alpha}^*y$.

\begin{obs}Note that 
\begin{equation}\label{minangulo}\max_{\alpha\in\t}\om_\alpha((x|y)_o) = -\log\min_{\alpha\in\t}\sin\angle_{\L_\alpha o}(\xi_{\L_\alpha}x,\xi^*_{\L_\alpha}y).
\end{equation} Note also that, since $\{\om_\alpha|\frak a_\t\}_{\alpha\in\t}$ is a basis of $\frak a_\t$, the right hand side of equation (\ref{minangulo}) is comparable to the norm $\|(x|y)_o\|_o$. \end{obs}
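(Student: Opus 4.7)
The plan is to handle the two assertions in the remark separately, as each is essentially a direct consequence of the definition plus an elementary convexity/compactness argument on the finite-dimensional space $\frak a_\t$.

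For equation \eqref{minangulo}, the strategy is pure substitution. By the definition of the Gromov product, for every $\alpha \in \t$ we have
\[
\om_\alpha\bigl((x|y)_o\bigr) \;=\; -\log \sin \angle_{\L_\alpha o}\bigl(\xi_{\L_\alpha}x,\xi^*_{\L_\alpha}y\bigr).
\]
Since $-\log$ is strictly decreasing on $(0,1]$ and each sine lies in $(0,1]$, taking the maximum over $\alpha \in \t$ on the left corresponds to taking the minimum of the sines on the right, giving the claimed identity. There is no obstacle here.

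For the comparability with $\|(x|y)_o\|_o$, first observe that the image of the Gromov product lies in the closed cone
\[
\cC \;\coloneqq\; \bigl\{ v \in \frak a_\t \st \om_\alpha(v) \ge 0 \ \forall\, \alpha \in \t \bigr\},
\]
because each $\om_\alpha((x|y)_o) = -\log \sin \angle \ge 0$. The idea is then standard: since $\{\om_\alpha|_{\frak a_\t}\}_{\alpha \in \t}$ is a basis of the dual space $\frak a_\t^*$, the functional $v \mapsto \max_{\alpha \in \t} \om_\alpha(v)$ cannot vanish on any nonzero $v \in \cC$ (otherwise all $\om_\alpha(v)$ would be $\le 0$ and, combined with $v \in \cC$, all would be zero, forcing $v=0$). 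Both $v \mapsto \max_{\alpha} \om_\alpha(v)$ and $v \mapsto \|v\|_o$ are continuous and positively homogeneous of degree one on $\frak a_\t$, and both are strictly positive on the compact set $\cC \cap \{\|v\|_o = 1\}$. Extracting the min and max of their ratio on this compact set yields positive constants $c_1, c_2$ with
\[
c_1 \|v\|_o \;\le\; \max_{\alpha\in\t} \om_\alpha(v) \;\le\; c_2 \|v\|_o \qquad \text{for all } v \in \cC,
\]
and specializing to $v = (x|y)_o$ gives the comparability.

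The only subtlety worth flagging is that one must use the cone membership $(x|y)_o \in \cC$: on all of $\frak a_\t$ the quantity $\max_\alpha \om_\alpha$ is not a norm, so without this sign information the comparison would fail. Once this is noted, the argument reduces to the trivial equivalence of norms (or norm-like homogeneous functions) on a finite-dimensional real vector space, and there is no genuine difficulty.
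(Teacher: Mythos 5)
Your proof is correct. The paper states this as a remark without supplying a proof (it only gestures at the fact that $\{\om_\alpha|_{\frak a_\t}\}_{\alpha\in\t}$ is a basis — of $\frak a_\t^*$, a typo in the paper), so there is no argument in the paper to compare against. Your treatment of \eqref{minangulo} by direct substitution from the definition of the Gromov product and monotonicity of $-\log$ is exactly right, and your compactness argument for the norm comparison is sound. You also correctly flag the one subtlety the paper glosses over: $v\mapsto\max_{\alpha\in\t}\om_\alpha(v)$ (without absolute values) is not a norm on all of $\frak a_\t$, so the comparability genuinely uses the fact that the Gromov product lands in the cone $\cC=\{v\in\frak a_\t : \om_\alpha(v)\ge 0\ \forall\alpha\in\t\}$, which holds because each $-\log\sin\angle\ge 0$. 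On $\cC$, the maximum cannot vanish at a nonzero $v$ (since the $\om_\alpha$ are a dual basis), and the positive-homogeneity-plus-compactness argument on $\cC\cap\{\|v\|_o=1\}$ closes the gap.
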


As suggested by the notation, the Gromov product is independent on the choice of Tits's representations of $G$, moreover, it keeps track of Gromov products for all irreducible representations of $G$. Indeed one has the following consequence of equation (\ref{eq:normayrep}).

\begin{obs}\label{GromovyRep} Let $\L:G\to\PSL(V)$ be a finite dimensional rational irreducible representation with $\dim V_{\chi_\L} = 1$. If $(x,y)\in\posgen_{\t_\L}$ then\footnote{We have identified $\frak a_{\{\alpha_ 1\}}$ with $\R$ via $\omega_{\alpha_1}$.}
$$
(\xi_\L x |\xi_\L^* y)_{\L o} = \chi_\L((x|y)_o) = \sum_{\alpha\in\t_\L}\<\chi_\L,\alpha\>\,\om_\alpha((x|y)_o).
$$
Note that, by definition of $\t_\L$ (recall Subsection \ref{representaciones}), the coefficients in the last equation are all strictly positive. 
\end{obs}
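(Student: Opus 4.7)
The identity has two components. The second, that $\chi_\L((x|y)_o)=\sum_{\alpha\in\t_\L}\<\chi_\L,\alpha\>\om_\alpha((x|y)_o)$, follows from the expansion \eqref{coeficientes}, namely $\chi_\L=\sum_{\alpha\in\Pi}\<\chi_\L,\alpha\>\om_\alpha$, once one shows $\<\chi_\L,\alpha\>=0$ for $\alpha\in\Pi\setminus\t_\L$. I would derive this via $\mathfrak{sl}_2$-representation theory of the triple $\{n_\alpha,n_{-\alpha},h_\alpha\}$: by definition of $\t_\L$, the weight $\chi_\L-\alpha$ is not a weight of $\L$, so $\phi_\L(n_{-\alpha})$ annihilates $V_{\chi_\L}$; combined with $\phi_\L(n_\alpha)V_{\chi_\L}=0$ (dominance of $\chi_\L$), the $\mathfrak{sl}_2$-module generated by $V_{\chi_\L}$ is trivial, forcing $\<\chi_\L,\alpha\>=\chi_\L(h_\alpha)=0$. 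The positivity $\<\chi_\L,\alpha\>>0$ for $\alpha\in\t_\L$ is the converse: $\chi_\L-\alpha$ being a weight makes the $\alpha$-string through $\chi_\L$ of length at least $2$, hence $\<\chi_\L,\alpha\>\ge 1$.

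For the principal equality, the footnote reduces the LHS to $-\log\sin\angle_{\L o}(\xi_\L x,\xi_\L^* y)$. Since $\dim V_{\chi_\L}=1$, Remark \ref{igual0} identifies the stabilizer of $(V_{\chi_\L},W_{\chi_\L})$ as $L_{\t_\L}=P_{\t_\L}\cap\wk P_{\t_\L}$, and since $(x,y)\in\posgen_{\t_\L}$ one can pick $h\in G$ with $\xi_\L x=\L(h)V_{\chi_\L}$ and $\xi_\L^* y=\L(h)W_{\chi_\L}$. Choosing a highest-weight vector $v_0\in V_{\chi_\L}$ and $\varphi_0\in V^*$ with $\ker\varphi_0=W_{\chi_\L}$ normalized by $\|v_0\|_{\L o}=\|\varphi_0\|_{\L o}=\varphi_0(v_0)=1$, and substituting $v=\L(h)v_0$, $\varphi=(\L(h)^{-1})^*\varphi_0$ in the sine formula yields
\[
-\log\sin\angle_{\L o}(\xi_\L x,\xi_\L^* y)=\log\|\L(h)v_0\|_{\L o}+\log\|(\L(h)^{-1})^*\varphi_0\|_{\L o}.
\]

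The final step is to identify this with $\chi_\L((x|y)_o)$. Writing $h=k(\exp a_0)n$ in the Iwasawa decomposition ($k\in K^o$, $a_0\in\frak a$, $n\in N$) and $h^{-1}=\wk k(\exp\wk a_0)\wk n$ in the opposite Iwasawa decomposition, one exploits two structural facts: raising operators kill $v_0$ (so $\L(n)v_0=v_0$), and $\wk N$ stabilizes $W_{\chi_\L}$ because lowering operators cannot carry lower weights back to $\chi_\L$ (so $\L(\wk n)^*\varphi_0=\varphi_0$). Combined with the $K^o$-invariance of the norm, each summand above reduces to the exponential of $\chi_\L$ applied to $a_0$ and $\wk a_0$ respectively, giving the LHS as $\chi_\L(a_0+\wk a_0)$. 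Applying the same computation to every Tits representation $\L_\alpha$ ($\alpha\in\t_\L$) and invoking the defining identity $\om_\alpha((x|y)_o)=-\log\sin\angle_{\L_\alpha o}(\xi_{\L_\alpha}x,\xi_{\L_\alpha}^*y)$ identifies $\om_\alpha((x|y)_o)$ with $\chi_{\L_\alpha}(a_0+\wk a_0)$; since $\{\om_\alpha|_{\frak a_{\t_\L}}\}_{\alpha\in\t_\L}$ is a basis of $\frak a_{\t_\L}^*$ (a principal submatrix of the positive-definite inverse Cartan matrix is invertible), this matches $(x|y)_o$ with $a_0+\wk a_0$, and applying $\chi_\L$ closes the argument. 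The main obstacle in this plan is the careful bookkeeping of the normalizations $\chi_{\L_\alpha}=m_\alpha\om_\alpha$ built into the Tits representations, together with verifying that the Levi freedom in the choice of $h$ makes the computed $\frak a$-component genuinely land in $\frak a_{\t_\L}$ and be independent of the auxiliary Iwasawa decompositions.
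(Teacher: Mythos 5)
The paper offers no argument here beyond the pointer to \eqref{eq:normayrep}, so there is no proof to compare; your plan does capture the relevant ideas. The $\mathfrak{sl}_2$-string argument giving $\<\chi_\L,\alpha\>=0$ for $\alpha\in\Pi\setminus\t_\L$ is correct and is exactly what lets \eqref{coeficientes} collapse to a sum over $\t_\L$ (the paper's final sentence states only the complementary positivity for $\alpha\in\t_\L$). Your Iwasawa step, however, contains a genuine slip. You write $h^{-1}=\wk k(\exp\wk a_0)\wk n$ with $\wk k\in K^o$ on the left, but $(\L(h^{-1}))^*\varphi_0 = \L(\wk n)^*\L(\exp\wk a_0)^*\L(\wk k)^*\varphi_0$, so the \emph{first} operator reaching $\varphi_0$ is $\L(\wk k)^*$; it moves $\varphi_0$ off the line dual to $W_{\chi_\L}$, and the subsequent $\L(\exp\wk a_0)^*$ is then no longer a scalar. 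The decomposition that works is $h^{-1}=\wk n'\exp(a')k'$ with $\wk n'\in\wk N$ leftmost: then $\varphi_0\circ\L(\wk n')=\varphi_0$, $\varphi_0\circ\L(\exp a')=e^{\chi_\L(a')}\varphi_0$, and $\|\varphi_0\circ\L(k')\|=1$, yielding $\log\|(\L(h)^{-1})^*\varphi_0\|=\chi_\L(a')$. A rank-one check confirms the distinction: for $G=\SL(2,\R)$, $\L$ the standard representation and $h=\left(\begin{smallmatrix}1&1\\0&1\end{smallmatrix}\right)$, one gets $\|(\L(h)^{-1})^*\varphi_0\|=\sqrt2=e^{\chi_\L(a')}$ with $a'=\tfrac12\log2$, whereas the $K^o\exp\frak a\,\wk N$ projection of $h^{-1}$ is $\wk a_0=-\tfrac12\log 2$ and would predict $1/\sqrt2$.

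Of the two remaining concerns you flag, one dissolves and one has teeth. The Levi freedom in the choice of $h$ is not an obstacle: $-\log\sin\angle_{\L o}(\xi_\L x,\xi^*_\L y)$ is defined directly from $(x,y,o)$ with no reference to $h$, so once the identity $=\chi_\L(a_0+a')$ has been obtained for \emph{one} admissible $h$, independence from $h$ and from the auxiliary decompositions is automatic. The Tits normalization, by contrast, is essential, and it is resolved by the reading of the paper's definition of the Gromov product: the displayed formula there must be understood as $\chi_{\L_\alpha}((x|y)_o)=-\log\sin\angle_{\L_\alpha o}(\xi_{\L_\alpha}x,\xi^*_{\L_\alpha}y)$, which is what the phrase ``$\om_\alpha$ is the fundamental weight of $\L_\alpha$'' is naming. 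Read literally with $\om_\alpha$ the dual basis element, the observation already fails for $G=\PSL(2,\R)$, where the Tits representation is the adjoint and $\chi_{\L_{\alpha_1}}=2\om_{\alpha_1}$. Under the correct reading, your Iwasawa computation applied to $\L$ and to each $\L_\alpha$ gives $\chi_{\L_\alpha}((x|y)_o)=\chi_{\L_\alpha}(a_0+a')$ and $-\log\sin\angle_{\L o}(\xi_\L x,\xi^*_\L y)=\chi_\L(a_0+a')$, and the vanishing of $\<\chi_\L,\alpha\>$ off $\t_\L$ then closes the first equality with no need to track the integers $m_\alpha$.
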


\subsection{Gromov product and distances to parallel sets} 
The aim of this subsection is to prove the following.

\begin{prop}\label{angles} Given $\t\en\Pi$ there exist $c>1$ and $c'>0$ only depending on $G$ such that for all $(x,y)\in\posgen_\t$ one has 
$$
\frac1c\|(x|y)_o\|_o\leq d_X(o,P(x,y))\leq c\|(x|y)_o\|_o+c'.
$$

\end{prop}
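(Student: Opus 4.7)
The plan is to reduce Proposition \ref{angles} to the $\PSL(d,\R)$-case already handled by Proposition \ref{cjtoparalelogeneral}, by using a Pl\"ucker representation to embed the symmetric space $X$ into $X_V$ in a way that is compatible with parallel sets (via Corollary \ref{encaje1}) and with Gromov products (via Remark \ref{GromovyRep}).

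First, I would pick a Pl\"ucker representation $\Lambda \colon G \to \PSL(V)$ associated to $\theta$, as in Proposition \ref{prop:plucker}. Because $P_\theta$ is the stabilizer of a line, this representation has $\dim V_{\chi_\Lambda} = 1$ and $\theta_\Lambda = \theta$. The kernel of $\Lambda$ lies in the (finite) center of $G$, which acts trivially on both $X$ and $\mathscr{F}_\theta$, so after passing to a suitable quotient I may assume $\Lambda$ is injective. Corollary \ref{encaje1} then yields
$$d_X(o, P(x,y)) \asymp d_{X_V}\big(\Lambda o, P(\xi_\Lambda x, \xi^*_\Lambda y)\big)$$
with purely multiplicative constants, for every $(x,y)\in \scr F^{(2)}_\theta$.

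Second, since $\xi_\Lambda x$ is a projective point and $\xi^*_\Lambda y$ a hyperplane in $V$, the pair $(\xi_\Lambda x, \xi^*_\Lambda y)$ lies in $\scr F^{(2)}_{\{\alpha_1\}}$ of $\PSL(V)$, and I can apply Proposition \ref{cjtoparalelogeneral} with $\theta = \{\alpha_1\}$:
$$-\tfrac{1}{c}\log\sin\angle_{\Lambda o}(\xi_\Lambda x, \xi^*_\Lambda y) \leq d_{X_V}\big(\Lambda o, P(\xi_\Lambda x, \xi^*_\Lambda y)\big) \leq c' - c\log\sin\angle_{\Lambda o}(\xi_\Lambda x, \xi^*_\Lambda y).$$
By Remark \ref{GromovyRep} (applicable since $\dim V_{\chi_\Lambda}=1$), the angle is controlled by the Gromov product:
$$-\log\sin\angle_{\Lambda o}(\xi_\Lambda x, \xi^*_\Lambda y) = \chi_\Lambda((x|y)_o) = \sum_{\alpha\in\theta}\langle \chi_\Lambda,\alpha\rangle\,\omega_\alpha((x|y)_o).$$

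Third, I would observe that every coefficient $\langle\chi_\Lambda,\alpha\rangle$ is strictly positive for $\alpha \in \theta_\Lambda = \theta$, and each $\omega_\alpha((x|y)_o) \geq 0$ by definition of the Gromov product; hence the above sum is comparable to $\max_{\alpha\in\theta}\omega_\alpha((x|y)_o)$. Since $\{\omega_\alpha|\frak a_\theta\}_{\alpha\in\theta}$ is a basis of $\frak a_\theta^*$ and $(x|y)_o$ lies in the closed cone where all these functionals are non-negative, this maximum is comparable to $\|(x|y)_o\|_o$ (the norm and the sum of coordinates in a basis are equivalent on such a cone). Concatenating the three $\asymp$'s gives the desired multiplicative lower bound and multiplicative-plus-additive upper bound.

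The main obstacle is the injectivity assumption needed to invoke Corollary \ref{encaje1}; this is essentially bookkeeping about the center of $G$, but has to be handled cleanly. An alternative, avoiding the Pl\"ucker representation altogether, would be to obtain the lower bound from the individual Tits representations $\Lambda_\alpha$ (using that $P(x,y)\subset P(x^{\{\alpha\}},y^{\{\alpha\}})$, so $d_X(o,P(x,y))\geq d_X(o,P(x^{\{\alpha\}},y^{\{\alpha\}})) \asymp \omega_\alpha((x|y)_o)$ for each $\alpha$), but the upper bound genuinely requires a representation with $\theta_\Lambda = \theta$, i.e.\ the Pl\"ucker one, since a Tits representation only controls the distance to the strictly larger parallel set $P(x^{\{\alpha\}},y^{\{\alpha\}})$.
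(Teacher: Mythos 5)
Your proposal is correct and follows essentially the same route as the paper: reduce to the rank-one situation in $\PSL(V)$ via the Pl\"ucker representation, apply Corollary~\ref{encaje1} and Proposition~\ref{cjtoparalelogeneral}, then translate back with Remark~\ref{GromovyRep}; the only cosmetic difference is that the paper obtains the lower bound by a direct estimate (``As for $\PSL(d,\R)$, the first inequality follows easily''), whereas you transport it through both sides of Corollary~\ref{encaje1}. Your remark about needing injectivity for Corollary~\ref{encaje1}, and that this is harmless since the center of $G$ acts trivially on $X$ and on $\scr F_\t$, is a legitimate bookkeeping point that the paper leaves implicit.
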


\begin{proof} As for $\PSL(d,\R)$, the first inequality follows easily. Let us show the second one. Let $\L:G\to\PSL(V)$ be a Pl\"ucker representation of $G$ associated to the set $\t$, (recall Subsection~\ref{plucker}). Corollary \ref{encaje1} implies that there exists $c_0$ such that for all $o\in X$ and $(x,y)\in\posgen_{\t_\L}$ one has 
$$
d_X(o,P(x,y))\leq c_0d_{X_V}(\L o,P(\xi_\L x,\xi_\L^* y)).
$$
Note that $\xi_\L x\in\P(V) $ and that $\xi^*_\L y\in\P(V^*)$, Proposition \ref{cjtoparalelogeneral} applied to the set $\{\alpha_1\}\en\Pi_V$ implies that 
$$
d_{X_V}(\L o,P(\xi_\L x,\xi_\L^* y))\leq c(\xi_\L x|\xi_\L^* y)_{\L o}+c'.
$$
Finally, Remark \ref{GromovyRep} states that 
$$
(\xi_\L x|\xi_\L^* y)_{\L o} = \sum_{\alpha\in\t}\<\chi_\L,\alpha\>\,\om_\alpha((x|y)_o),
$$
where $\<\chi_\L,\alpha\>>0$ for all $\alpha\in\t$, thus, this last quantity is comparable to $\|(x|y)_o\|$. This completes the proof.
\end{proof}

\subsection{The Morse Lemma of Kapovich--Leeb--Porti} Let us begin with some definitions. Consider $p,q\in X$, $\t\en\Pi$ and $(x,y)\in\posgen_\t$.

\begin{itemize}\item[-] The \emph{Weyl cone} $V(p,x)$ through $p$ and $x$ is $\bigcup_\p \p(\frak a^+)$, where the union is indexed on all parametrized flats $\p$ with $\p(0) = p$ and $\zz(\p)^\t = x$.\item[-] Let 
$$
\t(p,q) = \{\alpha\in\Pi: \alpha(\aa(p,q))\neq0\}
$$
and let $U(p,q)\in \scr F_{\t(p,q)}$ be the flag determined by $\zz(\p )^{\t(p,q)} = U(p,q)$ for every parametrized flat with $\p(0) = p$ and $q\in\p(\frak a^+)$.

\item[-] If $\t\en \t_{(p,q)}$ the \emph{Weyl cone} $V_\t(p,q)$ through $p$ and $q$ (in that order) is 
$$
\bigcup_\p \p(\frak a^+),
$$
where the union is indexed on all parametrized flats $\p$ with $\p(0) = p$, and $\zz(\p)^\t = U(p,q)^{\t}$. \item[-] Finally, if $\t\en \t_{(p,q)}$ the $\t$-\emph{diamond} between $p$ and $q$ is the subset 
$$
\dd_{\t}(p,q) = V_{\t}(p,q)\cap V_{\ii\t}(q,p).
$$

\end{itemize}

If $\scr C\subset \frak a^+$ is a closed cone, consider the subset 
$$
\t_\scr C = \{\alpha\in\Pi: \ker\alpha\cap\scr C = \{0\}\}.
$$
Let $I\en \Z$ be an interval. Following \cite{KLP2} we will say that a quasi-geodesic segment $\{p_n\}_I\subset X$ is $\scr C$-\emph{regular} if for all $n<m$ one has 
$$
\aa(p_n,p_m)\in\scr C.
$$
One has the following version of the Morse Lemma.

\begin{teo}[{\cite[Theorem 1.3]{KLP2}}] Let $\mu,c$ be positive numbers and $\scr C\en\frak a^+$ a closed cone, then there exists $C>0$ such that if $\{p_n\}_{n\in I}$ is a $\scr C$-regular $(\mu,c)$-quasi-geodesic segment, then \begin{itemize}\item[-] If $I$ is finite then $\{p_n\}$ is at distance at most $C$ from the diamond $\dd_{\t_{\scr C}}(p_{\min I},p_{\max I})$. \item[-] If $I = \N$ then there exists $x\in\scr F_{\t_{\scr C}}$ such that $\{p_n\}$ is contained in a $C$-neighborhood from the Weyl cone $V_{\t_{\scr C}}(p_{\min I},x)$. \item[-] If $I = \Z$ then there exist two partial flags in general position $(x,y)\in\posgen_{\t_\scr C}$ such that $\{p_n\}$ is contained in a $C$-neighborhood from the union $V_{\t_{\scr C}}(z,x)\cup V_{\ii\t_{\scr C}}(z,y)$ for some $z\in P(x,y)$ at uniform distance from $\{p_n\}$. \end{itemize}
\end{teo}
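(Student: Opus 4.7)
The strategy mirrors closely that of Theorem~\ref{teo:MorseSLd}, with the Gromov product $(\mathord{\cdot}|\mathord{\cdot})_o$ from subsection~\ref{productoGromov} playing the role of $\min_{\alpha_p\in\t}\angle_o(E_p, F_{d-p})$ and Proposition~\ref{angles} replacing Proposition~\ref{cjtoparalelogeneral}. After translating, assume $0 \in I$ and $p_0 = o$, and choose $h_n \in G$ with $h_n\cdot o = p_n$; set $g_n \coloneqq h_{n+1}^{-1}h_n$. The quasi-geodesic upper bound together with \eqref{distvectG} yields $\|a(g_n)\|_o \leq \mu + c$, so $\{g_n\}$ stays in a fixed compact subset of $G$. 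Moreover, $a(g_m\cdots g_n) = a(h_{m+1}^{-1}h_n) = \aa(p_{m+1},p_n) \in \ii\scr C$ by $\scr C$-regularity and $\ii\aa(p,q) = \aa(q,p)$.

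Since $\scr C\cap\ker\alpha = \{0\}$ for every $\alpha\in\t_\scr C$ and $\scr C$ is closed, a compactness argument provides $\delta > 0$ with $\alpha(a) \geq \delta\|a\|_o$ for all $a\in\ii\scr C$ and $\alpha\in\ii\t_\scr C$. Combined with the quasi-geodesic lower bound, this gives $(\alpha\circ\ii)(a(g_m\cdots g_n)) \geq (\delta/\mu)(m-n+1) - \delta c$ for every $\alpha\in\t_\scr C$. Now for each such $\alpha$ apply the Tits representation $\L_\alpha\colon G\to\PSL(V_\alpha)$: by \eqref{eq:relaciontits} the gap $\log\sigma_1-\log\sigma_2$ of $\L_\alpha(g_m\cdots g_n)$ equals $\alpha(a(g_m\cdots g_n))$. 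Hence, for appropriate constants, each matrix sequence $\{\L_\alpha(g_n)\}_{n\in I}$ lies in $\cD(K,1,\mu',c',I)$ in the sense of subsection~\ref{ss.dom_sequences}.

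Applying Lemma~\ref{l.seq_splitting} to all $\L_\alpha$ simultaneously yields $\ell_1\in\N$ and $\delta_0>0$ such that whenever $m,-n\in I$ with $m,n\geq\ell_1$, for every $\alpha\in\t_\scr C$,
\[
\angle_{\L_\alpha o}\bigl(U_1(\L_\alpha(g_{m-1}\cdots g_0)),\,S_{\dim V_\alpha-1}(\L_\alpha(g_0\cdots g_{-n}))\bigr) > \delta_0.
\]
Subsection~\ref{procedure} identifies these subspaces with $\xi_{\L_\alpha}(U(o,p_m)^{\t_\scr C})$ and $\xi^*_{\L_\alpha}(U(o,p_{-n})^{\ii\t_\scr C})$; via~\eqref{minangulo} the lower bound on these angles translates into an upper bound on $\om_\alpha\bigl((U(o,p_m)^{\t_\scr C}\,|\,U(o,p_{-n})^{\ii\t_\scr C})_o\bigr)$ uniform in $\alpha$. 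Proposition~\ref{angles} then produces a constant $C>0$ (depending only on $\mu$, $c$, $\scr C$, $G$) with
\[
d_X\bigl(o,\,P(U(o,p_m)^{\t_\scr C},U(o,p_{-n})^{\ii\t_\scr C})\bigr) \leq C.
\]
By $G$-homogeneity, the analogous estimate holds centered at any $p_k$ whose $\pm\ell_1$ neighborhood lies in $I$.

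The three cases now proceed as in the proof of Theorem~\ref{teo:MorseSLd}. If $I=\Z$, Proposition~\ref{p.BG_sequences} (applied to each $\L_\alpha$) combined with the equivariance of the dominated splittings under $\vartheta$ gives well-defined limits $E \in \scr F_{\t_\scr C}$ and $F\in\scr F_{\ii\t_\scr C}$, in general position, such that every $p_k$ lies within distance $C$ of $P(E,F)$. If $I=\N$, only the forward flag $E = \lim_{m\to\infty}U(o,p_m)^{\t_\scr C}$ converges; using Lemma~\ref{l.seq_convergence} together with Lemmas~\ref{l.dominationattractor} and~\ref{l.Rafael} (translated to the present setting via each $\L_\alpha$) to transport the backward flag to basepoint $p_k$, one obtains proximity to the Weyl cone $V_{\t_\scr C}(p_{\min I},E)$. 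If $I$ is finite, both endpoint flags are used analogously to give proximity to a diamond. The main obstacle is the uniform translation between angle bounds in the various Tits representations $V_\alpha$ and a single Gromov-product bound in $X$, together with the bookkeeping needed in case $I=\N$ to control the change of basepoint; both are handled by the multi-representation formalism of subsection~\ref{plucker} and by routine application of the linear-algebraic lemmas from Appendix~\ref{s.appendix}.
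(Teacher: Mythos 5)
Your proposal is correct in structure and follows essentially the same strategy as the paper: normalize so $p_0 = o$, pass to the increments $g_n = h_{n+1}^{-1}h_n$, show they form a dominated sequence of matrices via irreducible representations, apply Lemma~\ref{l.seq_splitting} to get uniform angle bounds, translate these to a bound on the Gromov product, and invoke Proposition~\ref{angles} to bound the distance to the parallel set; the three cases are then treated as in Theorem~\ref{teo:MorseSLd}. The one genuine difference is that you pull back through the \emph{family} of Tits representations $\{\L_\alpha\}_{\alpha\in\t_\scr C}$, one per simple root, whereas the paper pulls back through a \emph{single} Pl\"ucker representation $\L$ associated to $\t_\scr C$ (Proposition~\ref{prop:plucker}) and notes that $\L\scr C$ avoids $\ker\alpha_1$ so that $\{\L p_n\}$ is already a regular quasi-geodesic in $X_V$. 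Your choice is natural since the Gromov product $(\mathord{\cdot}|\mathord{\cdot})_o$ is defined directly in terms of the Tits representations and~\eqref{minangulo} is then immediate; the paper's choice is more compact (one matrix sequence rather than $|\t_\scr C|$ of them) and leans on Remark~\ref{GromovyRep} to recover the Gromov product. Both lead to Proposition~\ref{angles} and are equivalent.

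One bookkeeping point to tidy up: you bound $(\alpha\circ\ii)\big(a(g_m\cdots g_n)\big)$ from below, because $a(g_m\cdots g_n) = \aa(p_{m+1},p_n) \in \ii\scr C$, but by~\eqref{eq:relaciontits} the \emph{top} gap $\log\sigma_1 - \log\sigma_2$ of $\L_\alpha(g_m\cdots g_n)$ equals $\alpha\big(a(g_m\cdots g_n)\big)$, not $(\alpha\circ\ii)$ of it. So the domination you actually obtain for the sequence $\{\L_\alpha(g_n)\}$ is at index $\dim V_\alpha - 1$ (equivalently, at index $1$ for the inverse sequence), and the relevant spaces in Lemma~\ref{l.seq_splitting} are then $S_1$ of the forward product and $U_{\dim V_\alpha-1}$ of the backward one (or interchange $\alpha$ and $\ii\alpha$ throughout, e.g.\ index your Tits representations by $\ii\t_\scr C$). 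This matches the paper's own bookkeeping in the proof of Theorem~\ref{teo:MorseSLd}, where the matrix sequence lies in $\cD(\cdot, d-p, \cdot, \cdot, I)$ for $\alpha_p \in \t_\scr C$. It is an innocuous shift of convention, but worth fixing so that the $U_1$/$S_{\dim V_\alpha-1}$ labels in your displayed angle estimate are consistent.
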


\begin{proof} Let $\L:G\to\PSL(V)$ be a Pl\"ucker representation associated to $\t_{\scr C}$. If $\{p_n\}$ is a $\scr C$-regular quasi geodesic then $\{\L p_n\}$ is a $\L\scr C$-regular quasi-geodesic. Moreover, equation (\ref{alpha}) implies that the cone $\L \scr C$ does not intersect the wall $\ker\alpha_1$. 

The proof now follows the lines of the proof of Theorem~\ref{teo:MorseSLd}, with the use of Proposition~\ref{angles}.
\end{proof}

\appendix

\section{Auxiliary technical results}\label{s.appendix}

In this appendix we collect a number of lemmas that are used elsewhere in the paper.
These lemmas are either quantitative linear-algebraic facts or properties of dominated splittings.
Certainly many of these results are known, but they do not necessarily appear in the literature in the exact form or setting that we need; therefore we include proofs for the reader's convenience.

\subsection{Angles, Grassmannians}\label{ss.grass}

The \emph{angle} between non-zero vectors
$v$, $w\in\R^d$ is defined as the unique number $\angle(v,w)$ in $[0,\pi]$
whose cosine is $\langle v,w \rangle / (\|v\| \, \|w\|)$.
If $P$, $Q \subset \R^d$ are nonzero subspaces then we define their \emph{angle} as:
\begin{equation}\label{e.ang}
\angle(P,Q) \coloneqq \min_{v \in P^\times} \min_{w \in Q^\times} \angle(v,w) \, ,
\end{equation}
where $P^\times \coloneqq P \smallsetminus \{0\}$.
We also write $\angle (v, Q)$ instead of $\angle(\R v, Q)$, if $v$ is a nonzero vector.

Given integers $1 \leq p < d$, we let $\Gr_p(\R^d)$ be the \emph{Grassmaniann} formed by the $p$-dimensional subspaces of $\R^d$. 
Let us metrize $\Gr_p(\R^d)$ in a convenient way.
If $p = 1$, the sine of the angle defines a distance. (We leave for the reader to check the triangle inequality.)
In general, we may regard each element of $\Gr_p(\R^d)$ as a compact subset of $\Gr_1(\R^d)$, and then use the Hausdorff metric induced by the distance on $\Gr_1(\R^d)$.
In other words, we define, for $P$, $Q \in \Gr_p(\R^d)$:
\begin{equation}\label{e.distGr0}
d(P,Q) \coloneqq \max \left\{ 
\max_{v \in P^\times} \min_{w \in Q^\times} \sin \angle(v,w), \ 
\max_{w \in Q^\times} \min_{v \in P^\times} \sin \angle(v,w) 
\right\} \, .
\end{equation}
Note the following trivial bound:
\begin{equation}\label{e.stupid_bound}
d(P,Q) \ge \sin \angle(P,Q) \, .
\end{equation}
In fact, the following stronger fact holds: for any subspace $\tilde Q \subset \R^d$ (not necessarily of dimension $p$),
\begin{equation}\label{e.stupid_bound2}
\tilde Q \cap Q \neq \{0\} \quad \Rightarrow \quad
d(P,Q) \ge \sin \angle(P, \tilde Q) \, .
\end{equation}
Actually the two quantities between curly brackets in the formula \eqref{e.distGr0} coincide, that is,
\begin{equation}\label{e.distGr}
d(P,Q) = \max_{w \in Q^\times} \min_{v \in P^\times} \sin \angle(v,w) 
	   = \max_{w \in Q^\times} \sin \angle(w,P) \, . 
\end{equation}
This follows from the existence of a isometry of $\R^d$ that interchanges $P$ and $Q$ (see \cite[Theorem~2]{Wong}).

As the reader can easily check, relation \eqref{e.distGr} can be rewritten in the following ways:
\begin{align}
d(P,Q) 
	& = \max_{w \in Q^\times} \max_{u \in (P^\perp)^\times} \cos \angle(u,w) \, , \label{e.distGr2}
\\
   & = \max_{w \in Q^\times} \min_{v \in P} \frac{\|v - w\|}{\|w\|} \, ,
\label{e.distGr3}
\end{align}
where $P^\perp$ denotes the orthogonal complement of $P$.
As a consequence of \eqref{e.ang} and \eqref{e.distGr2}, we have:
\begin{equation}\label{e.distGr4}
d(P,Q) = \cos \angle(P^\perp, Q) \, .
\end{equation}
Another expression for the distance $d(P,Q)$ is given below in \eqref{e.sin_tan}.

A finer description of the relative position of a pair of elements in the Grassmannian is given by the next proposition:

\begin{prop}[Canonical angles between a pair of spaces] \label{p.canonical}
Let $P$, $Q \in \grass_p(\R^d)$.
Then there exist:
\begin{itemize} 
\item numbers $\beta_1 \ge \beta_2 \ge \cdots \ge \beta_p \in [0,\pi/2]$, called \emph{canonical angles}, and
\item orthonormal bases $\{v_1, \dots, v_p\}$ and $\{w_1, \dots, w_p\}$ of $P$ and $Q$ respectively,
\end{itemize}
such that, for every $i$, $j \in \{1,\dots,p\}$,
$$
\angle(v_i,w_j) =
\begin{cases}
	\beta_i	&\text{if $i=j$,} \\
	\pi/2	&\text{if $i\neq j$,}
\end{cases}
$$
and moreover:
\begin{itemize}
\item if $p>d/2$ then $\beta_{d-p+1} = \beta_{d-p+2} = \cdots = \beta_p = 0$;
\item $d(P,Q) = \sin \beta_1$.
\end{itemize}
\end{prop}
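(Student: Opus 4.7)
The plan is to derive everything from the singular value decomposition of the orthogonal projection $A := \pi_Q|_P : P \to Q$, where $\pi_Q$ denotes the orthogonal projection onto $Q$. Since $\|\pi_Q v\| \le \|v\|$, the operator $A$ has all singular values in $[0,1]$. Applying SVD and then reversing the ordering, I would produce orthonormal bases $\{v_1,\dots,v_p\}$ of $P$ and $\{w_1,\dots,w_p\}$ of $Q$ and numbers $\cos\beta_1 \le \cos\beta_2 \le \cdots \le \cos\beta_p$ in $[0,1]$ (equivalently $\beta_1\ge\cdots\ge\beta_p$ in $[0,\pi/2]$) such that $A v_i = \cos(\beta_i)\, w_i$. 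Note that when $\beta_i = \pi/2$ the vector $w_i$ is not determined by this relation, but that is harmless: any orthonormal completion works.

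Next I would verify the two angle identities. Using that $\pi_Q$ is self-adjoint with $\pi_Q v_i = \cos(\beta_i) w_i$, for $i\neq j$ one has
$$
\langle v_i, w_j \rangle = \langle \pi_Q v_i, w_j \rangle = \cos(\beta_i)\langle w_i, w_j\rangle = 0,
$$
so $\angle(v_i,w_j) = \pi/2$. For $i=j$, $\langle v_i, w_i\rangle = \cos(\beta_i)$, so $\angle(v_i,w_i)=\beta_i$. The dimension statement when $p>d/2$ would follow from the inequality $\dim(P\cap Q) \ge 2p-d$: every unit vector in $P\cap Q$ is a fixed vector of $\pi_Q$, and hence an eigenvector of $A^*A$ with eigenvalue $1$. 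Thus at least $2p-d$ of the singular values of $A$ equal $1$; in our (reversed) ordering these correspond to indices $d-p+1,\dots,p$, giving $\beta_{d-p+1}=\cdots=\beta_p=0$.

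Finally, for $d(P,Q) = \sin\beta_1$, I would use the expression \eqref{e.distGr}, namely $d(P,Q) = \max_{w\in Q^\times} \sin\angle(w,P)$. Observing that $A^* = \pi_P|_Q$ and that $A^* w_i = \cos(\beta_i) v_i$, for a unit vector $w = \sum c_i w_i \in Q$ I compute
$$
\|\pi_P w\|^2 = \sum_{i=1}^p c_i^2 \cos^2\beta_i, \qquad \sin^2\angle(w,P) = 1 - \|\pi_P w\|^2 = \sum_{i=1}^p c_i^2 \sin^2\beta_i \le \sin^2\beta_1,
$$
with equality when $w=w_1$. This gives the claimed formula.

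No single step should be a real obstacle; the only point requiring some care is the ordering convention (the proposition lists angles in decreasing order, opposite to the usual SVD convention) and the handling of the case $\beta_i = \pi/2$, where the $w_i$'s are not uniquely forced by the SVD relation and must be chosen by an orthonormal completion on the kernel of $A$.
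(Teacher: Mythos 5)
Your proof is correct and follows exactly the approach the paper indicates: the paper simply cites Stewart's book and says "a simple application of the singular value decomposition," and your argument is precisely that SVD computation (applied to $\pi_Q|_P$) carried out in full. The details you supply — self-adjointness giving the orthogonality relations, the eigenvalue-$1$ observation for $P\cap Q$, and the maximization over $w\in Q$ via \eqref{e.distGr} — are all sound.
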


The proof of the proposition is a simple application of the singular value decomposition: see \cite[p.73]{Stewart}.

\subsection{More about singular values}

If $A \colon E \to F$ is a linear map between two inner product vector spaces then we define its \emph{norm} and its \emph{conorm} (or \emph{mininorm}) by:
$$
\|A\|  \coloneqq \max_{v \in E^\times} \frac{\|A v\|}{\|v\|} \, , \qquad
\mm(A) \coloneqq \min_{v \in E^\times} \frac{\|A v\|}{\|v\|} \, .
$$
The following properties hold whenever they make sense:
$$
\|AB\| \le \|A\| \, \|B\| \, , \qquad
\mm(AB) \ge \mm(A) \, \mm(B) \, , \qquad
\mm(A)  =  \| A^{-1}\|^{-1} \, .
$$
In terms of singular values, we have $\|A\|  =  \sigma_1(A)$ and $\mm(A)  =  \sigma_d(A)$, where $d  =  \dim E$.

For convenience, let us assume that $A$ is a linear map from $\R^d$ to $\R^d$. 
We have the following useful ``minimax'' characterization of the singular values:
\begin{align}
\sigma_p(A)     & =  \max_{P \in \Gr_p(\R^d)} \mm(A|_P) \, ,      \label{e.maxmin}
\\ 
\sigma_{p+1}(A) & =  \min_{Q \in \Gr_{d-p}(\R^d)} \| A|_Q \| \, ; \label{e.minmax}
\end{align}
see \cite[Corol.~4.30]{Stewart}.
Moreover, if $A$ has a gap of index $p$ (that is, $\sigma_p(A) > \sigma_{p+1}(A)$)
then the maximum and the minimum above are respectively attained (uniquely) at the spaces $P  =  S_{d-p}(A)^\perp$ and $Q = S_{d-p}(A)$ (defined at \S~\ref{ss.dom_sing}).

\subsection{Linear-algebraic lemmas}\label{ss.la_lemmas}

In this subsection we collect a number of estimates that will be useful later.
Fix integers $1 \le p \le d$.

\begin{lem}\label{l.sing_value_change}
Let $A$, $B \in \GL(d,\R)$. 
Then:
$$
\max \big\{ \mm(A) \sigma_p(B), \sigma_p(A) \mm(B) \big\}
\le \sigma_p(AB) \le 
\min \big\{ \|A\| \sigma_p(B) , \sigma_p(A) \|B\|  \big\} \, .
$$
\end{lem}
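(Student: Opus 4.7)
The plan is to derive all four inequalities as direct consequences of the ``maxmin'' characterization $\sigma_p(C) = \max_{P \in \Gr_p(\R^d)} \mm(C|_P)$ recorded in \eqref{e.maxmin}, together with the basic submultiplicativity of norm and conorm. I would handle the upper and lower bounds in parallel, since each pair comes from pushing the characterization forward or backward through $B$ (or $A$).

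For the two upper bounds: fix an arbitrary $P \in \Gr_p(\R^d)$. On one hand, $\|ABv\| \le \|A\| \, \|Bv\|$ for every $v \in P^\times$, so taking the infimum over $v \in P^\times$ of the ratio $\|ABv\|/\|v\|$ yields $\mm((AB)|_P) \le \|A\| \, \mm(B|_P)$; maximizing over $P$ and invoking \eqref{e.maxmin} on both sides gives $\sigma_p(AB) \le \|A\| \, \sigma_p(B)$. On the other hand, set $Q \coloneqq B(P) \in \Gr_p(\R^d)$; for $v \in P^\times$, write $w \coloneqq Bv \in Q^\times$, and use $\|v\| \ge \|w\|/\|B\|$ to obtain $\|ABv\|/\|v\| \le \|B\| \, \|Aw\|/\|w\|$. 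Infimizing yields $\mm((AB)|_P) \le \|B\| \, \mm(A|_Q) \le \|B\| \, \sigma_p(A)$, and then maximizing over $P$ gives $\sigma_p(AB) \le \|B\| \, \sigma_p(A)$.

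For the two lower bounds: select witnesses of the maxima. First, pick $P_0 \in \Gr_p(\R^d)$ realizing $\mm(B|_{P_0}) = \sigma_p(B)$; then for $v \in P_0^\times$, $\|ABv\| \ge \mm(A) \, \|Bv\| \ge \mm(A)\sigma_p(B)\|v\|$, so $\sigma_p(AB) \ge \mm((AB)|_{P_0}) \ge \mm(A)\sigma_p(B)$. Second, pick $Q_0 \in \Gr_p(\R^d)$ realizing $\mm(A|_{Q_0}) = \sigma_p(A)$; since $B$ is invertible, $P_0 \coloneqq B^{-1}(Q_0) \in \Gr_p(\R^d)$, and for $v \in P_0^\times$, putting $w \coloneqq Bv \in Q_0$ we have $\|w\| \ge \mm(B)\|v\|$ and thus $\|ABv\| = \|Aw\| \ge \sigma_p(A)\|w\| \ge \sigma_p(A)\mm(B)\|v\|$, giving $\sigma_p(AB) \ge \sigma_p(A)\mm(B)$.

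There is really no serious obstacle here: every step is a one-line manipulation once \eqref{e.maxmin} is in place. The only mildly delicate point is to remember, in the second lower bound, that invertibility of $B$ is what guarantees $B^{-1}(Q_0)$ is again $p$-dimensional (and similarly one uses that $B(P)$ has dimension $p$ in the upper bound); otherwise one would have to argue separately about the kernel. Since the hypothesis places $A$ and $B$ in $\GL(d,\R)$, this is automatic, so the argument goes through cleanly.
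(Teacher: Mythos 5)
Your proof is correct. Every step is valid: in the two upper bounds, you bound $\mm((AB)|_P)$ for an arbitrary $p$-plane $P$ and then maximize, and in the two lower bounds you exhibit a witness plane; invertibility of $A$ and $B$ is used exactly where you flag it, namely to guarantee that $B(P)$ and $B^{-1}(Q_0)$ remain $p$-dimensional.

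The only (very minor) divergence from the paper is in the source of the upper bound. The paper's one-line proof invokes \emph{both} variational characterizations of the singular values — the ``maxmin'' identity \eqref{e.maxmin} for the lower bound and the dual ``minmax'' identity \eqref{e.minmax} for the upper bound — so each half of the lemma is read off from the characterization that directly witnesses the extremum in question. You instead derive all four inequalities from \eqref{e.maxmin} alone, proving the upper bound by showing $\mm((AB)|_P)\le \|A\|\,\mm(B|_P)$ (resp.\ $\le\|B\|\,\mm(A|_{B(P)})$) for \emph{every} $P$ and only then maximizing. Both arguments are equally short; the paper's is the more symmetric use of the two dual characterizations, while yours is marginally more economical in that \eqref{e.minmax} is never needed. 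Neither route gains or loses anything substantive here, so this is a matter of taste.
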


\begin{proof}
The two inequalities follow from \eqref{e.maxmin} and \eqref{e.minmax}, respectively.
\end{proof}

\begin{lem}\label{l.pitagoras}
Let $A \in \GL(d,\R)$ have a gap of index $p$.
Then, for all unit vectors $v$, $w \in \R^d$, we have: 
\begin{align}
\| A v \|      &\ge \sigma_p(A) \sin \angle (v, S_{d-p}(A))      \, , \label{e.pit1} \\
\| A^{-1} w \| &\ge \sigma_{p+1}(A)^{-1} \sin \angle (w, U_p(A)) \, . \label{e.pit2}
\end{align}
Also, for all $Q \in \Gr_{d-p}(\R^d)$ and $P \in \Gr_p(\R^d)$, we have:
\begin{align}
\|A|_Q\|		&\ge \sigma_p(A)			\, d(Q, S_{d-p}(A)) \, , \label{e.pit3} \\
\|A^{-1}|_P\|	&\ge \sigma_{p+1}(A)^{-1}	\, d(P, U_p(A))     \, . \label{e.pit4}
\end{align}
\end{lem}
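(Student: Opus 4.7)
The four inequalities are really four variations of one Pythagorean estimate, so the plan is to establish \eqref{e.pit1} carefully and then derive the others by formal manipulations.

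For \eqref{e.pit1}, I would decompose the unit vector $v$ orthogonally as $v = v_1 + v_2$ with $v_1 \in S_{d-p}(A)^\perp$ and $v_2 \in S_{d-p}(A)$, so that $\|v_1\| = \sin \angle(v, S_{d-p}(A))$. The key observation is that $A$ sends the orthogonal pair $\bigl(S_{d-p}(A)^\perp, S_{d-p}(A)\bigr)$ to the orthogonal pair $\bigl(U_p(A), U_p(A)^\perp\bigr)$, using the identities $U_p(A)^\perp = A(S_{d-p}(A))$ and $S_{d-p}(A)^\perp = A^{-1}(U_p(A))$ recorded in \S~\ref{ss.dom_sing}. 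Consequently $Av_1 \perp Av_2$, and Pythagoras gives $\|Av\|^2 \ge \|Av_1\|^2$. Since $\sigma_p(A) = \mm\bigl(A|_{S_{d-p}(A)^\perp}\bigr)$ by the minimax characterization \eqref{e.maxmin} (the maximum there being attained at $S_{d-p}(A)^\perp$), we conclude $\|Av_1\| \ge \sigma_p(A) \|v_1\|$, and \eqref{e.pit1} follows.

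Inequality \eqref{e.pit2} then follows from \eqref{e.pit1} applied to $A^{-1}$ at index $d-p$: the formulas $\sigma_{d-p}(A^{-1}) = \sigma_{p+1}(A)^{-1}$ (singular values are reversed and reciprocated under inversion) and $S_p(A^{-1}) = U_p(A)$ (a consequence of the definition $S_{d-p}(A) \coloneqq U_{d-p}(A^{-1})$) translate the statement into \eqref{e.pit2}. For \eqref{e.pit3}, I take the supremum of \eqref{e.pit1} over unit vectors $v \in Q$: the left-hand side becomes $\|A|_Q\|$, while the right-hand side becomes $\sigma_p(A)\cdot\max_{v\in Q,\,\|v\|=1} \sin \angle(v, S_{d-p}(A))$. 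Because $Q$ and $S_{d-p}(A)$ both have dimension $d-p$, this maximum equals $d(Q, S_{d-p}(A))$ by the symmetric-form identity recorded after \eqref{e.distGr}. Finally, \eqref{e.pit4} is deduced from \eqref{e.pit3} by the same inversion trick used for \eqref{e.pit2}.

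The only real difficulty is notational bookkeeping: keeping track of how the indices and the spaces $U_p$, $S_{d-p}$ transform under $A \mapsto A^{-1}$. Once the dictionary $\sigma_i(A^{-1}) = \sigma_{d-i+1}(A)^{-1}$, $U_p(A^{-1}) = S_p(A)$, $S_{d-p}(A^{-1}) = U_{d-p}(A)$ is laid out explicitly, the four inequalities reduce to \eqref{e.pit1} plus one-line symmetry arguments.
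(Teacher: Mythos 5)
Your proof is correct and follows essentially the same strategy as the paper's: the orthogonal decomposition $v = v_1 + v_2$ along $S_{d-p}(A)^\perp \oplus S_{d-p}(A)$, the observation that $A$ carries this orthogonal decomposition to the orthogonal decomposition $U_p(A) \oplus U_p(A)^\perp$ (so Pythagoras applies after applying $A$), and the identification $\mm(A|_{S_{d-p}(A)^\perp}) = \sigma_p(A)$. The paper is terser — it simply asserts that \eqref{e.pit3} follows from \eqref{e.pit1} and that \eqref{e.pit2}, \eqref{e.pit4} follow by passing to $A^{-1}$ — so your spelling out of the supremum argument for \eqref{e.pit3} and the index bookkeeping for the inversion is a welcome expansion, not a deviation. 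One small caution on the closing ``dictionary'': the entries you should rely on are $U_{d-p}(A^{-1}) = S_{d-p}(A)$ and $S_p(A^{-1}) = U_p(A)$, which are the quantities actually defined when $A$ has a gap of index $p$; the entries $U_p(A^{-1})$ and $S_{d-p}(A^{-1})$ in your summary would additionally require $A$ to have a gap of index $d-p$, which the lemma does not assume. Your in-line derivations of \eqref{e.pit2} and \eqref{e.pit4} already use the correct quantities, so this is only a cosmetic slip in the recap.
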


\begin{proof}
Given a unit vector $v \in \R^d$, decompose it as $v  =  s+u$ where $s \in S_{d-p}(A)$ and $u \in S_{d-p}(A)^\perp$;
Then $\|u\|  =   \sin \angle (v, S_{d-p}(A))$.
Moroever, since $As$ and $Au$ are orthogonal, we have $\|A v\| \ge \|A u\| \ge \sigma_p(A) \|u\|$.
This proves \eqref{e.pit1}, from which \eqref{e.pit3} follows.
Inequalities \eqref{e.pit2} and \eqref{e.pit4} follow from the previous ones applied to the matrix $A^{-1}$, which has a gap of index $d-p$.
\end{proof}

The next three lemmas should be thought of as follows:
if $A$ has a strong gap of index $p$ and $\|B^{\pm 1}\|$ are not too large, then 
then $U_p(AB)$ is close to $U_p(A)$, and $U_p(BA)$ is close to $B(U_p(A))$;
moreover, $A(P)$ is close to $U_p(A)$ for any $P \in \Gr_{p}(\R^d)$ whose angle with $S_{d-p}(A)$ is not too small.\footnote{We remark that angle estimates of this flavor appear in the usual proofs os Oseledets theorem; see e.g.\ \cite[p.\ 141--142]{Simon}. We also note that \cite[Lemma 5.8]{GGKW} contains a generalization of Lemmas \ref{l.nochangeright} and \ref{l.dominationattractor} to more general Lie groups.}

\begin{lem}\label{l.nochangeright}
Let $A$, $B \in \GL(d,\R)$.
If $A$ and $AB$ have gaps of index~$p$ then:
$$
d(U_p(A),U_p(AB)) \leq \|B\| \, \|B^{-1}\| \, \frac{\sigma_{p+1}(A)}{\sigma_p(A)}. 
$$
\end{lem}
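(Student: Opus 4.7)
\medskip\noindent\textbf{Proof plan for Lemma \ref{l.nochangeright}.}
The plan is to use the expression \eqref{e.distGr} for the Grassmannian distance, namely
$$
d(U_p(A),U_p(AB)) \;=\; \max_{w \in U_p(AB)^{\times}} \sin\angle\bigl(w,U_p(A)\bigr),
$$
and bound the right-hand side uniformly. The key observation is the identity $U_p(AB)=A(Q)$, where
$$
Q \;\coloneqq\; B\bigl(S_{d-p}(AB)^{\perp}\bigr) \;\in\; \Gr_{p}(\R^d),
$$
which follows from $S_{d-p}(AB)^{\perp}=(AB)^{-1}(U_p(AB))$ recorded in \S\ref{ss.dom_sing}. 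So every unit $w\in U_p(AB)$ can be written as $w=Av/\|Av\|$ for some nonzero $v\in Q$.

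Next I would split $v$ along the $A$-orthogonal decomposition $v=v_{1}+v_{2}$ with $v_{1}\in S_{d-p}(A)^{\perp}$ and $v_{2}\in S_{d-p}(A)$. By the properties recalled in \S\ref{ss.dom_sing}, $Av_{1}\in U_p(A)$ and $Av_{2}\in U_p(A)^{\perp}$, so these two components are orthogonal and
$$
\sin\angle\bigl(Av,\,U_p(A)\bigr) \;=\; \frac{\|Av_{2}\|}{\|Av\|}.
$$
For the numerator, the definition of $\sigma_{p+1}(A)$ (together with $v_2 \in S_{d-p}(A)$) gives
$\|Av_{2}\|\le\sigma_{p+1}(A)\,\|v_{2}\|\le\sigma_{p+1}(A)\,\|v\|$.

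For the denominator, I would write $v=Bu$ with $u\in S_{d-p}(AB)^{\perp}$, which is the $p$-plane on which $AB$ has the largest minimum expansion (cf.\ \eqref{e.maxmin}); therefore
$$
\|Av\| \;=\; \|(AB)u\| \;\ge\; \sigma_p(AB)\,\|u\| \;\ge\; \frac{\sigma_p(AB)}{\|B\|}\,\|v\|.
$$
Finally, Lemma~\ref{l.sing_value_change} bounds $\sigma_p(AB)\ge \sigma_p(A)\,\mm(B)=\sigma_p(A)/\|B^{-1}\|$. Combining these three estimates:
$$
\sin\angle(w,U_p(A)) \;\le\; \frac{\sigma_{p+1}(A)\,\|v\|}{\sigma_p(A)\|B\|^{-1}\|B^{-1}\|^{-1}\|v\|} \;=\; \|B\|\,\|B^{-1}\|\,\frac{\sigma_{p+1}(A)}{\sigma_p(A)}.
$$
Taking the maximum over $w\in U_p(AB)$ yields the claim.

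The argument is essentially linear-algebraic; no real obstacle arises once the identity $U_p(AB)=A(B(S_{d-p}(AB)^{\perp}))$ is exploited to transport the problem back to a single decomposition in terms of the singular directions of $A$. The only mildly delicate point is arranging the denominator: one must resist the temptation to estimate $\|Av\|$ through $\sigma_p(A)\sin\angle(v,S_{d-p}(A))$ (which goes through \eqref{e.pit1} and would require controlling an angle that is not available), and instead exploit that $v$ lies in the image under $B$ of a subspace where $AB$ is well-expanded.
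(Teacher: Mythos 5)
Your proof is correct and runs along the same lines as the paper's: both reduce to bounding $\sigma_{p+1}(A) \cdot \|A^{-1}|_{U_p(AB)}\|$ and then estimating that operator norm by $\|B\|\,\sigma_p(AB)^{-1}$ via the factorization $A^{-1}=B(AB)^{-1}$, finishing with Lemma~\ref{l.sing_value_change}. The paper cites \eqref{e.pit4} for the first reduction, whereas you inline its proof through the explicit orthogonal decomposition $v=v_1+v_2$ along $S_{d-p}(A)^\perp\oplus S_{d-p}(A)$, so the content is the same.
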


For another way to estimate the distance of $U_p(A)$ and $U_p(AB)$ that does not rely on ``smallness'' of $B$, see Lemma~\ref{l.Rafael} below.

\begin{proof}[Proof of Lemma~\ref{l.nochangeright}]
We have:
\begin{alignat*}{2}
d(U_p(AB),U_p(A)) 
&\le \sigma_{p+1}(A) \, \|A^{-1}|_{U_p(AB)}\| 					&\quad&\text{(by \eqref{e.pit4})} \\
&\le \sigma_{p+1}(A) \, \|B\| \, \|B^{-1}A^{-1}|_{U_p(AB)} \| \\
& =    \sigma_{p+1}(A) \, \|B\| \, \sigma_p(AB)^{-1}  \\
&\le \sigma_{p+1}(A) \, \|B\| \, \|B^{-1}\| \, \sigma_p(A)^{-1}	&\quad&\text{(by Lemma~\ref{l.sing_value_change}).}
\xqedhere{9mm}{\qed}
\end{alignat*}
\renewcommand{\qed}{}
\end{proof}

\begin{lem}\label{l.domination_implies_slow_change} 
Let $A$, $B \in \GL(d,\R)$.
If $A$ and $BA$ have gaps of index $p$ then:
$$
d\big( B(U_p(A)), U_p(BA) \big) 
\le \|B\| \, \|B^{-1}\| \, \frac{\sigma_{p+1}(A)}{\sigma_p(A)} \, .
$$
\end{lem}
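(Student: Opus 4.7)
The plan is to mimic the proof of Lemma~\ref{l.nochangeright} but applying the inequality \eqref{e.pit4} to the matrix $BA$ in place of $A$, and to the candidate subspace $P \coloneqq B(U_p(A))$ in place of $U_p(AB)$. From \eqref{e.pit4} we obtain
$$
d\big( B(U_p(A)), U_p(BA) \big) \le \sigma_{p+1}(BA) \, \big\|(BA)^{-1}\big|_{B(U_p(A))}\big\| \, .
$$
The bound $\sigma_{p+1}(BA) \le \|B\| \, \sigma_{p+1}(A)$ is immediate from Lemma~\ref{l.sing_value_change}, so the crux is to estimate the restricted operator norm $\big\|(BA)^{-1}\big|_{B(U_p(A))}\big\|$.

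For this, I would parametrize vectors in $B(U_p(A))$ as $v = Bw$ with $w \in U_p(A)$, so that $(BA)^{-1} v = A^{-1} B^{-1} B w = A^{-1} w$. Since $w \in U_p(A)$, the definition of $U_p$ together with the minimax characterization \eqref{e.maxmin} gives $\|A^{-1} w\| \le \sigma_p(A)^{-1} \|w\|$, while the denominator satisfies $\|Bw\| \ge \mm(B) \, \|w\| = \|B^{-1}\|^{-1} \, \|w\|$. Putting these together yields
$$
\big\|(BA)^{-1}\big|_{B(U_p(A))}\big\| = \sup_{w \in U_p(A)^\times} \frac{\|A^{-1}w\|}{\|Bw\|} \le \|B^{-1}\| \, \sigma_p(A)^{-1} \, .
$$
Combining the two estimates produces exactly the bound in the statement.

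No serious obstacle is expected: the argument is essentially a left-sided symmetrization of the previous lemma, and the key observation---that $A^{-1}$ realizes its best contraction rate precisely on $U_p(A)$---is built into the definition of that subspace. The only small care needed is making sure that $B$ does not contaminate the computation, which is handled transparently by the cancellation $B^{-1}B = \id$ on $U_p(A)$.
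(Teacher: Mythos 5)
Your proof is correct and follows essentially the same route as the paper: apply \eqref{e.pit4} to $(BA)^{-1}$ restricted to $B(U_p(A))$, bound $\sigma_{p+1}(BA)$ by $\|B\|\sigma_{p+1}(A)$ via Lemma~\ref{l.sing_value_change}, and then bound the restricted norm $\|(BA)^{-1}|_{B(U_p(A))}\|$ by $\|B^{-1}\|\,\sigma_p(A)^{-1}$ after cancelling $B^{-1}B$ on $U_p(A)$. Your explicit parametrization $v = Bw$ makes the last estimate a bit more transparent than the paper's terse chain, but the underlying argument is identical.
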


\begin{proof}
We have:
\begin{alignat*}{2}
d(B(U_p(A)), U_p(BA)) 
&\le \sigma_{p+1}(BA) \, \|A^{-1}B^{-1}|_{B(U_p(A))}\| 				 &\quad&\text{(by \eqref{e.pit4})} \\
&\le \|B\| \, \sigma_{p+1}(A) \, \|A^{-1}|_{U_p(A)} \| \, \|B^{-1}\| &\quad&\text{(by Lemma~\ref{l.sing_value_change})} \\
& =    \|B\| \, \sigma_{p+1}(A) \, \sigma_p(A)^{-1} \, \|B^{-1}\| \, . &&
\xqedhere{31mm}{\qed}
\end{alignat*}
\renewcommand{\qed}{}
\end{proof}

\begin{lem}\label{l.dominationattractor} 
Let $A \in \GL(d,\R)$ have a gap of index $p$.
Then, for all $P \in \Gr_{p}(\R^d)$ transverse to $S_{d-p}(A)$ we have:
$$
d(A(P), U_p(A)) \le \frac{\sigma_{p+1}(A)}{\sigma_p(A)} \, \frac{1}{\sin\angle(P,S_{d-p}(A))} \, .
$$
\end{lem}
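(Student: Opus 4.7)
The plan is to bound $\sin\angle(Av,U_p(A))$ pointwise for each nonzero $v\in P$, and then take the maximum, using the identity
$$d(A(P),U_p(A))\;=\;\max_{v\in P^{\times}} \sin\angle(Av,U_p(A))$$
from equation~\eqref{e.distGr}.

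The key observation I would exploit is that $A$ sends the orthogonal splitting $\R^d = S_{d-p}(A)^{\perp}\oplus S_{d-p}(A)$ to the orthogonal splitting $\R^d = U_p(A)\oplus U_p(A)^{\perp}$. Indeed, by the identities recalled in \S\ref{ss.dom_sing}, one has $A(S_{d-p}(A)^{\perp}) = U_p(A)$, and on the complement, $A(S_{d-p}(A)) = U_p(A)^{\perp}$. So if I write a given $v\in P^{\times}$ as $v = v_1 + v_2$ with $v_1\in S_{d-p}(A)^{\perp}$ and $v_2\in S_{d-p}(A)$, then $Av_1\in U_p(A)$ and $Av_2\in U_p(A)^{\perp}$ are orthogonal, and $Av_1$ is precisely the orthogonal projection of $Av$ onto $U_p(A)$. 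Consequently
$$\sin\angle(Av,U_p(A)) \;=\; \frac{\|Av_2\|}{\|Av\|}.$$

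Now I would estimate numerator and denominator separately. For the numerator, since $v_2 \in S_{d-p}(A)$, the minimax characterization \eqref{e.minmax} (or directly the definition of $S_{d-p}(A)$) gives $\|Av_2\|\le \sigma_{p+1}(A)\|v_2\|\le \sigma_{p+1}(A)\|v\|$, where the last inequality uses $v_1\perp v_2$. For the denominator, I apply the Pythagorean-style estimate \eqref{e.pit1} of Lemma~\ref{l.pitagoras}, which (after rescaling $v$ to be unit) gives $\|Av\|\ge \sigma_p(A)\|v\|\sin\angle(v,S_{d-p}(A))$. Combining,
$$\sin\angle(Av,U_p(A)) \;\le\; \frac{\sigma_{p+1}(A)}{\sigma_p(A)}\cdot\frac{1}{\sin\angle(v,S_{d-p}(A))}.$$

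Finally, since $v \in P$, we have $\angle(v,S_{d-p}(A))\ge \angle(P,S_{d-p}(A))$ by the definition \eqref{e.ang} of the angle between subspaces, so $\sin\angle(v,S_{d-p}(A))\ge\sin\angle(P,S_{d-p}(A))$ (both quantities lying in $[0,\pi/2]$ because the transversality hypothesis ensures the angle is positive). Taking the supremum over $v\in P^{\times}$ yields the desired bound. There is no real obstacle here; the only thing one has to be careful about is invoking the correct formula for the Grassmannian distance (namely \eqref{e.distGr}, which expresses $d(A(P),U_p(A))$ as a one-sided Hausdorff-type maximum over $A(P)$) rather than the symmetric definition \eqref{e.distGr0}, so that a pointwise bound on $\sin\angle(Av,U_p(A))$ suffices.
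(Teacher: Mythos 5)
Your proof is correct and is essentially the paper's proof with the auxiliary estimate \eqref{e.pit4} unrolled into an explicit pointwise computation: your orthogonal decomposition $v=v_1+v_2$ and the resulting bound $\sin\angle(Av,U_p(A))=\|Av_2\|/\|Av\|\le\sigma_{p+1}(A)\|v\|/\|Av\|$ is precisely what \eqref{e.pit4} encapsulates, and the remaining step (bounding $\|Av\|$ from below via \eqref{e.pit1}) is shared by both arguments. One small imprecision: \eqref{e.distGr} literally expresses $d(P,Q)$ as a maximum over $Q^\times$, not over $P^\times$, so your maximum over $A(P)^\times$ additionally uses the symmetry $d(P,Q)=d(Q,P)$, which is the remark made in the paper when passing from \eqref{e.distGr0} to \eqref{e.distGr}.
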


\begin{proof}
By \eqref{e.pit4},
$$
d(A(P), U_p(A)) 
\le \sigma_{p+1}(A) \, \|A^{-1}|_{A(P)}\| 
 =    \frac{\sigma_{p+1}(A)}{\mm(A|_P)} \, . 
$$
By \eqref{e.pit1}, $\mm(A|_P) \ge \sigma_p(A) \sin \angle (P, S_{d-p}(A))$, 
so the lemma is proved.
\end{proof}

The next lemma implies that the singular values of a product of matrices are approximately the products of the singular values, provided that certain angles are not too small:

\begin{lem}\label{l.no_cancellation}
Let $A$, $B \in \GL(d,\R)$.
Suppose that $A$ and $AB$ have gaps of index $p$.
Let $\alpha \coloneqq \angle \big( U_p(B), S_{d-p}(A) \big)$.
Then:
\begin{align*}
\sigma_p(AB)     &\ge (\sin \alpha) \, \sigma_p(A) \, \sigma_p(B) \, ,  		\\
\sigma_{p+1}(AB) &\le (\sin \alpha)^{-1} \, \sigma_{p+1}(A) \, \sigma_{p+1}(B) \, . 
\end{align*}
\end{lem}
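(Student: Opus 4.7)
The plan is to establish the first inequality by a direct application of the minimax characterization \eqref{e.maxmin} of $\sigma_p$, and then to deduce the second inequality from the first, applied to the inverses $B^{-1}$ and $A^{-1}$ at the complementary index $d-p$. Both steps reduce to the pointwise bound \eqref{e.pit1}, together with the identity $S_{d-p}(M)^\perp = M^{-1}(U_p(M))$ recalled in \S~\ref{ss.dom_sing} (which forces $B$ to have a gap of index $p$ in order for $\alpha$ to make sense in the first place).

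For the first inequality, I would take as test space $P \coloneqq S_{d-p}(B)^\perp$, which has dimension $p$ and, by the above identity, satisfies $B(P) = U_p(B)$. Given a unit vector $v \in P$, the angle $\angle(v, S_{d-p}(B))$ equals $\pi/2$, so \eqref{e.pit1} applied to $B$ yields $\|Bv\| \ge \sigma_p(B)$. Setting $w \coloneqq Bv/\|Bv\| \in U_p(B)$ and invoking \eqref{e.pit1} once more, this time for $A$, one obtains
\[
\|Aw\| \,\ge\, \sigma_p(A)\sin\angle(w,S_{d-p}(A)) \,\ge\, \sigma_p(A)\sin\alpha,
\]
the last inequality because $w \in U_p(B)$ and $\alpha = \angle(U_p(B),S_{d-p}(A))$. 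Multiplying the two estimates gives $\|ABv\| \ge (\sin\alpha)\,\sigma_p(A)\,\sigma_p(B)$; taking the infimum over unit $v \in P$ produces $\mm(AB|_P) \ge (\sin\alpha)\,\sigma_p(A)\,\sigma_p(B)$, and the minimax identity \eqref{e.maxmin} concludes.

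For the second inequality, I would apply the first inequality to the pair $(B^{-1}, A^{-1})$, whose product is $B^{-1}A^{-1} = (AB)^{-1}$, at the index $d-p$. The angle appearing in the hypothesis of the first inequality is
\[
\angle\bigl(U_{d-p}(A^{-1}),\, S_p(B^{-1})\bigr) \,=\, \angle\bigl(S_{d-p}(A),\, U_p(B)\bigr) \,=\, \alpha,
\]
by the very definitions $U_{d-p}(A^{-1}) = S_{d-p}(A)$ and $S_p(B^{-1}) = U_p((B^{-1})^{-1}) = U_p(B)$. The first inequality therefore gives $\sigma_{d-p}((AB)^{-1}) \ge (\sin\alpha)\,\sigma_{d-p}(A^{-1})\,\sigma_{d-p}(B^{-1})$. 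Converting each factor via $\sigma_i(M^{-1}) = \sigma_{d+1-i}(M)^{-1}$ and taking reciprocals yields exactly $\sigma_{p+1}(AB) \le (\sin\alpha)^{-1}\sigma_{p+1}(A)\,\sigma_{p+1}(B)$.

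The only subtle point, such as it is, is the bookkeeping of singular-subspace identities: identifying the correct test space $P = S_{d-p}(B)^\perp$ in the first inequality so that $B(P) = U_p(B)$, and matching up the $U$'s and $S$'s under inversion in the duality step. Once that is sorted out, the whole argument is essentially two applications of \eqref{e.pit1}.
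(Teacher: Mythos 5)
Your proof is correct and follows essentially the same route as the paper: the test space you pick, $S_{d-p}(B)^\perp$, is exactly $B^{-1}(U_p(B))$ (the space used in the paper's proof), and both the first step and the duality step reducing the second inequality to the first coincide with the paper's argument. The only difference is cosmetic — you unfold the conorm $\mm(AB|_P)$ into a vectorwise computation via two applications of \eqref{e.pit1}, whereas the paper first factors $\mm(AB|_P) \ge \mm(A|_{U_p(B)})\,\mm(B|_{B^{-1}(U_p(B))})$ and then evaluates each factor.
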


\begin{proof}
By \eqref{e.maxmin} we have: 
$$
\sigma_p(AB) \ge \mm(AB|_{B^{-1}(U_p(B))}) 
\ge \mm(A|_{U_p(B)}) \, \mm(B|_{B^{-1}(U_p(B))})
 =  \mm(A|_{U_p(B)}) \, \sigma_p(B) \, .
$$
On the other hand, inequality \eqref{e.pit1} yields $\mm(A|_{U_p(B)}) \ge (\sin \alpha) \, \sigma_p(A)$,
and so we obtain the first inequality in the lemma.
The second inequality follows from the first one, using the fact that $\sigma_{p+1}(A^{-1})  =  1 / \sigma_{d-p}(A)$.
\end{proof}

\subsection{A sketch of the proof of Theorem~\ref{t.BG}}\label{ss.sketch_BG}

Note that due to the uniqueness property of dominated splittings (Proposition~\ref{p.uniqueness}), it is sufficient to prove the theorem in the case $\mathbb{T} = \Z$, which is done in \cite{BG}. 
For the convenience of the reader, let us include here a summary of this proof, using some lemmas that we have already proved in \S~\ref{ss.la_lemmas}.

The ``only if'' part of the theorem is not difficult, so let us consider the ``if'' part.
So assume that the gap between the $p$-th and the $(p+1)$-th singular values of $\psi^n_x$ increases uniformly exponentially with time $n$.
Fix any $x\in X$, and consider the sequence of spaces $U_p \big( \psi^n_{\phi^{-n}(x)} \big)$ in $\Gr_p(E_x)$.
Using Lemma~\ref{l.nochangeright}, we see that the distance between consecutive elements of the sequence decreases exponentially fast, and in particular the sequence has a limit $E^\cu_x$. Uniform control on the speed of convergence yields that $E^\cu$ is a continuous subbundle of $E$.
Lemma~\ref{l.domination_implies_slow_change} implies that this subbundle is invariant.\footnote{This step is missing in \cite{BG}.}
Analogously we obtain the subbundle $E^\cs$.
 
To conclude the proof, we need to show that the bundles $E^\cu$ and $E^\cs$ are transverse, and that the resulting splitting is indeed dominated. Here, \cite{BG} uses an ergodic-theoretical argument: The gap between singular values implies that for any Lyapunov regular point $x$, the difference $\lambda_p(x) - \lambda_{p+1}(x)$ between the $p$-th and the $(p+1)$-th Lyapunov exponents is bigger than some constant $2\epsilon > 0$. Moreover, Oseledets theorem implies that $E^\cu_x$ and $E^\cs_x$ are sums of Oseledets spaces, corresponding to Lyapunov exponents $\ge \lambda_p(x)$ and $\le \lambda_{p+1}(x)$, respectively.
Bearing these facts in mind, assume for a contradiction that $E^\cu$ does not dominate $E^\cs$. Then there exist points $x_i \in X$, unit vectors $v_i \in E^\cs_{x_i}$ and $w_i \in E^\cu_{x_i}$, and times $n_i \to \infty$ such that 
$$
\frac{\|\psi^{n_i} (v_i) \|}{\|\psi^{n_i} (w_i) \|} > e^{-\epsilon n_i} \, .
$$
It follows from a Krylov--Bogoliubov argument\footnote{i.e.\ convergence (in the weak star topology) of measures supported in long segments of orbits to an invariant measure.} (making use of the continuity of the subbundles; see \cite{BG} for details) that there exists a Lyapunov regular point $x$ such that $\lambda_p(x) - \lambda_{p+1}(x) \le \epsilon$.
This contradiction establishes domination.

\subsection{Exterior powers and applications}\label{ss.ext}

We recall quantitative facts about exterior powers; all the necessary information can be found in \cite[\S~3.2.3]{Arnold}.

The space $\Wedge^p \R^d$ is endowed with the inner product defined on decomposable $p$-vectors as:
\begin{equation}\label{e.inner_pr_ext_pow}
\langle v_1 \wedge \cdots \wedge v_p ,  w_1 \wedge \cdots \wedge w_p \rangle 
\coloneq \det \big( \langle v_i , w_j \rangle \big)_{i,j=1,\dots,p}
\end{equation}
Geometrically (see \cite[\S~IX.5]{Gantmacher}), $\| v_1 \wedge \dots \wedge v_p \|$ is the $p$-volume of the parallelepiped with edges $v_1$, \dots, $v_p$.
Therefore, if those vectors span a $p$-dimensional space $P \subset \R^d$ then,
for any $A \in \GL(d,\R)$,
\begin{equation}\label{e.jac_norm}
\frac{\|(\Wedge^p A)( v_1 \wedge  \dots \wedge v_p )\|}{\| v_1 \wedge  \dots \wedge v_p \|} 
= \jac(A|_P),
\end{equation}
where $\jac(\mathord{\cdot})$ denotes the \emph{jacobian} of a linear map, i.e., the product of its singular values.

The \emph{Pl\"ucker embedding} is the map $\iota \colon \grass_p(\R^d) \to \grass_1(\Wedge^p \R^d)$ such if $P$ is spanned by vectors $v_1$, \dots, $v_p \in \R^d$ then $\iota(P)$ is spanned  by the vector $ v_1 \wedge  \dots \wedge v_p \in \Wedge^p \R^d$. We metrize $\grass_1(\Wedge^p \R^d)$ in the way described in Subsection~\ref{ss.grass}.

\begin{lem}\label{l.Plucker}
For all $P$, $Q \in \grass_p(\R^d)$ we have:
\begin{align}
d(\iota(P),\iota(Q)) &\ge d(P,Q) \, , \label{e.plucker_dist} \\
\sin \angle \big(\iota(P),(\iota(Q))^\perp \big) &\ge [\sin\angle(P,Q^\perp)]^{\min\{p,d-p\}} \, . \label{e.plucker_ang}
\end{align}
\end{lem}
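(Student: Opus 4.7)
The plan is to reduce both inequalities to elementary statements about the canonical angles via Proposition~\ref{p.canonical}. Let $\beta_1 \ge \cdots \ge \beta_p$ in $[0,\pi/2]$ be the canonical angles between $P$ and $Q$, and let $\{v_1,\dots,v_p\}$, $\{w_1,\dots,w_p\}$ be the associated orthonormal bases of $P$ and $Q$, so that $\langle v_i, w_j\rangle = \cos(\beta_i)\,\delta_{ij}$. Set $\omega_P \coloneqq v_1\wedge\cdots\wedge v_p$ and $\omega_Q \coloneqq w_1\wedge\cdots\wedge w_p$. By \eqref{e.inner_pr_ext_pow} applied to each of the pairs, these are unit vectors spanning $\iota(P)$ and $\iota(Q)$, and
$$
\langle \omega_P,\omega_Q\rangle = \det\bigl(\langle v_i,w_j\rangle\bigr) = \prod_{i=1}^p \cos\beta_i \, .
$$

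For inequality \eqref{e.plucker_dist}: since $\iota(P)$ and $\iota(Q)$ are lines in $\Wedge^p\R^d$, one has $d(\iota(P),\iota(Q)) = \sin\angle(\iota(P),\iota(Q)) = \sqrt{1-\prod_i \cos^2\beta_i}$. Using that each $\beta_i \le \beta_1$, we obtain $\prod_i \cos^2\beta_i \le \cos^2\beta_1$, whence $d(\iota(P),\iota(Q)) \ge \sin\beta_1 = d(P,Q)$ by the last statement of Proposition~\ref{p.canonical}.

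For inequality \eqref{e.plucker_ang}: because $\iota(P)$ is a line and $(\iota(Q))^\perp$ is the orthogonal complement of a line in $\Wedge^p\R^d$, the elementary identity relating the angle of a line to a hyperplane and its normal gives $\sin\angle(\iota(P),(\iota(Q))^\perp) = \cos\angle(\iota(P),\iota(Q)) = \prod_{i=1}^p \cos\beta_i$. On the other hand, identity \eqref{e.distGr4} (applied after swapping the roles of $P$ and $Q$) together with Proposition~\ref{p.canonical} give $\cos\angle(P, Q^\perp) = d(P,Q) = \sin\beta_1$, hence $\sin\angle(P,Q^\perp) = \cos\beta_1$. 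The target inequality thus reduces to
$$
\prod_{i=1}^p \cos\beta_i \ge (\cos\beta_1)^{\min\{p,d-p\}}.
$$
If $p \le d-p$, this is immediate since each $\cos\beta_i \ge \cos\beta_1$. If $p > d-p$, Proposition~\ref{p.canonical} forces $\beta_{d-p+1} = \cdots = \beta_p = 0$, so only $d-p$ factors are nontrivial and the same argument applies.

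There is no real obstacle: once the canonical-angle decomposition is invoked, both parts become elementary product estimates. The only minor care needed is in extracting $\sin\angle(P,Q^\perp) = \cos\beta_1$, which can alternatively be checked by a direct computation with the basis $\{v_i\}$ (the minimum of $\sum a_i^2\cos^2\beta_i$ over unit vectors $\sum a_i v_i \in P$ is attained at the coordinate with the largest $\beta_i$, namely $i=1$).
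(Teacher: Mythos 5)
Your proof is correct and follows essentially the same route as the paper's: introduce the canonical angles $\beta_1\ge\cdots\ge\beta_p$ via Proposition~\ref{p.canonical}, compute $\cos\angle(\iota(P),\iota(Q))=\prod_i\cos\beta_i$ from \eqref{e.inner_pr_ext_pow}, and then compare this product with $\cos\beta_1$ from both sides. One tiny infelicity: to get $\prod_i\cos^2\beta_i\le\cos^2\beta_1$ the relevant observation is that the factors with $i\ge2$ are each $\le1$ (so the product is at most its first factor), not that $\beta_i\le\beta_1$ — that monotonicity is what you correctly use in the \emph{opposite} direction for \eqref{e.plucker_ang}; the conclusion is right either way.
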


\begin{proof}
Let $\beta$, $\hat \beta \in [0,\pi/2]$ be such that:
$$
\sin \beta = d(P,Q) \quad \text{and} \quad \sin \hat\beta = d(\iota(P) , \iota(Q)) = \angle (\iota(P) , \iota(Q)) \, .
$$
Recalling \eqref{e.distGr4}, we have:
$$
\cos \beta = \angle(P,Q^\perp) \quad \text{and} \quad \cos \hat\beta = \angle(\iota(P), \iota(Q)^\perp) \, .
$$
Associated to the pair $P$, $Q$, Proposition~\ref{p.canonical} provides canonical angles $\beta_1 \ge \cdots \ge \beta_p$, where $\beta_1=\beta$, and canonical orthonormal bases $\{v_1,\dots,v_p\}$ and $\{w_1,\dots,w_p\}$ for $P$ and $Q$, respectively.
Using \eqref{e.inner_pr_ext_pow} we compute:
$$
\cos \hat{\beta} = \cos \angle (\iota(P) , \iota(Q)) = 
\langle v_1 \wedge \cdots \wedge v_p ,  w_1 \wedge \cdots \wedge w_p \rangle
= \prod_{i=1}^p \cos \beta_i \, .
$$
Then, on one hand, $\cos \hat{\beta} \le \cos \beta$, that is, $\sin \hat\beta \ge \sin \beta$, which is estimate \eqref{e.plucker_dist}.
On the other hand, since at most $n \coloneqq \min\{p,d-p\}$ canonical angles are nonzero, we have $\cos \hat\beta \ge (\cos \beta)^n$, which is estimate \eqref{e.plucker_ang}.
\end{proof}

Given $A \in \GL(d,\R)$, the singular values of $\Wedge^p A$ are the numbers $(\sigma_{i_1}(A) \cdots \sigma_{i_p}(A))$, where $1 \le i_1 < \cdots < i_p \le d$; in particular,
\begin{align}
\label{e.ext_sigma1}
\sigma_1 (\Wedge^p A) &= \sigma_1(A) \cdots \sigma_{p-1}(A) \sigma_p(A) \, ,\\ 
\label{e.ext_sigma2}
\sigma_2 (\Wedge^p A) &= \sigma_1(A) \cdots \sigma_{p-1}(A) \sigma_{p+1}(A) \, .
\end{align}
So $A$ has a gap of index $p$ if and only if $\Wedge^p A$ has a gap of index $1$.
In this case, the corresponding singular spaces are related via the Pl\"ucker embedding  $\iota \colon \grass_p(\R^d) \to \grass_1(\Wedge^p \R^d)$ as follows:
\begin{align}
\label{e.ext_U}
U_1(\Wedge^p A) &= \iota(U_p(A)) \, ,
\\
\label{e.ext_S}
S_{\binom{d}{p}-1}(\Wedge^p A) &= \big( \iota(S_{d-p}(A)^\perp) \big)^\perp \, .
\end{align}

The following result is used in the end of Section~\ref{s.morse}; it should be compared to Lemma~\ref{l.nochangeright}.

\begin{lem}\label{l.Rafael}
Suppose that $A$, $B$, and $AB \in \GL(d,\R)$ have gaps of index $p$, and that 
the spaces $S_{d-p}(A)$ and $U_p(B)$ are transverse.
Then:
$$
d \big( U_p(AB) , U_p(A) \big) \le \frac{\sigma_{p+1}(A)}{\sigma_p(A)}  \, \big[\sin \angle \big( U_p(B), S_{d-p}(A) \big) \big]^{-\min\{p,d-p\}} \, .
$$
\end{lem}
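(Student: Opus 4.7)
The plan is to reduce the general case to the case $p=1$ via the Pl\"ucker embedding $\iota \colon \Gr_p(\R^d) \to \Gr_1(\Wedge^p \R^d)$. The appearance of the exponent $\min\{p, d-p\}$ in the statement matches exactly the one in the Pl\"ucker angle inequality \eqref{e.plucker_ang}, which is a strong hint that this is the right approach.

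First I would establish the case $p = 1$. By \eqref{e.pit4}, $d(U_1(AB), U_1(A)) \le \sigma_2(A)\,\|A^{-1}|_{U_1(AB)}\|$. To control the last factor, I would observe that in dimension one the restriction $AB \colon S_{d-1}(AB)^\perp \to U_1(AB)$ is a scaling by $\sigma_1(AB)$, so any unit vector $w \in U_1(AB)$ has the form $w = AB u$ with $u \in S_{d-1}(AB)^\perp$ and $\|u\| = 1/\sigma_1(AB)$. Then $A^{-1} w = B u$, which yields $\|A^{-1}|_{U_1(AB)}\| \le \sigma_1(B)/\sigma_1(AB)$. Lemma~\ref{l.no_cancellation} supplies $\sigma_1(AB) \ge \sin\alpha \cdot \sigma_1(A)\,\sigma_1(B)$ with $\alpha = \angle(U_1(B), S_{d-1}(A))$, and combining gives $d(U_1(AB), U_1(A)) \le \frac{\sigma_2(A)}{\sigma_1(A)} \cdot \frac{1}{\sin\alpha}$, which matches the statement for $p=1$ since $\min\{1, d-1\} = 1$.

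For general $p$, \eqref{e.plucker_dist} together with \eqref{e.ext_U} yields
\[
d(U_p(AB), U_p(A)) \le d\big(\iota(U_p(AB)), \iota(U_p(A))\big) = d\big(U_1(\Wedge^p A \cdot \Wedge^p B),\, U_1(\Wedge^p A)\big).
\]
The gap hypotheses for $A$, $B$, $AB$ at index $p$ are equivalent to gap hypotheses at index $1$ for $\Wedge^p A$, $\Wedge^p B$, $\Wedge^p(AB)$, so the $p = 1$ case applies to the exterior power matrices. Identities \eqref{e.ext_sigma1}--\eqref{e.ext_sigma2} convert $\sigma_2(\Wedge^p A)/\sigma_1(\Wedge^p A)$ into $\sigma_{p+1}(A)/\sigma_p(A)$; \eqref{e.ext_U} gives $U_1(\Wedge^p B) = \iota(U_p(B))$; and \eqref{e.ext_S} identifies $S_{N-1}(\Wedge^p A)$ (with $N = \binom{d}{p}$) as $(\iota(S_{d-p}(A)^\perp))^\perp$. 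Finally the Pl\"ucker angle inequality \eqref{e.plucker_ang} applied with $P = U_p(B)$ and $Q = S_{d-p}(A)^\perp$ provides the lower bound $[\sin\angle(U_p(B), S_{d-p}(A))]^{\min\{p, d-p\}}$ on the relevant sine, completing the proof.

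The main obstacle, in my view, is pinpointing the right ingredient for the $p=1$ case: once Lemma~\ref{l.no_cancellation} is identified as the tool that bounds $\sigma_1(AB)$ from below in terms of the transversality of $U_1(B)$ with $S_{d-1}(A)$, the remaining arguments are a short linear-algebraic computation and a routine translation via exterior powers.
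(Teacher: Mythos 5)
Your proof is correct and follows essentially the same two-step strategy as the paper's own proof: establish the case $p=1$ via a direct linear-algebraic computation, then reduce the general case to $p=1$ using the Pl\"ucker embedding and identities \eqref{e.ext_sigma1}--\eqref{e.ext_S} together with \eqref{e.plucker_dist}--\eqref{e.plucker_ang}. The only cosmetic difference is in the $p=1$ step: you invoke Lemma~\ref{l.no_cancellation} as a black box to bound $\sigma_1(AB)$ from below, while the paper carries out the equivalent computation inline by pairing \eqref{e.pit1} and \eqref{e.pit2} and multiplying; the underlying estimate is the same.
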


\begin{proof}
First we consider the case $p=1$.
Let $v$ and $w$ be unit vectors spanning the spaces $B^{-1}A^{-1}U_1(AB)$ and $B^{-1}U_1(B)$, respectively.
By inequality \eqref{e.pit1},
$$
\frac{\|ABw\|}{\|Bw\|} 
\ge \sigma_1(A) \, \sin \angle (Bw , S_{d-1}(A))  
\, ,
$$
while by inequality \eqref{e.pit2},
$$
\frac{\|Bv\|}{\|ABv\|} 
= \frac{\|A^{-1}ABv\|}{\|ABv\|} 
\ge \sigma_2(A)^{-1} \, \sin \angle (ABv , U_1(B))  
\, .
$$
Multiplying the two inequalities, we have:
$$
\frac{\sigma_1(A)}{\sigma_2(A)} \, d (U_1(AB) , U_1(B))  \, \sin \angle (U_1(B) , S_{d-1}(A)) 
\le \frac{\|ABw\|}{\|ABv\|} \times \frac{\|Bv\|}{\|Bw\|} \le 1 \times 1 = 1 \, ,
$$
which is the desired inequality in the case $p=1$.

Now consider arbitrary $p$.
If $A$, $B$, and $AB$ have gaps of index $p$ then
the corresponding exterior powers have gaps of index $1$,
and by the previous case we have:
$$
d \big( U_1(\Wedge^p(AB)) , U_p(\Wedge^p A) \big) \le \frac{\sigma_2(\Wedge^p A)}{\sigma_1(\Wedge^p A)} \, \big[ \sin \angle\big( U_1(\Wedge^p B), S_{\binom{d}{p}-1}(\Wedge^p A) \big) \big]^{-1}
$$
By \eqref{e.ext_sigma1} and \eqref{e.ext_sigma2}, we have:
$$
\frac{\sigma_2(\Wedge^p A)}{\sigma_1(\Wedge^p A)} = \frac{\sigma_{p+1}(A)}{\sigma_p(A)} \, .
$$
By \eqref{e.ext_U} and \eqref{e.plucker_dist}, we have:
$$
d \big( U_1(\Wedge^p(AB)) , U_p(\Wedge^p A) \big) 
= d \big( \iota(U_p(AB)) , \iota(U_p(A)) \big)
\ge 
d \big( U_p(AB) , U_p(A) \big) \, ,
$$
On the other hand, by \eqref{e.ext_U}, \eqref{e.ext_S}, and \eqref{e.plucker_ang}, we have:
\begin{multline*}
\sin \angle\big( U_1(\Wedge^p B), S_{\binom{d}{p}-1}(\Wedge^p A) \big) =
\sin \angle\Big( \iota(U_p(B)) , \big( \iota(S_{d-p}(A)^\perp) \big)^\perp \Big) 
\\
\ge \big[\sin \angle \big( U_p(B), S_{d-p}(A) \big) \big]^{\min\{p,d-p\}} \, .
\end{multline*}
Combining the facts above we obtain the lemma.
\end{proof}

\subsection{Expansion on the Grassmannian}

The aim of this subsection is to prove the following lemma, which is used in Subsection~\ref{ss.coco}.

\begin{lem}\label{l.expand}
Given $\alpha > 0$, there exists $b>0$ with the following properties.
Let $A \in \GL(d,\R)$.
Suppose that $P \in \Gr_p(\R^d)$ and
$Q \in \Gr_{d-p}(\R^d)$ satisfy
\begin{equation}\label{e.bi_separation}
\min \{ \angle(P,Q), \angle(AP,AQ) \} \ge \alpha \, .
\end{equation}
Then there exists $\delta > 0$ such that if $P_1$, $P_2 \in \Gr_p(\R^d)$ are $\delta$-close to $P$
then
\begin{equation}\label{e.min_expansion}
d(A(P_1),A(P_2)) \ge b \, \frac{\mm(A|_Q)}{\| A|_P \|} \, d(P_1,P_2) \, .
\end{equation}
\end{lem}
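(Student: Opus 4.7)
The plan is to reduce the expansion statement on the Grassmannian to a clean linear-algebraic expansion estimate by introducing affine charts. Specifically, parametrize a neighborhood of $P$ in $\Gr_p(\R^d)$ by the linear maps $\phi \colon P \to Q$, via the graph construction $\phi \mapsto P_\phi \coloneqq \{v + \phi(v) \st v \in P\}$, and similarly parametrize a neighborhood of $AP$ by maps $\tilde\phi \colon AP \to AQ$. These charts are well-defined because the hypotheses $\angle(P,Q) \geq \alpha$ and $\angle(AP,AQ) \geq \alpha$ guarantee that $Q$ is a complement of $P$ and $AQ$ is a complement of $AP$. In these coordinates, the action of $A$ becomes the conjugation-type map $L(\phi) \coloneqq (A|_Q) \circ \phi \circ (A|_P)^{-1}$, since $A(P_\phi) = (AP)_{L(\phi)}$.

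The first key step is the algebraic estimate
$$
\| L(\phi) \| \geq \frac{\mm(A|_Q)}{\|A|_P\|} \, \|\phi\|.
$$
This I would establish by choosing $v \in P$ nearly realizing $\|\phi(v)\| = \|\phi\|\,\|v\|$ and evaluating $L(\phi)$ at the unit vector $w \coloneqq Av/\|Av\|$: since $(A|_P)^{-1} w = v/\|Av\|$, one has $\|L(\phi) w\| = \|A|_Q \phi(v)\|/\|Av\| \geq \mm(A|_Q)\|\phi\|/\|A|_P\|$. Applied to $\phi_1 - \phi_2$, this yields the expansion $\|L(\phi_1) - L(\phi_2)\| \geq (\mm(A|_Q)/\|A|_P\|)\,\|\phi_1 - \phi_2\|$.

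The second step is a bi-Lipschitz comparison between the operator norm on the chart $\Hom(P,Q)$ and the Grassmannian distance $d(P_\phi, P_\psi)$, with constants depending only on $\alpha$. When $Q = P^\perp$, Proposition~\ref{p.canonical} and a direct computation of canonical angles give $d(P_\phi, P) = \|\phi\|/\sqrt{1+\|\phi\|^2}$, and an analogous formula for $d(P_\phi, P_\psi)$, so the comparison is explicit for $\|\phi\|$, $\|\psi\|$ bounded. For general $Q$ with $\angle(P,Q) \geq \alpha$, one reduces to the orthogonal case by decomposing any $q \in Q$ along $P^\perp$ and $P$, noting that the $P^\perp$-component has norm at least $\sin(\alpha)\|q\|$; this introduces only a distortion factor depending on $\alpha$. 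The same comparison holds near $AP$ using $\angle(AP,AQ) \geq \alpha$. Combining the two comparisons with the expansion for $L$, and choosing $\delta$ small enough that $P_1$, $P_2$ both lie in the domain of the chart based at $P$, yields \eqref{e.min_expansion} with $b$ depending only on $\alpha$.

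The main technical obstacle will be the bi-Lipschitz comparison in the second step: one must check that the constants relating the chart norm to the intrinsic Grassmannian distance depend only on the angle lower bound $\alpha$, and not on the specific matrix $A$ or on the possibly very disparate magnitudes of $\|A|_P\|$ and $\mm(A|_Q)$. Once the comparison is secured with such uniform constants, and with the same uniformity for the chart near $AP$ (which is where the role of $\angle(AP,AQ) \geq \alpha$ enters), the expansion estimate for $L$ transfers to the Grassmannian and produces the required factor $b$.
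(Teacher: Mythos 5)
Your proposal is correct and uses essentially the same approach as the paper: graph coordinates for $p$-planes near $P$ and $AP$, the conjugation formula for how $A$ acts on these coordinates, the linear-algebraic norm estimate giving the factor $\mm(A|_Q)/\|A|_P\|$, and a bi-Lipschitz comparison (with constants depending only on $\alpha$) between the chart norm and the Grassmannian distance. The only difference is bookkeeping: you parametrize by graphs into $Q$ and $AQ$ and absorb the angle bound into the bi-Lipschitz comparison, whereas the paper first reduces to the case $Q = P^\perp$, $AQ = (AP)^\perp$ (via a change of inner product whose distortion is controlled by $\alpha$), and then invokes its Lemma~\ref{l.d_and_norm}.
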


Before proving this lemma, we need still another characterization of the distance \eqref{e.distGr} on the Grassmannian. 
Suppose that $P$, $Q \in \Gr_p(\R^d)$ satisfy $d(P,Q)<1$.
Then $Q \cap P^\perp  =  \{0\}$, and so there exists a unique linear map 
\begin{equation}\label{e.graph}
L_{Q,P} \colon P \to P^\perp
\quad \text{such that} \quad
Q  =  \big\{ v+ L_{Q,P}(v) \st v \in P \big\}\, .
\end{equation}
We have:
\begin{equation} \label{e.sin_tan}
\|L_{Q,P}\|  =  \frac{d(P,Q)}{\sqrt{1 - d(P,Q)^2}} \, ;
\end{equation}
indeed, letting $\theta \in [0,\pi/2)$ be such that $\sin \theta  =  d(P,Q)$, using \eqref{e.distGr3}
we conclude that $\|L_{Q,P}\|  =  \tan \theta$.

\begin{lem}\label{l.d_and_norm}
Let $P$, $P_1$, $P_2 \in \Gr_p(\R^d)$.
For each $i = 1$, $2$, assume that $d(P_i,P) < 1/\sqrt{2}$, and let $L_i  =  L_{P_i,P}$.
Then:
$$
d(P_1,P_2) \le \|L_1 - L_2\| \le 4 d(P_1,P_2) \, .
$$

\end{lem}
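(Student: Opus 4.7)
The plan is to exploit the natural bijections $\iota_i \colon P \to P_i$, $u \mapsto u + L_i(u)$, which realize $P_i$ as the graph of $L_i$, together with the characterization \eqref{e.distGr3} of the Grassmannian distance.

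For the lower bound, I would argue as follows. Given any nonzero $v \in P_1$, write $v = u + L_1(u)$ for a unique $u \in P$; note that $\|u\| \le \|v\|$. The vector $\iota_2(u) = u + L_2(u)$ lies in $P_2$ and satisfies $v - \iota_2(u) = L_1(u) - L_2(u) \in P^\perp$, so
\begin{equation*}
\frac{\|v - \iota_2(u)\|}{\|v\|} = \frac{\|(L_1-L_2)(u)\|}{\|v\|} \le \|L_1 - L_2\| \, .
\end{equation*}
Taking the supremum over $v$ and using \eqref{e.distGr3} yields $d(P_1,P_2) \le \|L_1 - L_2\|$.

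For the upper bound, the key computation is as follows. Fix a unit vector $u \in P$; I want to bound $\|(L_1-L_2)(u)\|$. Let $v_1 \coloneqq \iota_1(u) = u + L_1(u) \in P_1$ and let $v_2 \in P_2$ be the orthogonal projection of $v_1$ onto $P_2$. Since the minimum over $w \in P_2$ of $\|v_1 - w\|/\|v_1\|$ equals $\sin\angle(v_1, P_2)$ and this in turn is bounded above by $d(P_1,P_2)$ (again by \eqref{e.distGr3}), we get $\|v_1 - v_2\| \le \|v_1\| \, d(P_1,P_2)$. Now write $v_2 = u' + L_2(u')$ with $u' \in P$; then the $P$- and $P^\perp$-components of $v_1 - v_2$ are $u - u'$ and $L_1(u) - L_2(u')$ respectively, so each has norm at most $\|v_1 - v_2\|$. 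Therefore
\begin{equation*}
\|(L_1-L_2)(u)\| \le \|L_1(u) - L_2(u')\| + \|L_2\|\,\|u'-u\| \le (1 + \|L_2\|)\|v_1\| \, d(P_1,P_2).
\end{equation*}

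To finish, I will use the hypothesis $d(P_i,P) < 1/\sqrt{2}$ together with \eqref{e.sin_tan} to deduce $\|L_i\| < 1$ for $i=1,2$. This gives $\|v_1\|^2 = 1 + \|L_1(u)\|^2 \le 2$ and $1 + \|L_2\| < 2$, so the right-hand side is at most $2\sqrt{2} \, d(P_1,P_2) < 4\,d(P_1,P_2)$. Taking the supremum over unit $u \in P$ gives the claimed upper bound. I do not expect any serious obstacle: the whole argument is a straightforward graph/projection computation, and the only care required is in bookkeeping constants to obtain the explicit factor $4$.
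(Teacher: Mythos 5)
Your proof is correct and follows essentially the same route as the paper's: the lower bound uses the same graph-transport of a vector in $P_1$ to $P_2$ via $u \mapsto u + L_2(u)$, and the upper bound uses the same triangle-inequality split $L_1(u)-L_2(u) = (L_1(u)-L_2(u'))+(L_2(u')-L_2(u))$ after projecting $v_1 \in P_1$ to $v_2 \in P_2$, together with orthogonality of the $P$- and $P^\perp$-components and the bound $\|L_i\|<1$ from \eqref{e.sin_tan}. The only (cosmetic) difference is that you bound $\|v_1\|\le\sqrt{2}$ by orthogonality instead of $\|w_1\|\le 2$ by the triangle inequality, yielding the slightly sharper intermediate constant $2\sqrt{2}$, which of course still fits under the stated $4$.
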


\begin{proof}
Consider an arbitrary $u_1 \in P_1^\times$,
and write it as $u_1  =  v + L_1 v$ for some $v \in P^\times$.
By orthogonality, $\|u_1\| \ge \|v\|$.
Letting $u_2 \coloneqq v + L_2 v$, we have:
$$
\frac{\|u_1 - u_2\|}{\|u_1\|} \le \frac{\|L_1 v - L_2 v\|}{\|v\|} \le \|L_1 - L_2\|.
$$
Using \eqref{e.distGr3} we conclude that $d(P_1,P_2) \le \|L_1 - L_2\|$, which is the first announced inequality.

For each $i = 1$, $2$, we have $\|L_i\| \le 1$, 
as a consequence of \eqref{e.sin_tan} and the hypothesis $d(P_i,P) < 1/\sqrt{2}$.
Consider an arbitrary unit vector $v_1 \in P$.
Let $w_1 \coloneqq v_1 + L_1 v_1$, so $\|w_1\| \le 2$.
By \eqref{e.distGr3}, there exists $w_2 \in P_2^\times$ such that 
$$
\frac{\|w_2 - w_1\|}{\|w_1\|} \le d(P_1,P_2) \, .
$$
Let $v_2 \in P$ be such that $w_2  =  v_2 + L_2 v_2$.
By orthogonality, 
$$
\|w_1 - w_2\| \ge \max \{ {\|v_1-v_2\|}, {\|L_1 v_1 - L_2 v_2\|} \} \, , 
$$
so
\begin{align*}
\|L_1 v_1 - L_2 v_1 \| 
&\le \|L_1 v_1 - L_2 v_2 \| + \|L_2 v_2 - L_2 v_1\| \\
&\le \|L_1 v_1 - L_2 v_2 \| + \|v_2 - v_1\| \\
&\le 2 \|w_1-w_2\| \\
&\le 2 \|w_1\| d(P_1,P_2) \\
&\le 4 d(P_1,P_2) \, .
\end{align*}
Taking sup over unit vectors $v_1 \in P_1$ we obtain $\|L_1 - L_2 \| \le 4 d(P_1,P_2)$.
\end{proof}

\begin{proof}[Proof of Lemma~\ref{l.expand}]
First consider the case $\alpha  =  \pi/2$, so \eqref{e.bi_separation} means that 
$Q  =  P^\perp$ and $AQ  =  (AP)^\perp$.
Assume that $P_1$, $P_2$ are $\delta$-close to $P$, for some small $\delta>0$ to be chosen later. 
Recall notation \eqref{e.graph} and, for each $i = 1$, $2$, consider the linear maps $L_i \coloneqq L_{P_i,P}$ and $M_i \coloneqq L_{A P_i,A P}$,
which are well-defined since $\delta<1$ guarantees that $P_i \cap Q  =  \{0\}$.
These maps are related by
$L_i  =  (A^{-1}|_{AQ}) \circ  M_i \circ (A|_P)$.
As a consequence,
\[
\|L_1 - L_2\|  =  \big\| (A^{-1}|_{AQ}) \circ (M_1-M_2) \circ (A|_P) \big\|
\le \frac{\| A|_P \|}{\mm(A|_Q)} \, \|M_1 - M_2\| \, .
\]
Lemma~\ref{l.d_and_norm} entails:
$$
\|L_1 - L_2\| \ge d(P_1,P_2) \, .
$$
On the other hand, by \eqref{e.sin_tan} we have $\|L_i\| \le d(P_i,P) < \delta$, 
and therefore $\|M_i\| \le  \|A^{-1}\| \, \|L_i\| \, \|A\| < 1$, provided $\delta$ is chosen sufficiently small (depending on $A$).
Using \eqref{e.sin_tan} again we guarantee that $d(A P_i, A P) < 1/\sqrt{2}$. 
This allows us to apply Lemma~\ref{l.d_and_norm} and obtain:
$$
\|M_1 - M_2\| \ge \frac{1}{4} d(AP_1, AP_2) \, .
$$ Putting these three estimates together, we conclude that the lemma holds with $b = 1/4$, provided $\alpha  =  \pi/2$.

The general case can be reduced to the previous one by changes of inner products, whose effect on all involved quantities can be bounded by a factor depending only on $\alpha$. 
\end{proof}

\subsection{Additional lemmas about dominated splittings}

\subsubsection{More about domination of sequences of matrices}

Recall from \S~\ref{ss.dom_sequences} the definition of the sets $\cD(K,p,\mu,c,I)$.

\begin{lem}\label{l.seq_convergence}
Given $K>1$, $\mu>0$, and $c>0$, there exists $\ell \in \N$ and $\tilde{c}>c$
such that if $I \subset \Z$ is an interval 
and $(A_i)_{i\in I}$  is an element of $\cD(K,p,\mu,c,I)$, 
then the following properties hold:
\begin{enumerate}
\item\label{i.seq_U}	
If $n' < n < k$ all belong to $I$ and $k-n \ge \ell$ then:
$$
d \big( U_p(A_{k-1} \cdots A_{n+1} A_n) , \, U_p(A_{k-1} \cdots A_{n'+1} A_{n'}) \big)
< \tilde{c} e^{-\mu (k-n)} \, .
$$
\item\label{i.seq_S}	
If $k < m < m'$ all belong to $I$ and $m-k \ge \ell$ then: 
$$
d \big( S_{d-p}(A_{m-1} \cdots A_{k+1} A_k) , \, S_{d-p}(A_{m'-1} \cdots A_{k+1} A_k) \big)
< \tilde{c} e^{-\mu (m-k)} \, .
$$
\end{enumerate}
\end{lem}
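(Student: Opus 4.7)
The plan is to prove the two statements by a telescoping argument based on Lemma~\ref{l.nochangeright} and its natural analog for the subspaces $S_{d-p}$. The naive attempt of applying Lemma~\ref{l.nochangeright} directly to compare $U_p(A_{k-1}\cdots A_n)$ with $U_p(A_{k-1}\cdots A_n\cdot B)$ for $B = A_{n-1}\cdots A_{n'}$ would produce a factor $\|B\|\|B^{-1}\| \le K^{2(n-n')}$, which may be much larger than the domination gain $e^{-\mu(k-n)}$. The way around this is to move from $n$ down to $n'$ one step at a time, so that each ``perturbation matrix'' contributes only a bounded factor $K^2$ while the gap between $\sigma_p$ and $\sigma_{p+1}$ of the growing product also shrinks exponentially.

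Let me first choose $\ell \in \N$ so that $c e^{-\mu \ell} < 1$, which ensures that whenever the product under consideration has at least $\ell$ factors, it has a gap of index $p$ and so the spaces $U_p$ and $S_{d-p}$ are well-defined. For part~(\ref{i.seq_U}), the triangle inequality together with Lemma~\ref{l.nochangeright} applied with $A  =  A_{k-1}\cdots A_{j+1}$ and $B  =  A_j$ gives, for each $j$ with $k-j-1 \ge \ell$,
\[
d\big(U_p(A_{k-1}\cdots A_{j+1}),\, U_p(A_{k-1}\cdots A_{j+1}A_j)\big)
\le K^2 \, \frac{\sigma_{p+1}(A_{k-1}\cdots A_{j+1})}{\sigma_p(A_{k-1}\cdots A_{j+1})}
\le K^2 c\, e^{-\mu(k-j-1)} \, .
\]
Summing these contributions as $j$ runs from $n-1$ down to $n'$, the geometric series yields
\[
d\big(U_p(A_{k-1}\cdots A_n),\, U_p(A_{k-1}\cdots A_{n'})\big)
\le \sum_{j = n'}^{n-1} K^2 c\, e^{-\mu(k-j-1)}
\le \frac{K^2 c}{1-e^{-\mu}}\, e^{-\mu(k-n)},
\]
so it suffices to take $\tilde c \coloneqq \max\bigl\{c+1,\, K^2 c /(1-e^{-\mu})\bigr\}$.

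For part~(\ref{i.seq_S}) I will use the analogous estimate
\[
d(S_{d-p}(B),\, S_{d-p}(AB)) \le \|A\|\,\|A^{-1}\|\, \frac{\sigma_{p+1}(B)}{\sigma_p(B)} \, ,
\]
which is deduced from Lemma~\ref{l.nochangeright} via the identity $S_{d-p}(M) = U_{d-p}(M^{-1})$ and the relation $\sigma_i(M^{-1}) = \sigma_{d+1-i}(M)^{-1}$. Applying this with $B = A_{j-1}\cdots A_k$ and $A = A_j$, and then telescoping from $j  =  m$ to $j  =  m'-1$, one obtains, for every $k < m < m'$ with $m-k \ge \ell$,
\[
d\big(S_{d-p}(A_{m-1}\cdots A_k),\, S_{d-p}(A_{m'-1}\cdots A_k)\big)
\le \sum_{j = m}^{m'-1} K^2 c\, e^{-\mu(j-k)}
\le \frac{K^2 c}{1-e^{-\mu}}\, e^{-\mu(m-k)},
\]
which is exactly the required bound with the same $\tilde c$.

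The only conceptual step that requires care is the derivation of the $S_{d-p}$ analog of Lemma~\ref{l.nochangeright}; once that is in place, both parts follow from the same telescoping geometric-series argument. No subtle issue arises from the choice of $\ell$, since the only role of the threshold is to guarantee that every partial product appearing in the telescoping has a genuine gap of index $p$.
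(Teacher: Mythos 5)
Your argument is correct and coincides with the paper's proof: both choose $\ell$ so that $ce^{-\mu\ell}<1$, apply Lemma~\ref{l.nochangeright} one matrix at a time, sum the resulting geometric series, and derive part~(ii) by the same argument applied to inverses (the paper merely says ``the argument applied to $A^{-1}$'' where you spell out the $S_{d-p}$ version of Lemma~\ref{l.nochangeright}). The constants also agree up to an inessential $\max$ with $c+1$.
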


\begin{proof}
Given $K$, $\mu$, and $c$, let $\ell \in \N$ be such that $ce^{-\mu \ell} < 1$.
Fix $(A_n)_{n\in I} \in \cD(K,p,\mu,c,I)$ and $k \in I$.
If $n \in I$ satisfies $n \le k - \ell$ then the space $P_n \coloneqq U_p(A_{k-1} \cdots A_{n+1} A_n)$ is well-defined.
If $n-1 \in I$ then it follows from Lemma~\ref{l.nochangeright} 
that
$$
d(P_n, P_{n-1}) \le K^2 c e^{-\mu (k-n)} \, .
$$
Therefore, if $n' < n$ belongs to $I$ then
$$
d(P_n, P_{n'}) \le d(P_n, P_{n-1}) + d(P_{n-1}, P_{n-2}) + \cdots + d(P_{n'+1}, P_{n'}) 
\le \tilde{c} e^{-\mu (k-n)} \, .
$$
where $\tilde{c} \coloneqq K^2 c/(1-e^{-\mu})$.
This proves part~(\ref{i.seq_U}) of the lemma. The argument applied to $A^{-1}$ yields part~(\ref{i.seq_S}).
\end{proof}

The following ``extension lemma'' is useful to deduce one-sided results from two-sided ones.

\begin{lem}\label{l.seq_extension}
Given $K>1$, $\mu>0$, and $c>0$, there exists $c'\geq c$ such that every  
one-sided sequence $(A_n)_{n\in \N}$ in $\cD(K,p,\mu,c,\N)$ can be extended to a two-sided sequence $(A_n)_{n\in \Z}$ in $\cD(K,p,\mu,c',\Z)$. 
\end{lem}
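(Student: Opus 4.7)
The plan is to extend $(A_n)_{n\in\N}$ by setting $A_n \coloneqq B$ for all $n<0$, where $B$ is a single matrix carefully tailored to the one-sided sequence. First, Lemma~\ref{l.seq_convergence}(\ref{i.seq_S}) applied with $I = \N$ and $k = 0$ shows that the spaces $S_{d-p}(A_{m-1}\cdots A_0)$ converge, as $m\to\infty$, to a limit $E\in\Gr_{d-p}(\R^d)$; set $F \coloneqq E^{\perp}\in\Gr_p(\R^d)$ and define $B$ to act as the scalar $e^{\mu/2}$ on $F$ and as $e^{-\mu/2}$ on $E$, in the orthogonal decomposition $\R^d = F\oplus E$. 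A preliminary remark is that $K\ge e^{\mu/2}$: iterating Lemma~\ref{l.sing_value_change} bounds the ratio $\sigma_{p+1}/\sigma_p$ of any length-$m$ product in $\cD(K)$ from below by $K^{-2m}$, and comparing with the hypothesis $\le c\, e^{-\mu m}$ as $m\to\infty$ forces $K^{-2}\le e^{-\mu}$. Hence $B\in\cD(K)$, and a direct computation shows $U_p(B^k)=F$, $S_{d-p}(B^k)=E$, and $\sigma_{p+1}(B^k)/\sigma_p(B^k)=e^{-\mu k}$ for every $k\ge 1$.

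With this definition, the condition $\sigma_{p+1}/\sigma_p(A_m\cdots A_n) \le c'\, e^{-\mu(m-n+1)}$ splits into three cases. The case $0\le n\le m$ is the hypothesis and the case $n\le m\le -1$ is a pure power of $B$ with $c' = 1$. The substantial case is $n<0\le m$, where the product factors as $(A_m\cdots A_0)\,B^{-n}$. Applying Lemma~\ref{l.no_cancellation} with $A = A_m\cdots A_0$ and $B^{-n}$ in place of $B$, and noting that $U_p(B^{-n}) = F$, I obtain
\[
\frac{\sigma_{p+1}}{\sigma_p}\bigl((A_m\cdots A_0)\,B^{-n}\bigr)
\le \frac{1}{\sin^{2}\alpha_m}\cdot\frac{\sigma_{p+1}(A_m\cdots A_0)}{\sigma_p(A_m\cdots A_0)}\cdot e^{\mu n},
\]
where $\alpha_m \coloneqq \angle\bigl(F,\,S_{d-p}(A_m\cdots A_0)\bigr)$. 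The hypothesis bounds the middle factor by $c\,e^{-\mu(m+1)}$, so the right-hand side becomes $\sin^{-2}\alpha_m\cdot c\, e^{-\mu(m-n+1)}$, and the entire task reduces to a uniform lower bound on $\sin\alpha_m$.

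The main obstacle is that $\alpha_m$ admits no uniform lower bound across all $m\ge 0$: for small $m$, the subspace $S_{d-p}(A_m\cdots A_0)$ could lie arbitrarily close to $E = F^\perp$. I split the range of $m$ accordingly. For $m$ large, Lemma~\ref{l.seq_convergence}(\ref{i.seq_S}) gives $d\bigl(S_{d-p}(A_{m-1}\cdots A_0),E\bigr)\le\tilde c\,e^{-\mu m}$, and identity~\eqref{e.distGr4} translates this into $\cos\alpha_m = d\bigl(E,S_{d-p}(A_m\cdots A_0)\bigr)$; so I can pick $m_0 = m_0(K,\mu,c)$ ensuring $\sin\alpha_m\ge \sqrt{3}/2$ whenever $m\ge m_0$, which yields the bound $\tfrac{4c}{3}\,e^{-\mu(m-n+1)}$. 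For $0\le m< m_0$ I bypass Lemma~\ref{l.no_cancellation} and use the crude estimate $\sigma_{p+1}/\sigma_p\bigl((A_m\cdots A_0)\,B^{-n}\bigr) \le K^{2(m+1)}\, e^{\mu n}$ coming from Lemma~\ref{l.sing_value_change}, which is absorbed into a constant $K^{2m_0}\,e^{\mu m_0}$ depending only on $K,\mu,c$. Taking $c'$ to be the maximum of the constants from the three cases completes the argument.
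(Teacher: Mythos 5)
Your proof is correct and mirrors the paper's argument: both constructions introduce an auxiliary matrix $B$ whose $U_p$- and $S_{d-p}$-spaces are aligned with the limit $E$ of the spaces $S_{d-p}(A_{m-1}\cdots A_0)$, apply Lemma~\ref{l.no_cancellation} to products straddling index zero whose positive part is long enough for the transversality to kick in uniformly, and absorb the finitely many remaining short products via Lemma~\ref{l.sing_value_change}. One small bonus in your writeup is the explicit verification that such a $B$ actually lies in $\cD(K)$, via the observation that $K\ge e^{\mu/2}$ whenever $\cD(K,p,\mu,c,\N)\neq\emptyset$ --- a point the paper leaves implicit.
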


\begin{proof}
Fix $(A_n)_{n\in \N}$ in $\cD(K,p,\mu,c,\N)$. 
Let $Q_n \coloneqq S_{d-p} \big(A_{n-1} A_{n-2} \cdots A_0)$, which is defined for sufficiently large $n$.
By Lemma~\ref{l.seq_convergence}, the spaces $Q_n$ converge to some $Q \in \Gr_p(\R^d)$;
moreover, we can find some $n_0$ depending only on the constants $K$, $\mu$, $c$ (and not on the sequence of matrices) with the property that for all $n \ge n_0$
we have $d(Q_n,Q) < 1/\sqrt{2}$ or equivalently, by \eqref{e.distGr4}, $\angle(Q_n, Q^\perp) < \pi/4$.

Let $B$ be a matrix satisfying the following conditions:
$$
\|B^{\pm 1}\| \le K \, , \quad
\frac{\sigma_{p+1}}{\sigma_p}(B) < e^{-\mu} \, , \quad
B(Q)  =  Q  =  S_{d-p}(B) \, , \quad 
B(Q^\perp)  =  Q^\perp  =  U_p(B) \, .
$$
In particular, for all $m \ge 0$ we have $\frac{\sigma_{p+1}}{\sigma_p}(B^m) < e^{-\mu m}$.
Then, for all $n\ge n_0$, Lemma~\ref{l.no_cancellation} yields:  
$$
\frac{\sigma_{p+1}}{\sigma_p} (A_{n-1}\cdots A_0 B^m) \le 2 c e^{-\mu(n+m)} \, .
$$
As a consequence of Lemma~\ref{l.sing_value_change}, a similar estimate holds for $0 \le n<n_0$ with another constant replacing $2c$.
So if we set $A_n  =  B$ for all $n<0$ then the extended sequence $(A_n)_{n\in \Z}$ belongs to $\cD(K,p,\mu,c',\Z)$ for some suitable $c'$ depending only on $\mu$, $c$, and $n_0$.
\end{proof}

Combining the previous lemma with Proposition~\ref{p.BG_sequences}, we obtain:

\begin{cor}\label{c.seq_complement}
Given $K>1$, $\mu>0$, and $c>0$, there exist $\tilde c > 0$, $\tilde \mu > 0$, and $\alpha>0$ with the following properties.
For every one-sided sequence $(A_n)_{n\in \N}$ in $\cD(K,p,\mu,c,\N)$, 
the following limit exists:
$$
Q  =  \lim_{n\to \infty} S_{d-p}(A_{n-1}\cdots A_0) \, ,
$$
and moreover there exists $\tilde Q \in \Gr_{p}(\R^d)$ 
such that for every $n \ge 0$  
we have:
\begin{gather*}
\angle \big(A_{n-1}\cdots A_0 (\tilde Q) , A_{n-1}\cdots A_0 (Q) \big) \ge \alpha \, , \\
\frac{\| A_{n-1}\cdots A_0|_{Q} \|}{\mm (A_{n-1}\cdots A_0|_{\tilde Q})} < \tilde{c} e^{-\tilde{\mu} n} \, .
\end{gather*}
\end{cor}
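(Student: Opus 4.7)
The plan is to reduce the one-sided statement to the two-sided situation already handled by Proposition~\ref{p.BG_sequences} via the extension provided by Lemma~\ref{l.seq_extension}. Given a sequence $(A_n)_{n\in\N} \in \cD(K,p,\mu,c,\N)$, first I would invoke Lemma~\ref{l.seq_extension} to extend it to a two-sided sequence $(A_n)_{n\in\Z} \in \cD(K,p,\mu,c',\Z)$, where $c' = c'(K,p,\mu,c)$ does not depend on the particular sequence. Viewing this as a point $x \in \cD(K,p,\mu,c',\Z)$, Proposition~\ref{p.BG_sequences} produces a dominated splitting $E^\cu(x) \oplus E^\cs(x)$ for the shift cocycle $(\vartheta,A)$, with $\dim E^\cu = p$.

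Next I would define $Q \coloneqq E^\cs(x)$ and $\tilde Q \coloneqq E^\cu(x)$. Observe that the formula for $E^\cs$ in Proposition~\ref{p.BG_sequences} depends only on the forward coordinates of $x$, so $Q$ is well-defined purely from the original one-sided data and satisfies $Q = \lim_n S_{d-p}(A_{n-1}\cdots A_0)$ as required. By invariance of the splitting under the shift cocycle,
$$
A_{n-1}\cdots A_0(\tilde Q) = E^\cu(\vartheta^n x), \qquad A_{n-1}\cdots A_0(Q) = E^\cs(\vartheta^n x),
$$
for every $n \ge 0$.

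For the angle bound, I would use that the bundles $E^\cu$ and $E^\cs$ are continuous on the compact space $\cD(K,p,\mu,c',\Z)$ and pointwise transverse, so the angle between them is uniformly bounded below by some $\alpha = \alpha(K,p,\mu,c') > 0$, which via the equivariance above yields the first inequality. For the ratio estimate, apply the defining inequality \eqref{eq:dom1} of domination at the point $\vartheta^n x \mapsto \psi^0$… more precisely, taking the supremum over unit $v \in E^\cs$ and the infimum over unit $w \in E^\cu$ in \eqref{eq:dom1} gives
$$
\frac{\|\psi^n_x|_{E^\cs(x)}\|}{\mm(\psi^n_x|_{E^\cu(x)})} \le \tilde c\, e^{-\tilde \mu n},
$$
with constants $\tilde c, \tilde \mu$ depending only on the domination constants of the cocycle on $\cD(K,p,\mu,c',\Z)$, hence only on $K,p,\mu,c$. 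Since $\psi^n_x = A_{n-1}\cdots A_0$ by the definition of the shift cocycle, this is exactly the required bound.

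There is no substantive obstacle here: the work has been done in Lemma~\ref{l.seq_extension} (which supplies the backward extension without blowing up the constants) and in Proposition~\ref{p.BG_sequences} (which supplies the dominated splitting and the exponential gap). The only care needed is to check that the constants ultimately depend only on $(K,p,\mu,c)$ and not on the chosen extension, which is automatic since $c'$ is a function of these data alone, and to translate the fiberwise inequality for unit vectors in the definition of dominated splitting into the norm/conorm formulation used in the statement of the corollary.
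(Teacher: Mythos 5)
Your proof is correct and follows precisely the route the paper intends: the paper derives this corollary with the one-line remark ``Combining the previous lemma with Proposition~\ref{p.BG_sequences}, we obtain,'' which is exactly your argument of extending via Lemma~\ref{l.seq_extension}, reading off $Q = E^\cs$ and $\tilde Q = E^\cu$ from Proposition~\ref{p.BG_sequences}, and then using equivariance, compactness for the uniform angle, and the domination inequality for the ratio estimate. The only minor blemish is the garbled phrase ``at the point $\vartheta^n x \mapsto \psi^0$,'' but you immediately restate the inequality correctly, so the argument is sound.
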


\subsubsection{H\"older continuity of the bundles} 

\begin{teo}\label{teo-holder}   
Let $\phi^t : X \to X$ a Lipschitz flow on a compact metric space $X$ with $t\in \mathbb{T}$ and $E$ a vector bundle over $X$. Consider $\psi^t$ a be a $\beta$-H\"older linear flows over $\phi^t$ which admits a dominated splitting with constants $C, \lambda>0$. Then, if $\alpha < \beta$ and $e^{-\lambda} K^{\alpha} < 1$ where $K$ is a Lipschitz constant for $\phi^{1}$ and $\phi^{-1}$, then the maps $x \mapsto E^{\cu}(x)$ and $x \mapsto E^{\cs}(x)$ are $\alpha$-H\"older. 
\end{teo}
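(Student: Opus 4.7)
\smallskip

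The plan is to carry out a standard graph-transform (Hirsch--Pugh--Shub section) argument on the Grassmann bundle $\grass_p(E) \to X$, applied to the invariant section $x \mapsto E^\cu_x$; the case of $E^\cs$ is symmetric after time reversal. First, I would reduce to a local computation: since $X$ is compact and both $E$ and the Grassmann bundle are locally trivial, and since $\alpha$-H\"older continuity is a local property, it suffices to prove the estimate for pairs $(x,y)$ at distance less than some fixed $\delta_0 > 0$, working inside a single trivializing chart of $E$. In such a chart, write the time-one map as $\psi^1(x, v) = (\phi(x), A(x) v)$, where $\phi \coloneqq \phi^1$ is $K$-Lipschitz (as is $\phi^{-1}$) and $A \colon X \to \GL(d, \R)$ is $\beta$-H\"older.

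The next ingredient is contracting dynamics of the cocycle on the Grassmannian. From the dominated splitting one obtains a strictly $\psi^1$-invariant continuous field of multicones $\mathcal C(x) \subset \grass_p(E_x)$ of index $p$, containing $E^\cu_x$ in its interior (see Section \ref{s.multicones} and the proof of \cite[Theorem 2.6]{CroPo}), equipped with an adapted family of metrics $\rho_x$ for which the cocycle acts as a uniform fiber contraction:
\begin{equation*}
\rho_x\bigl(A(\phi^{-1}(x))\,P,\; A(\phi^{-1}(x))\,Q\bigr) \le e^{-\lambda}\, \rho_{\phi^{-1}(x)}(P, Q) \quad \text{for all } P, Q \in \mathcal C(\phi^{-1}(x)).
\end{equation*}
Define the graph transform $(Ts)(x) \coloneqq A(\phi^{-1}(x))\,s(\phi^{-1}(x))$ on continuous sections $s$ of $\mathcal C$; by construction, $E^\cu$ is a fixed point of $T$. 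A triangle-inequality estimate combining the fiber contraction above with the $\beta$-H\"older continuity of $A$ yields
\begin{equation*}
\rho_x\bigl(Ts(x),\; Ts(y)\bigr) \le e^{-\lambda}\,\rho_{\phi^{-1}(x)}\bigl(s(\phi^{-1}(x)),\, s(\phi^{-1}(y))\bigr) + L\,d\bigl(\phi^{-1}(x),\, \phi^{-1}(y)\bigr)^\beta,
\end{equation*}
where the second term comes from comparing $A(\phi^{-1}(x))$ and $A(\phi^{-1}(y))$ acting on a single plane and $L$ is a uniform constant coming from the Lipschitz action on the Grassmannian (see Lemma \ref{l.expand}). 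Dividing by $d(x,y)^\alpha$, using $d(\phi^{-1}(x), \phi^{-1}(y)) \le K\, d(x,y)$, and bounding $d(x,y)^{\beta - \alpha}$ by $\delta_0^{\beta - \alpha}$ (valid since $\alpha < \beta$ and $d(x,y) < \delta_0$), one obtains the affine estimate
\begin{equation*}
\mathrm{Hol}_\alpha(Ts) \le \bigl(e^{-\lambda} K^\alpha\bigr)\,\mathrm{Hol}_\alpha(s) + L',
\end{equation*}
with $L'$ depending only on the data.

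Since $e^{-\lambda} K^\alpha < 1$ by hypothesis, this is an affine contraction in the H\"older seminorm, with unique fixed value $H^* \coloneqq L'/(1 - e^{-\lambda} K^\alpha)$. Starting from any Lipschitz section $s_0 \in \mathcal C$ (for instance a constant section in the local trivialization), the iterates $T^n s_0$ have $\alpha$-H\"older constant uniformly bounded above by $H^*$ for all large $n$, while at the same time $(T^n s_0)(x) \to E^\cu_x$ uniformly: indeed $E^\cu$ is the unique attracting invariant section inside $\mathcal C$, as follows from the characterization $E^\cu_x = \lim_n \psi^n_{\phi^{-n}(x)}\,P$ for any $P \in \mathcal C(\phi^{-n}(x))$ (a consequence of Theorem \ref{t.BG} together with Lemma \ref{l.dominationattractor}). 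Since $\alpha$-H\"older estimates are preserved under pointwise limits, $E^\cu$ is $\alpha$-H\"older with constant at most $H^*$.

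The main obstacle is the construction of the adapted metrics $\rho_x$ realizing the contraction factor $e^{-\lambda}$ at each single step, rather than the weaker $C e^{-\lambda n}$ after $n$ steps that one gets immediately from Theorem \ref{t.BG}. Absorbing the multiplicative constant $C$ is a standard but nontrivial technical step, handled either by passing to a Lyapunov-type adapted metric, or---more concretely---by applying the entire graph-transform argument to a high iterate $T^N$ chosen large enough that $C e^{-\lambda N} K^{\alpha N} < 1$, which gives the required H\"older bound on a coarse grid and then, by $\beta$-H\"older interpolation at scales below $K^{-N}\delta_0$, globally.
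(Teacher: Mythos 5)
Your proof takes essentially the same route as the paper's: both are graph-transform arguments in which one shows the $\alpha$-H\"older seminorm of sections is contracted by the transfer operator, with the contraction factor $e^{-\lambda}K^\alpha<1$ coming from the domination versus the Lipschitz constant of $\phi^{\pm 1}$, and both handle the multiplicative constant $C$ by passing to a high iterate (the paper does so in a footnote, you do so in your closing paragraph). The only cosmetic difference is the ambient chart: the paper linearizes by working in $\mathrm{Hom}(\hat E^{\cu},\hat E^{\cs})$ over Lipschitz approximations of the invariant bundles, whereas you work intrinsically in the Grassmann bundle with an adapted metric on a multicone field — in the latter formulation you should also note that the adapted metrics $\rho_x$ must themselves be chosen Lipschitz in $x$ (e.g.\ via the Lipschitz approximations the paper uses, or via a high iterate together with the standard Grassmannian metric) so that the H\"older estimate obtained in the adapted metric transfers to a fixed background metric.
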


\begin{proof}[Sketch of the proof] 
Choose Lipschitz approximations $\hat E^{\cs}$ and $\hat E^{\cu}$ of $E^{\cs}$ and $E^{\cu}$ respectively. One can define then the bundle $\cE$ over $X$ corresponding to the linear maps from $\hat E^{\cu}(x)$ to $\hat E^{\cs}(x)$, that is $\cE(x) =  \mathrm{Hom}(\hat E^{\cu}(x),\hat E^{\cs}(x))$ . Let $\cT \subset \cE$ given as $\cT =  \{ (x,L) \in \cE \st \ \|L\|\leq 1 \}$. We consider the standard graph transform $H: \cT \to \cE$ given as:  
$$ H(x,L) =  (\phi^1(x),H_{x}( L)) $$ 
defined so that if $(w,v) \in \hat E^{\cs}(x) \oplus \hat E^{\cu}(x)$ is in the graph of $L$ (i.e.\ $w = Lv$) then one has that $(\psi^1 w, \psi^1 v)$ is in the graph of $H_{x}(L)$. It follows that the map $H$ is $\beta$-H\"older and a standard computation shows that some iterate leaves invariant the set $\cT$.

Given two sections $\sigma_0$ and $\sigma_1$ of $\cE$ such that $\sigma_i(x) \in \cT$ for every $x\in X$, one shows that the $\alpha$-H\"older distance of $H\circ \sigma_0$ and $H\circ \sigma_1$ is uniformly contracted\footnote{To show that this metric is contracted, one needs to assume that the constant $C$ appearing in the dominated splitting is equal to $1$. Otherwise, one can argue for an iterate and the same will hold.}  if $\alpha<\beta$ and $e^{-\lambda} k^{-\alpha}<1$ where $k \coloneqq \min\limits_{x \neq y} \frac{d(\phi^1(x),\phi^1(y))}{d(x,y)}$. Indeed, the graphs are getting contracted at a rate similar to $\lambda$ while points cannot approach faster than $k$ which gives that the $\alpha$-H\"older distance contracts\footnote{The need for $\alpha < \beta$ is evident, since the section cannot be more regular than the cocycle. In the computation this appears because an error term of the form $d(x,y)^{\beta-\alpha}$ appears which will then be negligible as $d(x,y)\to 0$ and gives the desired statement. See \cite[Section 4.4]{CroPo} for a similar computation.}.

As a consequence, one obtains that there is a unique $H$-invariant section $\sigma$ of this bundle which, moreover, it is $\alpha$-H\"older. 

It is direct to show that this section corresponds to the bundle $E^{\cu}$. A symmetric argument shows that $E^{\cs}$ is $\alpha$-H\"older, proving the result. 
\end{proof}

\begin{cor}\label{cor-holder} 
If $u \mapsto \psi^t_u$ is a $\beta$-H\"older family of linear flows over $\phi^t$ a Lipschitz flow and $\psi_0^t$ admits a dominated splitting, then there exists a neighborhood $D$ of $0$  such that the maps $(u,x) \mapsto E^{\cs}_u(x)$ and $(u,x) \mapsto E^{\cu}_u(x)$ are $\alpha$-H\"older.
\end{cor}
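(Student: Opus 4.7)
The plan is to reduce Corollary \ref{cor-holder} to Theorem \ref{teo-holder} by assembling the family $\{\psi^t_u\}_{u \in D}$ into a single linear flow over an enlarged base space $X \times D$. First, I would show that the dominated splitting of $\psi^t_0$ persists under small $C^0$-perturbations with uniform constants. This is a standard consequence of the cone field characterization (Proposition \ref{p.conefield}): an invariant field of multicones for $\psi^t_0$ is strictly invariant, hence remains strictly invariant under all sufficiently nearby linear flows. Thus there exists a neighborhood $D$ of $0$ and constants $C, \lambda > 0$ such that every $\psi^t_u$ (for $u \in D$) admits a dominated splitting $E^{\cu}_u \oplus E^{\cs}_u$ with the same domination constants.

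Next, I would consider the product space $X \times D$ together with the trivial extension of the base flow, $\Phi^t(x,u) \coloneqq (\phi^t(x), u)$, which is Lipschitz with the same constant $K$ as $\phi^t$ (in the product metric). Over $X \times D$ I form the vector bundle $\widetilde E$ whose fiber over $(x,u)$ is the fiber $E_x$ of the original bundle, and define the linear flow $\Psi^t(x,u,v) \coloneqq (\phi^t(x), u, \psi^t_u(v))$. The $\beta$-H\"older regularity of the family in $u$, together with the $\beta$-H\"older regularity of each $\psi^t_u$ in $x$ (which one obtains from the hypothesis, possibly after shrinking $D$), yields that $\Psi^t$ is $\beta$-H\"older in the combined variable $(x,u)$ endowed with a natural product metric.

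The subbundles $\widetilde E^{\cu}(x,u) \coloneqq E^{\cu}_u(x)$ and $\widetilde E^{\cs}(x,u) \coloneqq E^{\cs}_u(x)$ then define a $\Psi^t$-invariant splitting of $\widetilde E$, and the uniform domination constants from Step~1 show that this splitting is dominated. Applying Theorem \ref{teo-holder} to $\Psi^t$ with exponent $\alpha < \beta$ small enough so that $e^{-\lambda} K^\alpha < 1$, we conclude that $(x,u) \mapsto \widetilde E^{\cu}(x,u)$ and $(x,u) \mapsto \widetilde E^{\cs}(x,u)$ are $\alpha$-H\"older, which is exactly the assertion of the corollary.

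The main obstacle is the first step: verifying that dominated splittings are \emph{robust with uniform constants} under H\"older perturbations of the cocycle, not merely existent for each nearby cocycle. Once one has an invariant multicone field for $\psi^t_0$, this robustness is automatic by the open nature of the strict invariance condition, so the only care needed is to choose the multicone field with enough ``room'' so that strict invariance survives on a full neighborhood $D$, and to ensure that the Lipschitz constant of the base flow $\Phi^t$ does not degrade when passing to the product space, which is clear since $\Phi^t$ acts trivially in the parameter direction.
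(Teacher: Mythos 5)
Your proposal is correct and follows essentially the same route as the paper's (very terse) proof: assemble the family into a single linear flow over the product base $X \times D$ with base map $\phi^t \times \mathrm{id}$, note that this base map has the same Lipschitz constants as $\phi^t$, and apply Theorem~\ref{teo-holder}. You additionally spell out, via the multicone criterion of Proposition~\ref{p.conefield}, why the domination constants can be taken uniform over a small neighborhood $D$ — a point the paper leaves implicit — which is a worthwhile clarification but not a different method.
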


\begin{proof} 
Fix $\alpha$ as in the previous theorem. There exists a neighborhood $D$ of $0$ for which $\lambda K^\alpha <1$ (here $\lambda$ denotes the strength of the domination for $\psi^t_u$ with $u \in D$). 

Now one applies the previous theorem to the linear flow $\hat \psi^t$ over $\phi^t \times \mathrm{id} : X \times D \to X\times D$. Note that the Lipschitz constants of $\phi^1 \times \mathrm{id}$ and $\phi^{-1} \times \mathrm{id}$ are the same as the ones of $\phi^1$ and $\phi^{-1}$. The result follows. 
\end{proof}



\medskip

\author{\vbox{\footnotesize\noindent 
	Jairo Bochi\\ 
	Facultad de Matem\'aticas\\ Pontificia Universidad Cat\'olica de Chile \\ 
	Av.\ Vicu\~na Mackenna 4860 Santiago Chile \\
	\texttt{jairo.bochi@mat.uc.cl}
\bigskip}}

\author{\vbox{\footnotesize\noindent 
	Rafael Potrie\\ 
	CMAT Facultad de Ciencias\\ 
	Universidad de la Rep\'ublica\\ Igu\'a 4225 Montevideo Uruguay\\ 
	\texttt{rpotrie@cmat.edu.uy}
\bigskip}}

\author{\vbox{\footnotesize\noindent 
	Andr\'es Sambarino\\
	IMJ-PRG (CNRS UMR 7586)\\ Universit\'e Pierre et Marie Curie (Paris VI) \\
	4 place Jussieu 75005 Paris France\\
	\texttt{andres.sambarino@imj-prg.fr}
}}


\begin{thebibliography}{GGKW}

\MRbibitem[Ar]{1723992}{Arnold} \textsc{L.~Arnold.}
\textit{Random dynamical systems.}
Springer Monographs in Mathematics. Springer-Verlag, Berlin, 1998.


\MRbibitem[ABY]{2718140}{ABY} \textsc{A.~Avila, J.~Bochi, J.-C.~Yoccoz.}
Uniformly hyperbolic finite-valued $\mathrm{SL}(2,\mathbb{R})$-cocycles.
\textit{Comment.\ Math.\ Helv.\ }85 (2010), 813--884.

\bibitem[Be]{BenoistNotes} \textsc{Y.~Benoist.}
Sous-groupes discrets des groupes de Lie.
European Summer School in Group Theory, Luminy, 1997.




\MRbibitem[BM]{3959484}{BluMo} \textsc{A.~Blumenthal, I.D.~Morris.}
Characterization of dominated splittings for operator cocycles acting on Banach spaces.
\textit{J.\ Differential Equations} 267 (2019), no.\ 7, 3977--4013. 



\MRbibitem[BG]{2529495}{BG} \textsc{J.~Bochi, N.~Gourmelon.}
Some characterizations of domination.
\textit{Math.\ Z.\ }263 (2009), 221--231. 

\MRbibitem[Bog]{1485595}{Bogo} \textsc{O.~Bogopolski.}
Infinite commensurable hyperbolic groups are bi-Lipschitz equivalent.
\textit{Algebra and Logic} 36 (1997), no.\ 3, 155--163.



\MRbibitem[BDV]{105774}{BDV} \textsc{C.~Bonatti, L.J.~D\'iaz, M.~Viana.}
\textit{Dynamics beyond uniform hyperbolicity: a global geometric and probabilistic perspective.}
Encyclopaedia of Mathematical Sciences, 102. Mathematical Physics, III. Springer-Verlag, Berlin, 2005. 


\MRbibitem[BDP]{2018925}{BDP} \textsc{C.~Bonatti, L.J.~D\'iaz, E.~Pujals.}
A $C^1$-generic dichotomy for diffeomorphisms: weak forms of hyperbolicity or infinitely many sinks or sources. 
\textit{Ann.\ of Math.\ }158 (2003), no.\ 2, 355--418. 


\MRbibitem[Bow]{1602069}{Bowditch} \textsc{B.H.~Bowditch.}
A topological characterization of hyperbolic groups.
\textit{J.\ Amer.\ Math.\ Soc.\ }11 (1998), 643--667. 


\MRbibitem[BCS]{3833339}{BCS} \textsc{M. Bridgeman, R. Canary, A. Sambarino.}
An introduction to pressure metrics on higher Teichm\"uller spaces. 
\textit{Ergodic Theory Dynam.\ Systems\ }38 (2018), 2001--2035. 



\MRbibitem[BCLS]{3385630}{BCLS} \textsc{M.~Bridgeman, R.~Canary, F.~Labourie, A.~Sambarino.}
The pressure metric for Anosov representations.
\textit{Geom.\ Funct.\ Anal.\ }25 (2015), 1089--1179.


\MRbibitem[BH]{1744486}{BH} \textsc{M.R.~Bridson, A.~Haefliger.}
\textit{Metric spaces of non-positive curvature.} 
Grundlehren der Mathematischen Wissenschaften, 319.
Springer-Verlag, Berlin, 1999.



\MRbibitem[Ca]{0758901}{Cannon} \textsc{J.W.~Cannon.}
The combinatorial structure of cocompact discrete hyperbolic groups. 
\textit{Geom.\ Dedicata} 16 (1984), no.\ 2, 123--148. 

\MRbibitem[Ch]{1283206}{champetier} \textsc{C.~Champetier.}
Petite simplification dans les groupes hyperboliques.
\textit{Ann.\ Fac.\ Sci.\ Toulouse Math.\ }(6) 3 (1994), no.\ 2, 161--221. 

\MRbibitem[CK]{1752730}{ColKli} \textsc{F.~Colonius, W.~Kliemann.}
\textit{The dynamics of control.} 
Birkh\"auser Boston, Inc., Boston, MA, 2000.


\MRbibitem[CDP]{1075994}{CDP} \textsc{M.~Coornaert, T.~Delzant, A.~Papadopoulos.}
\textit{G\'eom\'etrie et th\'eorie des groupes: Les groupes hyperboliques de Gromov.}
Lecture Notes in Mathematics, 1441. 
Springer-Verlag, Berlin, 1990.


\bibitem[CP]{CroPo} \textsc{S.~Crovisier, R.~Potrie.}
\textit{Introduction to partially hyperbolic dynamics.}
School on Dynamical Systems, ICTP, Trieste, 2015. 


\MRbibitem[DT]{3544632}{DT} \textsc{B.~Deroin, N.~Tholozan.}
Dominating surface group representations by Fuchsian ones.
\textit{Int.\ Math.\ Res.\ Not.\ }2016, no.\ 13, 4145--4166. 

\MRbibitem[Ebe]{1441541}{Eberlein} \textsc{P.~Eberlein.}
\emph{Geometry of nonpositevly curved manifolds.} 
Chicago Lectures in Mathematics. 
The University of Chicago Press, 1996.


\MRbibitem[EHY]{3444261}{EHY} \textsc{H.\ Eliasson, H.\ Hofer and J.-C.\ Yoccoz} (organizers).
Dynamische Systeme. Abstracts from the workshop held July 19--25, 2015. 
\textit{Oberwolfach Rep.\ }12 (2015), no.\ 3, 1945--1948.


\MRbibitem[Ga]{1657129}{Gantmacher} \textsc{F.R.~Gantmacher.}
\textit{The theory of matrices. Vol.~1.} 
Translated from the Russian by K.A.\ Hirsch. 
Reprint of the 1959 translation. 
AMS Chelsea Publishing, Providence, RI, 1998.


\MRbibitem[GH]{1086648}{GhysdelaHarpe} \textsc{E.~Ghys, P.~de la Harpe} (editors).
\textit{Sur les groupes hyperboliques d'apr\`es Mikhael Gromov.}
Papers from the Swiss Seminar on Hyperbolic Groups held in Bern, 1988. 
Progress in Mathematics, 83. Birkh\"auser Boston, Inc., Boston, MA, 1990. 
	

\MRbibitem[Go]{2599887}{Gogolev} \textsc{A.~Gogolev.}
Diffeomorphisms H\"older conjugate to Anosov diffeomorphisms.
\emph{Ergodic Theory Dynam.\ Systems} 30 (2010), no.\ 2, 441--456. 

\MRbibitem[Gr]{0919829}{gromov} \textsc{M.~Gromov.}
``Hyperbolic groups,'' in {\em Essays in Group Theory},
\emph{MSRI Publ.\ }8 (1987), 75--263.


\MRbibitem[GGKW]{3608719}{GGKW} \textsc{F.~Gu\'eritaud, O.~Guichard, F.~Kassel, A.~Wienhard.}
Anosov representations and proper actions. 
\textit{Geom.\ Topol.\ }21 (2017), no.\ 1, 485--584. 


\MRbibitem[GKW]{3347373}{GKW} \textsc{F.~Gu\'eritaud, F.~Kassel, M.~Wolff.} 
Compact anti-de Sitter $3$-manifolds and folded hyperbolic structures on surfaces.
\textit{Pacific J.\ Math.\ }275 (2015), no.\ 2, 325--359. 



\MRbibitem[GW]{2981818}{GuichardWienhard} \textsc{O.~Guichard, A.~Wienhard.}
Anosov representations: domains of discontinuity and applications.
\textit{Invent.\ Math.\ }190 (2012), 357--438.



\MRbibitem[GJT]{1633171}{bordeF} \textsc{Y.~Guivarc'h, L.~Ji,  J.C.~Taylor.}
\emph{Compactifications of symmetric spaces}, Progress in Mathematics, 156 (1998), Birkh\"auser, Boston, MA.



\MRbibitem[Hel]{0145455}{Helgason} \textsc{S.~Helgason.}
\emph{Differential geometry and symmetric spaces.} 
Academic Press, New York--London, 1962.


\MRbibitem[HPS]{0501173}{HPS} \textsc{M.~Hirsch, C.~Pugh, M.~Shub.}
\emph{Invariant manifolds.} 
Lecture Notes in Mathematics, Vol.\ 583. 
Springer-Verlag, Berlin-New York, 1977.


\MRbibitem[Hit]{1174252}{Hitchin} \textsc{N.J.~Hitchin.} Lie groups and Teichm\"uller space. \emph{Topology} 31 (1992), no.\ 3, 449--473.


\MRbibitem[H\"o]{1045639}{Hormander} \textsc{L.~H\"ormander.}
\emph{An introduction to complex analysis in several variables.} 3rd ed.
North-Holland Publishing Co., Amsterdam, 1990.

\MRbibitem[HJ]{2978290}{HornJ} \textsc{R.A.~Horn, C.R.~Johnson.}
\emph{Matrix analysis}. 2nd ed. 
Cambridge University Press, Cambridge, 2013.

\MRbibitem[Hub]{2245223}{Hubbard} \textsc{J.~Hubbard.}
\emph{Teichm\"uller theory.}
Matrix Editions, Ithaca, NY, 2006.


\MRbibitem[Hum$_1$]{0499562}{James}\textsc{J.~Humphreys.}
\emph{Introduction to Lie algebras and representation theory.}
2nd printing, revised. 
Graduate Texts in Mathematics, 9.
Springer-Verlag, New York--Berlin, 1978.

\MRbibitem[Hum$_2$]{0396773}{LAG}\textsc{J.~Humphreys.}
\emph{Linear algebraic groups.}
Graduate Texts in Mathematics, 21.
Springer-Verlag, New York--Heidelberg, 1975.


\MRbibitem[Kap]{2175160}{nonlinear} \textsc{M.~Kapovich.}
Representations of polygons of finite groups.
\textit{Geom.\ Topol.\ }9 (2005), 1915--1951. 


\bibitem[KLP$_1$]{KLP1} \textsc{M.~Kapovich, B.~Leeb, J.~Porti.}
Morse actions of discrete groups on symmetric space.
\arxiv{1403.7671}

\MRbibitem[KLP$_2$]{3890767}{KLP2} \textsc{M.~Kapovich, B.~Leeb, J.~Porti.}
A Morse Lemma for quasigeodesics in symmetric spaces and euclidean buildings.
\textit{Geom.\ Topol.\ }22 (2018), 3827--3923.

\MRbibitem[KLP$_3$]{3569569}{KLP3} \textsc{M.~Kapovich, B.~Leeb, J.~Porti.}
Some recent results on Anosov representations.
\textit{Transform.\ Groups} 21 (2016), no.\ 4, 1105--1121. 



\MRbibitem[Lab]{2221137}{Labourie-AnosovFlows} \textsc{F.~Labourie.}
Anosov flows, surface groups and curves in projective space. 
\textit{Invent.\ Math.\ }165 (2006), 51--114.


\MRbibitem[Lang]{1666820}{lang}\textsc{S.~Lang.}
\emph{Fundamentals of Differential Geometry.}
Graduate Texts in Mathematics,191.
Springer, New Haven, 1999.


\MRbibitem[Led]{1715960}{Ledrappier-Bord} \textsc{F.~Ledrappier.} 
Structure au bord des vari\'et\'es \`a courbure n\'egative. 
\emph{S\'emin. Th\'eor. Spectr. G\'eom.}, 13, 97--122.
Univ.\ Grenoble I, Saint-Martin-d'H\`eres, 1995. 


\MRbibitem[Len]{2044815}{Lenz} \textsc{D.~Lenz.}
Existence of non-uniform cocycles on uniquely ergodic systems.
\textit{Ann.\ Inst.\ H.\ Poincar\'e Probab.\ Statist.\ }40 (2004), no.\ 2, 197--206. 

\MRbibitem[LM]{1369092}{LindMarcus} \textsc{D.~Lind, R.~Marcus.}
\textit{An introduction to symbolic dynamics and coding.}
Cambridge University Press, Cambridge, 1995. 

\MRbibitem[Mat]{1715930}{matheus} \textsc{F.~Math\'eus.}
Flot g\'eod\'esique et groupes hyperboliques d'apr\`es M.~Gromov.
\emph{S\'emin. Th\'eor. Spectr. G\'eom.}, 9, 67--87.
Univ.\ Grenoble I, Saint-Martin-d'H\`eres, 1991. 

\MRbibitem[Mi]{2140987}{mineyev} \textsc{I.~Mineyev.}
Flows and joins of metric spaces. \emph{Geom.\ Topol.\ }9 (2005), 403--482.



\MRbibitem[Pa]{0681227}{Palmer} \textsc{K.J.~Palmer.}
Exponential separation, exponential dichotomy and spectral theory for linear systems of ordinary differential equations.
\textit{J.\ Differ.\ Eq.\ }46 (1982), no.\ 3, 324--345. 

\MRbibitem[PS]{3681396}{PotSam} \textsc{R.~Potrie, A.~Sambarino.}
Eigenvalues and entropy of a Hitchin representation.
\textit{Invent.\ Math.\ }209 (2017), 885--925.


\MRbibitem[Ru]{0534172}{Ruelle} \textsc{D.~Ruelle.}
Analyticity properties of the characteristic exponents of random matrix products. 
\textit{Adv.\ Math.\ }32 (1979), no.\ 1, 68--80. 


\MRbibitem[Sa]{3449196}{Sambarino-Orbitalcounting} \textsc{A.~Sambarino.} The orbital counting problem for hyperconvex representations.
\textit{Ann.\ Inst.\ Fourier }65 (2015), no.\ 4, 1755--1797.



\MRbibitem[Sam]{3457601}{Samba_dom} \textsc{M.~Sambarino.}
A (short) survey on dominated splittings.
\textit{Mathematical Congress of the Americas,} 149--183.
\textit{Contemp.\ Math.\ }656, Amer.\ Math.\ Soc., Providence, RI, 2016. 


\MRbibitem[Sim]{3410783}{Simon} \textsc{B.~Simon.}
\textit{A Comprehensive Course in Analysis, Part 3: Harmonic analysis.} 
AMS, Providence, RI, 2015. 

\MRbibitem[Sma]{0228014}{Smale} \textsc{S.~Smale.}
Differentiable dynamical systems. 
\textit{Bull.\ Amer.\ Math.\ Soc.\ }73 (1967), 747--817. 



\MRbibitem[Ste]{1653546}{Stewart} \textsc{G.W.~Stewart.}
\textit{Matrix algorithms. Vol.~I: Basic decompositions.}
SIAM, Philadelphia, PA, 1998.


\MRbibitem[Ti]{0277536}{Tits} \textsc{J.~Tits.}
Repr{\'e}sentations lin{\'e}aires irr{\'e}ductibles d'un groupe r{\'e}ductif sur un corps quelconque.
\textit{J.\ reine angew.\ Math.\ }247 (1971), 196--220. 


\bibitem[Wie]{Wienhard-Survey} \textsc{A.~Wienhard.}
Representations and geometric structures.
\arxiv{1602.03480}.


\MRbibitem[Wo]{0216433}{Wong} \textsc{Y.-C.~Wong.}
Differential geometry of Grassmann manifolds. 
\textit{Proc.\ Nat.\ Acad.\ Sci.\ U.S.A.\ }57 (1967), 589--594.

\MRbibitem[Yo]{2093316}{Yoccoz} \textsc{J.-C.~Yoccoz.}
Some questions and remarks about $\mathrm{SL}(2,\mathbb{R})$ cocycles. 
\textit{Modern dynamical systems and applications}, 447--458, 
Cambridge Univ.\ Press, Cambridge, 2004. 


\end{thebibliography}
\end{document}